\documentclass[11pt]{amsart}

\usepackage{hyperref}
\usepackage{amssymb,amsfonts}
\usepackage{hyperref}
\usepackage{amssymb,textcomp}

\usepackage{enumerate}

\usepackage[utf8]{inputenc}
\usepackage[T1]{fontenc}

\usepackage[mathscr]{eucal}

\usepackage[a4paper, twoside=false, vmargin={2cm,3cm}, includehead]{geometry}

\newtheorem{lemma}{Lemma}[section]
\newtheorem{theorem}[lemma]{Theorem}
\newtheorem*{theorem*}{Theorem}
\newtheorem{corollary}[lemma]{Corollary}

\newtheorem{proposition}[lemma]{Proposition}
\newtheorem*{proposition*}{Proposition}

\newtheorem{conjecture}{Conjecture}
\newtheorem{problem}{Problem}
\newtheorem*{problem*}{Problem}

\theoremstyle{definition}
\newtheorem*{claim*}{Claim}

\newtheorem*{definition}{Definition}

\newtheorem*{remark}{Remark}
\newtheorem*{remarks}{Remarks}



\newcommand{\C}{{\mathbb C}}
\newcommand{\E}{{\mathbb E}}

\newcommand{\N}{{\mathbb N}}

\newcommand{\Q}{{\mathbb Q}}
\newcommand{\R}{{\mathbb R}}

\newcommand{\T}{{\mathbb T}}
\newcommand{\Z}{{\mathbb Z}}


\newcommand{\CI}{{\mathcal I}}

\newcommand{\CX}{{\mathcal X}}




\newcommand{\norm}[1]{\left\Vert #1\right\Vert}
\newcommand{\nnorm}[1]{\lvert\!|\!| #1|\!|\!\rvert}

\DeclareMathOperator{\spec}{Spec}
\DeclareMathOperator{\id}{id}

\begin{document}
	
	\title{Joint ergodicity of  sequences}	
	
	\thanks{The author was supported  by the
		Hellenic Foundation for
		Research and Innovation,
		Project
		No: 1684.}
	
	\author{Nikos Frantzikinakis}
	\address[Nikos Frantzikinakis]{University of Crete, Department of mathematics and applied mathematics, Voutes University Campus, Heraklion 71003, Greece} \email{frantzikinakis@gmail.com}
	\begin{abstract}
	A collection of integer sequences is jointly ergodic if for every ergodic measure preserving system the multiple ergodic averages, with iterates given by this collection of sequences, converge in the mean to the product of the integrals. We give necessary and sufficient conditions
	for joint ergodicity that are flexible enough to recover  the known examples of jointly ergodic sequences and also allow us to answer some related open problems. An interesting feature of our arguments is that they  avoid deep tools from ergodic theory that were previously used to establish similar results. Our approach is primarily based on an ergodic variant of a  technique pioneered by Peluse and Prendiville in order to give  quantitative variants for the finitary version of the polynomial Szemer\'edi theorem. 	
	\end{abstract}

\subjclass[2010]{Primary: 37A44; Secondary:    28D05, 05D10.}

\keywords{Joint ergodicity,  ergodic averages, recurrence, Hardy fields, fractional powers.}
\maketitle

\section{Introduction and main results}\label{S:MainResults}
\subsection{Introduction}
The study   of
multiple  ergodic averages was initiated in the seminal work of Furstenberg~\cite{Fu77}, where an ergodic theoretic proof of Szemer\'edi's theorem on arithmetic progressions was given. Since then a variety of multiple ergodic  averages has been studied, resulting in
 new and far reaching combinatorial consequences. A rather general family of problems is as follows: We are given
  a collection of  integer sequences $a_1,\ldots, a_\ell\colon \N\to \Z$ and an invertible measure preserving system $(X,\mu,T)$. We would like to understand the limiting behavior (as $N\to\infty$) in $L^2(\mu)$  of the averages
\begin{equation}\label{E:MEA}
\E_{n\in [N]} \, T^{a_1(n)}f_1\cdot \ldots \cdot T^{a_\ell(n)}f_\ell
\end{equation}
for all functions $f_1,\ldots, f_\ell\in L^\infty(\mu)$ where  $\E_{n\in [N]}$ denotes the average $\frac{1}{N}\sum_{n=1}^N$. The reader can find a large collection of related convergence
results  in  \cite{Fr16}. We remark though that  the limit is not easy to compute even in the case where the iterates are  integer polynomials. In particular, if the sequences satisfy some non-trivial linear relations, then simple examples of ergodic compact abelian group rotations show that  the averages \eqref{E:MEA} are not going to converge to the product of the integrals of the individual functions.


 The question we seek to answer is under what conditions on the sequences $a_1,\ldots, a_\ell$ the iterates $T^{a_1(n)}, \ldots, T^{a_\ell(n)}$, $n\in \N$,
 behave independently enough, so
 that  for every ergodic system, or totally ergodic system (cf. Section~\ref{SS:JE}), 
 the averages \eqref{E:MEA} converge in $L^2(\mu)$ to the product of the integrals.
  This gives rise to the following notion of ``joint ergodicity'' that was first introduced in a somewhat different setting  in \cite{BB84}.
\begin{definition}
 We say that  the collection of sequences   $a_1,\ldots, a_\ell\colon \N\to \Z$ is
 \begin{enumerate}
 	\item
  {\em jointly ergodic for the 
  	system  $(X,\mu, T)$}, if  for all functions   $f_1,\ldots, f_\ell \in L^\infty(\mu)$ we have
	\begin{equation}\label{E:jem}
	\lim_{N\to\infty} \E_{n\in [N]}\, T^{a_1(n)}f_1 \cdot\ldots \cdot  T^{a_\ell(n)}f_\ell= \int f_1\, d\mu\cdot \ldots \cdot \int f_\ell\, d\mu
	\end{equation}
	where convergence takes place in $L^2(\mu)$.
	
	\item {\em jointly ergodic}, if it is jointly ergodic for every ergodic system.
	\end{enumerate}
\end{definition}
It is a direct consequence of Furstenberg's correspondence principle~\cite{Fu81a} that if the collection of sequences $a_1,\ldots, a_\ell$ is jointly ergodic, then every set of integers with positive upper density contains patterns of the form $m,m+a_1(n),\ldots, m+a_\ell(n)$, for some $m,n\in \N$.
In fact, a stronger property holds (which fails when say $a_1(n)=n,a_2(n)=2n$, see \cite[Theorem~2.1]{BHK05}), namely, for every set of integers $\Lambda$  we have
\begin{equation}\label{E:dl1}
\liminf_{N\to\infty}\E_{n\in[N]}\, \bar{d}(\Lambda\cap (\Lambda +a_1(n))\cap\cdots \cap (\Lambda+a_\ell(n)))\geq (\bar{d}(\Lambda))^{\ell+1},
\end{equation}
where $\bar{d}(\Lambda):=\limsup_{N\to\infty}|\Lambda\cap [1,N]|/N$.


For $\ell=1$, by appealing to the spectral theorem for unitary operators (or the theorem of Herglotz on positive definite sequences),
one can show that  a single sequence $a\colon \N\to \Z$ is (jointly) ergodic if and only if for every $t\in (0,1)$ we have
\begin{equation}\label{E:Spectral}
\lim_{N\to\infty} \E_{n\in[N]}\, e(a(n)t)=0
\end{equation}
where $e(t):=e^{2\pi i t}$. In particular, the sequences $[n^c]$, where  $c\in \R_+\setminus \Z$,
and $[n^2\alpha+n\beta]$, where $\alpha,\beta$ are non-zero real numbers such that
$\alpha/\beta$ is irrational,  are (jointly) ergodic (see \cite[Theorem~A]{BKQW05}). It also follows that a single sequence is jointly ergodic for every totally ergodic system if and only if  \eqref{E:Spectral} holds for every irrational $t\in [0,1)$, a condition that is known to be satisfied, for instance,  by all non-constant  integer polynomials.

For $\ell\geq 2$,     joint ergodicity is a much tougher property to establish.
In \cite{Fr10} it was shown  that the collection of sequences $[n^{c_1}],\ldots, [n^{c_\ell}]$, where $c_1,\ldots, c_\ell\in \R_+\setminus \Z$ are distinct,  is jointly ergodic, and further  examples involving polynomials with real coefficients and other sequences arising from smooth functions and bracket polynomials can be found in  \cite{BLS16,BMR20,BMR21, Fr15,  KK19, K21}. Moreover, a collection of  polynomial sequences $p_1,\ldots, p_\ell\in \Z[t]$ is known to be jointly ergodic
for all totally ergodic systems if and only if the polynomials are rationally independent \cite{FrK05a}; a typical case is the pair $n,n^2$, which was first handled in \cite{FuW96}.
 The proofs of  most of these results rely on deep tools from ergodic theory, such as  the Host-Kra theory of characteristic factors \cite{HK05a}
 and equidistribution results on nilmanifolds. But a general criterion for establishing joint ergodicity, like the one mentioned for $\ell=1$,  is still lacking, the reason being that  we do not have a  good substitute for the spectral theorem for unitary operators that would allow us to represent correlation sequences of the form $\int f_0 \cdot T^{n_1}f_1\cdot \ldots \cdot  T^{n_\ell}f_\ell\, d\mu$, for sparse values of $n_1,\ldots, n_\ell\in \N$,  in a useful way.

   Using ergodic rotations on compact abelian groups we see that a necessary condition for joint ergodicity for the collection of sequences $a_1,\ldots, a_\ell\colon \N\to \Z$ is that
for all $t_1,\ldots, t_\ell\in [0,1)$, not all of them zero, we have
$$
\lim_{N\to\infty} \E_{n\in[N]}\, e(a_1(n)t_1+\cdots+a_\ell(n)t_\ell)=0.
$$
Another necessary condition is the ``good for seminorm estimates'' property  that is defined in  Section~\ref{SS:JE} and is known to be satisfied for an ample supply of sequences.
The main
objective of this article is to prove the rather surprising fact  that these two conditions are also sufficient for joint ergodicity. We do this in  Theorem~\ref{T:JointErgodicity}.
These  necessary and sufficient conditions  enable us to recover painlessly all the known results about  joint ergodicity of  sequences that we are aware of and also 
prove some new ones. For example,  in Theorem~\ref{T:Hardy} we make progress towards  a conjecture in
\cite[Problem~2]{Fr10} 
regarding joint ergodicity of Hardy field sequences,
improving upon the best known results in \cite{BMR21,Fr10}, and in Theorem~\ref{T:HardyPolyChar} we verify  a conjecture from   \cite[Conjecture~6.1]{BMR21}. A special case of this last result  asserts that a collection of sequences  of the form
 $[\sum_{i=1}^k\alpha_in^{b_i}]$, where  $\alpha_1,\ldots, \alpha_k\in \Q$, $b_1,\ldots, b_k\in (0,+\infty)$ are positive,
  is jointly ergodic for all totally ergodic systems if and if it is linearly independent;
  this was previously known when all the exponents are  integers \cite{FrK05a} or none of the exponents is an integer \cite{BMR21}.


An interesting feature of our main result is that its proof avoids deep tools from ergodic theory  and
leads to vastly simpler proofs of several known results. In particular,
our arguments
do not rely on the Host-Kra theory of characteristic factors or on equidistribution results on nilmanifolds, as previous results in the area did. Moreover,   in  Corollary~\ref{C:Nilsystems} we show that our main result has some far-reaching consequences related   to equidistribution properties of general sequences on nilmanifolds. Our approach is to adapt and utilize in our ergodic theory setup  a technique developed by Peluse~\cite{P19a} and Peluse and Prendiville~\cite{PP19}  (see also \cite{P20} for an exposition of the technique) and used in \cite{P19b,PP19}  to establish quantitative results for finitary variants of  special cases of   the polynomial Szemer\'edi theorem.
 Although the main ideas are elementary, they are a bit cumbersome to implement in full generality,  so in order to facilitate reading,   we first present our argument
 in the simpler case of the multiple ergodic averages 
$$
\E_{n\in [N]}\,  T^nf\cdot T^{n^2}g.
$$
It is a classical result of Furstenberg and Weiss that these averages converge in $L^2(\mu)$ to the product of the integrals for totally ergodic systems,  and that the rational Kronecker factor is a characteristic factor for the above averages.
We present in  Section~\ref{S:FW} a proof of this result
using the general principles of the argument used in \cite{PP19}, but  also take advantage of various simplifications that our infinitary setup allows (the complete argument is only a few pages long). Subsequently, in Sections~\ref{S:Preparation} and  \ref{S:Main} we extend this method to give a proof of our main result, Theorem~\ref{T:JointErgodicity}, which gives necessary and sufficient conditions for joint ergodicity. We then use it in  Section~\ref{S:Corollaries} to give a result about characteristic factors (Theorem~\ref{T:CharacteristicFactors}) that also applies to arbitrary collections of rationally independent integer polynomials.
In Section~\ref{S:Hardy}, by applying the previous results, we recover known results for Hardy field sequences and also  address some  related open problems. An additional advantage of our approach is that it relies on   vastly simpler arguments than previously used.


 Finally, in Section~\ref{S:Flows}, we deal
with similar problems for flows. This section  is independent from the previous ones, and the methodology used  is completely different   than the one used to cover discrete time averages. We prove pointwise convergence  results that apply to actions of not necessarily commuting transformations. We remark that even for commuting transformations, the corresponding results for discrete time actions are not known, and for general non-commuting transformations they are known to be false~\cite[Theorem~1.7]{FrLW11}.
Our main result regarding flows  is given in Theorem~\ref{T:flows},  and  representative special cases  assert that
 for all ergodic measure preserving actions  $T^t, S^t, R^t$, $t\in\R$,  on a probability space $(X,\mathcal{X},\mu)$ and functions  $f,g,h\in L^\infty(\mu)$,
the following limits exist
$$
\lim_{y\to+\infty} \frac{1}{y} \int_0^y f(T^tx)\cdot g(S^{t^2}x)\, dt, \qquad
\lim_{y\to+\infty} \frac{1}{y} \int_0^y f(T^{2^t}x)\cdot g(S^{3^t}x)\cdot h(R^{4^t}x)\, dt
$$
pointwise  for $\mu$-almost every $x\in X$,  and are equal to the product of the integrals of the individual functions.
We also establish analogous  results without any ergodicity assumptions.
When the actions $T^t$ and $S^t$ commute, the corresponding result  for the first average was recently obtained
 in \cite{CDKR21}, using harmonic analysis techniques from \cite{CDR21}.

\subsection{Definitions and notation} \label{SS:notation}

With $\N$ we denote the set of positive integers and with $\Z_+$ the set of non-negative integers.
With  $\R_+$ we denote the set of non-negative real numbers.  For $t\in \R$ we let $e(t):=e^{2\pi i t}$. With   $[t]$ we denote the integer part of $t$ and with $\{t\}$ the fractional part of $t$.
 With $\T$ we denote the one dimensional torus and we often identify it with $\R/\Z$ or  with $[0,1)$.
 With  $\Re(z)$ we denote the real part of the complex number $z$.

For  $N\in\N$  we let  $[N]:=\{1,\dots,N\}$ (not to be confused with the integer part). If $a\colon \N^s\to \C$ is a  bounded sequence for some $s\in \N$ and $A$ is a non-empty finite subset of $\N^s$,  we let
$$
\E_{n\in A}\,a(n):=\frac{1}{|A|}\sum_{n\in A}\, a(n).
$$


If $a,b\colon \R_+\to \R$ are functions we write
 $a(t)\prec b(t)$ if $\lim_{t\to +\infty} a(t)/b(t)=0$.
We say that the function  $a\colon \R_+\to \R$ has {\em  at most polynomial growth} if there exists $d\in \N$ such that $a(t)\prec t^d$.

\subsection{Joint ergodicity of general sequences}\label{SS:JE}
A {\em measure preserving system}, or simply {\em a system}, is a quadruple $(X,\mathcal{X},\mu,T)$
where $(X,\mathcal{X},\mu)$ is a Lebesgue probability space and $T\colon X\to X$ is an invertible, measurable,  measure preserving transformation.
We typically omit the $\sigma$-algebra $\mathcal{X}$  and write $(X,\mu,T)$. Throughout,  for $n\in \N$ we denote  by $T^n$   the composition $T\circ  \cdots \circ T$ ($n$ times) and let $T^{-n}:=(T^n)^{-1}$ and $T^0:=\id_X$. Also, for $f\in L^\infty(\mu)$ and $n\in\Z$ we denote by  $T^nf$ the function $f\circ T^n$.

We say that the system $(X,\mu,T)$ is {\em ergodic}  if the only functions $f\in L^1(\mu)$ that
satisfy $Tf=f$ are the constant ones. It is {\em totally ergodic}, if $(X,\mu,T^d)$ is ergodic for every $d\in \N$.

To facilitate discussion we introduce the following notions.
\begin{definition}
	If $(X,\mu,T)$ is   a  system we  let
	\begin{enumerate}
		\item 	$\spec(T):=\{t\in [0,1)\colon Tf=e(t)
\,  f  \text{ for some non-zero } f\in L^2(\mu)\}$.

	\item
	$ \mathcal{E}(T):=\{f\in L^\infty(\mu)\colon Tf=e(t)\, f   \text{ for some } t\in [0,1)  \text{ and } |f|=1\}$.
	\end{enumerate}
	\end{definition}
For  the definition of the ergodic seminorms $\nnorm{\cdot}_s$ we refer the reader to  Section~\ref{SS:GHK}.
\begin{definition}
	We say that  the collection of sequences  $a_1,\ldots, a_\ell\colon \N\to \Z$ is:

\begin{enumerate}
\item {\em   good for seminorm estimates for the system $(X,\mu,T)$}, if  there exists $s\in \N$ such that whenever  $f_1,\ldots, f_\ell \in L^\infty(\mu)$
satisfy  $\nnorm{f_m}_{s}=0$  for some $m\in \{1,\ldots, \ell\}$  and $f_{m+1},\ldots, f_\ell\in \mathcal{E}(T)$, we have
$$
\lim_{N\to\infty} \E_{n\in [N]}\, T^{a_1(n)}f_1\cdot\ldots \cdot  T^{a_\ell(n)}f_\ell= 0
$$
where convergence takes place in $L^2(\mu)$.
 It is {\em good for seminorm estimates}, if it is good for seminorm estimates for every ergodic system.




\item {\em good for equidistribution  for the system $(X,\mu,T)$}, if for all  $t_1,\ldots, t_\ell\in \spec(T)$,  not all of them $0$, we have
\begin{equation}\label{E:equidistribution}
\lim_{N\to\infty} \E_{n\in[N]}\,  e(a_1(n)t_1+\cdots+ a_\ell(n)t_\ell) =0.
\end{equation}
It is {\em good for equidistribution} if it is good for equidistribution for every system, or  equivalently, if \eqref{E:equidistribution} holds for all $t_1,\ldots, t_\ell\in [0,1)$, not all of them $0$.


\item {\em  good for irrational equidistribution for   the system $(X,\mu,T)$}, if for all  $t_1,\ldots, t_\ell\in \spec(T)$, not all of them rational, equation \eqref{E:equidistribution} holds.
It is  {\em good for irrational equidistribution}, if it is good for irrational equidistribution for every system, or equivalently, if \eqref{E:equidistribution} holds for all $t_1,\ldots, t_\ell\in [0,1)$, not all of them rational.
\end{enumerate}
\end{definition}
\begin{remarks}
	$\bullet$ Using a standard argument (see for example \cite[Lemma~4.12]{Fr15})
	we get that if  the sequences $a_1,\ldots, a_\ell$ are good  for seminorm estimates    for
	the product system $(X\times X, \mu\times\mu, T\times T)$ for some $s\in \N$, then  for all  $f_1,\ldots, f_\ell\in L^\infty(\mu)$ with $\nnorm{f_m}_{s+1}=0$  for some $m\in [\ell]$ and
	every bounded sequence of complex numbers $(c_n)$, we have
	$
	\lim_{N\to\infty}\E_{n\in [N]}\, c_n\, T^{a_1(n)}f_1\cdot\ldots \cdot  T^{a_m(n)}f_m= 0
	$
	in $L^2(\mu)$.  We are going to use this fact in Section~\ref{SS:CF}.
	
		$\bullet$ If $\mu=\int \mu_x\, d\mu$ is the ergodic decomposition of the measure $\mu$, it is known that if  $\nnorm{f}_{s,\mu}=0$, then  $\nnorm{f}_{s,\mu_x}=0$ for $\mu$-almost every $x\in X$  \cite[Chapter~8, Proposition~18]{HK18}. As a consequence,  if a collection of sequences is good for seminorm estimates for every ergodic system, then it is also good for seminorm estimates for arbitrary systems.
	
$\bullet$	If the sequences $a_1,\ldots, a_\ell$ are  good for irrational equidistribution, then
	the sequences $1,a_1,\ldots, a_\ell$  have to be  linearly independent. 
	 Indeed, suppose that   $c_0+c_1a_1+\cdots+c_\ell a_\ell=0$ for some $c_0,c_1,\ldots, c_\ell\in \R$ not all of them $0$. After multiplying by an appropriate constant  we can assume that at least one of the numbers  $c_1,\ldots, c_\ell$ is irrational and    this  immediately implies that  the collection $a_1,\ldots, a_\ell$ is  not good for irrational equidistribution.
	We remark also, that although  
	  for collections of polynomial sequences linear independence implies good irrational equidistribution,  this is  not true in general, for instance, it fails when have a single sequence that  equals $2^n$ or $[n\sqrt{2}]$.

$\bullet$ If $c_1,\ldots, c_k$ are distinct positive real numbers, then it is known that the sequences $[n^{c_1}],\ldots, [n^{c_k}]$  are  good for seminorm estimates \cite[Theorem~2.9]{Fr10} and it is not hard to show using known equidistribution results that they are good for irrational equidistribution (see Proposition~\ref{L:HardyPoly} below). If in addition the exponents  $c_1,\ldots, c_k$ are not integers, then the sequences are also
 good for equidistribution (see Proposition~\ref{L:WeylHardy} below).
	\end{remarks}




If a collection of sequences is jointly ergodic for a system, then it  is good for seminorm estimates (since  $\nnorm{f}_1=0$ implies $\int f\, d\mu$=0) and equidistribution for this system (since verifying \eqref{E:jem} for eigenfunctions of the system implies \eqref{E:equidistribution}).
It is a rather surprising fact  that the converse is also true; this is the context
of our main result that we now state.
\begin{theorem}\label{T:JointErgodicity}
	Let $(X,\mu,T)$ be an ergodic system and   $a_1,\ldots, a_\ell\colon \N\to \Z$  be sequences. Then the following two conditions are equivalent:
  	\begin{enumerate}
  	\item
	The sequences $a_1,\ldots, a_\ell$ are jointly ergodic for $(X,\mu,T)$.
  	  	
  	\item The sequences $a_1,\ldots, a_\ell$ are good for seminorm estimates and equidistribution for $(X,\mu,T)$.
  \end{enumerate}
\end{theorem}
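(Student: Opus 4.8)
The forward implication (i) $\Rightarrow$ (ii) is the easy direction, already flagged in the text: if the averages \eqref{E:jem} converge to the product of integrals, then testing against an eigenfunction $f_m$ with $\int f_m = 0$ (which has $\nnorm{f_m}_1 = 0$) forces the limit to vanish, giving good-for-seminorm-estimates with $s=1$; and specializing $(X,\mu,T)$ to a compact abelian group rotation with the $f_i$ chosen as characters yields \eqref{E:equidistribution}, i.e.\ good for equidistribution. So the whole content is (ii) $\Rightarrow$ (i), and the plan is to run the Peluse--Prendiville-style argument in the ergodic setting, exactly as previewed in the introduction and carried out first for $T^n f \cdot T^{n^2} g$ in Section~\ref{S:FW}.

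Here is the strategy I would follow for (ii) $\Rightarrow$ (i). First, using that $a_1,\ldots,a_\ell$ are good for seminorm estimates, reduce the problem to functions that are "structured" in a strong sense. The seminorm hypothesis gives some $s\in\N$ such that if one $f_m$ has $\nnorm{f_m}_s = 0$ (and the later functions are eigenfunctions), the average vanishes; combining this with a telescoping/van der Corput PET-type induction one wants to peel off the functions one at a time and show that only the projections onto an appropriate "characteristic factor'' — ultimately the Kronecker factor, i.e.\ the closed span of $\mathcal{E}(T)$ — matter. Concretely, the goal of this first phase is: it suffices to prove \eqref{E:jem} when every $f_i \in \mathcal{E}(T)$, i.e.\ each $f_i$ is an eigenfunction of modulus one, $T f_i = e(t_i) f_i$. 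The key mechanism here is the degree-lowering / inverse-theorem step adapted from \cite{P19a,PP19}: if the average does not go to zero one extracts, via repeated applications of van der Corput and Cauchy--Schwarz, a "dual function'' correlation that forces a low-complexity structure, and one iterates to bring the controlling seminorm down to $\nnorm{\cdot}_1$, which detects exactly the Kronecker factor. The bookkeeping for general $\ell$ and general sequences (as opposed to the clean polynomial case) is the part that is "a bit cumbersome'' in the author's words.

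Second, once reduced to eigenfunctions $f_i$ with $Tf_i = e(t_i)f_i$, $|f_i|=1$, one computes directly:
\begin{equation}\label{E:eigencomp}
\E_{n\in[N]}\, T^{a_1(n)}f_1 \cdots T^{a_\ell(n)}f_\ell = \left(\prod_{i=1}^\ell f_i\right)\cdot \E_{n\in[N]}\, e\big(a_1(n)t_1 + \cdots + a_\ell(n)t_\ell\big).
\end{equation}
If $(t_1,\ldots,t_\ell)$ is not the zero vector, then since each $t_i \in \spec(T)$ and the collection is good for equidistribution, the scalar average tends to $0$, so the whole expression tends to $0$ in $L^2(\mu)$; meanwhile $\int f_1\, d\mu \cdots \int f_\ell\, d\mu$ is also $0$ because at least one eigenfunction with nonzero eigenvalue is orthogonal to the constants. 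If $(t_1,\ldots,t_\ell) = 0$, then by ergodicity each $f_i$ is constant and both sides of \eqref{E:jem} equal the product of those constants. By (multi)linearity and density of $\Span\,\mathcal{E}(T)$ in the Kronecker factor, combined with the Phase-1 reduction, this establishes \eqref{E:jem} for all $f_1,\ldots,f_\ell \in L^\infty(\mu)$.

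\textbf{Main obstacle.} The crux is Phase 1: proving that the sequences being good for seminorm estimates (a single-seminorm, $\mathcal{E}(T)$-assisted hypothesis) actually forces a genuine characteristic-factor statement pinning the average down to the Kronecker factor, \emph{without} invoking Host--Kra theory or nilsequence equidistribution. This is precisely where the infinitary adaptation of the Peluse--Prendiville degree-lowering argument does the work — one must set up the right sequence of van der Corput shifts and dual-function manipulations so that the "complexity'' of the controlling seminorm strictly decreases at each stage and terminates at $\nnorm{\cdot}_1$, and one must handle the presence of the auxiliary eigenfunctions $f_{m+1},\ldots,f_\ell$ that the seminorm hypothesis allows. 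Everything after that is the short computation \eqref{E:eigencomp} together with the equidistribution hypothesis. I would therefore expect the bulk of the proof to consist of the Phase-1 reduction, modeled on the warm-up case of Section~\ref{S:FW} and executed in full generality in Sections~\ref{S:Preparation}--\ref{S:Main}.
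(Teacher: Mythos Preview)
Your proposal is correct and follows essentially the same approach as the paper: the easy direction is verified with eigenfunctions, and the hard direction is an induction on the number $m$ of ``free'' functions (Proposition~\ref{P:10}), with the base case $m=0$ being your Phase~2 eigenfunction computation and the inductive step being the Peluse--Prendiville-style degree-lowering you describe as Phase~1. Two small corrections of phrasing: for the equidistribution half of (i)$\Rightarrow$(ii) you do not specialize the system to a rotation but rather use eigenfunctions of the given $(X,\mu,T)$ with eigenvalues $e(t_i)$, $t_i\in\spec(T)$; and the induction in the paper is not PET but rather (a) an outer induction on $m$ and (b) within each step, an iteration lowering the seminorm index from $s+2$ to $s+1$ (Proposition~\ref{P:4'}) using Gowers--Cauchy--Schwarz, the equidistribution hypothesis to force algebraic relations among the eigenvalues $\alpha_{\underline{n}}$, and Lemma~\ref{L:lower}.
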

\begin{remarks}
$\bullet$
Even when $\ell=1$ and the sequence $a_1$ is strictly increasing, we cannot omit the seminorm condition from the equivalence. Indeed, if a system is weakly mixing but not strongly mixing, then any  sequence is good for equidistribution for this system, but clearly, there exists   a strictly increasing sequence that is not  ergodic for this system. See though Problem~\ref{Problem1} below
for a related open problem.

$\bullet$ Even  if we use  the full force of the Host-Kra theory of characteristic factors~\cite{HK05a}, it is not clear how to prove Theorem~\ref{T:JointErgodicity}. So the more elementary approach we follow 
offers substantial technical advantages.

$\bullet$ It would be interesting to examine if  the arguments used in
\cite{KMT21} (which also crucially use ideas from \cite{P19a, PP19})
can be modified to give similar necessary conditions so that  \eqref{E:jem}
 holds pointwise; of course, in this case one has to use stronger quantitative variants of the conditions used in $(ii)$.
	\end{remarks}


Since the spectrum of a totally ergodic system contains only  irrationals in $(0,1)$ and the number $0$, we immediately deduce the following result:
\begin{corollary}\label{C:JointTotalErgodicity}
The following two conditions are equivalent:
	\begin{enumerate}
		\item
		The sequences $a_1,\ldots, a_\ell$ are jointly ergodic for every totally ergodic system.
		
		\item The sequences $a_1,\ldots, a_\ell$ are good for seminorm estimates and  \eqref{E:equidistribution} holds for all $t_1,\ldots,t_\ell\in [0,1)$
		that are irrational or $0$, but not all of them $0$.
	\end{enumerate}
\end{corollary}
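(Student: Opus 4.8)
The plan is to read Corollary~\ref{C:JointTotalErgodicity} off Theorem~\ref{T:JointErgodicity} after describing the spectrum of a totally ergodic system. The only preliminary point I would record is that if $(X,\mu,T)$ is totally ergodic then $\spec(T)\subseteq\{0\}\cup([0,1)\setminus\Q)$: a rational eigenvalue $t=p/q\in(0,1)$, with $Tf=e(t)f$ for some non-zero $f\in L^2(\mu)$, would give $T^qf=f$ with $f$ non-constant, contradicting the ergodicity of $T^q$. Hence every non-zero tuple in $\spec(T)^\ell$ has all its coordinates irrational or $0$ and not all $0$, so for a totally ergodic system the condition ``$a_1,\dots,a_\ell$ are good for equidistribution for $(X,\mu,T)$'' occurring in Theorem~\ref{T:JointErgodicity} follows from the equidistribution condition in part~(ii) of the Corollary.

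Granting this, $(ii)\Rightarrow(i)$ is immediate: for an arbitrary totally ergodic system $(X,\mu,T)$ the sequences are good for seminorm estimates for it (being so for every ergodic system) and, by the previous paragraph, good for equidistribution for it, so Theorem~\ref{T:JointErgodicity} shows they are jointly ergodic for $(X,\mu,T)$; as the system was an arbitrary totally ergodic one, $(i)$ holds.

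For $(i)\Rightarrow(ii)$ I would invoke the elementary remark stated just before Theorem~\ref{T:JointErgodicity}: joint ergodicity for a system entails good for seminorm estimates (with $s=1$) and good for equidistribution for that system. Applied to all totally ergodic systems this gives the seminorm part of~(ii) at once, and for the equidistribution part it gives \eqref{E:equidistribution} for every tuple lying in $\spec(T)^\ell$ for some totally ergodic $T$. To upgrade this to an arbitrary admissible tuple $(t_1,\dots,t_\ell)$ (each $t_i$ irrational or $0$, not all $0$) one realizes it inside such a spectrum and tests \eqref{E:jem} on exponential characters: when $1,t_1,\dots,t_\ell$ are rationally independent the rotation by $(t_1,\dots,t_\ell)$ on the corresponding torus is totally ergodic and already does the job. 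I expect the case in which the $t_i$ satisfy an integer linear relation modulo $1$ — so that $\langle t_1,\dots,t_\ell\rangle\le\T$ is not torsion-free and no single totally ergodic group rotation has all of them in its spectrum — to be the one point that is not purely formal, and hence the main obstacle; handling it requires, for instance, splitting off the periodic contribution coming from a common denominator of the rational parts of the $t_i$ and controlling the remaining torsion-free piece via joint ergodicity on an auxiliary torus. Apart from this reduction the Corollary is a direct unwinding of Theorem~\ref{T:JointErgodicity}.
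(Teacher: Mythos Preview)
Your argument for $(ii)\Rightarrow(i)$ is correct and is exactly the paper's one-line proof: the observation that $\spec(T)\subseteq\{0\}\cup([0,1)\setminus\Q)$ for totally ergodic $T$ reduces the equidistribution hypothesis of Theorem~\ref{T:JointErgodicity} to the one in~(ii), and the seminorm hypothesis carries over.

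For $(i)\Rightarrow(ii)$ there is a gap you do not flag, in addition to the one you do. You claim that joint ergodicity for all totally ergodic systems ``gives the seminorm part of~(ii) at once'', but ``good for seminorm estimates'' in the paper means good for seminorm estimates for \emph{every ergodic} system, not merely every totally ergodic one; the remark before Theorem~\ref{T:JointErgodicity} only delivers the latter. The passage from totally ergodic to general ergodic systems is not automatic: the definition involves eigenfunctions $f_{m+1},\dots,f_\ell\in\mathcal{E}(T)$, and for an ergodic but not totally ergodic system these may have rational eigenvalues, a situation your hypothesis~(i) says nothing about directly. The obstacle you \emph{do} identify in the equidistribution part is genuine: a tuple of irrationals can generate torsion in $\T$ (for instance $t_1=\sqrt{2}-1$, $t_2=3/2-\sqrt{2}$, with $t_1+t_2=1/2$), so cannot lie inside the spectrum of any single totally ergodic system, and your sketch of ``splitting off the periodic contribution'' does not actually resolve this. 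The paper itself offers no argument for $(i)\Rightarrow(ii)$ beyond the spectral remark, so on this direction you are attempting more than the paper does, but your attempt remains incomplete on both counts.
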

 In particular,
this is   satisfied  by collections of linearly independent polynomials with integer coefficients and zero constant terms (which are also known to be good for seminorm estimates), thus providing a simpler proof of the main result in \cite{FrK05a}. See also Theorem~\ref{T:HardyPolyChar} below for an application to   a vastly more general  collection of Hardy field sequences.

 The second application  is a strong multiple recurrence property, that is not shared for example by collections of linear sequences, or more general polynomial sequences.
\begin{corollary}\label{C:MultipleRecurrence}
	Let   $a_1,\ldots, a_\ell\colon \N\to \Z$  be sequences  that are good for seminorm estimates and  equidistribution for the
	system $(X,\mu,T)$. Then for every  set $A\in \mathcal{X}$ we have
		$$
	\lim_{N\to\infty} \E_{n\in [N]}\,  \mu(A\cap T^{-a_1(n)}A\cap \cdots \cap T^{-a_\ell(n)}A)\geq (\mu(A))^{\ell+1}.
	$$
\end{corollary}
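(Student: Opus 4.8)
The plan is to deduce the corollary from Theorem~\ref{T:JointErgodicity} by passing to ergodic components and then invoking convexity. Let $\mu=\int_X \mu_x\,d\mu(x)$ be the ergodic decomposition of $(X,\mu,T)$. First I would check that for $\mu$-almost every $x$ the sequences $a_1,\ldots,a_\ell$ remain good for seminorm estimates and for equidistribution for the ergodic system $(X,\mu_x,T)$. For seminorm estimates this is exactly the content of the second remark following the definitions (via \cite[Chapter~8, Proposition~18]{HK18}: if $\nnorm{f}_{s,\mu}=0$ then $\nnorm{f}_{s,\mu_x}=0$ for $\mu$-a.e.\ $x$). For equidistribution one uses that the point spectrum of a component is contained in that of the whole system, i.e.\ $\spec(T)$ computed in $(X,\mu_x,T)$ is contained in $\spec(T)$ computed in $(X,\mu,T)$ for $\mu$-a.e.\ $x$: an eigenfunction of a component, extended by zero off that component, is an eigenfunction of $(X,\mu,T)$ with the same eigenvalue, and a measurable-in-$x$ selection of such eigenfunctions exists. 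Granting this, Theorem~\ref{T:JointErgodicity} shows that $a_1,\ldots,a_\ell$ is jointly ergodic for $(X,\mu_x,T)$ for $\mu$-a.e.\ $x$.

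Next I would apply \eqref{E:jem} for the system $(X,\mu_x,T)$ with $f_1=\cdots=f_\ell=\one_A$. Taking the $L^2(\mu_x)$ inner product with $\one_A$ (legitimate since $L^2$-convergence implies convergence of inner products against a fixed vector) and using $\one_A\circ T^k=\one_{T^{-k}A}$, this gives, for $\mu$-a.e.\ $x$,
$$
\lim_{N\to\infty}\E_{n\in[N]}\,\mu_x\big(A\cap T^{-a_1(n)}A\cap\cdots\cap T^{-a_\ell(n)}A\big)=(\mu_x(A))^{\ell+1}.
$$
Since $x\mapsto\mu_x(B)$ is measurable with $\mu(B)=\int_X\mu_x(B)\,d\mu(x)$ for every $B\in\mathcal{X}$, and each average on the left lies in $[0,1]$, the bounded convergence theorem yields
$$
\lim_{N\to\infty}\E_{n\in[N]}\,\mu\big(A\cap T^{-a_1(n)}A\cap\cdots\cap T^{-a_\ell(n)}A\big)=\int_X(\mu_x(A))^{\ell+1}\,d\mu(x),
$$
so in particular the limit on the left exists. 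Finally, since $t\mapsto t^{\ell+1}$ is convex on $[0,1]$ and $\mu$ is a probability measure, Jensen's inequality gives
$$
\int_X(\mu_x(A))^{\ell+1}\,d\mu(x)\geq\Big(\int_X\mu_x(A)\,d\mu(x)\Big)^{\ell+1}=(\mu(A))^{\ell+1},
$$
which is the asserted estimate.

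The step I expect to be the main obstacle is the first one: verifying that both hypotheses descend to $\mu$-almost every ergodic component. The seminorm part is quoted in the paper, but the equidistribution part rests on the (standard yet slightly technical) measurable selection of eigenfunctions along the ergodic decomposition. Everything else — applying Theorem~\ref{T:JointErgodicity}, the ergodic-decomposition bookkeeping, and Jensen's inequality — is routine; and if one reads the hypotheses in their unqualified form (good for seminorm estimates for every ergodic system, and good for equidistribution, i.e.\ \eqref{E:equidistribution} for all real $t_1,\ldots,t_\ell$ not all zero) then even this step is immediate.
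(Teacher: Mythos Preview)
Your approach matches the paper's: pass to ergodic components, apply Theorem~\ref{T:JointErgodicity} componentwise, and finish with Jensen (the paper writes this as $\int(\E(\one_A|\CI(T)))^{\ell+1}\,d\mu\geq(\mu(A))^{\ell+1}$, which is the same computation once one identifies $\E(\one_A|\CI(T))$ with $x\mapsto\mu_x(A)$).

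However, the step you yourself flag as the obstacle is genuinely problematic. The claim that $\spec(T)$ for $(X,\mu_x,T)$ is contained in $\spec(T)$ for $(X,\mu,T)$ is \emph{false} in general. Take $X=\T^2$, $\mu=m_{\T^2}$, and $T(\alpha,x)=(\alpha,x+\alpha)$: the ergodic components are the circle rotations $R_\alpha$, with $\spec(R_\alpha)=\Z\alpha\pmod 1$, while a short Fourier computation shows that the only $L^2(\mu)$-eigenfunctions of $T$ are the $T$-invariant functions, so $\spec(T,\mu)=\{0\}$. Your ``extend by zero'' justification fails because a single component has $\mu$-measure zero, and a measurable selection cannot help when the eigenvalue itself varies with the component. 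In this example every collection of sequences is vacuously good for equidistribution for $(X,\mu,T)$, yet the equidistribution property need not descend to the components (e.g.\ $\ell=2$, $a_1(n)=n$, $a_2(n)=-n$: for each irrational $\alpha$ one has $\alpha,-\alpha\in\spec(R_\alpha)$ and $\E_{n\in[N]}e(n\alpha-n\alpha)=1$). So with the system-specific hypothesis on a non-ergodic system your argument does not go through; as you already note, under the unqualified hypotheses --- or if $(X,\mu,T)$ is ergodic --- the issue disappears and the proof is complete. The paper's one-line ``standard argument'' is no more explicit on this point than you are.
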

 By invoking the correspondence principle of Furstenberg \cite{Fu81a}, one deduces that if the sequences $a_1,\ldots, a_\ell\colon \N\to \Z$ are good for seminorm estimates and equidistribution, then  for every set of integers $\Lambda$ equation \eqref{E:dl1} holds.

	

We also record a result where the assumption of seminorm estimates can be omitted. It allows us to infer equidistribution properties for general nilsystems\footnote{ 	A  $k$-step  nilsystem is
	a system of the form $(X, m_X, T_a)$,  where $X=G/\Gamma$ is a $k$-step nilmanifold (i.e.  $G$ is a $k$-step nilpotent Lie group
	and $\Gamma$ is a discrete cocompact subgroup of $G$), $a\in G$,  $T_a\colon X\to X$ is defined by  $T_{a}(g\Gamma) \mathrel{\mathop:}= (ag) \Gamma$, $g\in G$, and  $m_X$  is
	the normalized Haar measure on $X$.}  from the special case of compact abelian group rotations. Similar results were previously known for collections of polynomial sequences \cite{L05} (but with stronger equidistribution assumptions) and certain Hardy field sequences of at most  polynomial growth \cite{Fr09,R21}.
\begin{corollary}\label{C:Nilsystems}
	Let $(X,\mu,T)$ be an ergodic   nilsystem. Then the following two conditions are equivalent:
	\begin{enumerate}
	\item The sequences $a_1,\ldots, a_\ell$ are jointly ergodic for $(X,\mu,T)$.

		\item
		The sequences $a_1,\ldots, a_\ell$ are good for equidistribution for $(X,\mu,T)$.
	\end{enumerate}
\end{corollary}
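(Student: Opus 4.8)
\textbf{Proof plan for Corollary~\ref{C:Nilsystems}.}
The plan is to deduce the corollary from Theorem~\ref{T:JointErgodicity} by showing that, for an ergodic nilsystem $(X,\mu,T)$, being good for equidistribution automatically forces being good for seminorm estimates. The implication $(i)\Rightarrow(ii)$ is immediate (as noted in the excerpt, joint ergodicity always implies good for equidistribution), so the content is $(ii)\Rightarrow(i)$. By Theorem~\ref{T:JointErgodicity} it suffices to check that on an ergodic nilsystem the sequences $a_1,\ldots,a_\ell$ are good for seminorm estimates with no further hypothesis beyond good for equidistribution. The crucial point is that on a nilsystem the ergodic seminorms $\nnorm{\cdot}_s$ are, for $s$ large enough (roughly $s$ equal to the nilpotency step plus one), \emph{trivial} on a full-measure part of the space: more precisely, the factor $\CZ_{s-1}$ of $X$ associated with $\nnorm{\cdot}_s$ is all of $X$ once $s$ exceeds the step, so $\nnorm{f}_s=0$ forces $\int f\,d\mu=0$ but in fact there are no obstructions of higher complexity than the step. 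I would make this precise by recalling the structure theory: an ergodic $k$-step nilsystem satisfies $\CZ_k(X)=X$, hence $\nnorm{f}_{k+1}=0$ implies $f=0$ in $L^2(\mu)$ (see \cite{HK18}).

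With that in hand, the good-for-seminorm-estimates condition on the nilsystem is nearly vacuous: take $s=k+1$ where $k$ is the step; if some $f_m$ has $\nnorm{f_m}_{k+1}=0$ then $f_m=0$, so the average in the definition is identically zero and the estimate holds trivially. Thus every collection of sequences is good for seminorm estimates for $(X,\mu,T)$, and Theorem~\ref{T:JointErgodicity} applies: $a_1,\ldots,a_\ell$ are jointly ergodic for $(X,\mu,T)$ if and only if they are good for equidistribution for $(X,\mu,T)$. One subtlety to address is that the definition of good for seminorm estimates in the excerpt also allows factors $f_{m+1},\ldots,f_\ell\in\mathcal{E}(T)$; but since we are only claiming the hypothesis $\nnorm{f_m}_{k+1}=0$ can never be met nontrivially, the presence or absence of eigenfunction factors is irrelevant, and the conclusion still holds.

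The step I expect to be the main (really the only) obstacle is pinning down the correct statement that on an ergodic $k$-step nilsystem the $(k+1)$-st Host--Kra factor equals the whole system, so that $\nnorm{\cdot}_{k+1}$ is a norm. This is standard — it follows from the identification of the Host--Kra factors $\CZ_s$ with the maximal $s$-step nilfactors, together with the fact that a $k$-step nilmanifold is its own maximal $k$-step nilfactor — but it requires quoting the structure theory correctly and being careful about whether one needs $s=k$ or $s=k+1$ (the seminorm $\nnorm{\cdot}_{s}$ is controlled by $\CZ_{s-1}$, so one wants $\CZ_k=X$, giving $s=k+1$). Once the precise reference is cited, the rest is a one-line reduction to Theorem~\ref{T:JointErgodicity}. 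Everything else (the trivial direction, handling the eigenfunction factors) is routine.
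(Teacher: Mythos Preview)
Your proposal is correct and matches the paper's proof essentially verbatim: the paper likewise notes that $(i)\Rightarrow(ii)$ is clear, and for $(ii)\Rightarrow(i)$ invokes \cite[Chapter~12, Theorem~17]{HK18} to get that on an ergodic $k$-step nilsystem $\nnorm{f}_{k+1}=0$ forces $f=0$, so every collection of sequences is trivially good for seminorm estimates and Theorem~\ref{T:JointErgodicity} applies. Your remarks on the eigenfunction factors and on the indexing $s=k+1$ versus $\CZ_k$ are accurate and not even spelled out in the paper's (shorter) version.
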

\begin{remark}
	Roughly speaking, this statement asserts that a collection of sequences is jointly ergodic for a nilsystem if and only if it is jointly ergodic for its Kronecker factor.
\end{remark}
The previous result implies that if the sequences $a_1,\ldots, a_\ell$ are good for equidistribution for the ergodic nilsystem  $(X,\mu,T)$, then for all  $f_1,\ldots, f_\ell\in C(X)$ we have
\begin{equation}\label{E:qwe}
\lim_{N\to\infty}\E_{n\in [N]}\,  f_1(T^{a_1(n)}x)\cdot \ldots \cdot f_\ell(T^{a_\ell(n)}x)=\int f_1\, d\mu\cdots \int f_\ell\, d\mu
\end{equation}
where convergence  takes place in $L^2(\mu)$. It is known (see \cite[Example~5]{FrK05a}) that the convergence in \eqref{E:qwe}
cannot be strengthened to pointwise convergence  for every $x\in X$. We do not know if under the stated assumptions  the convergence in \eqref{E:qwe}  can be strengthened to pointwise  convergence for $m_X$-almost every $x\in X$ and all $f_1,\ldots, f_\ell\in C(X)$.

 For our last application we assume that $X$ is a compact metric space and $T\colon X\to X$ is a homeomorphism. The system $(X,T)$ is called a {\em topological dynamical system} and it is called {\em minimal} if every orbit $\{T^nx\colon n\in \N\}$ is dense in $X$. Using Theorem~\ref{T:JointErgodicity} and arguing as in \cite[Section~2.3]{Fr15} we deduce the following:
\begin{corollary}
		Let   $a_1,\ldots, a_\ell\colon \N\to \Z$  be sequences  that are good for seminorm estimates and  equidistribution. Then
		\begin{enumerate}
			\item For every  minimal system $(X,T)$,  for a residual   set of $x\in X$ we have
			$$
			\overline{\{(T^{a_1(n)}x,\ldots,  T^{a_\ell(n)}x ) \colon n\in \N\}}=X\times \cdots\times X.
			$$
			\item For every topological dynamical  system $(X,T)$,  there exists $x\in X$ such  that
			$$
			\overline{\{(T^{a_1(n)}x,\ldots,  T^{a_\ell(n)}x ) \colon n\in \N\}}=\overline{\{T^nx\colon n\in \N\}} \times \cdots\times \overline{\{T^nx\colon n\in \N\}}.
			$$
		\end{enumerate}
\end{corollary}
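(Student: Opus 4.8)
The plan is to derive both parts from Theorem~\ref{T:JointErgodicity} through a Baire category argument, following the scheme of \cite[Section~2.3]{Fr15}; the only input needed beyond that theorem is the existence of a well-behaved invariant measure on a minimal system.

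For part (i), I would start from a minimal system $(X,T)$. Since $X$ is compact metric, the simplex of $T$-invariant Borel probability measures is non-empty and weak$^*$-compact, so it has an extreme point $\mu$, which is ergodic; its support, being a non-empty closed $T$-invariant set, must be all of $X$ by minimality, so $\mu$ has full support and $(X,\mu,T)$ is an ergodic system. Since $a_1,\dots,a_\ell$ are good for seminorm estimates and for equidistribution, they are so for $(X,\mu,T)$ in particular, and Theorem~\ref{T:JointErgodicity} then gives that they are jointly ergodic for $(X,\mu,T)$: for all $f_1,\dots,f_\ell\in C(X)$,
$$
\lim_{N\to\infty}\E_{n\in[N]}\,T^{a_1(n)}f_1\cdot\ldots\cdot T^{a_\ell(n)}f_\ell=\int f_1\,d\mu\cdots\int f_\ell\,d\mu\quad\text{in }L^2(\mu).
$$

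Next I would fix a countable base $(U_j)_{j\in\N}$ of non-empty open subsets of $X$ and, for $J=(j_1,\dots,j_\ell)\in\N^\ell$, set $A_J:=\bigcup_{n\in\N}\bigl(T^{-a_1(n)}U_{j_1}\cap\cdots\cap T^{-a_\ell(n)}U_{j_\ell}\bigr)$. This is open, and since the sets $U_{j_1}\times\cdots\times U_{j_\ell}$ form a base for $X\times\cdots\times X$, a point $x$ has dense $(a_1,\dots,a_\ell)$-orbit there precisely when $x\in\bigcap_J A_J$; hence it suffices to prove each $A_J$ is dense, and then $\bigcap_J A_J$ is the sought residual set. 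To see that $A_J$ meets an arbitrary non-empty open $V$, I would choose continuous functions $f_0,f_1,\dots,f_\ell\colon X\to[0,1]$ with supports in $V,U_{j_1},\dots,U_{j_\ell}$ respectively and with $\int f_i\,d\mu>0$ for every $i$, which is possible because $\mu$ has full support. By the displayed convergence, $\E_{n\in[N]}\prod_{i=1}^\ell T^{a_i(n)}f_i$ tends in $L^2(\mu)$ to the positive constant $c:=\prod_{i=1}^\ell\int f_i\,d\mu$; multiplying by the bounded function $f_0$ and integrating,
$$
\lim_{N\to\infty}\E_{n\in[N]}\int f_0\cdot\prod_{i=1}^\ell T^{a_i(n)}f_i\,d\mu=c\int f_0\,d\mu>0 .
$$
Thus $\int f_0\cdot\prod_i T^{a_i(n)}f_i\,d\mu>0$ for some $n$, so the open set $\{x: f_0(x)>0\text{ and }f_i(T^{a_i(n)}x)>0\text{ for all }i\}$ has positive $\mu$-measure, is therefore non-empty, and is contained in $V\cap A_J$; this gives density.

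For part (ii), I would apply Zorn's lemma to obtain a minimal closed invariant subset $Y\subseteq X$, run part (i) on the minimal system $(Y,T|_Y)$ to get a point $y\in Y$ with $(a_1,\dots,a_\ell)$-orbit dense in $Y\times\cdots\times Y$, and use that $\overline{\{T^ny:n\in\N\}}=Y$ by minimality; then $x=y$ works. I expect the main obstacle to be the density step of part (i) — turning the $L^2$-mean convergence from Theorem~\ref{T:JointErgodicity} into non-emptiness of $V\cap A_J$. The point to watch is that one cannot append the constant sequence $0$ to $a_1,\dots,a_\ell$ and apply joint ergodicity to the enlarged family (it is never jointly ergodic); instead one exploits that the average of $\prod_{i\ge1}T^{a_i(n)}f_i$ converges to a \emph{positive constant} and only then multiplies by the bump function $f_0$ supported in $V$. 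The remaining ingredients — the ergodic full-support measure, openness of $A_J$, and the Baire category bookkeeping — are routine.
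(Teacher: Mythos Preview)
Your proposal is correct and follows exactly the route the paper indicates: the paper does not give a self-contained proof but simply says to combine Theorem~\ref{T:JointErgodicity} with the argument in \cite[Section~2.3]{Fr15}, which is precisely the Baire category scheme you have written out (ergodic full-support measure on a minimal system, $L^2$-convergence to a positive constant, bump functions to hit a prescribed open set, then intersect over a countable base). Your remark that one must not append the constant sequence $0$ to the family but rather multiply by $f_0$ after taking the limit is the right way to handle the extra coordinate.
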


\subsection{Characteristic factors for general sequences}
If some of the sequences $a_1,\ldots, a_\ell$ are integer polynomials, then
Theorem~\ref{T:JointErgodicity} is not applicable to  all ergodic systems because the equidistribution property is not satisfied. We will state a  result about characteristic factors  that gives useful information in such cases and is strong enough to easily imply related convergence and multiple recurrence results.

\begin{definition}
Let $(X,\mu,T)$ be a system (not necessarily ergodic).
\begin{enumerate}
\item    The {\em rational Kronecker factor}  $\mathcal{K}_{rat}(T)$  of a system $(X,\mu,T)$ is the $L^2(\mu)$ closure of the linear span of rational eigenfunctions of the system, meaning, non-zero functions $f\in L^2(\mu)$ such that $Tf=e(t) f$ for some $t\in \Q$.

\item We say that the rational Kronecker factor $\mathcal{K}_{rat}(T)$ is a {\em characteristic factor for the sequences $a_1,\ldots,a_\ell\colon \N\to \Z$}, if  whenever at least one of the functions $f_1,\ldots, f_\ell \in L^\infty(\mu)$  is orthogonal to $\mathcal{K}_{rat}(T)$,  we have
 $$
	\lim_{N\to\infty} \E_{n\in [N]}\, T^{a_1(n)}f_1 \cdot\ldots \cdot  T^{a_\ell(n)}f_\ell=0
	$$
	where convergence takes place in $L^2(\mu)$.
\end{enumerate}
\end{definition}

 For the purposes of the next result we will also need a strengthening of a notion introduced in the previous subsection.
 \begin{definition}
 	We say that  the collection of sequences  $a_1,\ldots, a_\ell\colon \N\to \Z$  is
 	 {\em  very good for seminorm estimates for the system $(X,\mu,T)$}, if  there exists $s\in \N$ such that if $f_1,\ldots, f_\ell \in L^\infty(\mu)$
 		and $\nnorm{f_m}_{s}=0$ for some $m\in [\ell]$, then
 		$$
 		\lim_{N\to\infty} \E_{n\in [N]}\, T^{a_1(n)}f_1\cdot\ldots \cdot  T^{a_\ell(n)}f_\ell= 0
 		$$
 		where convergence takes place in $L^2(\mu)$. It is {\em very good for seminorm estimates}, if it is very good for seminorm estimates for every ergodic system.
 \end{definition}
It is  easy to verify that  if for a given collection of sequences the rational Kronecker factor is characteristic for  every measure preserving system, then this  collection of sequences is  very  good for seminorm estimates (with $s=2$) and good for irrational equidistribution. It turns out that the converse is also true. This is the context of our next result.
\begin{theorem}\label{T:CharacteristicFactors}
	Let    $a_1,\ldots, a_\ell\colon \N\to \Z$  be sequences. Then
 the following two conditions are equivalent:
	\begin{enumerate}
		\item
	 For every system $(X,\mu,T)$	the factor $\mathcal{K}_{rat}(T)$ is a characteristic factor for the sequences $a_1,\ldots,a_\ell$.
		
		\item The sequences $a_1,\ldots, a_\ell$ are very good for seminorm estimates and good for irrational equidistribution.
	\end{enumerate}
\end{theorem}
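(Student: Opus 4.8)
The plan is to prove the two implications separately; $(i)\Rightarrow(ii)$ is routine. If $\nnorm{f_m}_2=0$ then $f_m\perp\mathcal{Z}_1(T)\supseteq\mathcal{K}_{rat}(T)$, so $(i)$ forces the average to vanish and the sequences are very good for seminorm estimates with $s=2$. Given $t_1,\dots,t_\ell\in[0,1)$ not all rational, say $t_1\notin\Q$, apply $(i)$ to the coordinate characters $\pi_1,\dots,\pi_\ell$ (eigenvalues $t_1,\dots,t_\ell$, with $\pi_1\perp\mathcal{K}_{rat}$ and $|\pi_i|=1$) of the ergodic rotation by $(t_1,\dots,t_\ell)$ on the closure $H\le\T^\ell$ of $\{(nt_1,\dots,nt_\ell):n\in\Z\}$; this gives $\bigl(\E_{n\in[N]}e(\sum_i a_i(n)t_i)\bigr)\pi_1\cdots\pi_\ell\to 0$ in $L^2(m_H)$, i.e. \eqref{E:equidistribution}.

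For $(ii)\Rightarrow(i)$ I would begin with three reductions. First, since $\mathcal{K}_{rat}(T)=\overline{\bigcup_q L^2(\mathcal{I}(T^q))}$ for an arbitrary system (with $\mathcal{I}(T^q)$ the $T^q$-invariant $\sigma$-algebra), and since $\mathcal{I}(T^q,\mu)$ restricts to $\mathcal{I}(T^q,\mu_x)$ on $\mu$-a.e. ergodic component, the hypothesis $f_m\perp\mathcal{K}_{rat}(T,\mu)$ descends to the components, so we may take $(X,\mu,T)$ ergodic. Second, by Corollary~\ref{C:JointTotalErgodicity}, $(ii)$ implies that $a_1,\dots,a_\ell$ are jointly ergodic for every totally ergodic system. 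Third, since the sequences are very good for seminorm estimates with some parameter $s$, a telescoping argument lets us replace each $f_i$ by $\E(f_i\mid\mathcal{Z}_{s-1}(T))$ without altering the limit; as $\mathcal{Z}_{s-1}(T)$ is a pro-nilsystem, an $L^2$-approximation reduces us to the case where $(X,\mu,T)$ is an ergodic nilsystem, the $f_i$ are continuous, and $f_1\perp\mathcal{K}_{rat}(T)$ (the orthogonality persists because $\mathcal{K}_{rat}$ of a nilsystem factor lies inside $\mathcal{K}_{rat}(T)$).

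For the nilsystem case I would use the structural fact that an ergodic nilsystem is isomorphic to a product $(\Z/k\Z,{+}1)\times(Z,\psi)$ with $(Z,\psi)$ a totally ergodic nilsystem (take $Z$ a $T^k$-ergodic component and $\psi$ a $k$-th root, as a nilrotation, of $T^k|_Z$), where $k$ is the order of the maximal finite factor; under this identification $\mathcal{K}_{rat}(T)=L^2(\Z/k\Z)$, so $f_1\perp\mathcal{K}_{rat}(T)$ says $\int_Z f_1(j,\cdot)\,dm_Z=0$ for every $j$. Decomposing $f_i(j,z)=\sum_{m}e(mj/k)g_{i,m}(z)$ in the first coordinate and using $T^a(j,z)=(j+a,\psi^a z)$, one finds
\[
\E_{n\in[N]}\prod_i T^{a_i(n)}f_i\,(j,z)=\sum_{\vec m}e\Bigl(\tfrac{j}{k}\sum_i m_i\Bigr)\,\E_{n\in[N]}\Bigl[w_{\vec m}(n)\prod_i\psi^{a_i(n)}g_{i,m_i}(z)\Bigr],
\]
where $w_{\vec m}(n)=e\bigl(\sum_i a_i(n)m_i/k\bigr)$ is a bounded weight with rational ``frequencies'' $m_i/k$, and — crucially — $\int_Z g_{1,m}\,dm_Z=0$ for every $m$. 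Thus it suffices to show: for a totally ergodic system $(Z,\psi)$, functions $g_1,\dots,g_\ell$ with $\int_Z g_1=0$, and a weight $w(n)=e(\sum_i a_i(n)\theta_i)$ with $\theta_i\in\Q$, one has $\E_{n\in[N]}w(n)\prod_i\psi^{a_i(n)}g_i\to 0$ in $L^2(m_Z)$.

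This weighted statement is the step I expect to be the main obstacle. I would prove it by re-running, for $(Z,\psi)$, the Peluse--Prendiville-type argument behind Theorem~\ref{T:JointErgodicity}, carrying the rational weight $w$ through the van der Corput/PET iterations: the seminorm reductions survive thanks to the weighted form of the seminorm estimate recorded in the first Remark of Section~\ref{SS:JE} applied to $(Z\times Z,\psi\times\psi)$, and the hypothesis $\int_Z g_1=0$ (i.e. $\nnorm{g_1}_1=0$) propagates so that the exponential sums one is left with in the final step have the shape $\E_{n\in[N]}e\bigl(\sum_i a_i(n)(t_i+\theta_i)\bigr)$ with $t_1$ forced to be irrational (total ergodicity excludes nonzero rational eigenvalues, and $\int_Z g_1=0$ excludes the eigenvalue $0$); since then $t_1+\theta_1$ is irrational, the tuple $(t_i+\theta_i)_i$ is not all rational, and the hypothesis that $a_1,\dots,a_\ell$ are good for irrational equidistribution supplies the required decay. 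Carrying out this bookkeeping — in particular so that no nilmanifold equidistribution is invoked beyond what already sits inside Corollary~\ref{C:JointTotalErgodicity} — is the technical core; the reductions above and the implication $(i)\Rightarrow(ii)$ are by comparison routine.
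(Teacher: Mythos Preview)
Your broad strategy matches the paper's: reduce to ergodic systems, use the very-good-seminorm hypothesis to pass to $\mathcal{Z}_{s-1}$, invoke Host--Kra to get finite rational spectrum, and then split off a finite cyclic piece from a totally ergodic piece. But the two proofs diverge at the crucial point of handling the resulting \emph{weighted} averages on the totally ergodic factor, and this is where your proposal has a genuine gap.

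You propose to ``re-run the Peluse--Prendiville-type argument behind Theorem~\ref{T:JointErgodicity}, carrying the rational weight $w$ through,'' but you do not carry this out, and the bookkeeping is nontrivial: the inductive properties $(P_m)$ of Proposition~\ref{P:10} must be reformulated with weights, Proposition~\ref{P:4'} redone with the extra $\theta_i$'s present in every exponential sum, and the base case checked separately. The paper sidesteps all of this. After writing $T^{a_i(n)}f_i$ in terms of $S:=T^k$ and modified sequences $b_i(n)$ (Lemma~\ref{L:irrequi}, which shows the $b_i$ inherit both the seminorm and irrational-equidistribution properties), the paper converts the weighted problem into an \emph{unweighted} one by passing to $(X\times X,\mu\times\mu,S\times S)$: this product still has trivial rational spectrum, the hypothesis $f_1\perp\mathcal{K}(T)$ gives $f_1\otimes\bar f_1\perp\mathcal{I}(S\times S)$, and Theorem~\ref{T:JointErgodicity} applies directly on each ergodic component; a final Cauchy--Schwarz absorbs the arbitrary bounded weights $c_n$. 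This is both shorter and avoids reopening the black box of Section~\ref{S:Main}.

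Two smaller points. First, your claim that an ergodic nilsystem is \emph{isomorphic to a product} $(\Z/k\Z,{+}1)\times(Z,\psi)$ with $\psi$ totally ergodic requires justification (one must produce a $k$-th root of $T^k|_Z$ as a nilrotation on the connected component $Z$, which is true but not a one-liner). The paper avoids this entirely by using only the weaker consequence of Host--Kra recorded in Theorem~\ref{T:HostKra} (inverse limit of systems with finite rational spectrum), together with the ``$k$-th root'' system built in the proof of Lemma~\ref{L:irrequi}, which works for arbitrary systems. Second, the paper first reduces to showing that the \emph{full} Kronecker factor $\mathcal{K}(T)$ is characteristic, after which the passage from $\mathcal{K}(T)$ to $\mathcal{K}_{rat}(T)$ is a direct application of irrational equidistribution to eigenfunctions; going straight for $\mathcal{K}_{rat}(T)$, as you do, is what forces you to drag the rational weights through the entire argument.
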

\begin{remarks}
$\bullet$ Although it is not  particularly  hard to deduce this result from   Theorem~\ref{T:JointErgodicity},  we use in a crucial way the fact that
characteristic factors of the averages we are interested in are inverse limits of systems with finite rational spectrum, a property that we do not know how to prove without appealing to the main result in \cite{HK05a}.

$\bullet$ If in addition to the properties in Part~$(ii)$ we assume that for all rational $t_1,\ldots, t_\ell\in [0,1)$ the averages $\E_{n\in[N]}\, e(a_1(n)t_1+\cdots+a_\ell(n)t_\ell)$ converge as $N\to\infty$, then it is easy  to deduce from Theorem~\ref{T:CharacteristicFactors} that  the averages \eqref{E:MEA}  converge in the mean for all systems and  functions in $L^\infty(\mu)$.
\end{remarks}

Next, we give a consequence of the previous result to multiple recurrence.


\begin{definition}
We say that a collection of sequences $a_1,\ldots, a_\ell \colon \N\to \Z$ has {\em good divisibility properties}, if for every $r\in \N$ we have
 \begin{equation}\label{E:dnN}
 \bar{d}(\{n\in \N\colon a_1(n)\equiv 0, \ldots, a_\ell(n)\equiv 0 \!  \!  \! \pmod{r}\})>0.
 \end{equation}
 \end{definition}

\begin{corollary}\label{C:MultipleRecurrence'}
	 Let   $a_1,\ldots, a_\ell\colon \N\to \Z$  be sequences with good divisibility properties  that are very good for seminorm estimates and   good for    irrational equidistribution. Then for every
	system $(X,\mu,T)$, set $A\in \mathcal{X}$, and every $\varepsilon>0$, we have
	$$
	\mu(A\cap T^{-a_1(n)}A\cap \cdots \cap T^{-a_\ell(n)}A)\geq (\mu(A))^{\ell+1}-\varepsilon
	$$
	for a set of $n\in \N$ with positive upper density (or lower density if in \eqref{E:dnN} we use $\underline{d}$).
\end{corollary}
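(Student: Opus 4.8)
The plan is to derive Corollary~\ref{C:MultipleRecurrence'} from Theorem~\ref{T:CharacteristicFactors} by a standard averaging-over-congruence-classes device, the point being that the divisibility hypothesis lets us pass to a subset of $\N$ on which the sequences $a_i$ all vanish modulo every fixed $r$, and on such a subset the rational Kronecker factor behaves like a trivial factor. First I would fix the system $(X,\mu,T)$, the set $A$, and $\varepsilon>0$, and write $f=\one_A$. By Theorem~\ref{T:CharacteristicFactors}, since the sequences are very good for seminorm estimates and good for irrational equidistribution, $\mathcal{K}_{rat}(T)$ is a characteristic factor for $a_1,\ldots,a_\ell$; hence, writing $g=\E(f\mid\mathcal{K}_{rat}(T))$, the averages $\E_{n\in[N]}\,T^{a_1(n)}f\cdot\ldots\cdot T^{a_\ell(n)}f$ and $\E_{n\in[N]}\,T^{a_1(n)}g\cdot\ldots\cdot T^{a_\ell(n)}g$ have the same limit in $L^2(\mu)$ (expand multilinearly and note every term with at least one factor orthogonal to $\mathcal{K}_{rat}(T)$ contributes $0$). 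So it suffices to work with $g$, which is a nonnegative function with $\int g\,d\mu=\mu(A)$ that is measurable with respect to $\mathcal{K}_{rat}(T)$.

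Next I would exploit the structure of $\mathcal{K}_{rat}(T)$: it is an inverse limit of finite rational-spectrum factors, so $g$ can be approximated in $L^2(\mu)$ by a function $g'$ measurable with respect to a factor $(X',\mu',T')$ on which $T'$ has order dividing some $r\in\N$ (i.e.\ $(T')^r=\id$); we may also take $0\le g'\le 1$ and $\int g'\,d\mu'$ as close to $\mu(A)$ as we like. Now invoke the good divisibility property: the set $R=\{n\in\N\colon a_i(n)\equiv 0\ (\mathrm{mod}\ r)\text{ for all }i\}$ has $\bar d(R)>0$ (or $\underline d(R)>0$). For $n\in R$ we have $T^{a_i(n)}g'=g'$ since $g'$ is $(T^r)$-invariant, so
$$
\int g'\cdot T^{a_1(n)}g'\cdot\ldots\cdot T^{a_\ell(n)}g'\,d\mu=\int (g')^{\ell+1}\,d\mu\ge \left(\int g'\,d\mu\right)^{\ell+1},
$$
the last step by H\"older's inequality (or Jensen, since $\mu$ is a probability measure). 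Choosing the approximation fine enough, $\int (g')^{\ell+1}\,d\mu\ge(\mu(A))^{\ell+1}-\varepsilon/2$, and then undoing the approximations $g'\rightsquigarrow g\rightsquigarrow f$ (which cost an $L^2$-error, hence an error of the same order in the multilinear integral, uniformly in $n$) gives
$$
\mu(A\cap T^{-a_1(n)}A\cap\cdots\cap T^{-a_\ell(n)}A)=\int f\cdot T^{a_1(n)}f\cdot\ldots\cdot T^{a_\ell(n)}f\,d\mu\ge(\mu(A))^{\ell+1}-\varepsilon
$$
for every $n\in R$, a set of positive upper (resp.\ lower) density. That is exactly the claim.

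The one subtlety I would be careful about is the interchange of limits hidden in "undoing the approximations": the replacement $f\to g$ is exact only in the $N\to\infty$ limit of the averages, not for individual $n$. I would handle this by arguing with the averages rather than pointwise in $n$: set $I_N=\E_{n\in[N]}\,\one_R(n)\,\big(\int f\cdot T^{a_1(n)}f\cdots T^{a_\ell(n)}f\,d\mu - \int g'\cdot T^{a_1(n)}g'\cdots T^{a_\ell(n)}g'\,d\mu\big)$. Using the remark in Section~\ref{SS:JE} (that very good seminorm estimates for the product system absorb arbitrary bounded weights $c_n$, here $c_n=\one_R(n)$) together with a telescoping of the multilinear difference, $I_N\to 0$ as $N\to\infty$ after the characteristic-factor reduction, while the $g'$-averages are $\ge\big(\int g'\big)^{\ell+1}\bar d_N(R)$ along a suitable sequence of $N$'s. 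Passing to the $\limsup$ (resp.\ $\liminf$) in $N$ and using $\bar d(R)>0$ then forces the set of $n\in R$ for which the inequality $\mu(A\cap\cdots)\ge(\mu(A))^{\ell+1}-\varepsilon$ holds to itself have positive upper (resp.\ lower) density. This limit-interchange bookkeeping, rather than any genuinely new idea, is the main obstacle; everything else is H\"older plus the already-established Theorem~\ref{T:CharacteristicFactors}.
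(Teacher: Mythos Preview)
Your overall strategy—approximate $\E(\one_A\mid\mathcal{K}_{rat})$ by a $T^r$-invariant function, restrict to the set $R=S_r$ where all $a_i(n)\equiv 0\pmod r$, and use H\"older—is exactly the paper's. The place where your argument is incomplete is precisely the subtlety you flag: you need $\mathcal{K}_{rat}(T)$ to be characteristic for the \emph{weighted} averages $\E_{n\in[N]}\one_R(n)\,T^{a_1(n)}f_1\cdots T^{a_\ell(n)}f_\ell$, and the remark in Section~\ref{SS:JE} does not give this. That remark says only that if $\nnorm{f_m}_{s+1}=0$ then the weighted averages vanish; it says nothing about functions that are merely orthogonal to $\mathcal{K}_{rat}$. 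Bridging the gap from seminorm-zero to $\mathcal{K}_{rat}$-orthogonality is the entire content of Theorem~\ref{T:CharacteristicFactors}, and its proof (via Theorem~\ref{T:HostKra} and the totally-ergodic reduction) does not obviously survive the insertion of arbitrary bounded weights $c_n$.

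The paper handles this with a clean trick that you should adopt: instead of introducing weights, rewrite the restricted average $\E_{n\in S_r\cap[N_k]}\,T^{a_1(n)}f_1\cdots T^{a_\ell(n)}f_\ell$ (up to the harmless normalizing factor $N_k/|S_r\cap[N_k]|$) as an \emph{unweighted} average on the product system $(X\times\Z_r,\mu\times\nu_r,T\times R)$, where $R$ is the shift on $\Z_r$ and each $f_j$ is replaced by $f_j\otimes\one_{\{0\}}$. Since $\E(f_{j_0}\mid\mathcal{K}_{rat}(T))=0$ implies $\E(f_{j_0}\otimes\one_{\{0\}}\mid\mathcal{K}_{rat}(T\times R))=0$, Theorem~\ref{T:CharacteristicFactors} applies directly to this product system and yields the weighted characteristic-factor statement you need. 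Once you have that, the rest of your argument (the $\mathcal{K}_r$ approximation, invariance on $S_r$, and H\"older) goes through and matches the paper's proof line for line.
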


 By invoking the correspondence principle of Furstenberg~\cite{Fu81a} one deduces that under the previous assumptions on $a_1,\ldots, a_\ell$, for every set of integers $\Lambda$   and      $\varepsilon>0$ we have
 $$
 \bar{d}(\Lambda \cap (\Lambda+ a_1(n)) \cap \cdots\cap (\Lambda+a_\ell(n)) )\geq (\bar{d}(\Lambda))^{\ell+1}-\varepsilon
 $$
  for a set of $n\in \N$ with positive upper density.
 Moreover, arguing as in \cite[Theorem~2.2]{FrK06}\footnote{It is crucial  that Corollary~\ref{C:MultipleRecurrence'} holds for all systems; it is not known how to deduce  such finitistic consequences from  multiple recurrence results that are only known  for all ergodic systems.}  we get the following finitistic variant of this result: For every $\delta>0$ and $\varepsilon>0$ there exists $N(\delta, \varepsilon)$, such that for all $N>N(\delta,\varepsilon)$, for every set of integers $\Lambda\subset [N]$ with $|\Lambda|\geq \delta N$, there is some $n\in \N$ such that $\Lambda$ contains at least $(1-\varepsilon)\delta^{\ell+1}N$ patterns of the form $m,m+a_1(n),\ldots, m+a_\ell(n)$.
 Some articles in the  literature, refer to such values of $n\in \N$ as ``popular differences''.

\subsection{Joint ergodicity of Hardy field sequences}\label{SS:16}
In this subsection we record some rather  straightforward applications of the
 previous results  for
collections of  Hardy field sequences (defined  in  Section~\ref{SS:Hardy}) that satisfy some
growth assumptions that we define next. Our method also gives alternative  proofs of known results from \cite{BMR21, Fr10, FrK05a, KK19} that avoid the  theory of characteristic factors and equidistribution results on nilmanifolds. For the purposes of this subsection we assume that all Hardy fields $\mathcal{H}$ considered have
the following property:
\begin{equation}\label{E:Hardy}
\text{if } a,b\in \mathcal{H }, \text{ then } a\circ b^{-1}\in \mathcal{H } \text{ and } a(\cdot+h)\in \mathcal{H}  \text{ for every } h\in \R_+.
\end{equation}
It is known that there
exists a Hardy field that has this property and contains all logarithmic-exponential functions (the example of Pfaffian functions is mentioned in \cite{Bos94});  so this covers all the interesting examples considered in practice, like
functions of the form $t^{c_1}(\log{t})^{c_2} (\log\log{t})^{c_3}$ where $c_1,c_2,c_3\in \R$.


\begin{definition}
We say that a function $a\colon \R_+\to \R$
\begin{enumerate}


\item is {\em tempered} if
	$t^k\log{t}\prec a(t)\prec t^{k+1}$ for some $k\in \Z_+$.

\item {\em stays logarithmically away from rational polynomials,} if for every  $p\in \Q[t]$ we have
$$
\lim_{t\to +\infty} \frac{|a(t)-p(t)|}{\log{t}}=+\infty.
$$
 \end{enumerate}
We denote by $\mathcal{T}$
the class of all  tempered functions and by $\mathcal{T}+\mathcal{P}$ the class of all linear combinations of   tempered and polynomial functions with real coefficients.
\end{definition}

It is known that if a sequence from a Hardy field has at most  polynomial growth and
stays logarithmically away from constant multiples of rational polynomials, then it is ergodic; this follows quite easily from the spectral theorem  and \cite{Bos94} (see the discussion around \eqref{E:Spectral}), the details can be found in  \cite[Theorem~A]{BKQW05}.
A possible generalization of this result that covers finite collections of Hardy field sequences is the following:
\begin{conjecture}[\cite{Fr10}, \cite{Fr16}]\label{Conj1}
	Let $a_1,\ldots, a_\ell\colon [c,+\infty)\to \R$  be functions from a Hardy field  of at most polynomial growth   such  that  every non-trivial linear combination of these functions
	stays logarithmically away from rational polynomials. Then the sequences $[a_1(n)],\ldots, [a_\ell(n)]$ are jointly ergodic.
\end{conjecture}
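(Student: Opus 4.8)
The plan is to obtain Conjecture~\ref{Conj1} as a direct consequence of Theorem~\ref{T:JointErgodicity}. By that theorem it suffices to prove that the integer sequences $b_j(n):=[a_j(n)]$, $1\le j\le\ell$, are \emph{good for seminorm estimates} and \emph{good for equidistribution}; joint ergodicity for every ergodic system then follows by applying Theorem~\ref{T:JointErgodicity} one system at a time. So the entire problem reduces to verifying these two ``external'' properties from the growth and independence hypotheses on $a_1,\ldots,a_\ell$, with no further appeal to ergodic theory.

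For the equidistribution property one must show that
\[
\lim_{N\to\infty}\E_{n\in[N]}\, e\Bigl(\sum_{j=1}^\ell t_j[a_j(n)]\Bigr)=0
\]
for every $(t_1,\ldots,t_\ell)\in[0,1)^\ell\setminus\{0\}$. I would handle the integer parts by writing $[a_j(n)]=a_j(n)-\{a_j(n)\}$ and splitting into two cases. If not all $t_j$ are rational, then for every nonzero $(k_1,\ldots,k_\ell,k_{\ell+1})\in\Z^{\ell+1}$ the function $\sum_{j}(k_j+k_{\ell+1}t_j)a_j$ is a non-trivial linear combination of $a_1,\ldots,a_\ell$, hence stays away from $\Q[t]$ by hypothesis, so Boshernitzan's equidistribution criterion \cite{Bos94} shows that $\bigl(\{a_1(n)\},\ldots,\{a_\ell(n)\},\{\sum_j t_ja_j(n)\}\bigr)$ is equidistributed in $\T^{\ell+1}$; since $e(\sum_j t_j[a_j(n)])=e(\sum_j t_ja_j(n))\cdot e(-\sum_j t_j\{a_j(n)\})$ and the first factor averages to $0$, the limit vanishes. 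If all $t_j$ are rational, say with common denominator $q$, the same criterion applied to $(\{a_1(n)/q\},\ldots,\{a_\ell(n)/q\})$ shows that $([a_1(n)],\ldots,[a_\ell(n)])$ equidistributes modulo $q$ in $(\Z/q\Z)^\ell$, and a short computation with roots of unity (using that each nonzero $t_j$ lies in $(0,1)$) closes this case. This is precisely where the hypothesis must refer to \emph{rational} polynomials: it is what lets the argument cover systems with non-trivial rational spectrum, and hence yields joint ergodicity for all ergodic systems rather than only all totally ergodic ones (compare Corollary~\ref{C:JointTotalErgodicity}).

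The substantive step — and the one I expect to be the real obstacle — is the seminorm estimate: that there is $s\in\N$ with $\lim_N\E_{n\in[N]}\,T^{[a_1(n)]}f_1\cdots T^{[a_\ell(n)]}f_\ell=0$ in $L^2(\mu)$ whenever $\nnorm{f_m}_s=0$ for some $m$ and $f_{m+1},\ldots,f_\ell\in\mathcal E(T)$. After absorbing the eigenfunction tail $f_{m+1},\ldots,f_\ell$ into a bounded scalar multiplier $c_n$, the natural attack is a PET-type induction: iterate the van der Corput inequality, at each step passing from $(a_1,\ldots,a_\ell)$ to a ``derivative'' family assembled from the differences $a_j(n+h)-a_j(n)$ together with an appropriate rescaling, and track a monotone complexity (built from the growth orders of the $a_j$ and of their pairwise differences and ratios) that strictly decreases until one reaches a family controlled by a Weyl-type estimate or by the $\ell=1$ case, while keeping the seminorm of the distinguished function $f_m$ in charge of the whole average. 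For general Hardy functions this induction is delicate because the growth scale is dense rather than discrete: pairs such as $t^2$ and $t^2+\sqrt{t}$, or $t^{3/2}$ and $t^{3/2}\log{t}$, have ``comparable'' orders, so one has to show the scheme still terminates and never manufactures a new linear relation among the surviving functions. This is why the present article proves the conjecture only under additional growth assumptions (for tempered functions, or for the class $\mathcal T+\mathcal P$; see Theorems~\ref{T:Hardy} and~\ref{T:HardyPolyChar}), and why the general case remains a conjecture: it would require a van der Corput / PET scheme robust enough to treat arbitrary polynomial-growth Hardy functions under only the assumption that their non-trivial linear combinations stay logarithmically away from rational polynomials.
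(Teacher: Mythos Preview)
The statement you are addressing is Conjecture~\ref{Conj1}, which the paper does \emph{not} prove: it is explicitly presented as an open conjecture, and the text after Theorem~\ref{T:Hardy} says that the method ``reduces Conjecture~1 to showing that under the assumptions of this conjecture the sequences $[a_1(n)],\ldots,[a_\ell(n)]$ are good for seminorm estimates; but verifying this property appears to be a challenging problem.'' (The ``Recent developments'' paragraph notes that the conjecture was later resolved by Tsinas~\cite{Ts21}, but that proof is not in this paper.)

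Your proposal is therefore not a proof, and you yourself recognise this in the last paragraph. What you have written is a correct diagnosis of the situation, and it matches the paper's own discussion precisely: invoke Theorem~\ref{T:JointErgodicity} to reduce to the two properties; obtain the equidistribution property from Boshernitzan's criterion (this is Proposition~\ref{L:WeylHardy}, proved via Lemma~\ref{L:Equi}); and isolate the seminorm estimate as the genuine remaining obstacle. Your equidistribution sketch is essentially correct, though the paper's Lemma~\ref{L:Equi} handles it more uniformly: rather than splitting into the cases ``not all $t_j$ rational'' versus ``all $t_j$ rational'', it shows directly, via a Fourier expansion of the Riemann-integrable weight $G(x_1,\ldots,x_\ell)=e(-\{x_1\}t_1-\cdots-\{x_\ell\}t_\ell)$, that it suffices to have $\E_{n\in[N]}\,e\big(\sum_i (k_i+kt_i)a_i(n)\big)\to 0$ for all integers $k,k_1,\ldots,k_\ell$ with $(k_1+kt_1,\ldots,k_\ell+kt_\ell)\ne 0$, which is immediate from the hypothesis and Theorem~\ref{T:Boshernitzan}. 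Your case split works too, but the ``first factor averages to $0$'' step needs the joint equidistribution of $(\{a_1(n)\},\ldots,\{a_\ell(n)\},\{\sum_j t_ja_j(n)\})$ in $\T^{\ell+1}$ to be made explicit, since the second factor depends on~$n$.

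In short: there is no proof in the paper to compare against; your outline and the paper's reduction coincide; and you have correctly located the gap (the seminorm estimate for general polynomial-growth Hardy functions) that keeps this a conjecture within the scope of the paper.
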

A variant of the conjecture that uses appropriately chosen weighted averages
in place of Ces\`aro averages was recently verified in \cite[Theorem~B]{BMR21}.
In the form originally  stated
 the conjecture
has been verified under  either of the following, partially complementary, additional assumptions:\footnote{Note that conditions  $(a)$ and $(b)$ are in general position;    the collections   $\{t^c, t^{c-1}+\log{t}\}$,  $\{t^c+t\log{t},t^{c-1}\}$, where $c\in (1,\infty)\setminus \Z_+$,
	satisfy $(a)$ but not $(b)$, and the collections  $\{t^a,t^a+t^b\}$, where $a,b\in \R_+\setminus \N$ and $b<a$,  satisfy $(b)$ but not $(a)$.}
\begin{enumerate}[(a)]
\item The functions $a_1,\ldots, a_\ell$ are tempered and  have different  growth rates, meaning $a_1\prec a_2\prec\cdots \prec a_\ell$,  see  \cite[Theorem~2.3]{Fr15} (see also \cite[Theorem~2.6]{Fr10} for a simpler argument that works for a slightly more restricted class of functions).
	
\item  The functions $\sum_{j=1}^\ell c_j a_j^{(k_j)}$  are in $\mathcal{T}+\mathcal{P}$  for all $c_1,\ldots, c_\ell \in \R$ and $k_1,\ldots, k_\ell\in \Z_+$, see \cite[Corollary~B1]{BMR21}.
\end{enumerate}

The next result improves upon the previously mentioned results
and 
 establishes Conjecture~\ref{Conj1} when the  functions $a_1,\ldots, a_\ell$  and their differences are in $\mathcal{T}+\mathcal{P}$.
Its proof is primarily based on Theorem~\ref{T:JointErgodicity}.
 \begin{theorem}\label{T:Hardy}
 	Let $a_1,\ldots, a_\ell\colon [c,+\infty)\to \R$  be functions from a Hardy field. Suppose that the functions and their differences are in $\mathcal{T}+\mathcal{P}$ and  every non-trivial linear combination of $a_1,\ldots, a_\ell$
 	stays logarithmically away from rational polynomials.
 	Then the sequences $[a_1(n)],\ldots, [a_\ell(n)]$ are jointly ergodic.
 \end{theorem}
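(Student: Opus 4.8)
\textbf{Proof strategy for Theorem~\ref{T:Hardy}.}
The plan is to deduce the theorem from Theorem~\ref{T:JointErgodicity} by verifying its two hypotheses for the sequences $a_m(n):=[a_m(n)]$, $m\in[\ell]$: namely that the collection is good for seminorm estimates and good for equidistribution, for an arbitrary ergodic system $(X,\mu,T)$. The equidistribution condition is purely a statement about one-dimensional exponential sums and should follow directly from the classical Weyl-type equidistribution results for Hardy field sequences. Concretely, for $t_1,\ldots,t_\ell\in[0,1)$ not all zero, one considers the function $g(t):=t_1a_1(t)+\cdots+t_\ell a_\ell(t)$, which by hypothesis is a non-trivial linear combination of the $a_j$'s, hence lies in $\mathcal{T}+\mathcal{P}$ and stays logarithmically away from rational polynomials. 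By \cite{Bos94} (or the version in \cite{BKQW05}) such a function is equidistributed mod $1$, which is exactly \eqref{E:equidistribution}; one must be slightly careful because the $t_j$ may be rational so $g$ need not individually stay away from \emph{all} rationals, but since not all $t_j$ vanish and the linear combination hypothesis is about the functions $a_j$ themselves, $g-p$ still grows faster than $\log t$ for every rational polynomial $p$, which suffices. A small technical point is passing from the equidistribution of $g(n)$ mod $1$ to that of $\sum_j t_j[a_j(n)]$; the difference $\sum_j t_j\{a_j(n)\}$ is handled by a standard approximation/van der Corput argument, or one simply notes $e(\sum_j t_j[a_j(n)]) = e(g(n))\cdot e(-\sum_j t_j\{a_j(n)\})$ and estimates.

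The substantive part, and the main obstacle, is establishing that $[a_1(n)],\ldots,[a_\ell(n)]$ are good for seminorm estimates. I expect this to require a PET-induction / van der Corput scheme adapted to Hardy field functions, combined with the structural hypothesis that the $a_j$ and their pairwise differences lie in $\mathcal{T}+\mathcal{P}$. The idea is: starting from an average $\E_{n\in[N]} T^{[a_1(n)]}f_1\cdots T^{[a_\ell(n)]}f_\ell$ with $\nnorm{f_m}_s=0$ and $f_{m+1},\ldots,f_\ell\in\mathcal{E}(T)$, one uses the eigenfunction structure of $f_{m+1},\ldots,f_\ell$ to absorb those terms into scalar multiplicative factors $e(a_j(n)t_j)$ times constants (this is where the ``$\mathcal{E}(T)$'' part of the definition of good-for-seminorm-estimates is exploited, and why the statement is set up that way), reducing to showing a weighted seminorm estimate $\E_{n}c_n\,T^{[a_1(n)]}f_1\cdots T^{[a_m(n)]}f_m\to 0$ for bounded $(c_n)$. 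Then repeated van der Corput applied to the tempered/polynomial functions $a_i(n+h)-a_j(n)$, which by hypothesis remain in $\mathcal{T}+\mathcal{P}$, should lower the complexity until one isolates a single factor and controls it by an ergodic seminorm of bounded degree $s$; the growth-rate bookkeeping to guarantee the induction terminates is the delicate combinatorial core. Here one should be able to lean on the analysis already carried out in the Hardy-field literature --- e.g. the seminorm estimates proven in \cite[Theorem~2.3]{Fr15} and the van der Corput machinery for functions in $\mathcal{T}+\mathcal{P}$ from \cite{BMR21} --- so that much of this step is a matter of citing and assembling rather than redoing the calculations.

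Once both conditions are verified, Theorem~\ref{T:JointErgodicity} applies directly and yields that the collection is jointly ergodic for every ergodic system, which is the conclusion. I would organize the write-up as: (1) a lemma reducing the hypotheses of the theorem to checkable growth/equidistribution statements about elements of $\mathcal{T}+\mathcal{P}$; (2) verification of good-for-equidistribution via \cite{Bos94, BKQW05}; (3) verification of good-for-seminorm-estimates via a van der Corput/PET argument, citing the relevant Hardy-field estimates; (4) invoke Theorem~\ref{T:JointErgodicity}. The one place where genuine care is needed --- beyond assembling cited results --- is confirming that the class $\mathcal{T}+\mathcal{P}$ is closed under the operations the induction performs (differences, shifts $a(\cdot+h)$, and the derivative-type manipulations implicit in van der Corput), and that ``staying logarithmically away from rational polynomials'' is inherited appropriately at each step so the base case of the induction genuinely produces a control by $\nnorm{\cdot}_s$ rather than degenerating.
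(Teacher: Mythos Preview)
Your proposal is correct and follows exactly the paper's approach: the proof applies Theorem~\ref{T:JointErgodicity} after verifying the two hypotheses via Proposition~\ref{P:SeminormHardy} (seminorm estimates, citing the arguments of \cite{Fr10} and \cite{BMR21}) and Proposition~\ref{L:WeylHardy} (equidistribution, via Boshernitzan's Theorem~\ref{T:Boshernitzan} combined with Lemma~\ref{L:Equi} to pass to integer parts). The PET/van der Corput scheme you sketch for the seminorm step is precisely what those cited references carry out, so in the paper the proof is a two-line invocation of already-packaged propositions rather than a reworking of that machinery.
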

 One easily verifies that the result applies for example if  $a_1,\ldots, a_\ell$ are
 \begin{enumerate}[(a)]
 \item  functions  of the form $\sum_{i=1}^k\alpha_it^{b_i}$ where $\alpha_1,\ldots, \alpha_k\in \R$, $b_1,\ldots, b_k\in (0,+\infty)$, and every non-trivial linear combination
 of $a_1,\ldots, a_\ell$
 is not a rational polynomial (in  \cite{KK19} this was established when $b_1,\ldots, b_k\in \Z_+$).

 \item  linearly independent functions  of  the form $\sum_{i=1}^k\alpha_i \, t^{b_i}(\log{t})^{c_i}$, where  $b_i\in \R_+\setminus\Z_+$,  $\alpha_i, c_i\in \R$, $i=1,\ldots, k$.
 \end{enumerate}
Both cases were also previously covered by  \cite[Corollary~B1]{BMR21}.\footnote{An example of a collection of functions  that satisfy the
 assumptions of Theorem~\ref{T:Hardy} but is not covered by previous results is
 $\{t^a,t^b, t^a+t^b +t^k(\log{t})^c \}$, where $a,b\in (1,\infty)\setminus \N$ are distinct, $k\in \N$ is smaller than $\max\{a,b\}$, and $c\in (0,1)$.} Let us also remark that our method of proof reduces Conjecture~1 to showing that under the assumptions of this conjecture the sequences
  $[a_1(n)],\ldots, [a_\ell(n)]$ are good for seminorm estimates; but verifying this property appears to be a challenging problem.

 We also give a result that applies to a larger class of sequences  that includes polynomial sequences. The proof of Part~$(i)$ is based on Theorem~\ref{T:JointErgodicity}  and the proof of Part~$(ii)$ is based on Theorem~\ref{T:CharacteristicFactors}.
\begin{theorem}\label{T:HardyPolyChar}
Let $a_1,\ldots, a_\ell\colon [c,+\infty)\to \R$  be  functions from a Hardy field. Suppose that the functions and their differences are in $\mathcal{T}+\mathcal{P}$,  and that every non-trivial linear combination of $a_1,\ldots, a_\ell$, with at least one coefficient irrational, stays logarithmically away from rational polynomials.
Then
\begin{enumerate}
\item the sequences $[a_1(n)],\ldots, [a_\ell(n)]$ are  jointly ergodic for totally ergodic systems.

\item   the rational Kronecker factor is characteristic for $[a_1(n)],\ldots, [a_\ell(n)]$.
\end{enumerate}
\end{theorem}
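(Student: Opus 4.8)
The plan is to deduce both parts from criteria already established: Part~(i) from Corollary~\ref{C:JointTotalErgodicity} and Part~(ii) from Theorem~\ref{T:CharacteristicFactors}. In each case it suffices to verify two properties of the integer sequences $[a_1(n)],\ldots,[a_\ell(n)]$: that they are \emph{very good for seminorm estimates} and that they are \emph{good for irrational equidistribution}. Since the former implies ``good for seminorm estimates'', Part~(i) then follows from Corollary~\ref{C:JointTotalErgodicity} (its equidistribution hypothesis, for $t_i$ irrational or $0$ and not all $0$, being a special case of good for irrational equidistribution), while Part~(ii) follows directly from Theorem~\ref{T:CharacteristicFactors}.

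For equidistribution I would fix $t_1,\ldots,t_\ell\in[0,1)$ not all rational, discard the entries equal to $0$, and assume all remaining $t_i\in(0,1)$ with, say, $t_1$ irrational. Writing $[a_i(n)]=a_i(n)-\{a_i(n)\}$ one has
$$
e\Big(\sum_i [a_i(n)]\,t_i\Big)=e\Big(\sum_i a_i(n)\,t_i\Big)\cdot\prod_i e(-\{a_i(n)\}\,t_i).
$$
Each $a_i$ lies in $\mathcal{T}+\mathcal{P}$, and for such a function one has the standard dichotomy: either $a_i(n)$ is equidistributed modulo $1$, or $\{a_i(n)\}$ takes finitely many values (the latter precisely when $a_i$ is a polynomial with all non-constant coefficients rational). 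In the first case I would approximate $x\mapsto e(-\{x\}t_i)$ in $L^1(\T)$ by a trigonometric polynomial of bounded supremum norm and absorb the resulting error using equidistribution of $a_i(n)$; in the second case $e(-\{a_i(n)\}t_i)$ is already a trigonometric polynomial in $n$. Multiplying out and letting the approximation parameter tend to $0$, the assertion $\E_{n\in[N]}\,e(\sum_i[a_i(n)]t_i)\to 0$ reduces to showing, for all $m_1,\ldots,m_\ell\in\Z$ and all $q\in\Q[t]$, that
$$
\lim_{N\to\infty}\E_{n\in[N]}\,e\Big(\sum_i (t_i+m_i)\,a_i(n)+q(n)\Big)=0.
$$
Now $\sum_i (t_i+m_i)a_i$ is a non-trivial linear combination of $a_1,\ldots,a_\ell$ (all coefficients $t_i+m_i$ are non-zero, since $t_i\in(0,1)$) whose coefficient of $a_1$, namely $t_1+m_1$, is irrational; by hypothesis it stays logarithmically away from rational polynomials, and adding $q\in\Q[t]$ preserves this. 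Since the resulting function belongs to a Hardy field, has at most polynomial growth, and stays logarithmically away from rational polynomials, it is equidistributed modulo $1$ by Boshernitzan's criterion (\cite{Bos94}, cf.~\cite{BKQW05}), which gives the displayed limit. Hence $[a_1(n)],\ldots,[a_\ell(n)]$ are good for irrational equidistribution.

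It remains to show that $[a_1(n)],\ldots,[a_\ell(n)]$ are very good for seminorm estimates, and I expect this to be the main obstacle. This is precisely where the assumption that the functions \emph{and their pairwise differences} lie in $\mathcal{T}+\mathcal{P}$ is used: the plan is to run a PET-induction argument together with the van der Corput inequality, exploiting that the class of Hardy field families which, together with all their pairwise differences, lie in $\mathcal{T}+\mathcal{P}$ is stable under the operations of that induction (replacing $a_1,\ldots,a_\ell$ by $a_2-a_1,\ldots,a_\ell-a_1$ and passing to shifted differences), so that it terminates at a base case settled by the equidistribution input above. The model case of distinct fractional powers is already known (see the remarks in Section~\ref{SS:JE}), and the general case should go through along the lines of \cite{BMR21}. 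That no such seminorm estimates are available under the weaker hypotheses of Conjecture~\ref{Conj1} is exactly why that conjecture remains open; so this step, rather than the comparatively soft equidistribution step, is where the real difficulty lies.
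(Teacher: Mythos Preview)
Your approach is exactly the paper's: verify that $[a_1(n)],\ldots,[a_\ell(n)]$ are very good for seminorm estimates and good for irrational equidistribution, then invoke Corollary~\ref{C:JointTotalErgodicity} and Theorem~\ref{T:CharacteristicFactors}. The paper does this in two lines, citing Proposition~\ref{P:SeminormHardy} for the seminorm estimate (precisely the PET/van der Corput input from \cite{Fr10,BMR21} that you sketch) and Proposition~\ref{L:HardyPoly} for irrational equidistribution; you could simply cite those two propositions rather than re-deriving them.

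One technical caveat on your equidistribution argument: the dichotomy ``either $a_i(n)$ is equidistributed modulo $1$ or $\{a_i(n)\}$ takes finitely many values'' need not hold for general members of $\mathcal{T}+\mathcal{P}$ as defined here (arbitrary linear combinations of tempered and polynomial functions). For instance, $t^{3/2}+\log t$ and $t^{3/2}$ are both tempered, so their difference $\log t$ lies in $\mathcal{T}+\mathcal{P}$, and hence so does $a(t)=t+\log t$; but $a(n)$ is not equidistributed modulo $1$ and $\{a(n)\}=\{\log n\}$ takes infinitely many values. The paper's route (the remark following Lemma~\ref{L:Equi}) avoids this by Fourier-expanding $e(-\sum_i\{x_i\}t_i)$ on $\T^\ell$ and reducing directly to $\E_{n\in[N]}\,e\big(\sum_i(t_i+k_i)a_i(n)\big)\to 0$ for all $k_i\in\Z$; since not all $t_i$ are rational, not all $t_i+k_i$ are rational, and the hypothesis together with Theorem~\ref{T:Boshernitzan} finishes the job without any case analysis on the individual $a_i$.
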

One easily verifies that the result applies for example if  $a_1,\ldots, a_\ell$ are
\begin{enumerate}[(a)]
\item  linearly independent functions of the form    $\sum_{i=1}^k\alpha_it^{b_i}$ where  $\alpha_1,\ldots, \alpha_k\in \Q$ and  $b_1,\ldots, b_k \in (0,+\infty)$  (this generalizes the main result   of \cite{FrK05a}, which covers the case of integer polynomials), or, more generally,

\item functions  of the form    $\sum_{i=1}^k\alpha_it^{b_i}$ where  $\alpha_1,\ldots, \alpha_k\in \R$ and  $b_1,\ldots, b_k\in  (0,+\infty)$, such that every linear combination of $a_1,\ldots, a_\ell$, with at least one coefficient irrational, is not a rational polynomial. We remark that linear independence does not suffice in this case; to see this,  take $k=1$ and $a_1(t)=\alpha t$ where $\alpha$ is irrational, then the sequence $[a_1(n)]$ is not ergodic for the (totally ergodic) system $(\T,m_\T, R)$, where $Rx=x+1/\alpha$, $x\in \T$.
\end{enumerate}
The previous special cases  resolve \cite[Conjecture~6.1]{BMR21}.

\subsection{Joint ergodicity for nilsystems}\label{SS:17}
 Combining  Corollary~\ref{C:Nilsystems} and known equidistribution results on the circle  from \cite{BBK02, BK90, BP98, Kar71},
we can  deduce some equidistribution results on general nilmanifolds. These results  do not appear to be
accessible from  known techniques related to qualitative or quantitative equidistribution  of polynomial sequences on nilmanifolds, since these techniques require the functions involved to have some derivative vanishing (in the sense that it converges to zero at infinity).

\begin{theorem}\label{T:NilsystemSpecial}
	The following collections of sequences are jointly ergodic for every ergodic 
	nilsystem:
	\begin{enumerate}
		\item $[n^{(\log{n})^{c_1}}], \ldots, [n^{(\log{n})^{c_\ell}}]$, where $c_1,\ldots, c_\ell\in (0,1/2)$ are distinct.
		
	
	\item $[p_1(n)\sin{n}],\ldots, [p_\ell(n)\sin{n}]$,  where $1,p_1,\ldots, p_\ell\in \Z[t]$ are linearly independent polynomials.
	
	\item $[n^k\sin{n}], [n^k\sin(2n)], \ldots, [n^k\sin(\ell n)]$, where $k,\ell\in \N$.
	\end{enumerate}
\end{theorem}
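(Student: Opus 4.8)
The plan is to derive Theorem~\ref{T:NilsystemSpecial} as a direct consequence of Corollary~\ref{C:Nilsystems}, so that the entire burden is reduced to verifying that each of the three collections is \emph{good for equidistribution} for every ergodic nilsystem. By Corollary~\ref{C:Nilsystems} this is equivalent to joint ergodicity for nilsystems, and by the definition of good for equidistribution it suffices (taking the system to be a circle rotation) to show that for all $t_1,\dots,t_\ell\in[0,1)$, not all zero, the exponential sums
\begin{equation*}
\E_{n\in[N]}\, e\bigl(a_1(n)t_1+\cdots+a_\ell(n)t_\ell\bigr)
\end{equation*}
tend to $0$ as $N\to\infty$, where $a_i(n)$ is the relevant integer sequence. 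Since $e([x]t)=e(xt)+O(\{x\}t)$ is not quite the right way to handle the integer parts, the correct first reduction is the standard one: write $[b_i(n)]t_i = b_i(n)t_i - \{b_i(n)\}t_i$ and observe that it is enough to prove equidistribution of the tuple $(b_1(n)t_1,\dots)$ — more precisely, to prove that $\sum_i b_i(n)t_i$ is equidistributed mod $1$ whenever the $t_i$ are not all zero — after which Weyl's criterion together with a routine argument handling the fractional-part corrections (as in \cite{BKQW05} or the Hardy-field literature) gives the claim; in cases (ii) and (iii) the sequences $n^k\sin(jn)$ already grow polynomially, and in case (i) the function $t^{(\log t)^c}$ has at most polynomial growth, so these reductions are legitimate.

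With that in place, each of the three parts becomes a concrete one-dimensional equidistribution statement. For part~(i), I would invoke the equidistribution results of \cite{BBK02, BK90, BP98, Kar71} cited just before the theorem: for distinct $c_1,\dots,c_\ell\in(0,1/2)$ the functions $t^{(\log t)^{c_i}}$ have pairwise distinct growth rates and every nontrivial real linear combination $\sum_i \lambda_i t^{(\log t)^{c_i}}$ fails to stay within $O(\log t)$ of any rational polynomial — indeed it grows like a single dominant term $\lambda_{i_0} t^{(\log t)^{c_{i_0}}}$ which is $\succ t^{1+\varepsilon}$-type behaviour unavailable to polynomials, or at any rate is handled by the sub-fractional-power equidistribution theorems in those references — so $\sum_i b_i(n)t_i$ is equidistributed mod $1$ for any nonzero $(t_1,\dots,t_\ell)$. (One must be a little careful that $(\log n)^{c_i}$ can be close to an integer; the cited circle results are precisely designed to cover $n^{g(n)}$ with $g$ slowly varying.) For part~(iii), the sequences $n^k\sin(jn)$, $j=1,\dots,\ell$, are a case where one uses the joint equidistribution of $(\sin n,\sin 2n,\dots,\sin\ell n)$ coming from Weyl equidistribution of $n\alpha$ for $\alpha=1/(2\pi)$ irrational: $\sum_j t_j n^k \sin(jn) = n^k \cdot \psi(n)$ where $\psi(x)=\sum_j t_j\sin(jx)$ is a nonzero trigonometric polynomial, so $\psi(n)$ equidistributes according to the pushforward of Lebesgue measure on $\T$ under $\psi$, which is non-atomic and in particular assigns zero mass to $\{0\}$; combined with the polynomial factor $n^k$ this forces $\sum_j t_j n^k\sin(jn)$ to be equidistributed mod $1$, again by the circle-equidistribution inputs for products of a polynomial and an equidistributed bounded sequence. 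Part~(ii) is the same idea with $\sin(jn)$ replaced by the single factor $\sin n$ and $n^k$ replaced by the linearly independent polynomials $p_j$: since $1,p_1,\dots,p_\ell$ are linearly independent over $\Q$ (hence over $\R$ after clearing denominators — actually the statement assumes $\Z$-coefficients, so linear independence over $\Z$), $\sum_j t_j p_j(n) \ne$ (constant) for nonzero $(t_j)$, so $q(n):=\sum_j t_j p_j(n)$ is a genuine real polynomial of degree $\ge 1$, and one needs $q(n)\sin n$ equidistributed mod $1$, which follows from the cited results on $P(n)\cdot(\text{equidistributed bounded sequence})$.

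The main obstacle I anticipate is not conceptual but bookkeeping: making sure the circle equidistribution theorems from \cite{BBK02, BK90, BP98, Kar71} apply verbatim to the exact functions $\sum_i \lambda_i t^{(\log t)^{c_i}}$ and to the products $q(n)\sin n$, $n^k\psi(n)$ — in particular, checking the hypotheses of those theorems (growth, regularity, the precise "stays away from rational polynomials" condition) and, for part~(i), confirming that distinctness of the $c_i\in(0,1/2)$ really does force every nontrivial combination into the regime covered by those results rather than into an exceptional case. A secondary, more routine obstacle is the passage from "equidistributed mod $1$" of the real-valued sequence to "good for equidistribution" of the integer-part sequence, i.e. showing $\E_{n\le N} e([b(n)])\to 0$; this is by now standard (the fractional part of $b(n)$ is itself equidistributed, so summation by parts or a direct Fourier argument disposes of it), but it should be spelled out, or a reference given, since the statement of Corollary~\ref{C:Nilsystems} is phrased in terms of the integer sequences $a_i$ and not the underlying Hardy-type functions. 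Once these checks are in place the theorem follows immediately by three applications of Corollary~\ref{C:Nilsystems}.
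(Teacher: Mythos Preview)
Your approach is essentially identical to the paper's: reduce via Corollary~\ref{C:Nilsystems} to showing good for equidistribution, then via Lemma~\ref{L:Equi} (which is precisely the integer-part reduction you sketch) to showing that every nontrivial real linear combination of the underlying real-valued sequences is equidistributed mod~$1$, and finally cite the external references. The paper's specific citations are \cite[Lemma~9]{BP98} (which is the main result of \cite{Kar71}) combined with \cite[Lemma~16]{BP98} for Part~(i), and \cite[Theorem~2.2]{BBK02} together with \cite[Theorem~4.1]{BK90} for Parts~(ii) and~(iii); your heuristic arguments for (ii) and (iii) are in the right spirit but are not themselves proofs, so you should indeed defer to those results rather than try to reprove them.
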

\begin{remark}
	We do not know if for these sequences equation \eqref{E:qwe} holds pointwise for every $x\in X$ (or even for $m_X$-almost every $x\in X$) and all $f_1,\ldots, f_\ell\in C(X)$.
	\end{remark}
It follows from Theorem~\ref{T:JointErgodicity} that joint ergodicity (for arbitrary ergodic systems) of  the previous collections of sequences would follow if one establishes that they are good for seminorm estimates; see Problem~\ref{Problem2} below.

\subsection{Joint ergodicity for flows}
A $1$-parameter measure preserving action (or flow) on a probability space $(X,\mathcal{X},\mu)$ is a  family $T^t$, $t\in \R$,
of invertible measure preserving transformations,
that satisfy $T^{s+t}=T^s\circ T^t$, $s,t\in \R$ ($T^0$ is the identity transformation), and
such that the map
$\R\times X\to X$, defined by $(t,x)\mapsto T^tx$, is measurable ($X$ is equipped with $\mathcal{X}$ and $\R$ with the Borel $\sigma$-algebra).

We give a joint ergodicity result  for  not necessarily commuting  measure preserving flows, when the iterates
are given by functions that satisfy suitable growth conditions. It turns out that in the case of flows we can also prove pointwise convergence results; in contrast, for discrete time actions the corresponding results are false
without any commutativity assumptions (even for mean convergence, see \cite[Theorem~1.7]{FrLW11}) and for general commuting  actions they are unknown.

\begin{theorem} \label{T:flows}
	Let $a_1,\ldots, a_\ell\colon [c,+\infty)\to \R_+$  be functions from a Hardy field. Suppose that there exists  
	$\delta>0$ such that  $t^\delta\prec a_1(t)$ and
	$(a_{j+1}(t))^\delta\prec a_j(t)\prec (a_{j+1}(t))^{1-\delta}$ for $j=1,\ldots, \ell-1$.
	Then for all measure preserving actions  $T_1^t,\ldots, T_\ell^t$, $t\in\R$,  on a probability space $(X,\mathcal{X},\mu)$ and $f_1,\ldots, f_\ell\in L^\infty(\mu)$, we have
	\begin{equation}\label{E:flows}
	\lim_{y\to+\infty} \frac{1}{y} \int_0^y f_1(T_1^{a_1(t)}x)\cdot\ldots\cdot f_\ell(T_\ell^{a_{\ell}(t)}x)\, dt=\tilde{f_1}\cdots \tilde{f_\ell}
	\end{equation}
	pointwise for $\mu$-almost every $x\in X$, where for $j=1,\ldots, \ell$ we denote by  $\tilde{f_j}$  the orthogonal projection of $f_j$ on the space of functions that are $T_j^t$-invariant for every $t\in \R$.
\end{theorem}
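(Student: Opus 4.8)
The plan is to reduce the pointwise convergence statement for flows to a purely discrete-time, one-dimensional question that can be handled by van der Corput estimates together with a Wiener--Wintner type argument. First I would observe that the hypotheses force the functions $a_1,\ldots,a_\ell$ to be "spread out" multiplicatively: each $a_j$ grows like a fractional power of $a_{j+1}$ from both sides, so after a change of variables the phases decouple. The key technical input is a differencing/van der Corput scheme adapted to flows: to control $\frac1y\int_0^y \prod_j f_j(T_j^{a_j(t)}x)\,dt$ one introduces a shift $t\mapsto t+h$, exploits that $a_j(t+h)-a_j(t)$ is, on the relevant scale, small relative to $a_j(t)$ but still has good equidistribution properties by the Hardy-field assumptions, and thereby replaces the $\ell$-fold average by an $(\ell-1)$-fold one involving the functions $g_j^{(h)}(x) := \overline{f_j(x)}\,f_j(T_j^{a_j(t+h)-a_j(t)}x)$. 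Iterating, one is left after $\ell-1$ steps with a single average $\frac1y\int_0^y f_1(T_1^{b(t)}x)\,dt$ for a Hardy-field function $b$ of polynomial-type growth with $t^\delta\prec b(t)$, whose pointwise behavior is governed by the spectral theorem and classical equidistribution of $\{b(n)\alpha\}$.

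Second, I would handle the passage from the integral average to a sum: since $a_j\in\mathcal H$ and the $a_j$ are eventually monotone with controlled growth, the integral $\frac1y\int_0^y$ can be compared to $\E_{n\in[N]}$ along a suitable discretization, up to errors that vanish because $a_j'$ changes slowly (this is where one uses that $a_j$ is from a Hardy field and the $\prec$ relations, to ensure the integrand does not oscillate too fast on unit-length intervals after rescaling). At this stage the problem becomes the pointwise analogue of joint ergodicity for the "decoupled" sequences, and the growth-separation hypotheses $(a_{j+1})^\delta\prec a_j\prec (a_{j+1})^{1-\delta}$ are exactly what makes the van der Corput differencing close up: differencing the fastest coordinate leaves the slower ones essentially frozen, so one never generates resonances among the $f_j$.

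Third, for the single-function pointwise statement I would invoke a Wiener--Wintner-type maximal inequality: for $b\in\mathcal H$ with $t^\delta\prec b(t)\prec t^d$, the averages $\frac1y\int_0^y e(b(t)\theta)\,f(T^tx)\,dt$ converge pointwise for a.e.\ $x$, uniformly in $\theta\in\R$, because $b$ equidistributes mod $1$ at a polynomial rate and a transfer argument (or the Bourgain-type oscillation estimate) applies. Combined with the spectral decomposition of $f_1$ with respect to the $T_1^t$-action, this gives that the limit of the single average is $\tilde f_1$ a.e.; feeding this back up the differencing tower yields that every intermediate limit is the product of the corresponding conditional expectations, and in particular the full limit is $\tilde f_1\cdots\tilde f_\ell$.

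The main obstacle, I expect, is making the van der Corput differencing rigorous in the \emph{pointwise} (rather than $L^2$) category for \emph{non-commuting} flows: one cannot simply square and integrate in $x$, so one must instead run the differencing inside the $t$-average, keeping $x$ fixed, and control a maximal function over the shift parameter $h$. The Hardy-field hypotheses are tailored to guarantee that the "error" sequences $a_j(t+h)-a_j(t)$ remain in a Hardy field (by \eqref{E:Hardy}) and retain enough equidistribution to drive the induction, but verifying the uniform-in-$h$ maximal bounds — essentially a pointwise oscillation estimate à la Bourgain for these generalized polynomial phases — is the substantive analytic core, and is presumably where the bulk of Section~\ref{S:Flows} is spent.
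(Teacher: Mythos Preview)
Your plan takes a genuinely different---and much harder---route than the paper, and the difficulties you anticipate at the end are exactly the ones the paper's method is designed to avoid.

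The paper does \emph{not} use van der Corput, does not discretize to sums, and needs no Wiener--Wintner or Bourgain-type oscillation estimates. Its proof is an induction on $\ell$ built on two elementary lemmas. The first (Lemma~\ref{L:CV}) says that if $\frac{1}{y}\int_0^y f(t)\,dt$ converges and $a$ is a Hardy-field function with $t^\delta\prec a$, then $\frac{1}{y}\int_0^y f(a(t))\,dt$ converges to the same limit; this is just a change of variables plus integration by parts. Applying it with $a=a_\ell$ renormalizes the fastest iterate to $t$, so the remaining iterates become $b_j=a_j\circ a_\ell^{-1}$, which by the growth hypotheses satisfy $t^{\delta'}\prec b_j(t)\prec t^{1-\delta'}$---in particular they are \emph{sublinear}. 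One then decomposes $f_\ell=\tilde f_\ell + (\text{small in }L^2) + (T_\ell^c h-h)$. The invariant piece drops $\ell$ by one (induction); the small piece is controlled by Cauchy--Schwarz plus the pointwise ergodic theorem for flows (via Lemma~\ref{L:CV} again); and for the coboundary one shifts $t\mapsto t-c$ in the linear slot, reducing to showing $\frac{1}{y}\int_0^y |f_j(T_j^{b_j(t-c)}x)-f_j(T_j^{b_j(t)}x)|^2\,dt\to 0$ a.e. That is the second lemma (Lemma~\ref{L:dense}), and it holds simply because $b_j'\to 0$, so $b_j(t-c)-b_j(t)\to 0$; a short spectral argument (reduce to $f_j$ with compactly supported spectral measure) plus Borel--Cantelli along $y=N^a$ finishes it.

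The gap in your proposal is Step~4 as you yourself diagnose it: running van der Corput pointwise for non-commuting actions and controlling a maximal function in $h$ amounts to proving Bourgain-type variational/oscillation bounds for generalized polynomial phases, which is a hard open-ended problem and not what Section~\ref{S:Flows} does. The paper's whole point is that flows permit a change of variables (onto-ness of $t\mapsto a_\ell(t)$ on $\R_+$, which fails on $\Z_+$) that sidesteps the entire differencing machinery; after the change of variables the slower iterates are so slow that the ``freezing'' you want from van der Corput comes for free, without any differencing at all.
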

\begin{remarks}
$\bullet$ When $\ell=2$ and  $T_1,T_2$ commute, the result was established in \cite{CDKR21} (for $a_1(t)=t$, $a_2(t)=t^2$, but for $\ell=2$ the argument given there probably extends to the more general class of functions that we consider). The argument in \cite{CDKR21} uses  the  transference principle of Calder\'on in order to get access to a harmonic analysis result from \cite{CDR21}; so it does not seem to extend to the case of non-commuting flows or to the case $\ell\geq 3$.


$\bullet$ If  the transformations $T_1,\ldots, T_\ell$ commute, and $a_1,\ldots, a_\ell$ are given by polynomials,  then  mean convergence for \eqref{E:flows} was established in \cite{Au11c} without any growth assumptions (and previously in \cite{Po11} when $T_1=\cdots=T_\ell$). Moreover, other related convergence results for flows
can be found in \cite{BLM11}.
\end{remarks}

The result applies for the collection of functions $t^{c_1},\ldots, t^{c_\ell}$ where $c_1,\ldots, c_\ell\in (0,+\infty)$ are distinct (or linear combinations of such functions, assuming they have different growth), and  also applies to some collections of fast growing functions like $b_1^t,\ldots, b_\ell^t$, where $b_1,\ldots, b_\ell\in (1,+\infty)$ are distinct.
But our method does not work for collections of functions with the  same or ``not substantially different growth'', like $t^2,t^2+t$ and $t, t\log{t}$,  or for functions with  ``very different growth'', like $t,2^t$.

The method we use to prove Theorem~\ref{T:flows} is different (and in fact much simpler) than the one used for
the results on discrete time averages given on previous subsections.
 We take advantage of the fact that functions like $t\mapsto t^2$ are onto   $\R_+$ (a property that crucially fails in $\Z_+$); this  enables us to  make a change of  variables (see Lemma~\ref{L:CV})
and reduce matters to the case where all the iterates have sublinear growth. This is an important simplification  and is what allows us to prove much stronger results than those for discrete-time actions.

\subsection{Open problems}
Theorem~\ref{T:JointErgodicity}  shows that  the sequences $a_1,\ldots, a_\ell\colon \N\to \Z$ are jointly ergodic if and only if
they are $(i)$ good for  seminorm estimates   and $(ii)$ good for equidistribution. It
 is a tantalizing problem to determine whether the  equidistribution assumption alone suffices to
deduce joint ergodicity (in which case condition $(i)$ can be inferred from condition $(ii)$).
Although at first this may seem like an unlikely scenario, we have not been able to construct a counterexample, which leaves the following question open:
\begin{problem}\label{Problem1}
If the sequences $a_1,\ldots, a_\ell\colon \N\to \Z$ are good  for equidistribution, are they
 jointly ergodic?
\end{problem}
\begin{remark}
 The corresponding question for pointwise convergence has a negative answer even when $\ell=1$. For instance if $a(n):=n^3+[(\log{n})^a]$, $n\in \N$, where $a\in \big(1,\frac{6}{5}\big)$, then it follows from \cite[Theorem~3.2]{BKQW05}  that the sequence $a(n)$ is good for the mean ergodic theorem (hence good for equidistribution), and from \cite[Theorem~3.7]{BKQW05} it follows that it is bad for the pointwise ergodic theorem.   So one has to impose  stronger quantitative equidistribution assumptions on the sequences in order to get  legitimate pointwise convergence criteria.
\end{remark}
In support of a positive answer to the previous question let us mention that the answer is positive for $\ell=1$; this follows from the spectral theorem (see the discussion around \eqref{E:Spectral}). Moreover, Corollary~\ref{C:Nilsystems} implies that if the   sequences $a_1,\ldots, a_\ell$ are good for equidistribution, then they are jointly ergodic for nilsystems.

We also record a problem regarding special sequences arising from smooth functions without a vanishing derivative.
\begin{problem}\label{Problem2}
Show that  the collections of the sequences in Theorem~\ref{T:NilsystemSpecial} are
 good for seminorm estimates.
\end{problem}
It is known that these collections of sequences  are good for equidistribution (see the  proof of Theorem~\ref{T:NilsystemSpecial} for relevant references), so solving this problem for any of the examples given in Theorem~\ref{T:NilsystemSpecial} would imply (by Theorem~\ref{T:JointErgodicity}) that the corresponding collection  of sequences is jointly ergodic. We remark that  related multiple recurrence results and combinatorial consequences are largely unknown for these collections of sequences.

\subsection{Recent developments}   We give a summary of some applications of the results and techniques of this article that took place after the first version of this article appeared on the arXiv.
\begin{itemize}
\item In \cite{BM21},  Best and Ferr\'e Moragues extended   Theorem~\ref{T:JointErgodicity}
 to the setting of certain countable ring actions, which include countable  fields of characteristic zero and also rings of integers of  number fields.

\item  In \cite{DKS19}, Donoso, Koutsogiannis, and Sun, and in \cite{DFKS21} the same authors together with Ferr\'e Moragues, they  used  the extension of Theorem~\ref{T:JointErgodicity} given in  \cite{BM21}, among other things,  to obtain several
joint ergodicity results for commuting transformations with polynomial iterates.

\item In \cite{Fr21b}, the author used Theorem~\ref{T:JointErgodicity} to prove joint ergodicity results for sequences given by  fractional powers of the prime numbers.

\item  In \cite{Ts21}, Tsinas used Theorem~\ref{T:JointErgodicity}   to answer conjectures from \cite{Fr10} and \cite{Fr16} concerning joint ergodicity properties of Hardy field sequences, in particular, he established Conjecture~\ref{Conj1} of Section~\ref{SS:16}.

\item Finally, in \cite{FK22},  the author and Kuca extended Theorem~\ref{T:JointErgodicity}  and obtained joint ergodicity criteria for commuting transformations. These criteria were then used, together with other results,
to  answer conjectures concerning joint ergodicity properties of independent polynomial sequences. 
\end{itemize}

\subsection{Acknowledgement} I would like to thank the referee for useful remarks and suggestions.

 \section{The Furstenberg-Weiss theorem} \label{S:FW}

The goal of this section is to explain the basic principles used in the proof of our main results  in a simple yet  interesting setting that allows us to  avoid much of the technicalities that appear in the proof of    Theorem~\ref{T:JointErgodicity} and Theorem~\ref{T:CharacteristicFactors}.
We do this by giving a proof of the following result that was originally proved by Furstenberg and Weiss:\footnote{Another reason why we choose to
	prove Theorem~\ref{T:FW}  separately is in order to have an ``elementary proof'' on record, since it is not covered by  Theorem~\ref{T:JointErgodicity} (it is covered by  Theorem~\ref{T:CharacteristicFactors}, but this result depends on  deep results from \cite{HK05a}).}
\begin{theorem}[Furstenberg-Weiss~\cite{FuW96}]\label{T:FW}
 Let $(X,\mu,T)$ be a system and $f_1,f_2\in L^\infty(\mu)$. If either $f_1$ or $f_2$ is orthogonal to $\mathcal{K}_{rat}(T)$, then
 \begin{equation}\label{E:aa0}
\lim_{N\to\infty} \E_{n\in[N]} \, T^nf_1\cdot T^{n^2}f_2=0
 \end{equation}
 in $L^2(\mu)$.
 \end{theorem}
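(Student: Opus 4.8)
Write $A_N:=\E_{n\in[N]}\,T^nf_1\cdot T^{n^2}f_2$. We may assume $\|f_1\|_\infty,\|f_2\|_\infty\le 1$, and it is enough to treat ergodic systems. The statement splits into two symmetric halves: \emph{(a)} if $f_2$ is orthogonal to $\mathcal{K}_{rat}(T)$ then $A_N\to 0$; \emph{(b)} if $f_1$ is orthogonal to $\mathcal{K}_{rat}(T)$ then $A_N\to 0$. In each half the plan is the same: run a seminorm estimate to reduce to the case where the offending function lies in the Kronecker factor, and then compute directly on the Kronecker factor — an ergodic rotation on a compact abelian group — using the spectral theorem and Weyl's equidistribution theorem.

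\smallskip
\noindent\textbf{Step 1: the seminorm estimate (the main point).}
The claim is that $\nnorm{f_2}_2=0$ implies $A_N\to 0$, and symmetrically $\nnorm{f_1}_2=0$ implies $A_N\to 0$ (recall $\nnorm{\cdot}_2$ vanishes precisely on the orthogonal complement of the Kronecker factor $\CZ_1$). Here I would run the ergodic analogue of the van der Corput / ``degree-lowering'' scheme of Peluse and Prendiville. Applying van der Corput's inequality in $n$ with shift $h$ and composing with $T^{-n}$, it suffices to bound, for a positive-density set of $h$,
\[
\Big\|\,\E_{n\in[N]}\,T^{n^2-n}\big(T^{2hn+h^2}f_2\cdot\overline{f_2}\big)\,\Big\|_{L^1(\mu)}.
\]
Applying van der Corput again in $n$ with a shift $\ell$, composing with $T^{-(n^2-n)}$, and using $(n+\ell)^2-(n+\ell)-(n^2-n)=2n\ell+\ell^2-\ell$, one linearizes the remaining quadratic iterate and reduces matters to controlling, for each pair $(h,\ell)$, linear correlation averages $\E_{n\in[N]}\int \prod_i T^{c_i n+d_i}g_i\, d\mu$, a product of four terms with $g_i\in\{f_2,\overline{f_2}\}$ and index set $\{c_i\}=\{0,\,2\ell,\,2h,\,2h+2\ell\}$. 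A standard PET/van der Corput argument already shows these are controlled by $\nnorm{f_2}_s$ for some $s$; the degree-lowering step improves this to $\nnorm{f_2}_2$: once $f_2$ may be assumed measurable with respect to the order-$(s-1)$ factor, the ``dual functions'' produced by the van der Corput estimates are themselves limits of products of shifts of such functions, whence (up to a small error) measurable with respect to the order-$(s-2)$ factor; iterating, the control descends to $\nnorm{f_2}_2$. The parameter pairs $(h,\ell)$ for which $\{0,2\ell,2h,2h+2\ell\}$ degenerates (e.g.\ $h=\ell$ or $\ell=2h$) form a set of density zero and contribute nothing after averaging. This is exactly where we dodge nilmanifolds: a naive PET argument leaves the order-$(s-1)$ factor characteristic and then needs the Host--Kra structure theorem, whereas the degree-lowering lands us in the Kronecker factor. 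The estimate for $f_1$ is identical and simpler, its iterate being already linear.

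\smallskip
\noindent\textbf{Step 2: the Kronecker factor.}
By Step 1, in case \emph{(a)} we may replace $f_2$ by its orthogonal projection onto $\CZ_1$, which is still orthogonal to $\mathcal{K}_{rat}(T)$ and is an $L^2$-limit of functions $\chi$ with $|\chi|=1$, $T\chi=e(t_\chi)\chi$, and $t_\chi$ irrational. For a single such $\chi$ we have $A_N(f_1,\chi)=\chi\cdot\E_{n\in[N]}e(n^2t_\chi)\,T^nf_1$, and the spectral theorem gives
\[
\big\|\E_{n\in[N]}e(n^2t_\chi)\,T^nf_1\big\|_{L^2(\mu)}^2=\int_{[0,1)}\Big|\E_{n\in[N]}e(n^2t_\chi+n\theta)\Big|^2\, d\sigma_{f_1}(\theta),
\]
where $\sigma_{f_1}$ is the spectral measure of $f_1$; since $t_\chi\notin\Q$, Weyl's theorem yields $\E_{n\in[N]}e(n^2t_\chi+n\theta)\to 0$ for every $\theta$, so the right-hand side tends to $0$ by dominated convergence. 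Summing over $\chi$ (with an $L^2$ truncation) gives $A_N\to 0$ and proves \emph{(a)}. Case \emph{(b)} is symmetric: $A_N(\chi,f_2)=\chi\cdot\E_{n\in[N]}e(nt_\chi)\,T^{n^2}f_2$ and $\big\|\E_{n\in[N]}e(nt_\chi)T^{n^2}f_2\big\|_{L^2(\mu)}^2=\int_{[0,1)}\big|\E_{n\in[N]}e(nt_\chi+n^2\theta)\big|^2\, d\sigma_{f_2}(\theta)\to 0$, again by Weyl's theorem and dominated convergence.

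\smallskip
\noindent\textbf{Where the difficulty lies.}
Everything outside Step 1 is soft. The real work is the degree-lowering — showing the van der Corput estimates can be iterated all the way down to the Kronecker seminorm $\nnorm{\cdot}_2$, with honest bookkeeping of the shift parameters and the dual functions, and verifying that the degenerate parameter configurations are negligible. This ergodic incarnation of the key idea of \cite{P19a,PP19} is what keeps the argument (and, later, the proofs of Theorems~\ref{T:JointErgodicity} and~\ref{T:CharacteristicFactors}) free of the deeper tools from ergodic theory.
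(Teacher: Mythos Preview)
Your Step~2 is correct and the overall architecture (reduce to the Kronecker factor being characteristic, then compute on eigenfunctions via the spectral theorem and Weyl) is sound. The gap is in Step~1. Two applications of van der Corput do give control of $A_N$ by $\nnorm{f_2}_3$ (this is Proposition~\ref{P:2}); the problem is your mechanism for lowering $3$ to $2$. You write that ``once $f_2$ may be assumed measurable with respect to the order-$(s-1)$ factor, the dual functions \ldots\ are measurable with respect to the order-$(s-2)$ factor''. This is false: products, shifts, and ergodic averages of $\mathcal{Z}_{s-1}$-measurable functions remain in $\mathcal{Z}_{s-1}$, and there is no reason for them to drop to $\mathcal{Z}_{s-2}$ (already for $s=2$: an ergodic average of products of eigenfunctions is typically a nonconstant eigenfunction, not a constant). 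Degree-lowering is not a soft measurability fact; it requires the specific polynomial structure of the iterates together with a Weyl-type equidistribution input, and neither appears in your Step~1.

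The paper's argument is organized differently and this difference is essential. It does \emph{not} try to show directly that $\nnorm{f_2}_2=0$ kills $A_N$. Instead, arguing by contradiction, it replaces $f_2$ by the explicit dual function $\tilde f_2=\lim_k\E_{n\in[N_k]}T^{-n^2}g_k\cdot T^{-n^2+n}\overline{f}_1$ (Proposition~\ref{P:1}), for which $\nnorm{\tilde f_2}_3>0$. The degree-lowering $\nnorm{\tilde f_2}_3>0\Rightarrow\nnorm{\tilde f_2}_2>0$ (Propositions~\ref{P:3} and~\ref{P:4}) succeeds precisely because of this averaged form: after a Gowers--Cauchy--Schwarz step one unwinds the average defining $\tilde f_2$, and the quadratic iterate $T^{n^2}$ now lands on an \emph{eigenfunction} $\chi_{n_1}$, collapsing to a scalar weight $e(n^2\alpha_{n_1})$. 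The spectral theorem then reduces matters to Weyl sums, and Lemma~\ref{L:Weyl2} forces the $\alpha_{n_1}$ into a fixed finite rational set, which is what permits the descent to $\nnorm{\cdot}_2$ via Lemma~\ref{L:lower}. These three ingredients --- the explicit dual function, the Gowers--Cauchy--Schwarz unwinding, and the Weyl input pinning down the eigenvalues --- are exactly what your sketch omits, and they are where the real work lies.
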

It is easy  to deduce from the previous result  a related multiple recurrence statement and then, via the correspondence principle of Furstenberg,  conclude that any set of integers with positive upper density contains patterns of the form $m,m+n,m+n^2$, for some $m,n\in\N$. This combinatorial  result was first proved in \cite{BL96} using ergodic theory and a proof that avoids ergodic theory (and produces reasonable quantitative bounds) was given in \cite{PP19}.

\subsection{Proof strategy}
Let us briefly describe the main idea of the proof, which is an adaptation of a technique used in \cite{PP19} to our ergodic setup, taking also into account various simplifications that our infinitary setting allows. If
\eqref{E:aa0} fails, then using  standard arguments,  we deduce that $\nnorm{\tilde{f}_2}_3>0$ where (the next limit is a weak limit)
$$
\tilde{f}_2:=\lim_{k\to\infty} \E_{n\in [N_k]} \, T^{-n^2}g_k\cdot T^{-n^2+n}\overline{f}_1
$$
for some $g_k\in L^\infty(\mu)$, bounded by $1$, and $N_k\to\infty$. The reader will find the details in Steps~1 and 2 below.
This then easily implies that
 \begin{equation}\label{E:Delta}
\liminf_{N\to\infty} \E_{n_1\in[N]}
\, \Re \Big(\int \Delta_{n_1}\tilde{f}_2\cdot \chi_{n_1}\, d\mu\Big)>0,
\end{equation}
where $\Delta_{n_1}f=T^{n_1}f\cdot \overline{f}$, $n_1\in\N$,   and $\chi_{n_1}$, $n_1\in\N$,  are  appropriate eigenfunctions of the system $(X,\mu,T)$ with unit modulus.
Suppose for convenience that the operators $\Delta_{n_1}$ and $\E_{n\in [N_k]}$ commute; this is of course not true, but another  convenient property holds and can be used as a substitute. Using this simplifying assumption we have
$$
\liminf_{N\to\infty} \E_{n_1\in[N]} \lim_{k\to\infty}\E_{n\in [N_k]}
\, \Re \Big(\int T^{-n^2}(\Delta_{n_1}g_k) \cdot T^{-n^2+n}(\Delta_{n_1}\overline{f}_1) \cdot \chi_{n_1}\, d\mu\Big)>0.
$$
At this point we have a much simpler problem to work with. Indeed,
after composing  with $T^{n^2}$ and using that $T^{n^2}\chi_{n_1}=e(n^2\alpha_{n_1})\cdot \chi_{n_1}$, for some $\alpha_{n_1}\in \spec(T)$, $n_1\in\N$, and then using the Cauchy-Schwarz inequality, we deduce that
$$
\liminf_{N\to\infty} \E_{n_1\in[N]} \limsup_{k\to\infty}\norm{\E_{n\in [N_k]}
\,  e(n^2\alpha_{n_1}) \cdot T^n(\Delta_{n_1}\overline{f}_1)}_{L^2(\mu)}> 0.
$$
Using the spectral theorem for unitary operators\footnote{ The consequence we use is that for every system $(X,\mu,T)$ and $f\in L^2(\mu)$ there exists a positive bounded measure $\sigma$ on $\T$  such that $\norm{\sum_{n=1}^N\, c_n\, T^nf}_{L^2(\mu)}=\norm{\sum_{n=1}^Nc_n\, e(nt)}_{L^2(\sigma(t))}$ for all $c_1,\ldots, c_N\in \C$.}
 and then Fatou's Lemma, we get that for some positive measures $\sigma_{n_1}$ on $\T$, $n_1\in \N$,  with uniformly bounded total mass, we have
$$
\liminf_{N\to\infty} \E_{n_1\in[N]}  \norm{\limsup_{k\to\infty} \big|\E_{n\in [N_k]}
\, e(n^2\alpha_{n_1}+nt)\big|}_{L^2(\sigma_{n_1}(t))}> 0.
$$
Using  the  good equidistribution properties of sequences of the form $n\alpha+n^2\beta\pmod{1}$ for $\beta$ irrational, we deduce  that the numbers $\alpha_{n_1}, n_1\in\N$, belong to a finite set of rationals. We conclude that for appropriate $\Lambda_N\subset [N]$, $N\in \N$,  with  $\liminf_{N\to\infty}|\Lambda_N|/N>0$, upon replacing $\E_{n\in[N]}$ with  $\E_{n\in \Lambda_N}$ (which we can do by being more careful),
equation  \eqref{E:Delta} holds with $\chi_{n_1}, n_1\in \Lambda_N,$ that depend only on $N\in \N$.
 It is then pretty straightforward  to deduce that
 $\nnorm{\tilde{f}_2}_2>0$ (as in the last part of Step~4 below). The details of the previous argument and the necessary adjustments,   are given in Steps~3 and 4.
  Given this, it is an easy matter to deduce in Step~5 that the functions $f_1$ and $f_2$ are not orthogonal to $\mathcal{K}_{rat}(T)$, contradicting the assumptions of Theorem~\ref{T:FW}.

 \subsection{Step 1} (Characteristic factors)
 Our first step is to show that if
 \eqref{E:aa0} fails, then it also fails for some function  $f_2$ of special form. More precisely the following holds:
 \begin{proposition}\label{P:1}
 Let $(X,\mu,T)$ be a system and $f_1,f_2\in L^\infty(\mu)$ be such that
 \begin{equation}\label{E:aa1}
\limsup_{N\to\infty}\norm{ \E_{n\in [N]} \, T^nf_1\cdot T^{n^2}f_2}_{L^2(\mu)}> 0.
 \end{equation}
  Then  there exist $N_k\to\infty$ and $g_k\in L^\infty(\mu)$, with  $\norm{g_k}_{L^\infty(\mu)}\leq 1$,  $k\in\N$, such that
 for
 \begin{equation}\label{E:aa2}
 \tilde{f}_2:=\lim_{k\to\infty} \E_{n\in [N_k]} \, T^{-n^2}g_k\cdot T^{-n^2+n}\overline{f}_1,
 \end{equation}
 where the limit is a weak limit (note that then $\tilde{f}_2\in L^\infty(\mu)$), we have
 \begin{equation}\label{E:aa3}
 \limsup_{N\to\infty}\norm{ \E_{n\in [N]} \, T^nf_1\cdot T^{n^2}\tilde{f}_2}_{L^2(\mu)} >0.
 \end{equation}
 \end{proposition}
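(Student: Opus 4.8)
\emph{Plan and setup.} The plan is to realize $\tilde f_2$ as a ``quadratic dual function'' built directly from the averages in \eqref{E:aa1}, and then to extract \eqref{E:aa3} through two applications of the Cauchy--Schwarz inequality. Write $\langle f,g\rangle:=\int f\,\overline g\,d\mu$ and, after rescaling, assume $\norm{f_1}_{L^\infty(\mu)},\norm{f_2}_{L^\infty(\mu)}\le 1$ (so also $\norm{f_2}_{L^2(\mu)}\le 1$). For $\phi\in L^\infty(\mu)$ set
\[
B_N(\phi):=\E_{n\in[N]}\, T^nf_1\cdot T^{n^2}\phi ,
\]
so that \eqref{E:aa1} reads $\limsup_N\norm{B_N(f_2)}_{L^2(\mu)}>0$, while $\norm{B_N(\phi)}_{L^2(\mu)}\le\norm{\phi}_{L^\infty(\mu)}$. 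Applying $T^{-n^2}$ inside the integral in each summand (using $\int T^mF\,d\mu=\int F\,d\mu$), one computes the adjoint
\[
\langle B_N(\phi),\psi\rangle=\langle\phi,B_N^*(\psi)\rangle,\qquad B_N^*(\psi):=\E_{n\in[N]}\, T^{-n^2}\psi\cdot T^{-n^2+n}\overline{f}_1 ,
\]
with $\norm{B_N^*(\psi)}_{L^\infty(\mu)}\le\norm{\psi}_{L^\infty(\mu)}$. The key observation is that $B_N^*$ outputs exactly the kind of average that appears in \eqref{E:aa2}.

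\emph{Construction of $\tilde f_2$.} Using \eqref{E:aa1}, fix $c>0$ and $N_k\to\infty$ with $\norm{B_{N_k}(f_2)}_{L^2(\mu)}\ge c$, and put $g_k:=B_{N_k}(f_2)$, so $\norm{g_k}_{L^\infty(\mu)}\le 1$. Then
\[
\tilde h_k:=B_{N_k}^*(g_k)=\E_{n\in[N_k]}\, T^{-n^2}g_k\cdot T^{-n^2+n}\overline{f}_1
\]
satisfies $\norm{\tilde h_k}_{L^\infty(\mu)}\le 1$; passing to a subsequence (Banach--Alaoglu together with metrizability of the weak-$\ast$ unit ball of $L^\infty(\mu)$, $\mu$ being a Lebesgue probability measure) we may assume $\tilde h_k\to\tilde f_2$ weakly, and this is the $\tilde f_2$ of \eqref{E:aa2}; note $\norm{\tilde f_2}_{L^\infty(\mu)}\le 1$.

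\emph{The two Cauchy--Schwarz steps.} First, the adjoint identity gives $\langle f_2,\tilde h_k\rangle=\langle B_{N_k}(f_2),g_k\rangle=\norm{B_{N_k}(f_2)}_{L^2(\mu)}^2\ge c^2$; letting $k\to\infty$ (weak convergence of $\tilde h_k$) yields $\langle f_2,\tilde f_2\rangle\ge c^2$, and since $\norm{f_2}_{L^2(\mu)}\le 1$ this forces $\norm{\tilde f_2}_{L^2(\mu)}\ge c^2$. Second, by the adjoint identity again,
\[
\langle B_{N_k}(\tilde f_2),B_{N_k}(f_2)\rangle=\langle\tilde f_2,B_{N_k}^*(g_k)\rangle=\langle\tilde f_2,\tilde h_k\rangle\longrightarrow\norm{\tilde f_2}_{L^2(\mu)}^2\quad(k\to\infty),
\]
using once more that $\tilde h_k\to\tilde f_2$ weakly. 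Combining this with $|\langle B_{N_k}(\tilde f_2),B_{N_k}(f_2)\rangle|\le\norm{B_{N_k}(\tilde f_2)}_{L^2(\mu)}\,\norm{B_{N_k}(f_2)}_{L^2(\mu)}\le\norm{B_{N_k}(\tilde f_2)}_{L^2(\mu)}$ gives $\liminf_k\norm{B_{N_k}(\tilde f_2)}_{L^2(\mu)}\ge\norm{\tilde f_2}_{L^2(\mu)}^2\ge c^4>0$, and since $N_k\to\infty$ this yields \eqref{E:aa3}.

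\emph{Expected main obstacle.} There is no deep difficulty here: the argument is a clean two-step duality computation. The care needed is in (i) checking that the ``weak limit'' of \eqref{E:aa2} may be taken as the weak-$\ast$ limit in $L^\infty(\mu)$ and that this suffices to pass all the inner-product limits — legitimate because on a probability space every $L^2$ (hence every $L^\infty$) function lies in $L^1(\mu)$, the predual of $L^\infty(\mu)$, and conjugation preserves weak-$\ast$ convergence — and (ii) the conjugate bookkeeping in the adjoint formula and the inner products. The one genuinely used input beyond formal manipulation is the normalization $\norm{f_2}_{L^\infty(\mu)}\le 1$, which is what upgrades $\langle f_2,\tilde f_2\rangle\ge c^2$ into the decisive lower bound $\norm{\tilde f_2}_{L^2(\mu)}\ge c^2$.
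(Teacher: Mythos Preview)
Your proof is correct and rests on the same core idea as the paper: take $g_k$ to be the averages $\E_{n\in[N_k]}\, T^nf_1\cdot T^{n^2}f_2$ themselves, and define $\tilde f_2$ as a weak limit of the ``dual'' averages $B_{N_k}^*(g_k)$. The paper packages this a bit differently: it introduces the closed subspace $\mathcal{C}\subset L^2(\mu)$ spanned by all weak limits of such dual averages (for arbitrary bounded $g_N$), shows that $f_2\perp\mathcal{C}$ forces the original averages to vanish (this is essentially your first Cauchy--Schwarz step), replaces $f_2$ by its projection $\E(f_2|\mathcal{C})$, and then invokes a final $L^2$-approximation argument to pass from $\E(f_2|\mathcal{C})$ to a genuine $\tilde f_2$ of the form \eqref{E:aa2}. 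Your explicit adjoint identity and second Cauchy--Schwarz step bypass both the projection and the approximation, yielding a slightly more direct and self-contained argument; the paper's route, on the other hand, makes it transparent that \emph{any} element of $\mathcal{C}$ close to $\E(f_2|\mathcal{C})$ would work as $\tilde f_2$.
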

\begin{proof} We can assume that both $f_1$ and $f_2$ are bounded by $1$.
For fixed $f_1\in L^\infty(\mu)$ we let $\mathcal{C}=\mathcal{C}(f_1)$ be the $L^2(\mu)$ closure  of  all  linear combinations of all
 subsequential  weak-limits of sequences of the form
$$
\E_{n\in [N]} \, T^{-n^2}g_N\cdot T^{-n^2+n}\overline{f}_1,
$$
where $g_N\in L^\infty(\mu)$, $N\in \N$,   are all bounded by $1$.

 We first  claim that if  $h\in L^\infty(\mu)$  is orthogonal to the subspace $\mathcal{C}$, then
 $$
 \lim_{N\to\infty}  \E_{n\in [N]} \, T^nf_1\cdot T^{n^2}h=0
 $$
 in $L^2(\mu)$.
 Indeed, if this is not the case, then  there exist  $a>0$ and $N_k\to \infty$  such that
 $$
 \norm{ \E_{n\in [N_k]} \, T^nf_1\cdot T^{n^2}h}_{L^2(\mu)}\geq a, \quad k\in \N.
 $$
 If we define the functions
 $$
 g_k:=
  \E_{n\in [N_k]} \, T^nf_1\cdot T^{n^2}h, \quad k\in \N,
 $$
 which all have  $L^\infty(\mu)$-norm bounded by $\norm{h}_{L^\infty(\mu)}$, we deduce that
 \begin{equation}\label{E:a2}
 \E_{n\in [N_k]} \, \int \overline{g}_k\cdot  T^nf_1\cdot T^{n^2}h\, d\mu \geq a^2,\quad k\in \N.
 \end{equation}
 By passing to a subsequence, we can assume that the averages
$ \E_{n\in [N_k]} \, T^{-n^2} g_k\cdot T^{-n^2+n}\overline{f}_1$, which are functions bounded by $\norm{h}_{L^\infty(\mu)}$,  converge in the weak topology as $k\to\infty$ say to a function $f\in \mathcal{C}$.
Then  composing with $T^{-n^2}$ in \eqref{E:a2} we deduce that
$$
\int h\cdot \overline{f}\,  d\mu\neq 0,
$$
contradicting our assumption  that $h$ is orthogonal to the subspace $\mathcal{C}$. This proves our claim.

By applying the previous claim for $h:=f_2-\E(f_2|\mathcal{C})$, we conclude that
$$
 \E_{n\in [N]} \, T^nf_1\cdot T^{n^2}(f_2-\E(f_2|\mathcal{C}))\to 0
$$
in $L^2(\mu)$ as $N\to \infty$,
where $\E(f_2|\mathcal{C})$ denotes the orthogonal projection  of $f_2$ onto the closed subspace $\mathcal{C}$.
Hence, if \eqref{E:aa1} holds,
 then
$$
 \E_{n\in [N]} \, T^nf_1 \cdot T^{n^2}\E(f_2|\mathcal{C}) \not\to 0
 $$
 in $L^2(\mu)$ as $N\to \infty$.
 Using the definition of $\mathcal{C}$ and an approximation argument, we get that  there exist $N_k\to\infty$ and $g_k\in L^\infty(\mu)$, $k\in\N$, all bounded by $1$, such that for $\tilde{f}_2$ as in \eqref{E:aa2} we have that
 \eqref{E:aa3} holds. Lastly,  since $f_1$ and $g_k$, $k\in \N$, all have $L^\infty(\mu)$ norm bounded by $1$, the same holds for $\tilde{f}_2$. This completes the proof.
 \end{proof}

 \subsection{Step 2} (Seminorm estimates) We state some seminorm estimates that were proved in a slightly different form in \cite{FuW96}. The technique is standard and we sketch it for completeness.
  \begin{proposition}\label{P:2}
 	Let $(X,\mu,T)$ be an ergodic  system and $f_1,f_2\in L^\infty(\mu)$ be such that   $\nnorm{f_2}_{3}=0$. Then
 	\begin{equation}\label{E:aa1'}
 	\lim_{N\to\infty}\E_{n\in[N]} \, T^nf_1\cdot T^{n^2}f_2=0
 	\end{equation}
 	where convergence takes place in $L^2(\mu)$.
 \end{proposition}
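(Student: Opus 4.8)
The plan is to prove \eqref{E:aa1'} by the standard van der Corput / PET argument, exploiting that both iterates $n$ and $n^2$ are polynomial. We may assume $\|f_1\|_{L^\infty(\mu)}\le 1$ and $\|f_2\|_{L^\infty(\mu)}\le 1$. The only tool is van der Corput's inequality in Hilbert space: if $(u_n)_{n\in\N}$ is a bounded sequence in a Hilbert space and $\lim_{H\to\infty}\E_{h\in[H]}\limsup_{N\to\infty}\big|\E_{n\in[N]}\langle u_{n+h},u_n\rangle\big|=0$, then $\lim_{N\to\infty}\E_{n\in[N]}u_n=0$. First I would apply this with $u_n:=T^nf_1\cdot T^{n^2}f_2\in L^2(\mu)$. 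Writing out $\langle u_{n+h},u_n\rangle$ and using that $T$ preserves $\mu$ (i.e.\ substituting $x\mapsto T^nx$) one gets
\[ \langle u_{n+h},u_n\rangle = \int \big(T^hf_1\cdot\overline{f_1}\big)\cdot\big(T^{n^2+(2h-1)n+h^2}f_2\big)\cdot\overline{\big(T^{n^2-n}f_2\big)}\, d\mu . \]
Since the factor $T^hf_1\cdot\overline{f_1}$ does not depend on $n$, pulling it out of the average $\E_{n\in[N]}$ and applying the Cauchy--Schwarz inequality bounds $\big|\E_{n\in[N]}\langle u_{n+h},u_n\rangle\big|$ by $\big\|\E_{n\in[N]}\,T^{n^2+(2h-1)n+h^2}f_2\cdot T^{n^2-n}\overline{f_2}\big\|_{L^2(\mu)}$. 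So it suffices to show that this quantity tends to $0$ after $\limsup_{N\to\infty}$ and then $\lim_{H\to\infty}\E_{h\in[H]}$, under the hypothesis $\nnorm{f_2}_3=0$.

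Now the two quadratic iterates differ by the linear polynomial $2hn+h^2$, so a second van der Corput step — squaring the $L^2$-norm, expanding it into a double average over $n$ and $m$, setting $m=n+k$, and applying $T^{-(n^2-n)}$ — makes every exponent linear in $n$ and produces an average over $k$ and $n$ of correlations of the form $\int\prod_i T^{c_in+d_i}g_i\, d\mu$, where each $g_i\in\{f_2,\overline{f_2}\}$ and the slopes $c_i$ run through $0,2h,2k,2h+2k$, pairwise distinct as soon as $h\ne k$, with $f_2$ and $\overline{f_2}$ occupying opposite pairs of corners of the corresponding parallelepiped (so the configuration carries the same structure as the $U^2$/Kronecker box). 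A further application of van der Corput then collapses these purely linear multiple averages, and following how the successive differencings act on $f_2$ one checks that $\nnorm{f_2}_3=0$ forces the entire chain of averages to $0$ — using ergodicity at the bottom, where $\nnorm{\cdot}_1=\big|\int\cdot\, d\mu\big|$.

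The one genuinely delicate point, which I expect to be the real obstacle, is this last piece of bookkeeping: keeping the number of van der Corput steps and the shape of the intermediate linear configurations aligned with the seminorm index $3$ (a carelessly run PET induction would yield a worse, though still finite, index). Since everything in sight is polynomial, this is entirely routine; it is essentially the computation carried out, in slightly different language, in \cite{FuW96}, and it is also a special case of the general Host--Kra seminorm estimates for polynomial iterates. Given that, the proof of the proposition is immediate from the three reductions above.
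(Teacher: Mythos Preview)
Your approach is essentially the paper's: two van der Corput steps (compose with $T^{-n}$, then with $T^{-n^2}$) linearize the iterates, after which one must show that $\nnorm{f_2}_3=0$ kills the resulting three-term linear average $\E_{n\in[N]}\,T^{an}f_2\cdot T^{bn}g\cdot T^{cn}h$ with $a\notin\{b,c\}$. The paper does exactly this and then cites Leibman~\cite{L05'} for the final linear estimate.

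One point deserves sharpening. You write that ``a further application of van der Corput then collapses these purely linear multiple averages'' and that aligning the index with $3$ is ``entirely routine.'' The paper explicitly flags this as the non-trivial step: its footnote observes that a plain van der Corput pass on the three-term linear average only yields control by $\nnorm{f_2}_4$, not $\nnorm{f_2}_3$, and that getting down to index $3$ genuinely requires Leibman's result (or equivalently the Host--Kra characteristic-factor machinery). Your appeal to the $U^2$-box structure of the four functions $f_2,\overline{f_2},\overline{f_2},f_2$ at slopes $0,2h,2k,2(h+k)$ is suggestive but does not by itself bypass this: after the inevitable Cauchy--Schwarz the functions at the other slopes become arbitrary, and the shifts $c_i$ do not line up to form a genuine multiplicative derivative. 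So your citation of the ``general Host--Kra seminorm estimates'' is doing the real work here, and that should be stated rather than folded into ``routine bookkeeping.''
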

 \begin{proof}
 		Using   the van der Corput Lemma (see for example  \cite[Lemma~1.1]{FuW96}), composing with $T^{-n}$,  and then using 	
 		the Cauchy-Schwarz inequality,   we get that instead of \eqref{E:aa1'} it  suffices to show that for every  $m\in \N$ we have
 	$$
 	\lim_{N\to \infty}  \E_{n\in [N]} \,   T^{(n+m)^2-n}f_2\cdot T^{n^2-n}\overline{f}_2= 0
 	$$
 	in $L^2(\mu)$. 
 	Using   the van der Corput lemma again, composing with $T^{-n^2}$,  and then using 	
 	the Cauchy-Schwarz inequality,   we further reduce matters to showing  the following:  If  $\nnorm{f_2}_{3}=0$, then for all  $g,h\in L^\infty(\mu)$ and all   $a,b,c\in \N$  with $a\neq b, a\neq c$, we have
 		\begin{equation}\label{E:abc}
 	\lim_{N\to \infty}  \E_{n\in [N]} \,   T^{an}f_2\cdot T^{bn}g\cdot T^{cn}h= 0
 	\end{equation}
 	in $L^2(\mu)$. 
 	 This follows from  \cite[Theorem~8]{L05'}.\footnote{The argument in \cite{L05'} is non-trivial. One can avoid  it by using that  $\nnorm{f_2}_4=0$ implies the identity \eqref{E:abc}  (see for example \cite[Chapter~21, Proposition~7]{HK18}). The drawback is that  the use of the $4$-th seminorm  would complicate our subsequent arguments a bit; but still a proof that avoids deep machinery can be given by using the argument in  Section~\ref{S:Main}.}
\end{proof}

 \subsection{Step 3} (Seminorms of averages of functions)
 Our next goal is to show  that if the seminorm of an average of functions is positive, then some related positiveness property holds for the individual functions. This is a crucial property for our argument;  we state it here only in the form needed for the proof of Theorem~\ref{T:FW}, and prove the extension needed for Theorem~\ref{T:JointErgodicity} in Proposition~\ref{P:3'} below.

\begin{definition}  If $(X,\mu,T)$ is a system and  $f\in L^\infty(\mu)$, then for $n\in\Z$ we let $\Delta_nf:=T^nf\cdot \overline{f}$.
\end{definition}

\begin{proposition}\label{P:3}
	Let $(X,\mu,T)$ be an ergodic system, $f_{n,k}\in L^\infty(\mu)$, $n,k\in \N$, be bounded by $1$,  and $f\in L^\infty(\mu)$ be defined by
	$$
	f:=\, \lim_{k\to\infty}\E_{n\in [N_k]}\, f_{n,k},
	$$
	for some $N_k\to\infty$, where the average is assumed to converge weakly.
	If 	$\nnorm{f}_{3}>0$,
		then there exist  $a>0$, a subset $\Lambda$ of $\N$ with positive lower density, and $\chi_{n_1}\in \mathcal{E}(T)$, $n_1\in \N$,  such that
	$$
 \Re \Big( \int \Delta_{n_1}f \cdot \chi_{n_1}\, d\mu\Big)>a, \quad  n_1\in \Lambda,
	$$
	and 	
		$$
	\liminf_{N\to\infty}\E_{n_1,n_1'\in \Lambda\cap [N]}\,  \limsup_{k\to\infty}\E_{n\in[N_k]}\,  \Re \Big(\int \Delta_{n_1-n_1'}f_{n,k} \cdot T^{-n_1'}(\chi_{n_1} \cdot \overline{\chi}_{n_1'})\, d\mu\Big)>0.
	$$
\end{proposition}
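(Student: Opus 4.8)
The plan is to unwind the hypothesis $\nnorm{f}_3 > 0$ via the defining formula for the $3$-rd ergodic seminorm as an average of square-integrals of $\Delta_{n_1}f$, and then to pass this positivity through the weak limit that defines $f$. Concretely, recall from Section~\ref{SS:GHK} that for an ergodic system $\nnorm{f}_3^{2^3} = \lim_N \E_{n_1\in[N]} \nnorm{\Delta_{n_1}f}_2^{2^2}$, and $\nnorm{g}_2^{2^2} = \lim_N \E_{n_1'\in[N]}\, |\int \Delta_{n_1'}g\, d\mu|^2$. Since $\nnorm{f}_3>0$, a positive-proportion pigeonhole argument gives $a>0$ and a set $\Lambda\subset\N$ of positive lower density such that $\nnorm{\Delta_{n_1}f}_2 > 2a^{1/4}$, say, for all $n_1\in\Lambda$. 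The quantity $\nnorm{\Delta_{n_1}f}_2$ can be unfolded one more level: it equals a limit of averages over $n_1'$ of $|\int T^{n_1-n_1'}(\Delta_{n_1}f)\cdot \overline{\Delta_{n_1}f}\,\cdots|$, and the standard spectral/orthogonality argument identifies the relevant "correlation" with an eigenfunction. This is precisely where the $\chi_{n_1}\in\mathcal{E}(T)$ come from: for each $n_1\in\Lambda$ one produces a unit-modulus eigenfunction $\chi_{n_1}$ with $\Re\big(\int \Delta_{n_1}f\cdot\chi_{n_1}\,d\mu\big) > a$. (If necessary one first shrinks $\Lambda$ by another positive-density pigeonhole to make the constant uniform, and multiplies $\chi_{n_1}$ by a unit scalar to make the integral real and positive — since $\mathcal{E}(T)$ is closed under multiplication by unit constants this is legitimate.)

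The second displayed inequality is then obtained by substituting $f = \lim_k \E_{n\in[N_k]} f_{n,k}$ into the first. The key point is that $\Delta_{n_1}$ is not linear, but the \emph{difference} operator behaves well: $\Delta_{n_1 - n_1'}$ applied inside the average, combined with a translate $T^{-n_1'}$, reproduces the pairing of $\chi_{n_1}$ against $\chi_{n_1'}$. Explicitly, after writing $\int \Delta_{n_1}f\cdot\chi_{n_1}\,d\mu = \int T^{n_1}f\cdot\overline f\cdot\chi_{n_1}\,d\mu$ and exploiting positivity for $n_1, n_1'\in\Lambda$, one forms the product over the pair $(n_1,n_1')$, uses that $f$ is a weak limit of the averages to replace $\overline f$ and $T^{n_1}f$ (and their $T^{n_1'}$-translates) by the corresponding $\E_{n\in[N_k]}$-averages of $f_{n,k}$, and applies Cauchy--Schwarz together with the uniform bound $\|f_{n,k}\|_\infty\le 1$ to bring the $\E_{n\in[N_k]}$ inside. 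Regrouping the exponents (the shift by $n_1$ versus $n_1'$ turns into the difference $n_1-n_1'$ after composing with $T^{-n_1'}$) and recognizing $\chi_{n_1}\cdot\overline{\chi}_{n_1'}$ as the surviving eigenfunction factor, one arrives at the stated $\liminf$ being $\ge c\cdot a^2 \cdot (\underline d(\Lambda))^2 > 0$ for an absolute constant $c$.

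I expect the main obstacle to be bookkeeping rather than conceptual: carefully tracking which factors are genuinely bilinear (so that the weak limit and the $\E_{n\in[N_k]}$-average can be interchanged with $\E_{n_1,n_1'}$) and which are not, and verifying that the interchange of $\limsup_k$ with the outer averages is justified by Fatou / the uniform $L^\infty$ bound. One must also be slightly careful that the weak convergence defining $f$ is only along the subsequence $N_k$, so all manipulations involving $f$ as a limit must stay on that subsequence — which is exactly why the final statement has $\limsup_{k\to\infty}\E_{n\in[N_k]}$ rather than a genuine limit. A minor point is that we only need ergodicity of $(X,\mu,T)$ to guarantee the seminorm identities used in the first paragraph; no deeper structure theory is invoked. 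The extension of this proposition needed for Theorem~\ref{T:JointErgodicity}, namely Proposition~\ref{P:3'}, will follow the same template with longer difference chains.
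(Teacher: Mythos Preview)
Your first paragraph is essentially the paper's argument: use $\nnorm{f}_3^8=\lim_N\E_{n_1\in[N]}\nnorm{\Delta_{n_1}f}_2^4$ together with the soft inverse theorem (Proposition~\ref{P:1'}) to produce $\chi_{n_1}\in\mathcal E(T)$ and then pigeonhole to a positive-lower-density set $\Lambda$ on which $\Re\big(\int\Delta_{n_1}f\cdot\chi_{n_1}\,d\mu\big)>a$. That part is fine.

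The second paragraph, however, contains a genuine gap. You propose to ``replace $\overline f$ and $T^{n_1}f$'' by the weak-limit averages and to ``form the product over the pair $(n_1,n_1')$''. Neither step works as stated. Weak convergence does not pass to products: from $g_k\rightharpoonup f$ you cannot conclude that $\int (T^{n_1}g_k)\,\overline{g_k}\,h\,d\mu\to\int (T^{n_1}f)\,\overline f\,h\,d\mu$, so you cannot substitute the weak limit into both factors of $\Delta_{n_1}f$. And multiplying the $n_1$ and $n_1'$ inequalities gives a product of two integrals, not a single integral containing $\Delta_{n_1-n_1'}f_{n,k}$; there is no mechanism to merge them.

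The paper's route avoids both issues as follows. In $\int T^{n_1}f\cdot\overline f\cdot\chi_{n_1}\,d\mu$ replace \emph{only} the factor $T^{n_1}f$ by the weak limit, giving
$\lim_k\E_{n\in[N_k]}\int T^{n_1}f_{n,k}\cdot\overline f\cdot\chi_{n_1}\,d\mu$. Average over $n_1\in\Lambda\cap[N]$, interchange the finite average $\E_{n_1}$ with $\lim_k\E_{n\in[N_k]}$, and then apply Cauchy--Schwarz in $L^2(\mu)$ to discard the single remaining $\overline f$:
\[
\liminf_{N\to\infty}\limsup_{k\to\infty}\E_{n\in[N_k]}\int\Big|\E_{n_1\in\Lambda\cap[N]}\,T^{n_1}f_{n,k}\cdot\chi_{n_1}\Big|^2\,d\mu>0.
\]
Only now does the variable $n_1'$ appear, by expanding the square; after composing with $T^{-n_1'}$ the integrand becomes $\Delta_{n_1-n_1'}f_{n,k}\cdot T^{-n_1'}(\chi_{n_1}\overline\chi_{n_1'})$, with both copies of $f_{n,k}$ carrying the \emph{same} index $(n,k)$, which is exactly what is needed. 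Your sketch has the Cauchy--Schwarz and the introduction of $n_1'$ in the wrong order, and that order matters.
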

\begin{remark}
	We plan to apply this proposition in the next step for the  function $\tilde{f}_2$ given by \eqref{E:aa2} in place of $f$.
	\end{remark}
\begin{proof}	
By \eqref{E:seminorm4}, which is proved below, we have  $\nnorm{f}_3^8=\lim_{N\to\infty}\E_{n_1\in[N]}\nnorm{\Delta_{n_1}f}^4_2>0$. We deduce from Proposition~\ref{P:1'} below
 that there exist $\chi_{n_1}\in \mathcal{E}(T)$, $n_1\in\N$, such that
$$
 \liminf_{N\to\infty}\E_{n_1\in[N]} \,
 \Re \Big(\int \Delta_{n_1}f\cdot \chi_{n_1}\, d\mu\Big)>0.
$$
Hence,  there exist  $a>0$, and a subset $\Lambda$ of $\N$ with positive lower density,
such that
	$$
 \Re \Big( \int \Delta_{n_1}f \cdot \chi_{n_1}\, d\mu\Big)>a,  \quad n_1\in \Lambda.
	$$
Then we have
	$$
	\liminf_{N\to\infty}\E_{n_1\in \Lambda\cap [N]}\,  \Re \Big( \int \Delta_{n_1}f \cdot \chi_{n_1}\, d\mu\Big)>0.
	$$
	Since $f=\lim_{k\to\infty}\E_{n\in [N_k]}\, f_{n,k}$ (the limit is a weak limit), we deduce that
		$$
	\liminf_{N\to\infty}\E_{n_1\in \Lambda\cap [N]}\,  \lim_{k\to\infty}\E_{n\in [N_k]}\, \Re \Big( \int  T^{n_1}f_{n,k} \cdot \overline{f}\cdot  \chi_{n_1}\, d\mu\Big)>0.
	$$
	Since all the limits $ \lim_{k\to\infty}\E_{n\in [N_k]}$ exist, we can interchange the finite average $\E_{n_1\in\Lambda\cap [N]}$ with $ \lim_{k\to\infty}\E_{n\in [N_k]}$, and  after using the Cauchy-Schwarz inequality we deduce that
		$$
	\liminf_{N\to\infty}  \limsup_{k\to\infty}\E_{n\in[N_k]}\int |\E_{n_1\in\Lambda\cap [N]}\, T^{n_1}f_{n,k} \cdot  \chi_{n_1}|^2\, d\mu>0.
	$$
 Expanding the square, composing with $T^{-n_1'}$,  and using that the limsup of a finite sum is at most the sum of the limsups, we get
 	$$
	\liminf_{N\to\infty}  \E_{n_1,n_1'\in \Lambda\cap [N]} \limsup_{k\to\infty}\E_{n\in[N_k]} \Re \Big(\int \overline{f}_{n,k} \cdot T^{n_1-n_1'}f_{n,k} \cdot T^{-n_1'}(\chi_{n_1} \cdot \overline{\chi}_{n_1'})\, d\mu\Big)>0.
	$$
	This completes the proof.
	\end{proof}

 \subsection{Step 4}\label{SS:2.5} (Going from $\nnorm{\cdot}_{3}$ to $\nnorm{\cdot}_{2}$)
We will use the following elementary fact:
\begin{lemma}\label{L:VDC}
 	Let  $N\in \N$ and $v_1,\ldots, v_N$ be elements of an inner product space
	of norm at most $1$. Then
	$$
	\norm{\E_{n\in[N]} \,
		v_n}^2\leq  2\, \E_{m\in [N]}
	\Re\Big(\frac{1}{N}\sum_{ n=1}^{N-m}
	\langle v_{n+m}, v_n \rangle \Big) + \frac{1}{N},
	$$
where $\Re(z)$ denotes the real part of the complex number $z$.
 \end{lemma}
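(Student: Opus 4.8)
The plan is to expand the squared norm on the left-hand side into a double sum of inner products, isolate the diagonal terms, and re-index the off-diagonal part by the difference of the two indices. Concretely, I would first write
\[
\Big\|\sum_{n=1}^N v_n\Big\|^2=\sum_{n=1}^N\sum_{n'=1}^N\langle v_n,v_{n'}\rangle=\sum_{n=1}^N\|v_n\|^2+\sum_{\substack{1\le n,n'\le N\\ n\ne n'}}\langle v_n,v_{n'}\rangle,
\]
and then pair the term indexed by $(n,n')$ with the one indexed by $(n',n)$, using $\langle v_n,v_{n'}\rangle+\langle v_{n'},v_n\rangle=2\Re\langle v_n,v_{n'}\rangle$, so that the off-diagonal contribution becomes $2\sum_{1\le n'<n\le N}\Re\langle v_n,v_{n'}\rangle$. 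Substituting $m=n-n'\in\{1,\dots,N-1\}$ and $n=n'+m$ turns this into $2\sum_{m=1}^{N-1}\sum_{n=1}^{N-m}\Re\langle v_{n+m},v_n\rangle$.

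Next I would use the hypothesis $\|v_n\|\le 1$ to bound the diagonal sum by $N$, which yields
\[
\Big\|\sum_{n=1}^N v_n\Big\|^2\le N+2\sum_{m=1}^{N-1}\sum_{n=1}^{N-m}\Re\langle v_{n+m},v_n\rangle.
\]
Dividing both sides by $N^2$ gives precisely the claimed inequality, except with the outer sum in $m$ running up to $N-1$ rather than $N$. Finally I would observe that the $m=N$ summand on the right-hand side is the empty sum $\sum_{n=1}^{0}$, hence identically zero, so extending the range of $m$ to include $m=N$ does not change the right-hand side. This completes the argument.

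There is essentially no serious obstacle here: the proof is a bookkeeping exercise in the standard Cauchy--Schwarz/van der Corput expansion. The only two points that require mild care are (i) correctly tracking the index ranges when passing from the pair $(n,n')$ to the difference variable $m$, and (ii) noting that the harmless upgrade from $\sum_{m=1}^{N-1}$ to $\sum_{m=1}^{N}$ works because the extra term vanishes exactly, not merely because it is nonnegative (indeed the $\Re(\cdots)$ terms may well be negative).
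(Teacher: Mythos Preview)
Your argument is correct. The paper does not actually prove this lemma; it simply introduces it as ``the following elementary fact'' and states it without proof, so there is no alternative approach to compare against. Your expansion of the squared norm, pairing of conjugate off-diagonal terms, re-indexing by the difference $m=n-n'$, and the observation that the $m=N$ summand is an empty sum are exactly the standard bookkeeping one would expect here.
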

\begin{proof}
For fixed $n\in\N$ we note that
 $$
 \norm{\E_{n\in[N]}\, v_n}^2 = \frac{1}{N^2}\sum_{m,n\in[N]}\langle v_m, v_n\rangle.
 $$
 We split the sum on the right  into three terms, depending on whether  $m<n$, $m=n$, and $m>n$. We make the  substitution $n = m+h$ on the first sum, $m=n+h$ on the third sum, and bound $v_n$ by $1$ on the second sum. The asserted estimate  follows. 	
\end{proof}
Our next goal is to use Proposition~\ref{P:3} and the particular form of the iterates defining the function $\tilde{f}_2$ in order to establish the following
result:
 \begin{proposition}\label{P:4}
Let $(X,\mu,T)$ be an ergodic system, $\tilde{f}_2$ be as in \eqref{E:aa2}, 
and suppose that $\nnorm{\tilde{f}_2}_{3}>0$. Then $\nnorm{\tilde{f}_2}_{2}>0$.
\end{proposition}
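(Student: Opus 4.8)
The plan is to run the argument sketched in the "proof strategy" subsection, specialized to the function $\tilde f_2$ defined by \eqref{E:aa2}. First I would invoke Proposition~\ref{P:3} with $f = \tilde f_2$ and $f_{n,k} = T^{-n^2}g_k\cdot T^{-n^2+n}\overline{f}_1$: since $\nnorm{\tilde f_2}_3>0$ by hypothesis, we obtain $a>0$, a set $\Lambda\subset\N$ of positive lower density, and eigenfunctions $\chi_{n_1}\in\mathcal{E}(T)$ (say $T\chi_{n_1}=e(\alpha_{n_1})\chi_{n_1}$ with $\alpha_{n_1}\in\spec(T)$) such that
$$
\liminf_{N\to\infty}\E_{n_1,n_1'\in\Lambda\cap[N]}\limsup_{k\to\infty}\E_{n\in[N_k]}\Re\Big(\int \Delta_{n_1-n_1'}f_{n,k}\cdot T^{-n_1'}(\chi_{n_1}\cdot\overline{\chi}_{n_1'})\,d\mu\Big)>0.
$$
Now plug in the explicit form of $f_{n,k}$. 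A short computation shows $\Delta_{n_1-n_1'}f_{n,k}$ is, up to bounded factors depending only on $n_1,n_1'$, a product of the form $T^{-n^2}(\Delta_{n_1-n_1'}g_k)\cdot T^{-n^2+n}(\Delta_{n_1-n_1'}\overline{f}_1)$ (the "splitting" of $\Delta$ over a product that the proof strategy alludes to; here one uses that $T^{-n^2}$ and $T^{-n^2+n}$ are powers of $T$ and that $\Delta$ interacts with shifts in a controlled way). Composing the integral with $T^{n^2}$ and using that $\chi_{n_1}$ is an eigenfunction, the factor $T^{-n_1'}(\chi_{n_1}\overline\chi_{n_1'})$ picks up a phase $e(n^2(\alpha_{n_1}-\alpha_{n_1'}))$, so after Cauchy--Schwarz in the $g_k$ variable we are left, for a positive-density set of pairs $(n_1,n_1')$, with
$$
\limsup_{k\to\infty}\big\|\E_{n\in[N_k]}\, e(n^2(\alpha_{n_1}-\alpha_{n_1'}))\cdot T^n(\Delta_{n_1-n_1'}\overline{f}_1)\big\|_{L^2(\mu)}>0.
$$

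The next step is to apply the spectral theorem for unitary operators to the power $T^n$: there are positive measures $\sigma_{n_1,n_1'}$ on $\T$ with uniformly bounded total mass (bounded by $\|\Delta_{n_1-n_1'}\overline{f}_1\|_\infty^2\le 1$) such that the displayed $L^2(\mu)$ norm equals $\|\E_{n\in[N_k]}\, e(n^2(\alpha_{n_1}-\alpha_{n_1'})+nt)\|_{L^2(\sigma_{n_1,n_1'}(t))}$. By Fatou we may pull the $\limsup_k$ inside. The point now is purely a matter of equidistribution: for each fixed $t$, if $\alpha_{n_1}-\alpha_{n_1'}$ is irrational, then $n^2(\alpha_{n_1}-\alpha_{n_1'})+nt$ is equidistributed $\pmod 1$ by Weyl's theorem, so the averages tend to $0$; the same holds if $\alpha_{n_1}-\alpha_{n_1'}$ is rational but $t$ is irrational. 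Hence the only contribution comes from pairs with $\alpha_{n_1}-\alpha_{n_1'}\in\Q$ and point masses of $\sigma_{n_1,n_1'}$ at rationals. Since the conclusion of Proposition~\ref{P:3} forces a positive-density set of pairs $(n_1,n_1')\in\Lambda^2$ on which the quantity survives, and since the $\sigma_{n_1,n_1'}$ have bounded mass, one deduces that for $(n_1,n_1')$ in a positive-density subset of $\Lambda^2$ the difference $\alpha_{n_1}-\alpha_{n_1'}$ lies in a fixed finite set of rationals (an $\varepsilon$-net argument, using that a bounded measure can put mass $\ge\delta$ on at most finitely many rationals with denominator below a threshold). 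By a pigeonhole/Ramsey-type argument this lets us pass to a further subset $\Lambda'\subset\Lambda$ of positive lower density, and a single rational $q\in\Q$ with bounded denominator, so that $\alpha_{n_1}\in\alpha_0+q\Z$-type coset structure --- more precisely, all $\alpha_{n_1}$, $n_1\in\Lambda'$, differ from each other by rationals with a common bounded denominator $Q$.

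Once the eigenvalues $\alpha_{n_1}$ are pinned down to lie in a single coset of $\frac1Q\Z$ for $n_1$ in a positive-density set, the eigenfunctions $\chi_{n_1}$ can be replaced (multiplying by suitable fixed rational eigenfunctions and absorbing phases into $a$) by eigenfunctions whose eigenvalue is a fixed rational, hence by rational eigenfunctions times a single fixed eigenfunction; in particular one can choose the $\chi_{n_1}$ to depend only on $N$ (through membership in $\Lambda'\cap[N]$) rather than on $n_1$ individually, or more cleanly, replace $\chi_{n_1}$ by $T^{n_1}\psi\cdot\overline\psi$ for a fixed rational eigenfunction $\psi$ up to bounded error. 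Feeding this back into the first displayed inequality of Proposition~\ref{P:3} (the one asserting $\Re\int\Delta_{n_1}\tilde f_2\cdot\chi_{n_1}\,d\mu>a$ on $\Lambda$), and using that $\int\Delta_{n_1}\tilde f_2\cdot\overline{\Delta_{n_1}\psi}\,d\mu=\int T^{n_1}(\tilde f_2\overline\psi)\cdot\overline{(\tilde f_2\overline\psi)}\,d\mu$, we find $\E_{n_1\in[N]}|\int\Delta_{n_1}(\tilde f_2\overline\psi)\,d\mu|$ bounded below, which by the formula $\nnorm{h}_2^4=\lim_N\E_{n_1\in[N]}|\int\Delta_{n_1}h\,d\mu|^2$ gives $\nnorm{\tilde f_2\overline\psi}_2>0$; since $\psi$ is a bounded eigenfunction of modulus one, $\nnorm{\tilde f_2\overline\psi}_2=\nnorm{\tilde f_2}_2$ (multiplication by a bounded eigenfunction does not change the $\nnorm{\cdot}_2$ seminorm), so $\nnorm{\tilde f_2}_2>0$ as desired. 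The main obstacle I anticipate is the bookkeeping in the reduction from a positive-density set of \emph{pairs} $(n_1,n_1')$ to a positive-density set of \emph{indices} $n_1$ with eigenvalues constrained to a fixed bounded-denominator coset --- making the $\varepsilon$-net/pigeonhole step rigorous while keeping quantitative control of the density, and correctly handling the interchange of $\limsup_k$ with averages over $n_1$ and the application of Fatou --- together with verifying the precise form of the $\Delta$-splitting identity for the specific iterates $-n^2$ and $-n^2+n$.
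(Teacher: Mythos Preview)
Your overall strategy matches the paper's, but two steps are not correct as written.

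\medskip

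\textbf{Finite set of rationals.} From the spectral theorem and Fatou you correctly reach, for a positive-density set of pairs, that
\[
\limsup_{k\to\infty}\big|\E_{n\in[N_k]}\, e\big(n t_{n_1,n_1'}+n^2(\alpha_{n_1}-\alpha_{n_1'})\big)\big|>a'
\]
for some $t_{n_1,n_1'}\in[0,1)$. Your claim that this forces $\alpha_{n_1}-\alpha_{n_1'}$ into a \emph{finite} set of rationals via ``an $\varepsilon$-net argument using bounded mass of $\sigma$'' does not work: the mass bound constrains the variable $t$, not the quadratic coefficient. Qualitative Weyl only gives $\alpha_{n_1}-\alpha_{n_1'}\in\Q$, which is infinite and blocks the pigeonhole. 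The paper invokes a quantitative Weyl estimate (Lemma~\ref{L:Weyl2}): if the exponential average stays above $a'$ along $N_k\to\infty$, then the quadratic coefficient must lie in a finite set $R=R(a')$. This is exactly the missing input. The paper also first fixes a single $n'_{1,N}$ (by averaging) before pigeonholing over $n_1$, which avoids your Ramsey step on pairs.

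\medskip

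\textbf{The final step.} Even granting that on $\Lambda'_N$ all $\chi_{n_1}$ share a common eigenvalue $e(\beta_N)$, your route via $\chi_{n_1}=\overline{\Delta_{n_1}\psi}$ cannot work: for any $\psi\in\mathcal{E}(T)$ one has $\Delta_{n_1}\psi=T^{n_1}\psi\cdot\overline\psi=e(n_1\alpha_\psi)$, a \emph{constant}. So $\overline{\Delta_{n_1}\psi}$ has eigenvalue $0$, not $\beta_N$, and the identity $\int\Delta_{n_1}\tilde f_2\cdot\chi_{n_1}\,d\mu=\int\Delta_{n_1}(\tilde f_2\overline\psi)\,d\mu$ you want is false unless $\beta_N=0$. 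Consequently the bound $\nnorm{\tilde f_2\overline\psi}_2>0$ does not follow. The paper instead exploits only that the eigenvalue is common: starting from $\Re\int\Delta_{n_1}\tilde f_2\cdot\chi_{n_1}\,d\mu>a$ on $\Lambda'_N$, it composes with $T^{n_1'}$, averages over $n_1'\in[N]$ (turning $\chi_{n_1}$ into the scalar phase $e(n_1'\beta_N)$), applies Cauchy--Schwarz to discard $\chi_{n_1}$, and then uses the van der Corput inequality (Lemma~\ref{L:VDC}) together with Lemma~\ref{L:lower} to conclude $\nnorm{\tilde f_2}_2>0$. That last lemma is the precise mechanism for passing from an average with a weight of the form ${\bf 1}_{[N]}(n'_{1,N}+n_2)\,e(n_2\beta_N)$ (which does not depend on $n_1$) to positivity of the seminorm; your sketch bypasses it with an argument that does not hold.

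\medskip

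The ``$\Delta$-splitting'' you flagged as a worry is in fact immediate: since all maps are powers of $T$, $\Delta_h(T^a u\cdot T^b v)=T^a(\Delta_h u)\cdot T^b(\Delta_h v)$, so that part is fine.
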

\begin{proof}
  Recall that  $$
  \tilde{f}_2:=\lim_{k\to\infty} \E_{n\in [N_k]} \, T^{-n^2}g_k\cdot T^{-n^2+n}\overline{f}_1,
  $$
  where the average converges weakly and the functions involved are uniformly bounded, say by $1$.
  Applying Proposition~\ref{P:3} for $f_{n,k}:= T^{-n^2}g_k\cdot T^{-n^2+n}\overline{f}_1$, $n,k\in\N$, we get that there exist $a>0$, a subset $\Lambda$ of $\N$ with lower density at least $a$, and   $\chi_{n_1}\in \mathcal{E}(T)$,  $n_1\in \N$, such that
	\begin{equation}\label{E:U2first}
	  \Re \Big( \int \Delta_{n_1}\tilde{f}_2 \cdot \chi_{n_1}\, d\mu\Big)>a, \quad  n_1\in \Lambda,
	\end{equation}
  and
		\begin{multline}\label{E:U2second}
	\liminf_{N\to\infty}\E_{n_1,n_1'\in \Lambda\cap [N]}\,  \limsup_{k\to\infty}\E_{n\in[N_k]}\\  \Re \Big(\int T^{-n^2}(\Delta_{n_1-n_1'}g_k) \cdot T^{-n^2+n}(\Delta_{n_1-n_1'}\overline{f}_1)\cdot T^{-n_1'} (\chi_{n_1} \cdot \overline{\chi}_{n_1'})\, d\mu\Big)>0.
	\end{multline}
	Our goal is to use \eqref{E:U2second}  in order to show that the eigenfunctions
	$\chi_{n_1}$,   $n_1\in [N]$, in \eqref{E:U2first} depend only on $N$; this combined with   Lemma~\ref{L:lower} below, will enable  us to deduce that  $\nnorm{\tilde{f}_2}_{2}>0$.
	
		We analyse the second estimate first  (the first will be used at the end of this proof).
	We compose with  $T^{n^2}$,  use  the identity
		$$
	T^{n^2} \chi_{n_1}=e(n^2\alpha_{n_1})\cdot \chi_{n_1}, \quad n,n_1\in \N,
	$$
	which holds for some $\alpha_{n_1}\in [0,1)$, $n_1\in \N$, and then use the Cauchy-Schwarz inequality. We deduce that
		\begin{equation}\label{E:nnL}
	\liminf_{N\to\infty}\E_{n_1,n_1'\in \Lambda\cap [N]}\,  \limsup_{k\to\infty}\norm{\E_{n\in[N_k]}\,  e(n^2(\alpha_{n_1}-\alpha_{n_1'}))\, T^{n}(\Delta_{n_1-n_1'}\overline{f}_1)}_{L^2(\mu)}>0.
	\end{equation}
	Using the spectral theorem for unitary operators and Fatou's Lemma, we deduce from \eqref{E:nnL} that there  exist  positive measures $\sigma_{n_1,n_1'}$ on $\T$, with total mass at most $1$,  such that
	\begin{equation}\label{E:gnn}
	\liminf_{N\to\infty}\E_{n_1,n_1'\in \Lambda\cap [N]}\, \norm{g_{n_1,n_1'}}_{L^2(\sigma_{n_1,n_1'})}>0,
	\end{equation}
	where
	$$
	g_{n_1,n_1'}(t):=\limsup_{k\to\infty} |\E_{n\in[N_k]} \, e(nt+n^2(\alpha_{n_1}-\alpha_{n_1'}))|, \quad n_1,n_1'\in \N,\,  t\in [0,1).
	$$
	We deduce from \eqref{E:gnn}  that there exist $a'>0$,     $n'_{1,N}\in[N]$, $N\in \N$,
	such  that
	$$
	\liminf_{N\to\infty}\E_{n_1\in \Lambda\cap [N]}\, \norm{g_{n_1,n_{1,N}'}}_{L^2(\sigma_{n_1,n_{1,N}'})}>a'.
	$$
Hence,  there exist
subsets $\Lambda_N$  of $\Lambda\cap [N]$, $N\in \N$, such that
$$
\liminf_{N\to\infty} \frac{|\Lambda_N|}{N}>0
$$
and
$$	
\norm{g_{n_1,n_{1,N}'}}_{L^2(\sigma_{n_1,n_{1,N}'})}>a',\quad n_1\in \Lambda_N, \, N\in \N.
$$
Since the measures have mass at most $1$, this  immediately implies that  there exist $t_{n_1,N}\in[0,1)$, $n_1\in \Lambda_N$, $N\in \N$, such that
$$
g_{n_1,n_{1,N}'}(t_{n_1,N})>a',\quad n_1\in \Lambda_N,\,  N\in \N,
$$
or, equivalently,
	\begin{equation}\label{E:nnL'}
\limsup_{k\to\infty} |\E_{n\in[N_k]}\,  e(n t_{n_1,N}+n^2(\alpha_{n_1}-\alpha_{n_{1,N}'}))|>a',\quad n_1\in \Lambda_N,\,  N\in \N.
\end{equation}

 Now using  Weyl type estimates, as in  Lemma~\ref{L:Weyl2} below, we deduce from \eqref{E:nnL'}
 that there exists a finite set of rationals $R=R(a')$ in $[0,1)$ such that
$$
 \alpha_{n_1}-\alpha_{n_{1,N}'}\in R \pmod{1},\quad   n_1\in \Lambda'_N,\,  N\in \N.
 $$
  We deduce from this that there exist subsets $\Lambda_N'$ of $\Lambda_N$, $N\in \N$, such that
\begin{equation}\label{E:ldens}
	\liminf_{N\to\infty} \frac{|\Lambda'_N|}{N}>0,
	\end{equation}
	and $r_N\in \R$  such that
 $\alpha_{n_1}=\beta_N:=r_N+\alpha_{n_{1,N}'}\pmod{1}$ for all $N\in \N$ and $n_1\in \Lambda'_N$.  For
$n_1\in \Lambda'_N$ we then have that $\chi_{n_1}$ is a $T$-eigenfunction with unit modulus and  eigenvalue $e(\beta_N)$, $N\in \N$, namely,
\begin{equation}\label{E:bettaN}
T\chi_{n_1}=e(\beta_N)\cdot \chi_{n_1}, \quad   n_1\in \Lambda'_N,\,  N\in \N.
\end{equation}


Using   \eqref{E:U2first}  and since $\Lambda_N'\subset \Lambda$ satisfies \eqref{E:ldens}, we have that
$$
\liminf_{N\to\infty}\E_{n_1\in  \Lambda'_N}\,  \Re\Big( \int \Delta_{n_1}\tilde{f}_2 \cdot \chi_{n_1}\, d\mu\Big)>0.
 $$
 We are going to deduce from this and \eqref{E:bettaN} that $\nnorm{\tilde{f}_2}_2>0$.
 Composing with $T^{n_1'}$, averaging over $n_1'\in [N]$,   using that $$
 T^{n_1'}\chi_{n_1}=e(n_1'\beta_N)\cdot \chi_{n_1}, \quad
  n_1\in \Lambda'_N,\,  N\in \N,\, n_1'\in \N,
  $$
    and then using the Cauchy-Schwarz inequality, we get
 $$
 \liminf_{N\to\infty}\E_{n_1\in\Lambda'_N}\,   \int |\E_{n_1'\in [N]}\, e(n_1'\beta_N)\cdot T^{n_1'+n_1}\tilde{f}_2 \cdot T^{n_1'}\overline{\tilde{f}_2}|^2\, d\mu>0.
 $$
Since  \eqref{E:ldens} holds, we  get
 $$
 \liminf_{N\to\infty}\E_{n_1\in [N]}\,   \int |\E_{n_1'\in [N]}\, e(n_1'\beta_N)\cdot T^{n_1'+n_1}\tilde{f}_2 \cdot T^{n_1'}\overline{\tilde{f}_2}|^2\, d\mu>0.
 $$
Finally, using     Lemma~\ref{L:VDC} and composing with $T^{-n_1'}$,    we deduce that
$$
\liminf_{N\to\infty}\Re\Big(\E_{n_1,n_1',n_2\in[N]}\,  
{\bf 1}_{[N]}(n_1'+n_2) \,  e(n_2\beta_N) \int \tilde{f}_2\cdot
T^{n_2}\overline{\tilde{f}_2} \cdot T^{n_1}\overline{\tilde{f}_2} \cdot T^{n_1+n_2}\tilde{f}_2\, d\mu\Big)>0.
$$
Hence, for some $n'_{1,N}\in [N]$, $N\in \N$, we have
$$
\liminf_{N\to\infty}\Re\Big(\E_{n_1, n_2\in[N]}\,  
{\bf 1}_{[N]}(n_{1,N}'+n_2) \,  e(n_2\beta_N) \int \tilde{f}_2\cdot
T^{n_2}\overline{\tilde{f}_2} \cdot T^{n_1}\overline{\tilde{f}_2} \cdot T^{n_1+n_2}\tilde{f}_2\, d\mu\Big)>0.
$$
 Using the case $s=2$ of Lemma~\ref{L:lower} below, for $c_N(n_1,n_2):={\bf 1}_{[N]}(n_{1,N}'+n_2)  \, e(n_2\beta_N)$, $n_1,n_2\in [N]$, $N\in\N$, which do not depend on the variable $n_1$, we deduce that
 $\nnorm{\tilde{f}_2}_2>0$,  completing the proof of the claim.
\end{proof}

\subsection{Step 5} (Proof of Theorem~\ref{T:FW})
We are now ready to  conclude the proof of  Theorem~\ref{T:FW}.
 It is known that if a function is orthogonal to the rational Kronecker factor of a system, then the same property holds with respect to almost every ergodic component (see for example \cite[Theorem~3.2]{FrK06}). Hence, using the ergodic decomposition theorem, 	we can assume that the system is ergodic.
	
We first show that if 	$\E(f_1|\mathcal{K}_{rat}(T))=0$, then  \eqref{E:aa0} holds.
Arguing by contradiction,  suppose that  \eqref{E:aa0} fails.
Then if $\tilde{f}_2$ is given by \eqref{E:aa2} we get by Propositions~\ref{P:1} and \ref{P:2} that  $\nnorm{\tilde{f}_2}_3>0$.  We deduce from Proposition~\ref{P:4}  that $\nnorm{\tilde{f}_2}_2>0$. It follows by  Proposition~\ref{P:1'}  below that there exists $\chi\in \mathcal{E}(T)$ such that
$$
\lim_{k\to\infty} \E_{n\in [N_k]} \, \int T^{-n^2}g_k\cdot T^{-n^2+n}\overline{f}_1\cdot \chi\, d\mu\neq 0.
$$
Composing with $T^{n^2}$, using that  $T^{n^2}\chi=e(n^2\alpha)\cdot \chi$, $n\in \N$, for some $\alpha \in [0,1)$, and using the Cauchy-Schwarz inequality,  we get that
$$
\E_{n\in [N_k]} \, e(n^2\alpha)\,   T^n\overline{f}_1\not\to 0
$$
in $L^2(\mu)$ as $k\to\infty$.
Since $\lim_{N\to\infty}\E_{n\in [N]} \, e(n^2\alpha+nt)=0$ if $t$ is irrational and $\alpha\in \R$,
using the spectral theorem,  the bounded convergence theorem, and our assumption $\E(f_1|\mathcal{K}_{rat}(T))=0$ (which implies that the   spectral measure $\sigma_{f_1}$ has no rational point mass),  we  deduce that the last limit is zero, a contradiction.

Finally,  we show that if 	$\E(f_2|\mathcal{K}_{rat}(T))=0$, then  \eqref{E:aa0} holds.
By the previous step we can assume that $f_1$ belongs to the rational Kronecker factor of the system and by approximation that it is an eigenfunction with eigenvalue $e(\alpha)$ for some $\alpha\in \Q$. Hence, it suffices to show that
$$
\lim_{N\to\infty}\E_{n\in [N]} \, e(n \alpha) \, T^{n^2}f_2 =0
$$
in $L^2(\mu)$.
Since $\lim_{N\to\infty}\E_{n\in [N]} \, e(n^2t+n\alpha)=0$ if $t$ is irrational and $\alpha\in \R$,  as before,
using the spectral theorem,  the bounded convergence theorem, and our assumption $\E(f_2|\mathcal{K}_{rat}(T))=0$,  we  deduce that the last limit is zero.
This completes the proof of Theorem~\ref{T:FW}.

\subsection{Weyl-type estimates} We record some pretty standard  Weyl-type estimates that were used in the previous argument (see for example  \cite[Proposition 4.3]{GT09}).
\begin{lemma}\label{L:Weyl}
	Let 	$a>0$ and $d\in \N$. There exists $Q=Q(a,d)>0$, $C=C(a,d)>0$, such that if
	$$
	|\E_{n\in [N]\, }e(t_1 n+\cdots +t_d n^d)|\geq a
	$$
	for some $N\in \N$ and $t_1,\ldots, t_d\in [0,1)$, then for every $i\in [d]$ there exist non-negative integers  $p,q\leq Q$ (depending on $i$)
	such that $\big|t_i-\frac{p}{q}\big|\leq \frac{C}{N^i}$.
\end{lemma}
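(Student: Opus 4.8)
The plan is to prove this classical inverse Weyl estimate (it is in essence \cite[Proposition~4.3]{GT09}) by induction on $d$. Throughout let $\|\theta\|$ denote the distance of $\theta\in\R$ to the nearest integer, and allow all implied constants to depend on $a$ and $d$. The base case $d=1$ is immediate: $|\E_{n\in[N]}e(t_1n)|\ge a$ combined with $|\sum_{n=1}^Ne(t_1n)|\le 1/(2\|t_1\|)$ gives $\|t_1\|\le 1/(2aN)$, so one may take $q=1$. For $d\ge2$ one may assume $t_d\ne0$ and that $N$ exceeds a threshold $N_0(a,d)$ (otherwise the conclusion holds trivially with $q=1$ and $C$ large), and the inductive step splits into the two stages below.

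\emph{Stage 1: locating $t_d$.} First I would apply the van der Corput inequality (Lemma~\ref{L:VDC}) to the unit vectors $e(p(n))$, where $p(n)=t_1n+\dots+t_dn^d$; this bounds $a^2$, up to $O(1/N)$, by a $\tfrac{N-h}{N}$-weighted average over $h$ of $|\E_{n\in[N-h]}e(\partial_hp(n))|$, with $\partial_hp(n)=p(n+h)-p(n)$ of degree $\le d-1$ in $n$ and with $n$-degree-$(d-1)$ coefficient exactly $d\,h\,t_d$. Since the weight $\tfrac{N-h}{N}$ is small for $h$ near $N$, a short computation shows there are $\gg1$ values of $h$, lying in a range of length $\gg N$, for which $|\E_{n\in[N-h]}e(\partial_hp(n))|\gg1$ and $N-h\gg N$. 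For each such $h$, the inductive hypothesis applied to $\partial_hp$ on $[N-h]$ (replacing its coefficients by their fractional parts, which changes neither $e(\partial_h p(n))$ at integers $n$ nor the relevant $\|\cdot\|$) yields an integer $q^{(h)}\ll1$ with $\|q^{(h)}d\,h\,t_d\|\ll N^{-(d-1)}$; pigeonholing, I may take $q^{(h)}=q^*\ll1$ on $\gg N$ values of $h\in[N]$. Feeding this into the recurrence lemma stated below (with $\gamma=q^*d\,t_d$, $M\asymp N$, $\varepsilon\asymp N^{-(d-1)}$) produces $q\ll1$ with $\|q\gamma\|\ll\varepsilon/M\ll N^{-d}$, i.e.\ the case $i=d$: $t_d$ lies within $\ll N^{-d}$ of a rational $r_d=p_d/q_d$ with $q_d\ll1$.

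\emph{Stage 2: peeling and the remaining coefficients.} On each residue class $n\equiv r\pmod{q_d}$ the function $e(r_dn^d)$ is constant, while $(t_d-r_d)n^d$ changes by $O(1)$ over $n\in[N]$ at a local rate $\ll N^{-1}$; so, partitioning $[N]$ into $O(1)$ intervals on which $(t_d-r_d)n^d$ varies by less than $a/100$ and decomposing the average over these residue classes and intervals, the hypothesis forces $|\E_{k\in J}e(\tilde p(k))|\gg1$ for some $r$ and some interval $J$ with $|J|\asymp N$, where $\tilde p(k)=\sum_{j=1}^{d-1}t_j(q_dk+r)^j$ has degree $\le d-1$. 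Applying the inductive hypothesis to $\tilde p$ on $J$ gives, for each $i\in[d-1]$, that the coefficient of $k^i$ in $\tilde p$ is within $\ll N^{-i}$ of a rational of denominator $\ll1$; inverting the upper-triangular, bounded-coefficient substitution $n=q_dk+r$ — solving for $t_{d-1},t_{d-2},\dots,t_1$ in turn, each time using that the already-treated $t_j$ carry error $\ll N^{-j}$ — recovers the cases $i=1,\dots,d-1$ and closes the induction.

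\emph{The recurrence lemma, and the main obstacle.} The single external ingredient is the classical estimate: \emph{if $0<\delta\le1$, $0<\varepsilon\le\tfrac12$, and $\|\gamma m\|\le\varepsilon$ for at least $\delta M$ integers $m\in[M]$, then there is a positive integer $q\ll\delta^{-O(1)}$ with $\|q\gamma\|\ll\delta^{-O(1)}\varepsilon/M$.} I would prove it by pigeonholing among the good $m$'s to get $q_0\le1/\delta$ with $\|q_0\gamma\|\le2\varepsilon$, writing $q_0\gamma=a_0+\phi$ with $|\phi|\le2\varepsilon$, noting that (for $\varepsilon$ small relative to $1/q_0$) $\|\gamma m\|\le\varepsilon$ then confines $m$ to an arithmetic progression with step dividing $q_0$ and length $O(\varepsilon/|\phi|)$, and comparing this with the lower bound $\delta M$ to conclude $\|q_0\gamma\|=|\phi|\ll\varepsilon/(\delta M)$. \textbf{This is the crux of the whole proof}: one must keep the denominator $q$ bounded \emph{independently of $N$} while still gaining the extra factor $1/N$ of cancellation at each stage, which is exactly what upgrades Dirichlet-type approximation (accuracy $\asymp1/(qN)$) to the sharper accuracy $1/N^i$ demanded in the statement. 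By contrast, the van der Corput step — including the weighting that discards the terms $h\approx N$ — and the two triangular-linear-algebra reductions are entirely routine.
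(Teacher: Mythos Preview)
The paper does not give a proof of this lemma; it simply records it as a ``pretty standard Weyl-type estimate'' and cites \cite[Proposition~4.3]{GT09}. Your sketch is exactly the classical inductive argument that underlies that reference (van der Corput to reduce the degree, pigeonhole together with the recurrence/Vinogradov-type lemma to pin down the leading coefficient, then peel and recurse), and it is correct; the one mild caveat is that your proof of the recurrence lemma tacitly uses $\varepsilon\ll\delta$, but in the application $\varepsilon\asymp N^{-(d-1)}$ and $\delta\asymp 1$, so this regime is the only one that arises.
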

From this we deduce the following (we  only use it for $\ell=2$ and $p_1(n)=n$, $p_2(n)=n^2$):
\begin{lemma}\label{L:Weyl2}
	Let $a>0$ and $d\in\N$.  Then there exists a finite set of rationals $R=R(a,d)$ with the following property:  If $p_1,\ldots, p_\ell\in \Z[t]$ are rationally independent polynomials of degree at most $d$ such that
	 for some $l\in [\ell]$ and $t_l\in[0,1)$ we have
	$$
	\limsup_{N\to\infty}\sup_{t_i\in \R, i\in [\ell]\setminus\{l\}}
	|\E_{n\in [N]\, }e(p_1(n)t_1+\cdots + p_\ell(n)t_\ell)|\geq a,
	$$
	then $t_{l}\in R$.
\end{lemma}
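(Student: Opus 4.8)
The plan is to treat $P_{\mathbf t}(n):=p_1(n)t_1+\cdots+p_\ell(n)t_\ell$ as a single polynomial in $n$, apply the one–variable Weyl‑type estimate of Lemma~\ref{L:Weyl} to it, and then use the rational independence of $p_1,\ldots,p_\ell$ to descend from control on the coefficients of $P_{\mathbf t}$ to control on the distinguished variable $t_l$. Throughout I write $\|\theta\|$ for the distance from $\theta\in\R$ to the nearest integer, so that $\|x+y\|\le\|x\|+\|y\|$ and $\|mx\|\le|m|\,\|x\|$ for $m\in\Z$. Writing $p_j(n)=\sum_{i=0}^d c_{j,i}n^i$ with $c_{j,i}\in\Z$, we have $P_{\mathbf t}(n)=\sum_{i=0}^d s_i(\mathbf t)\,n^i$ with $s_i(\mathbf t):=\sum_{j=1}^\ell c_{j,i}t_j$, and the constant term contributes only the unimodular factor $e(s_0(\mathbf t))$ to the exponential sum, so it may be discarded. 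Since $p_1,\ldots,p_\ell$ are rationally independent, the vectors $(c_{j,1},\ldots,c_{j,d})$, $j\in[\ell]$, are linearly independent over $\Q$ — otherwise a nontrivial rational combination of the $p_j$ would be a constant polynomial — so $\ell\le d$, and one may fix indices $i_1<\cdots<i_\ell$ in $[d]$ for which the $\ell\times\ell$ integer matrix $C'$ with $C'_{m,j}=c_{j,i_m}$ has $\det C'=D\ne 0$; set $A:=\operatorname{adj}(C')$, an integer matrix with $AC'=D\,I_\ell$. In the only case used in the paper, $p_j(n)=n^j$, one simply has $C'=I_\ell$, $A=I_\ell$, $D=1$.

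Suppose the hypothesis of the lemma holds. Then one can pick $N_k\to\infty$ together with, for each large $k$, reals $t_i^{(k)}$ for $i\in[\ell]\setminus\{l\}$ so that, setting $t_l^{(k)}:=t_l$ and $\mathbf t^{(k)}:=(t_1^{(k)},\ldots,t_\ell^{(k)})$, we have $\big|\E_{n\in[N_k]}\,e(P_{\mathbf t^{(k)}}(n))\big|\ge a/2$. Replacing each $s_i(\mathbf t^{(k)})$ by its fractional part (which leaves the exponential sum unchanged) and applying Lemma~\ref{L:Weyl} with $a/2$ in place of $a$, we obtain $Q=Q(a/2,d)$ and a constant $C_0=C(a/2,d)$ such that for every $i\in[d]$ there is a rational $\rho_{i,k}\in[0,1)$ of denominator at most $Q$ with $\|s_i(\mathbf t^{(k)})-\rho_{i,k}\|\le C_0/N_k^{\,i}$. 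There are only finitely many rationals in $[0,1)$ of denominator $\le Q$, so after passing to a subsequence we may assume $\rho_{i,k}=\rho_i$ is independent of $k$ for each $i\in[d]$; put $Q':=\operatorname{lcm}(1,\ldots,Q)$, so that $Q'\rho_i\in\Z$. Hence $\|s_i(\mathbf t^{(k)})-\rho_i\|\to 0$ as $k\to\infty$, for every $i\in[d]$.

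Finally, the linear relations $C'\mathbf t^{(k)}=\big(s_{i_1}(\mathbf t^{(k)}),\ldots,s_{i_\ell}(\mathbf t^{(k)})\big)$ and $AC'=D\,I_\ell$ give $D\,t_j^{(k)}=\sum_{m=1}^\ell A_{j,m}\,s_{i_m}(\mathbf t^{(k)})$ for every $j$; taking $j=l$ and using $t_l^{(k)}=t_l$ for all $k$,
$$
\Big\|\,D\,t_l-\sum_{m=1}^\ell A_{l,m}\,\rho_{i_m}\,\Big\|\ \le\ \sum_{m=1}^\ell |A_{l,m}|\cdot\big\|s_{i_m}(\mathbf t^{(k)})-\rho_{i_m}\big\|\ \longrightarrow\ 0\quad(k\to\infty).
$$
The left-hand side does not depend on $k$, hence is $0$; since $\sum_{m}A_{l,m}\rho_{i_m}$ has denominator dividing $Q'$, this forces $D\,Q'\,t_l\in\Z$, so $t_l$ lies in the finite set $R:=\{p/q:0\le p<q\le DQ'\}$. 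For $p_j(n)=n^j$ one has $D=1$, so $R=\{p/q:0\le p<q\le\operatorname{lcm}(1,\ldots,Q(a/2,d))\}$ depends only on $a$ and $d$, which is all the paper uses (in general $R$ also depends on $p_1,\ldots,p_\ell$, through the minor $D$). The one genuinely substantive step, and the place where rational independence is indispensable, is this last descent — isolating $t_l$ out of the combined coefficients $s_i(\mathbf t)$ of $P_{\mathbf t}$ with a controlled denominator; everything else (the reduction to a single polynomial, discarding the constant term, and the two successive subsequence extractions) is routine.
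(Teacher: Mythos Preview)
Your proof is correct and follows essentially the same route as the paper: write $p_1(n)t_1+\cdots+p_\ell(n)t_\ell$ as a single polynomial in $n$, apply Lemma~\ref{L:Weyl} to control its coefficients $s_i$ by rationals of bounded height, and then use the invertibility (over $\Q$) of the coefficient matrix coming from rational independence to solve for $t_l$ with a controlled denominator. The paper achieves the last step by assuming without loss of generality that $\ell=d$ so that the full coefficient matrix is square, whereas you select an $\ell\times\ell$ minor; and the paper bounds $|t_\ell-\tilde p_k/\tilde q_k|$ directly and lets $k\to\infty$, whereas you first pass to a subsequence on which the approximating rationals $\rho_i$ are constant --- these are cosmetic differences. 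Your explicit remark that in general $R$ depends on the polynomials through the minor $D$, and that for the only case used in the paper ($p_j(n)=n^j$, so $D=1$) this dependence disappears, is a welcome clarification; the paper's write-up is looser on this point (writing ``$L=L(Q)$'' for a bound that really comes from the inverse matrix).
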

\begin{proof}
	Without loss of generality we can assume that $l=\ell=d$ and that all the polynomials have zero constant term.
	Let $p_j(n)=\sum_{i=1}^\ell c_{i,j}n^i$, $j=1,\ldots, \ell$, for some $c_{i,j}\in \Z$.
	
	The assumption gives that there exist $N_k\to\infty$ and $t_{i,k}, t_\ell\in [0,1)$, $i=1,\ldots, \ell-1$,  $k\in\N$,  such that
	$$
	|\E_{n\in [N_k]}\, e( p_1(n)t_{1,k}+\cdots +p_{\ell-1}(n)t_{\ell-1,k} +p_\ell(n)t_\ell)|\geq \frac{a}{2}
	$$
	for all $k\in \N$. Then  for all $k\in \N$ we have
	$$
	|\E_{n\in [N_k]}\, e(ns_{1,k}+\cdots +n^\ell s_{\ell,k})|\geq \frac{a}{2}
	$$
	where
	$$
	s_{i,k}:= \sum_{j=1}^{\ell-1} c_{i,j}t_{j,k}+ c_{i,\ell}t_\ell, \quad i=1,\ldots, \ell, \, k\in \N.
	$$

	The previous lemma implies that there exist positive integers  $Q,C$ that depend only on $a,\ell$ such that for every $k\in\N$ and $i\in \{1,\ldots, \ell\}$ there exist non-negative integers $p_{i,k},q_k\leq Q$ and $m_{i,k}\in \Z$, $i=1,\ldots, \ell$, $k\in \N$,  such that
	$$
	\Big|s_{i,k}-\frac{p_{i,k}}{q_{k}}-m_{i,k}\Big|\leq \frac{C}{N_k^i}, \quad i=1,\ldots, \ell.
	$$
	Since the polynomials $p_1,\ldots, p_\ell$ are rationally independent, the matrix $(c_{i,j})_{i,j\in [\ell]}$ is invertible.  We deduce
	that there exist $l,l_1,\ldots, l_\ell\in \N$, with size smaller than  $L=L(Q)\geq Q$,  such that
	$$
	t_\ell=\sum_{i=1}^\ell \frac{l_i}{l}s_{i,k}.
	$$
	Combining the previous two facts  we get that there exist $\tilde{p}_k\in \Z$ and non-negative integers
	$\tilde{q}_k\leq L^2$, $k\in\N$, such that
		$$
	\lim_{k\to\infty}\frac{\tilde{p}_k}{\tilde{q}_k}=t_\ell.
	$$
	Since $t_\ell\in [0,1)$ and $\tilde{q}_k$, $k\in\N$,  are positive integers bounded by $L^2$,
it follows that $t_{\ell}=\frac{p}{q}$ for some non-negative integers $p,q\leq L^2$. 	So we can take $R$ to be the set of all rationals with
	numerator and denominator at most  $L^2$.
\end{proof}	

 \subsection{More general results}
 Before embarking into the proof of our main results let us make some remarks about the extend to which the previous argument applies to more general families of sequences.

If in place of $n, n^2$ we have sequences $a_1,\ldots, a_\ell$ that are good for seminorm estimates and equidistribution,  a similar, but technically more complicated argument can be used  to prove joint ergodicity and is given in the next two sections.

If in place of $n, n^2$ we have two rationally independent integer polynomials $p_1, p_2$, then modulo  a few additional technical complications, the previous argument
can be adapted as in the next two sections in order  to prove that the rational Kronecker factor is characteristic for $p_1,p_2$  (for this extension we also need the Weyl estimates of Lemma~\ref{L:Weyl2}).

If in place of $n,n^2$ we have rationally independent polynomials $p_1,\ldots, p_\ell$, where $\ell\geq 3$, or if we work in the more general setup of Theorem~\ref{T:CharacteristicFactors}, then  a new  non-trivial obstacle arises.\footnote{It is caused by the fact that we can  no longer use the spectral theorem (as in the case $\ell=2$) in order to carry out Step 4 of Section~\ref{SS:2.5}. For $\ell=3$ the problem occurs
when  $f_1, f_2\in \mathcal{K}_{rat}(T)$ and we want to deduce that  an estimate of the form \eqref{E:problemforpolies} below implies an estimate of the form
\eqref{E:nn''}. In the case of sequences that are good for equidistribution, this problem does not arise because  we can replace the functions $f_1, f_2$ by constants.} In order to overcome this problem
we use a consequence of the main result in \cite{HK05a} that enables us to reduce matters to the case where the system is totally ergodic (the reduction is carried out in Section~\ref{SS:CF}),
a case that is covered by Theorem~\ref{T:JointErgodicity}. Alternatively, we could have used an additional inductive argument, as in \cite{P19b}, and avoid the use of deep results form \cite{HK05a}, thus leading to a more  ``elementary proof'' for the special case of  Theorem~\ref{T:CharacteristicFactors} that covers all rational independent polynomials. We chose not to do so, firstly, because
this would  lead to a much more complicated argument and, secondly, because this approach   requires to impose  stronger  equidistribution assumptions (of quantitative nature) than those used in Theorem~\ref{T:CharacteristicFactors}.

\section{Preparation for the main result}\label{S:Preparation}
 In this section we  gather some basic notation and results about the ergodic seminorms
 and also prove some elementary estimates that will be used later in the proof of Theorem~\ref{T:JointErgodicity}.

\subsection{The  ergodic seminorms}\label{SS:GHK}
Throughout, we use the following notation:
\begin{definition}
Let $(X,\mu,T)$ be a system and $f\in L^\infty(\mu)$. 	If $\underline{n}=(n_1,\ldots, n_s)\in \Z_+^s$, $\underline{n}'=(n_1',\ldots, n_k')\in \Z_+^k$,   $\epsilon=(\epsilon_1,\ldots, \epsilon_s)\in \{0,1\}^s$, and $z\in \C$,  we let
\begin{enumerate}
\item $\epsilon\cdot \underline{n}:=\epsilon_1n_1+\cdots+ \epsilon_sn_s$;

\item $|\underline{n}|:=n_1+\cdots+ n_s$;

\item $\mathcal{C}^lz=z$ if $l$ is even and $\mathcal{C}^lz=\overline{z}$ if $l$ is odd;

\item $\underline{n}^\epsilon:=(n_1^{\epsilon_1}, \ldots, n_s^{\epsilon_s})$, where $n_j^0:=n_j$ and $n_j^1:=n_j'$ for $j=1,\ldots, s$;

\item $\Delta_{\underline{n}}f:=\Delta_{n_1}\cdots \Delta_{n_s}f=\prod_{\epsilon\in \{0,1\}^s}\mathcal{C}^{|\epsilon|}T^{\epsilon\cdot \underline{n}}f$ (here we allow $\underline{n}\in\Z^s$).
\end{enumerate}
\end{definition}
Given a system $(X,\mu,T)$
we will use the seminorms $\nnorm{\cdot}_s$, $s\in \N$,  that were introduced in  \cite{HK05a} for ergodic systems and can be  defined similarly for general systems  (see for example \cite[Chapter~8, Proposition~16]{HK18}). They are often refereed to as Gowers-Host-Kra seminorms and  are inductively defined for  $f\in L^\infty(\mu)$ as follows (for convenience we also define $\nnorm{\cdot}_0$, which is   not a seminorm):
$$
\nnorm{f}_0:=\int f\, d\mu,
$$
and for $s\in \Z_+$ we let
\begin{equation}\label{E:seminorm1}
\nnorm{f}_{s+1}^{2^{s+1}}:=\lim_{N\to\infty}\E_{n\in [N]} \nnorm{\Delta_{n}f}_s^{2^{s}}.
\end{equation}
We write $\norm{f}_{s,T}$ when  it is not clear from the context with respect to which transformation the seminorm is computed. The limit  in \eqref{E:seminorm1} can be shown to exist by successive applications of the mean ergodic theorem and for $f\in L^\infty(\mu)$ and $s\in \Z_+$ we have $\nnorm{f}_s\leq \nnorm{f}_{s+1}$ (see \cite{HK05a} or  \cite[Chapter~8]{HK18}).
It follows immediately from the definition that
 $$
\nnorm{f}_{1}=\norm{\E(f|\CI(T))}_{L^2(\mu)}
$$
where $\CI(T):=\{f\in L^2(\mu)\colon Tf=f\}$ and
\begin{equation}\label{E:seminorm2}
\nnorm{f}_s^{2^s}=\lim_{N_1\to\infty}\cdots \lim_{N_s\to\infty}\E_{n_1\in [N_1]}\cdots \E_{n_s\in [N_s]} \int \Delta_{n_1,\ldots, n_s}f\, d\mu.
\end{equation}
It can be shown that we can take any $s'\leq s$  of the  iterative limits to be simultaneous limits (i.e. average over $[N]^{s'}$ and let $N\to\infty$)
 without changing the value of the limit. This was originally proved in \cite{HK05a}, for a much simpler proof see \cite{BL15}.
 For $s'=s$ this gives the identity
\begin{equation}\label{E:seminorm3}
\nnorm{f}_s^{2^s}=\lim_{N\to\infty}\E_{\underline{n}\in [N]^s} \int \Delta_{\underline{n}}f\, d\mu,
\end{equation}
 and for $s'=s-2$ it gives the identity
\begin{equation}\label{E:seminorm4}
\nnorm{f}_s^{2^{s}}=\lim_{N\to\infty}\E_{\underline{n}\in [N]^{s-2}} \nnorm{\Delta_{\underline{n}}f}_2^{4}.
\end{equation}

  Also,  we can show that if $T,S$ are two commuting measure preserving transformations on $(X,\CX,\mu)$, then we have the implication
	\begin{equation}\label{E:TS}
	\CI(T)\subset \CI(S)\implies \nnorm{f}_{s,T}\leq \nnorm{f}_{s,S}
	\end{equation}
	for every $s\in \N$ and $f\in L^\infty(\mu)$.
We give the proof  for  $s=2$, the argument is similar for general $s\in \N$.  Using \eqref{E:seminorm2}, the mean ergodic theorem, and the hypothesis
$\CI(T)\subset \CI(S)$, we get
$$
\nnorm{f}_{2,T}^4=
\lim_{N\to\infty} \E_{n\in[N]} \norm{\E(T^nf\cdot f|\CI(T))}_{L^2(\mu)}^2\leq
\lim_{N\to\infty} \E_{n\in[N]} \norm{\E(T^nf\cdot f|\CI(S))}_{L^2(\mu)}^2.
$$
Using the mean ergodic theorem for the system $(X,\mu,S)$ and the fact that  the transformations $T,S$ commute, we get  that the last limit is equal to
$$
\lim_{N\to\infty}\lim_{M\to\infty}\E_{n\in[M]} \E_{m\in[N]} \int f\cdot T^n\overline{f}\cdot
 S^m\overline{f}\cdot T^nS^m f\, d\mu.
$$
Exchanging the limits over $M$ and $N$ (which we can do by \cite[Corollary~3]{Ho09}) and iterating the previous  procedure one more time, we deduce that
$$
\nnorm{f}_{2,T}^4\leq \lim_{N\to\infty}\lim_{M\to\infty}\E_{n\in[M]} \E_{m\in[N]} \int f\cdot S^n\overline{f}\cdot
S^m\overline{f}\cdot S^{n+m} f\, d\mu=\nnorm{f}_{2,S}^4.
$$

\subsection{Soft inverse theorems}
Recall that if $(X,\mu,T)$ is  a system, with   $\mathcal{E}(T)$ we denote the set of its eigenfunctions with modulus one.
\begin{proposition}\label{P:1'}
	Let $(X,\mu,T)$ be an ergodic system and $f\in L^\infty(\mu)$ be a function with $\norm{f}_{L^\infty(\mu)}\leq 1$. Then
	$$
	\nnorm{f}_2^4\leq  \sup_{\chi\in \mathcal{E}(T)} \Re\Big( \int f\cdot \chi\, d\mu\Big).
	$$
\end{proposition}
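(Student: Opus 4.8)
\emph{Proof sketch.} The plan is to pass to the spectral side and reduce everything to a statement about the atoms of the spectral measure of $f$. First I would note that, since the system is ergodic, $\CI(T)$ consists only of the constant functions, so $\nnorm{\Delta_n f}_1=\norm{\E(\Delta_n f\mid\CI(T))}_{L^2(\mu)}=\big|\int\Delta_n f\,d\mu\big|=\big|\int T^n f\cdot\overline f\,d\mu\big|$; plugging this into the defining recursion \eqref{E:seminorm1} for $\nnorm{\cdot}_2$ gives
\[
\nnorm{f}_2^4=\lim_{N\to\infty}\E_{n\in[N]}\Big|\int T^n f\cdot\overline f\,d\mu\Big|^2 .
\]
Let $\sigma$ be the spectral measure of $f$ for the Koopman operator of $T$: a positive Borel measure on $\T$ with $\sigma(\T)=\norm{f}_{L^2(\mu)}^2\le 1$ (here the hypothesis $\norm f_{L^\infty(\mu)}\le1$ enters) and $\int T^n f\cdot\overline f\,d\mu=\int_\T e(nt)\,d\sigma(t)$ for all $n\in\Z$.

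Next I would evaluate the limit by Wiener's lemma. Writing $\big|\int_\T e(nt)\,d\sigma(t)\big|^2=\int_{\T^2}e(n(t-s))\,d(\sigma\times\sigma)(t,s)$, interchanging the finite average over $n$ with the integral, and using that $\E_{n\in[N]}e(n\theta)\to\one_{\theta\in\Z}$ as $N\to\infty$ together with the bounded convergence theorem, one obtains $\nnorm{f}_2^4=\sum_{t\in\T}\sigma(\{t\})^2$, the sum running over the (at most countably many) atoms of $\sigma$.

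Then I would identify each atom with an eigenfunction. If $\sigma(\{t\})>0$, then $e(t)$ is an eigenvalue of the Koopman operator; by ergodicity the eigenspace is one-dimensional, and since the modulus of an eigenfunction is $T$-invariant hence constant, it is spanned by some $\phi_t\in\mathcal{E}(T)$ with $\norm{\phi_t}_{L^2(\mu)}=1$. The orthogonal projection onto this line sends $f$ to $\big(\int f\cdot\overline{\phi_t}\,d\mu\big)\phi_t$, and the usual mean ergodic theorem argument (applied to $e(-t)$ times the Koopman operator) gives $\sigma(\{t\})=\big|\int f\cdot\overline{\phi_t}\,d\mu\big|^2$. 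Since eigenfunctions for distinct eigenvalues are orthogonal, Bessel's inequality yields $\sum_t\sigma(\{t\})\le\norm{f}_{L^2(\mu)}^2\le1$.

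Combining these facts,
\[
\nnorm{f}_2^4=\sum_t\sigma(\{t\})^2\le\Big(\sup_t\sigma(\{t\})\Big)\sum_t\sigma(\{t\})\le\sup_t\sigma(\{t\})=\sup_t\Big|\int f\cdot\overline{\phi_t}\,d\mu\Big|^2\le\sup_t\Big|\int f\cdot\overline{\phi_t}\,d\mu\Big|,
\]
the last step using $\big|\int f\cdot\overline{\phi_t}\,d\mu\big|\le\norm f_{L^2(\mu)}\le1$. For each $t$, multiplying $\overline{\phi_t}$ by a suitable unimodular constant (which keeps it in $\mathcal{E}(T)$) makes $\int f\cdot\overline{\phi_t}\,d\mu$ real and nonnegative, so $\sup_t\big|\int f\cdot\overline{\phi_t}\,d\mu\big|\le\sup_{\chi\in\mathcal{E}(T)}\Re\big(\int f\cdot\chi\,d\mu\big)$, which is the claim. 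I do not expect a real obstacle here; the only points needing a little care are keeping all auxiliary functions and constants inside $\mathcal{E}(T)$ and the bookkeeping in the Wiener-lemma step, and if one prefers a fully ``soft'' proof one can bypass the spectral measure and run the same computation using only iterated applications of the mean ergodic theorem.
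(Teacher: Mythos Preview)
Your proof is correct and follows essentially the same approach as the paper: both arguments reduce to the identity $\nnorm{f}_2^4=\sum_j|c_j|^4$ (with $c_j=\int f\cdot\overline{\chi}_j\,d\mu$ over an orthonormal basis of unit-modulus eigenfunctions), then apply the elementary bound $\sum|c_j|^4\le\sup_j|c_j|^2\cdot\sum_j|c_j|^2\le\sup_j|c_j|$ together with the observation that $\mathcal{E}(T)$ is closed under multiplication by unimodular constants. The only cosmetic difference is that the paper obtains the key identity by passing to the Kronecker factor and citing $\nnorm{f}_2=\nnorm{\E(f\mid\mathcal{K}(T))}_2$, whereas you derive it from the spectral measure via Wiener's lemma; these are two equivalent ways of saying the same thing.
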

\begin{proof}
	Let  $\mathcal{K}(T)$ be the Kronecker factor of the system, meaning, the closed subspace of $L^2(\mu)$ spanned by all eigenfunctions of the system.
   It is well known (and not hard to prove, see for example \cite[Chapter~8, Theorem~1]{HK18}) that
	$$
	\nnorm{f}_{2}=\nnorm{\tilde{f}}_2
	$$
	where $\tilde{f}:=\E(f|\mathcal{K}(T))$.
	Since the system is ergodic, the subspace $\mathcal{K}(T)$ has an orthonormal basis of eigenfunctions of modulus one, say $(\chi_j)_{j\in\N}$  (the  basis is countable because the system is Lebesgue). Then
	$\tilde{f}=\sum_{j=1}^\infty c_j\, \chi_j$ where
	$$
	c_j:=\int \tilde{f}\cdot \overline{\chi}_j\, d\mu=\int f\cdot \overline{\chi}_j\, d\mu,\quad  j\in\N.
	$$
	It follows that 
	$$
	\nnorm{\tilde{f}}_2^4=\sum_{j=1}^\infty|c_j|^4\leq \sup_{j\in\N}(|c_j|^2)
	\sum_{j=1}^\infty|c_j|^2=\sup_{j\in\N}(|c_j|^2)\norm{f}_{L^2(\mu)}^2\leq
	\sup_{j\in\N}\Big|\int f\cdot \overline{\chi}_j\, d\mu\Big|,
	$$
	where the first identity follows by orthonormality and direct computation, the second identity by the Parseval identity, and the last estimate since all functions involved are bounded by $1$ .
	The result now follows since the set $\mathcal{E}(T)$ is invariant under multiplication by unit modulus  constants.
\end{proof}

\begin{proposition}\label{P:2'}
	Let $(X,\mu,T)$ be an ergodic system and $f\in L^\infty(\mu)$  be such that 	$\nnorm{f}_{s+2}>0$ for some $s\in\Z_+$. Then there  exist  
	 $\chi_{\underline{n}}\in \mathcal{E}(T)$, $\underline{n}\in \N^s$,  such that
	$$
\liminf_{N\to\infty} \E_{\underline{n}\in[N]^s}\Re \Big( \int \Delta_{\underline{n}}f \cdot \chi_{\underline{n}}\, d\mu\Big)>0. 
	$$
\end{proposition}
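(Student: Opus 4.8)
The plan is to bootstrap from the base case $s=0$, which is precisely Proposition~\ref{P:1'}, to arbitrary $s\in\Z_+$ by unwinding the definition of the Gowers--Host--Kra seminorm through the ``simultaneous limit'' identity \eqref{E:seminorm4}. First I would normalize so that $\norm{f}_{L^\infty(\mu)}\le 1$; this is harmless, since for $\underline n\in\N^s$ one has $\Delta_{\underline n}(cf)=|c|^{2^s}\,\Delta_{\underline n}f$, so replacing $f$ by a nonzero scalar multiple multiplies $\nnorm{f}_{s+2}^{2^{s+2}}$ and each $\int\Delta_{\underline n}f\cdot\chi_{\underline n}\,d\mu$ by a common positive constant, leaving both the hypothesis and the conclusion (with the same eigenfunctions $\chi_{\underline n}$) unchanged.

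Next, I would apply \eqref{E:seminorm4} with its index taken to be $s+2$, which gives
\[
\nnorm{f}_{s+2}^{2^{s+2}}=\lim_{N\to\infty}\E_{\underline n\in[N]^{s}}\nnorm{\Delta_{\underline n}f}_2^{4}.
\]
In particular this limit exists and, by hypothesis, is a positive number. Now fix $\underline n\in\N^s$; since $\Delta_{\underline n}f=\prod_{\epsilon\in\{0,1\}^s}\mathcal C^{|\epsilon|}\,T^{\epsilon\cdot\underline n}f$ satisfies $\norm{\Delta_{\underline n}f}_{L^\infty(\mu)}\le 1$, Proposition~\ref{P:1'} applies to it and yields
\[
\nnorm{\Delta_{\underline n}f}_2^{4}\le \sup_{\chi\in\mathcal E(T)}\Re\Big(\int\Delta_{\underline n}f\cdot\chi\,d\mu\Big).
\]
The supremum on the right is nonnegative, because every modulus-one constant lies in $\mathcal E(T)$; hence I may choose, for each $\underline n$, an eigenfunction $\chi_{\underline n}\in\mathcal E(T)$ realizing at least half of it, so that $\Re\big(\int\Delta_{\underline n}f\cdot\chi_{\underline n}\,d\mu\big)\ge\frac12\nnorm{\Delta_{\underline n}f}_2^{4}$.

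Finally, averaging this pointwise-in-$\underline n$ inequality over $\underline n\in[N]^s$ and letting $N\to\infty$ gives
\[
\liminf_{N\to\infty}\E_{\underline n\in[N]^s}\Re\Big(\int\Delta_{\underline n}f\cdot\chi_{\underline n}\,d\mu\Big)\ \ge\ \frac12\lim_{N\to\infty}\E_{\underline n\in[N]^s}\nnorm{\Delta_{\underline n}f}_2^{4}\ =\ \frac12\,\nnorm{f}_{s+2}^{2^{s+2}}\ >\ 0,
\]
which is exactly the asserted conclusion. I do not expect a serious obstacle here; the only subtlety worth flagging is that the eigenfunctions $\chi_{\underline n}$ are selected independently for each $\underline n$, with no measurability or uniformity in $\underline n$ required, which is precisely why the statement delivers only an averaged lower bound rather than a pointwise one. (If one prefers cleaner bookkeeping, one may instead pick $\chi_{\underline n}$ with $\Re(\int\Delta_{\underline n}f\cdot\chi_{\underline n}\,d\mu)>\nnorm{\Delta_{\underline n}f}_2^{4}-2^{-|\underline n|}$ and use that $\E_{\underline n\in[N]^s}2^{-|\underline n|}\to0$ as $N\to\infty$.)
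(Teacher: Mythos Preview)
Your proof is correct and follows essentially the same route as the paper's: apply \eqref{E:seminorm4} to write $\nnorm{f}_{s+2}^{2^{s+2}}$ as the limit of $\E_{\underline n\in[N]^s}\nnorm{\Delta_{\underline n}f}_2^4$, invoke Proposition~\ref{P:1'} pointwise in $\underline n$, and then choose $\chi_{\underline n}$ near the supremum. The paper's version is terser (it simply says ``from this the asserted estimate readily follows''), while you spell out the selection of $\chi_{\underline n}$ and the normalization; one small inaccuracy is that scaling $f$ by $c$ multiplies $\nnorm{f}_{s+2}^{2^{s+2}}$ by $|c|^{2^{s+2}}$ but $\int\Delta_{\underline n}f\cdot\chi_{\underline n}\,d\mu$ by $|c|^{2^s}$, so the constants are not literally common, though this is immaterial since both sides are positive iff they were before.
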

\begin{proof}
	By \eqref{E:seminorm4} we have   that
	$$
\lim_{N\to\infty}\E_{\underline{n}\in [N]^s} \nnorm{\Delta_{\underline{n}}f}_2^4>0.
	$$
	Using Proposition~\ref{P:1'} we deduce that
	$$
\liminf_{N\to\infty}\E_{\underline{n}\in [N]^s}\sup_{\chi\in \mathcal{E}(T)} \Re \Big(\int \Delta_{\underline{n}}f\cdot \chi\, d\mu\Big)>0.
	$$
	From this the asserted estimate readily follows.
\end{proof}

We will use the following  variant of the so called Gowers-Cauchy-Schwarz inequality:
\begin{lemma}\label{L:GCS} Let $(X,\mu,T)$ be a system,  and for $s\in \N$ let  $f_\epsilon \in L^\infty(\mu)$ be bounded by $1$ for $\epsilon\in \{0,1\}^s$, and $g_{\underline{n}}\in L^\infty(\mu)$ for $\underline{n}\in \N^s$. Let also $\underline{1}:=(1,\ldots,1)$. Then for every $N\in \N$ we have
$$
\Big|\E_{\underline{n}\in [N]^s}\,  \int \prod_{\epsilon\in \{0,1\}^s} T^{\epsilon\cdot \underline{n}}f_\epsilon\cdot g_{\underline{n}}\, d\mu\Big|^{2^s}\leq
\E_{\underline{n},\underline{n}'\in [N]^s}\,  \int \Delta_{\underline{n}-\underline{n}'}f_{\underline{1}}
\cdot T^{-|\underline{n}|}\big(\prod_{\epsilon\in \{0,1\}^s}\mathcal{C}^{|\epsilon|}g_{\underline{n}^\epsilon}
\big)\, d\mu.
$$
\end{lemma}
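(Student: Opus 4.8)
The plan is to prove this by induction on $s$, applying the Cauchy–Schwarz inequality once in each coordinate direction to "double up" the variables one at a time. First I would record the base case $s=1$: here the left-hand side is $\big|\E_{n\in[N]} \int f_0\cdot T^n f_1\cdot g_n\, d\mu\big|^2$. Writing the integrand as $\int f_0\cdot T^n f_1\cdot g_n\, d\mu = \int f_0\cdot (T^n f_1\cdot g_n)\, d\mu$ and viewing $n\mapsto T^n f_1\cdot g_n$ as a vector-valued function, Cauchy–Schwarz in $L^2(\mu)$ together with $\norm{f_0}_\infty\le 1$ and $\norm{f_1}_\infty\le 1$ gives
$$
\Big|\E_{n\in[N]}\int f_0\cdot T^n f_1\cdot g_n\, d\mu\Big|^2
\le \E_{n,n'\in[N]}\int T^n f_1\cdot \overline{T^{n'}f_1}\cdot g_n\cdot \overline{g_{n'}}\, d\mu.
$$
Composing with $T^{-n'}$ inside the integral (which preserves $\mu$) turns $T^n f_1\cdot \overline{T^{n'}f_1}$ into $T^{n-n'}f_1\cdot\overline{f_1}=\Delta_{n-n'}f_1$ and turns $g_n\cdot\overline{g_{n'}}$ into $T^{-n'}(g_n\cdot\overline{g_{n'}}) = T^{-n}\big(T^{n-n'}g_n\cdot T^{-n'+n'}\dots\big)$; a short bookkeeping check confirms this equals $T^{-|n|}\big(\prod_{\epsilon\in\{0,1\}}\mathcal{C}^{|\epsilon|}g_{n^\epsilon}\big)$ with the convention $n^0=n$, $n^1=n'$, which is exactly the claimed right-hand side for $s=1$.

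For the inductive step, suppose the inequality holds for $s-1$ and consider the average over $[N]^s$. Split $\underline{n}=(n_1,\underline{m})$ with $\underline{m}\in[N]^{s-1}$, and group the product $\prod_{\epsilon\in\{0,1\}^s}T^{\epsilon\cdot\underline{n}}f_\epsilon$ according to the value of the last coordinate $\epsilon_s$: this writes the integrand as $\int F_{\underline{m}}\cdot T^{n_1}G_{\underline{m}}\cdot g_{(n_1,\underline{m})}\,d\mu$ where $F_{\underline{m}}=\prod_{\epsilon':\epsilon_s=0}T^{\epsilon'\cdot\underline{m}}f_{(\epsilon',0)}$ and $G_{\underline{m}}=\prod_{\epsilon':\epsilon_s=1}T^{\epsilon'\cdot\underline{m}}f_{(\epsilon',1)}$ (both bounded by $1$). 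Now I would apply the $s=1$ case in the variable $n_1$ (with $\underline{m}$ as a parameter), which doubles $n_1$ to $(n_1,n_1')$, then apply the induction hypothesis in the remaining $s-1$ variables $\underline{m}$; tracking how the conjugations $\mathcal{C}^{|\epsilon|}$ and the shift operators $T^{-|\underline{n}|}$ accumulate across the two steps reproduces the full product over $\epsilon\in\{0,1\}^s$ on the right-hand side. The main obstacle — really the only nonroutine part — is the combinatorial bookkeeping: one must verify that after $s$ successive Cauchy–Schwarz steps the operators compose to give exactly the exponent $-|\underline{n}|$ on the $g$-factors, the pattern $\underline{n}^\epsilon$ of doubled/undoubled variables, and the parities $|\epsilon|$ on the conjugations, all in a way that matches $\Delta_{\underline{n}-\underline{n}'}f_{\underline{1}}$ appearing cleanly as the $f$-contribution. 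This is standard (it is the same computation underlying the Gowers–Host–Kra seminorm identities \eqref{E:seminorm2}–\eqref{E:seminorm3}), and once the indexing is set up carefully each step is an application of Cauchy–Schwarz followed by a measure-preserving change of variable; I would present it either by the induction above or, equivalently, by applying Cauchy–Schwarz in all $s$ coordinates simultaneously and then invoking Jensen/convexity to pull the $2^s$-th power outside the average.
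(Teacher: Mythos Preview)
Your approach is essentially the same as the paper's: the paper also proves the lemma by successive applications of Cauchy--Schwarz in each coordinate direction (it writes out the case $s=2$ explicitly and states that the general case follows by iterating over the variables $n_s,\ldots,n_1$ in the same manner). One small point to watch in your base-case bookkeeping: composing with $T^{-n'}$ yields $T^{-n'}(g_n\overline{g_{n'}})$, not $T^{-n}(g_n\overline{g_{n'}})$ as the lemma's right-hand side requires, and these are not pointwise equal---the ``short bookkeeping check'' actually uses that after averaging over $(n,n')\in[N]^2$ the two expressions coincide (the post--Cauchy--Schwarz average is real, and swapping $n\leftrightarrow n'$ conjugates it), so be sure to record that symmetry step rather than claim a pointwise identity.
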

\begin{proof}
For notational simplicity we give the details only for $s=2$.
	The general case follows in a similar manner by  successively applying the Cauchy-Schwarz inequality with respect to the variables $n_s,\ldots, n_1$, exactly as we do below for $s=2$.
We have that
$$
\Big|\E_{n_1,n_2\in [N]}\int f_0 \cdot T^{n_1}f_1 \cdot T^{n_2}f_2\cdot  T^{n_1+n_2}f_3 \cdot  g_{n_1,n_2}\, d\mu\Big|^2
$$
is bounded by (we use that $f_0,f_1$ are bounded by $1$)
$$
\E_{n_1\in [N]}\int \Big|\E_{n_2\in [N]} T^{n_2}f_2\cdot  T^{n_1+n_2}f_3 \cdot  g_{n_1,n_2}\Big|^2\, d\mu.
$$
After expanding the square we find that this expression is equal to
 $$
 \E_{n_1\in [N]}\int \E_{n_2,n_2'\in [N]}\,  T^{n_2}f_2\cdot  T^{n_2'}\overline{f}_2\cdot  T^{n_1+n_2}f_3\cdot  T^{n_1+n_2'}\overline{f}_3 \cdot  g_{n_1,n_2}\cdot \overline{g_{n_1,n_2'}} \, d\mu.
 $$
 After composing with  $T^{-n_2}$, exchanging  $\E_{n_1\in [N]}$ with
 $\E_{n_2,n_2'\in [N]}$,
  using the Cauchy-Schwarz inequality, and that $f_2$ is bounded by $1$, we get that the square of the last expression is bounded by
 $$
 \E_{n_2,n_2'\in [N]}\int \Big|\E_{n_1\in [N]}  \,  T^{n_1}f_3\cdot  T^{n_1+n_2'-n_2}\overline{f}_3 \cdot
 T^{-n_2}( g_{n_1,n_2}\cdot \overline{g}_{n_1,n_2'}) \Big|^2 \, d\mu.
 $$
 As before, we  expand the square,   and compose with $T^{-n_1}$.  We arrive at the expression
 \begin{multline*}
 \E_{n_1,n_2,n_1',n_2'\in [N]} \\
 \int f_3\cdot T^{n_1'-n_1}\overline{f_3}\cdot
 T^{n_2'-n_2}\overline{f_3} \cdot T^{n_1'+n_2'-n_1-n_2}f_3\cdot
 T^{-n_1-n_2}( g_{n_1,n_2}\cdot \overline{g}_{n_1,n_2'}
 \cdot \overline{g}_{n_1',n_2}\cdot g_{n_1',n_2'})  \, d\mu,
 \end{multline*}
which is equal to the right hand side of the asserted estimate when  $s=2$
(for $\underline{n}:=(n_1,n_2)$,  $\underline{n}':=(n_1',n_2')$).
Combining the previous two estimates gives the asserted bound for $s=2$.
\end{proof}

\begin{lemma}\label{L:lower}
Let  $(X,\mu, T)$ be an ergodic  system and  $f\in L^\infty(\mu)$ be such that $\nnorm{f}_{s}=0$ for some $s\in \N$.
For $j=1,\ldots,s$, $N\in\N$,  let  $b_{j,N}\in \ell^\infty(\N^{s})$  be  sequences  that do not depend on the variable $n_j$
and are bounded by $1$, and let
$$
c_{N}(\underline{n}):=\prod_{j=1}^s b_{j,N}(\underline{n}), \quad  \underline{n}\in [N]^{s}, \, N\in\N.
$$
Then
$$
\lim_{N\to\infty}  	\norm{\E_{\underline{n}\in [N]^{s}}\, c_{N}(\underline{n})\cdot
\Delta_{\underline{n}}f}_{L^2(\mu)}=0.
$$
\end{lemma}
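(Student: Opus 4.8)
The plan is to induct on $s$ and use the Gowers–Cauchy–Schwarz inequality of Lemma~\ref{L:GCS} together with the van der Corput trick, exploiting that for each index $j$ the weight $b_{j,N}$ does not depend on $n_j$. The base case $s=1$ is immediate: since $\nnorm{f}_1=0$ means $\E(f\mid \CI(T))=0$, and $c_N(\underline n)=b_{1,N}(\underline n)$ does not depend on $n_1$, the average $\E_{n_1\in[N]}c_N(n_1)\,T^{n_1}f$ has $L^2$-norm at most $\norm{\E_{n_1\in[N]}T^{n_1}f\cdot\overline{(\cdot)}}$-type quantity controlled by the mean ergodic theorem — more carefully, expand the square, compose with $T^{-n_1}$, and use that $c_N$ is independent of $n_1$ so the cross terms become $\E_{n_1,n_1'}c_N(n_1')\overline{c_N(n_1)}\int f\cdot T^{n_1-n_1'}\overline f\,d\mu$ times harmless weights; the inner average of $T^{n_1-n_1'}\overline f$ against $f$ tends to $\E(\overline f\mid\CI(T))\cdot\overline{\E(f\mid\CI(T))}$, which is zero.

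For the inductive step, suppose the claim holds for $s-1$. Write $A_N:=\E_{\underline n\in[N]^s}c_N(\underline n)\Delta_{\underline n}f$. I would first apply van der Corput (Lemma~\ref{L:VDC}) in the variable $n_1$: this bounds $\norm{A_N}^2$, up to $o(1)$, by $\E_{m}\Re\big\langle \E_{\underline n}c_N(n_1+m,n_2,\dots)\Delta_{(n_1+m,n_2,\dots)}f,\ \E_{\underline n}c_N(\underline n)\Delta_{\underline n}f\big\rangle$. Composing with $T^{-n_1}$ inside the inner product and using $\Delta_{n_1+m,\dots}f\cdot\overline{\Delta_{n_1,\dots}f}$-type algebra, the product $\Delta_{n_1}f\cdot\overline{\Delta_{n_1}f}$ telescopes out and one is left with an average over $(n_2,\dots,n_s)\in[N]^{s-1}$ of $\Delta_{n_2,\dots,n_s}$ applied to $\Delta_m f$ (after the usual shift), against a new weight. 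The key point: the new weight, built from $c_N$ at two shifted values of $n_1$ together with a factor $\mathbf 1_{[N]}(n_1+m)$ averaged over $n_1$, still factors as a product $\prod_{j=2}^s \tilde b_{j,N}(n_2,\dots,n_s)$ with $\tilde b_{j,N}$ independent of $n_j$ — because each original $b_{j,N}$ ($j\ge 2$) was independent of $n_j$ and is multiplied together with things not involving $n_j$, while the $n_1$-average and the $b_{1,N}$ factors get absorbed into a bounded coefficient depending only on $m$ and the remaining variables in an $n_j$-free way. Since $\nnorm{\Delta_m f}_{s-1}=0$ need not hold for every $m$, one instead uses that $\nnorm{f}_s=0$ is equivalent to $\lim_N \E_{m\in[N]}\nnorm{\Delta_m f}_{s-1}^{2^{s-1}}=0$; applying the inductive hypothesis to $\Delta_m f$ (which requires a uniform version over $m$, obtained by keeping the $m$-average outside) and then averaging in $m$, the displayed quantity goes to $0$, hence $\norm{A_N}\to 0$.

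Alternatively — and this is probably cleaner to write — I would avoid the explicit van der Corput recursion and instead apply Lemma~\ref{L:GCS} directly in the form that, for each fixed $N$,
$$
\Big|\E_{\underline n\in[N]^s}\int \Delta_{\underline n}f\cdot g_{\underline n}\,d\mu\Big|^{2^s}\le \E_{\underline n,\underline n'\in[N]^s}\int \Delta_{\underline n-\underline n'}f\cdot T^{-|\underline n|}\Big(\prod_{\epsilon}\mathcal C^{|\epsilon|}g_{\underline n^\epsilon}\Big)\,d\mu,
$$
where here I would reduce $\norm{A_N}^2$ to a scalar expression $\E_{\underline n}\int \Delta_{\underline n}f\cdot g_{\underline n}\,d\mu$ by testing against the normalized $A_N$ itself (so $g_{\underline n}$ involves $c_N(\underline n)$ and one $T$-translate of $\overline{A_N/\norm{A_N}}$), and then observe that in the product $\prod_{\epsilon\in\{0,1\}^s}\mathcal C^{|\epsilon|}g_{\underline n^\epsilon}$ the factors $c_N(\underline n^\epsilon)$ combine: because $c_N=\prod_j b_{j,N}$ with $b_{j,N}$ not depending on $n_j$, the product over $\epsilon$ of $b_{j,N}(\underline n^\epsilon)$ involves $b_{j,N}$ evaluated at points that agree in the $j$-th coordinate within each pair differing only in $\epsilon_j$, so by the same cancellation used in the Gowers norm computation these telescope to something bounded by $1$ and independent of enough variables; this forces the right-hand side to be $o(1)$ as $N\to\infty$ precisely because $\nnorm{f}_s=0$ (the unweighted right-hand side is exactly $\nnorm{f}_s^{2^s}+o(1)$ by \eqref{E:seminorm3}). \textbf{The main obstacle} is the bookkeeping of the weights: one must verify carefully that after the Cauchy–Schwarz expansions the surviving coefficient stays uniformly bounded and that the $n_j$-independence of $b_{j,N}$ is exactly what makes the weight drop out of the final $2^s$-fold average, so that it can be compared with the unweighted Gowers expression. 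Once that is pinned down, the conclusion follows from $\nnorm{f}_s=0$ and the identity \eqref{E:seminorm3}.
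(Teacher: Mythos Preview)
Your inductive Approach~1 has a genuine gap at exactly the point you flag. The inductive hypothesis is purely qualitative: it says that \emph{if} $\nnorm{g}_{s-1}=0$ then the weighted $(s-1)$-variable average vanishes. After a van der Corput step in $n_1$ you are faced with expressions involving $\Delta_m f$, and $\nnorm{\Delta_m f}_{s-1}$ is not zero for individual $m$; only the average $\E_{m\in[N]}\nnorm{\Delta_m f}_{s-1}^{2^{s-1}}$ tends to $0$. To exploit that, you would need a quantitative bound of the form $\norm{\E_{\underline n''}\tilde c_N(\underline n'')\Delta_{\underline n''}g}_{L^2}^{2^{s-1}}\le C\nnorm{g}_{s-1}^{2^{s-1}}+o_N(1)$, and the inductive hypothesis does not provide this. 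There is also a structural issue: van der Corput in $n_1$ applied to $v_{n_1}=\E_{\underline n''}c_N(n_1,\underline n'')\Delta_{n_1,\underline n''}f$ produces \emph{two} independent averages over $\underline n'',\underline n'''\in[N]^{s-1}$ in the cross term, so you do not land on a single $(s-1)$-variable problem with a product weight as you describe.

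Your Approach~2 also does not close. After Lemma~\ref{L:GCS}, the product $\prod_\epsilon\mathcal C^{|\epsilon|}c_N(\underline n^\epsilon)$ indeed becomes nonnegative and bounded by $1$ (each $\prod_\epsilon\mathcal C^{|\epsilon|}b_{j,N}(\underline n^\epsilon)$ is a product of squared moduli, since $b_{j,N}(\underline n^\epsilon)$ is independent of $\epsilon_j$), but it is \emph{not} independent of $(\underline n,\underline n')$, and the additional factor $T^{-|\underline n|}|A_N|^{2^s}$ is a genuine function on $X$ depending on $\underline n$. So the right-hand side is not of the form $\nnorm{f}_s^{2^s}+o(1)$; you cannot simply compare it to the unweighted Gowers expression.

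The paper's argument avoids both problems by iterating on the \emph{number of weight factors} rather than on $s$. One step is: use that $b_{s,N}$ is independent of $n_s$ together with the factorisation $\Delta_{\underline n}f=\Delta_{n_1,\ldots,n_{s-1}}\overline f\cdot T^{n_s}\Delta_{n_1,\ldots,n_{s-1}}f$ and Cauchy--Schwarz to remove $b_{s,N}$; then apply Lemma~\ref{L:VDC} in the surviving $n_s$-variable, compose with $T^{-n_s}$, and pigeonhole to fix the old $n_s$. One is left with an expression of the \emph{same} shape --- still $s$ variables, still $\Delta_{\underline n}f$ with the original $f$ --- but now with only $s-1$ weight factors $b'_{1,N},\ldots,b'_{s-1,N}$, each $b'_{j,N}$ still independent of $n_j$. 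After $s$ such steps the weights are gone (up to a harmless factor removable by partial summation), and the statement reduces directly to $\lim_N\E_{\underline n\in[N]^s}\int\Delta_{\underline n}f\,d\mu=\nnorm{f}_s^{2^s}=0$. The point is that $f$ never changes during the iteration, so the hypothesis $\nnorm{f}_s=0$ can be used once at the end, with no need for a quantitative inductive statement.
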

\begin{proof}
Using  the Cauchy-Schwarz inequality, the fact that the sequence $b_{s,N}$ does not depend on the variable $n_{s}$ and is bounded by $1$, and the identity
$$	
\Delta_{n_1,\ldots,n_s}f=T^{n_s} (\Delta_{n_1,\ldots, n_{s-1}}f)\cdot \Delta_{n_1,\ldots, n_{s-1}}\overline{f},
$$
we get that it suffices to show that (for later convenience we rename the variable $n_s$ as $n_s'$)
$$
\lim_{N\to\infty}  	\E_{\underline{n}\in [N]^{s-1}}\norm{\E_{n_s'\in [N]}  \, \prod_{j=1}^{s-1} b_{j,N}(\underline{n}, n_s')\cdot
	T^{n_s'}(\Delta_{\underline{n}}f)}^2_{L^2(\mu)}=0.
$$

 Using Lemma~\ref{L:VDC} for the average over $n_s'$ and composing with $T^{{-n_s'}}$,   we get that it suffices to show that
$$
\lim_{N\to\infty}  	\E_{\underline{n}\in [N]^{s-1}}\E_{n_s, n_s'\in [N]} \, {\bf 1}_{[N]}(n_s+n_s')\, \prod_{j=1}^{s-1}
	 b_{j,N}(\underline{n}, n'_s+n_s)\cdot  \overline{b}_{j,N}(\underline{n}, n'_s)
\int 	T^{n_s}(\Delta_{\underline{n}}f)\cdot 	\Delta_{\underline{n}}\overline{f}\, d\mu =0,
$$
or, equivalently, that
 $$
\lim_{N\to\infty}\E_{n_s'\in [N]} 	\E_{(\underline{n},n_s)\in [N]^s} \, {\bf 1}_{[N]}(n_s+n_s') \, \prod_{j=1}^{s-1}
   b_{j,N}(\underline{n}, n'_s+n_s) \cdot \overline{b}_{j,N}(\underline{n}, n'_s)
\int 	\Delta_{\underline{n},n_s}f\, d\mu =0.
$$
This would follow if we show that
 $$
\lim_{N\to\infty} \sup_{n_s'\in [N]}\Big|	\E_{(\underline{n},n_s)\in [N]^s} \, {\bf 1}_{[N]}(n_s+n_s') \, \prod_{j=1}^{s-1}
b_{j,N}(\underline{n}, n'_s+n_s) \cdot \overline{b}_{j,N}(\underline{n}, n'_s)
\int 	\Delta_{\underline{n},n_s}f\, d\mu\Big| =0,
$$
or equivalently, that
 for any choice of $n'_{s,N}\in [N]$, $N\in \N$, we have
  $$
 \lim_{N\to\infty}   \E_{(\underline{n},n_s)\in [N]^s}\, {\bf 1}_{[N]}(n_s+n_{s,N}') \, \prod_{j=1}^{s-1}
 b_{j,N}(\underline{n}, n'_{s,N}+n_s) \cdot  \overline{b}_{j,N}(\underline{n}, n'_{s,N})
 \int 	\Delta_{\underline{n},n_s}f\, d\mu =0.
 $$
Using the Cauchy-Schwarz inequality we get that it suffices to show that
   $$
 \lim_{N\to\infty}  \norm{\E_{(\underline{n},n_s)\in [N]^s} \, \prod_{j=1}^{s-1}
 b'_{j,N}(\underline{n},n_s)\cdot
 	\Delta_{\underline{n},n_s}f}_{L^2(\mu)}=0,
 $$
where  we let
$$
b'_{s-1,N}(\underline{n}, n_s):= {\bf 1}_{[N]}(n_s+n_{s,N}')  \cdot b_{s-1,N}(\underline{n}, n'_{s,N}+n_s)\cdot \overline{b}_{s-1,N}(\underline{n}, n'_{s,N})   ,  \quad (\underline{n},n_s)\in [N]^s, \, N\in \N,
$$
and for $j=1,\ldots,s-2$, we let
$$
b'_{j,N}(\underline{n},n_s):= b_{j,N}(\underline{n}, n'_{s,N}+n_s)\cdot \overline{b}_{j,N}(\underline{n}, n'_{s,N}), \quad (\underline{n},n_s)\in [N]^s,\,  N\in \N.
$$
Note that we now have a product of $s-1$ sequences, instead of $s$, and  for $j=1,\ldots, s-1$ the sequences $b'_{j,N}$ do not depend on the variable $n_j$. Hence,  we can continue like that (on the next step we eliminate the sequences $b'_{s-1,N}$, etc), and  after $s$ steps we deduce that  it suffices to show that
  $$
\lim_{N\to\infty}   	\E_{\underline{n}\in [N]^s} \,
 \big(1-\frac{n_1}{N}\big)	\ \int \Delta_{\underline{n}}f\, d\mu =0.
$$
Using partial summation with respect to the variable $n_1$  it suffices to show the following
  $$
\lim_{N\to\infty}   	\E_{\underline{n}\in [N]^s} \,
\int 	\Delta_{\underline{n}}f\, d\mu =0.
$$
This follows  from our assumption  $\nnorm{f}_s=0$ and \eqref{E:seminorm3} and  completes the proof.

	\end{proof}

\section{Joint ergodicity of general  sequences}\label{S:Main}
 The primary goal of this
section is to prove  Theorem~\ref{T:JointErgodicity}. At the end of the section we also deduce   Corollaries~\ref{C:MultipleRecurrence} and \ref{C:Nilsystems} from Theorem~\ref{T:JointErgodicity}. For a better understanding of the argument,
we advice the reader to  first go through  the technically less demanding model case that was treated  in Section~\ref{S:FW}.

	It is clear that Property~$(i)$ of Theorem~\ref{T:JointErgodicity} implies Property~$(ii)$. Indeed we can use appropriate  eigenfunctions to prove the  equidistribution property and the fact that $\nnorm{f}_1=0$ implies $\int f\, d\mu=0$ to prove the seminorm property with $s=1$.
So we only prove that Property~$(ii)$ implies Property~$(i)$.

\subsection{Goal}
It will be more convenient to prove the
 following statement (the case  $m=\ell$ implies  Theorem~\ref{T:JointErgodicity}):
 \begin{proposition}\label{P:10}
 	 Let  $(X,\mu,T)$ be an ergodic system and   $a_1,\ldots, a_\ell\colon \N\to \Z$
 	 be sequences that are good for seminorm estimates and  equidistribution for $(X,\mu,T)$. Then  for every $m\in \{0,1,\ldots, \ell\}$
 	 the following holds:

  ($P_m$) If  $f_1,\ldots,f_\ell\in L^\infty(\mu)$ are such that   
 $f_j\in \mathcal{E}(T)$ for  $j=m+1,\ldots, \ell$, then
\begin{equation}\label{E:desired}
\lim_{N\to\infty}\E_{n\in [N]}\, T^{a_1(n)}f_1 \cdot\ldots \cdot T^{a_\ell(n)}f_\ell=\int f_1\, d\mu\cdot \ldots \cdot   \int f_\ell\, d\mu
\end{equation}
where convergence takes place in $L^2(\mu)$.
\end{proposition}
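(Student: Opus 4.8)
I would establish the statements $(P_m)$, $m=0,1,\dots,\ell$, by induction on $m$; the case $m=\ell$ is Theorem~\ref{T:JointErgodicity}. The base case $(P_0)$ is immediate: all the $f_j$ lie in $\mathcal E(T)$, so $T^{a_j(n)}f_j=e(a_j(n)t_j)f_j$ for suitable $t_j\in\spec(T)$, and $\E_{n\in[N]}\prod_{j=1}^\ell T^{a_j(n)}f_j=\bigl(\E_{n\in[N]}e(a_1(n)t_1+\cdots+a_\ell(n)t_\ell)\bigr)f_1\cdots f_\ell$; if every $t_j=0$ then by ergodicity each $f_j$ is a modulus-one constant and we are done, and otherwise good equidistribution sends the scalar average to $0$ while at least one $f_j$ is a nonconstant eigenfunction with zero integral, so $\int f_1\,d\mu\cdots\int f_\ell\,d\mu=0$ as well. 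The inductive step $(P_{m-1})\Rightarrow(P_m)$ is where the Peluse--Prendiville type argument of Section~\ref{S:FW} is deployed in full generality.

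For the inductive step, assume $(P_{m-1})$. Writing $f_m=\E(f_m\mid\mathcal K(T))+(f_m-\E(f_m\mid\mathcal K(T)))$ and approximating the Kronecker component in $L^2(\mu)$ by a constant plus a finite linear combination of nonconstant modulus-one eigenfunctions --- all of which lie in $\mathcal E(T)$, so that $(P_{m-1})$ applies to each of the resulting averages and the eigenfunction terms contribute $0$ --- reduces $(P_m)$ to showing: if $\nnorm{f_m}_2=0$ then $\lim_{N\to\infty}\E_{n\in[N]}\prod_{j=1}^\ell T^{a_j(n)}f_j=0$ in $L^2(\mu)$. Suppose this fails. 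As in Step~1 (Proposition~\ref{P:1}), pass to a \emph{dual function} $\tilde f_m$, namely a weak limit of averages $\E_{n\in[N_k]}\,T^{-a_m(n)}g_k\cdot\prod_{j\neq m}T^{a_j(n)-a_m(n)}\overline{f_j}$ (with suitable conjugations, $\|g_k\|_{L^\infty(\mu)}\le1$, $N_k\to\infty$), for which the average with $f_m$ replaced by $\tilde f_m$ still does not tend to $0$; since $f_{m+1},\dots,f_\ell\in\mathcal E(T)$, the good seminorm estimates hypothesis then gives $\nnorm{\tilde f_m}_s>0$ for the relevant $s$.

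The core of the argument is a descent from $\nnorm{\tilde f_m}_s>0$ to $\nnorm{\tilde f_m}_2>0$, generalizing Steps~3--4. Via the soft inverse theorem (Proposition~\ref{P:2'}) one produces eigenfunctions $\chi_{\underline n}\in\mathcal E(T)$ correlating on average with $\Delta_{\underline n}\tilde f_m$; unfolding the dual form of $\tilde f_m$, stripping the $g_k$ by the Gowers--Cauchy--Schwarz inequality (Lemma~\ref{L:GCS}) and by van der Corput (Lemma~\ref{L:VDC}), and composing with $T^{a_m(n)}$ so that $T^{a_m(n)}\chi_{\underline n}=e(a_m(n)\alpha_{\underline n})\chi_{\underline n}$ with $\alpha_{\underline n}\in\spec(T)$, one reaches nonvanishing weighted (multiple) exponential averages where good equidistribution and Weyl-type estimates (Lemmas~\ref{L:Weyl} and~\ref{L:Weyl2}) confine the exponents $\alpha_{\underline n}$ to a finite set; this makes the relevant eigenfunctions depend on boundedly many coordinates of $\underline n$, and Lemma~\ref{L:lower} lowers the seminorm index by one. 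Iterating to index $2$ and applying Proposition~\ref{P:1'} yields a single $\chi\in\mathcal E(T)$ of eigenvalue $\gamma$ with $\int\tilde f_m\cdot\chi\,d\mu\neq0$; unfolding the dual form once more, composing with $T^{a_m(n)}$, and removing the remaining $\ell-1$ factors by the same mechanism (re-running the descent on the slots $j<m$, whose functions are still arbitrary) eventually produces a nonvanishing $\E_{n\in[N]}e(a_1(n)\gamma_1+\cdots+a_\ell(n)\gamma_\ell)$; good equidistribution forces all $\gamma_j$, in particular $\gamma$, to vanish, so $\chi$ is constant, and since the constant function lies in the dual space this contradicts $\int f_m\,d\mu=0$, a consequence of $\nnorm{f_m}_2=0$ and ergodicity.

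I expect the main obstacle to be this descent and, above all, its interface with the endgame: carrying the dual representation of $\tilde f_m$ through repeated van der Corput and Cauchy--Schwarz steps while tracking precisely which variables the weights and eigenfunctions depend on, making sure that at every stage the exponential averages appearing are genuinely of a form to which the equidistribution hypothesis (available only for the specific system $(X,\mu,T)$) and Lemma~\ref{L:lower} apply, and verifying that the constant function is reachable inside the dual space so that the concluding contradiction is legitimate.
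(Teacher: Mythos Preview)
Your overall architecture --- induction on $m$, base case via equidistribution, dual function $\tilde f_m$ via Proposition~\ref{P:11}, descent on the seminorm index --- matches the paper's. But there are two genuine gaps.

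First, your reduction and endgame do not fit together. You reduce to showing that $\nnorm{f_m}_2=0$ forces the average to vanish, and claim a final contradiction with $\int f_m\,d\mu=0$. But nothing in the dual-function machinery ties $\int\tilde f_m\,d\mu$ or $\int\tilde f_m\cdot\chi\,d\mu$ back to $\int f_m\,d\mu$: the function $\tilde f_m$ is built from $g_k$ and the $f_j$ for $j\neq m$, not from $f_m$. Once you reach $\int\tilde f_m\,d\mu\neq0$ (which is where the descent actually terminates --- the paper goes all the way to $\nnorm{\tilde f_m}_1>0$, not just $\nnorm{\tilde f_m}_2>0$), unfolding and Cauchy--Schwarz give $\limsup_k\bigl\|\E_{n\in[N_k]}\prod_{j\neq m}T^{a_j(n)}f_j\bigr\|_{L^2(\mu)}>0$, and $(P_{m-1})$ (with $f_m:=1\in\mathcal E(T)$) identifies this as $\bigl|\prod_{j\neq m}\int f_j\,d\mu\bigr|$. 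For a contradiction you therefore need some $f_j$ with $j\neq m$ to have zero integral. That is precisely the paper's reduction in Section~\ref{SS:4.6}: expand each $f_j$, $j\neq m$, around its mean and handle the all-constants term via the $\ell=1$ case. Your Kronecker decomposition of $f_m$ is a legitimate opening, but it does not by itself set up the contradiction you claim.

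Second, the descent step is missing its key ingredient. After applying Lemma~\ref{L:GCS} one faces averages $\E_{n\in[N_k]}\prod_{j=1}^\ell T^{a_j(n)}g_{j,\underline n,\underline n'}$ where $g_{j,\underline n,\underline n'}\in\mathcal E(T)$ for $j\geq m$, but for $j<m$ the functions $g_{j,\underline n,\underline n'}=\Delta_{\underline n-\underline n'}\overline f_j$ are arbitrary in $L^\infty(\mu)$. The paper's move (Proposition~\ref{P:4'}, the paragraph after \eqref{E:problemforpolies}) is to invoke $(P_{m-1})$ \emph{at this point} to replace those $j<m$ functions by their integrals; only then does one obtain the pure exponential average \eqref{E:nn''}, to which the equidistribution hypothesis for $(X,\mu,T)$ applies and forces $\beta_{\underline n,\underline n'_N}=0$ exactly. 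You do not mention this use of $(P_{m-1})$, and your appeal to Lemmas~\ref{L:Weyl}--\ref{L:Weyl2} is misplaced: those concern polynomial phases $n,n^2,\ldots$ and are used only in the model case of Section~\ref{S:FW}; in the general argument the equidistribution assumption does all the work and gives $\beta=0$ on the nose, not merely confinement to a finite rational set. Your phrase ``re-running the descent on the slots $j<m$'' in the endgame suggests you sense the need to handle those slots, but the correct place to do so is inside each descent step, via $(P_{m-1})$, not afterwards.
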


In the following steps we fix $\ell\in\N$, a system, and  a collection of sequences, and  we are going to prove
that  property $(P_m)$ of  Proposition~\ref{P:10} holds
by using (finite) induction on $m\in \{0,1,\ldots, \ell\}$.

\subsection{Proof strategy}
Let us briefly describe the main idea of the proof, which is again motivated by the technique used in \cite{PP19}. If
\eqref{E:desired} fails, then  in Step~1 below, using our assumption that the sequences $a_1,\ldots, a_\ell$ are good for seminorm estimates for $(X,\mu,T)$, we deduce that $\nnorm{\tilde{f}_m}_{s+2}>0$ for some $s\in \Z_+$ where $\tilde{f}_m$ is given by \eqref{E:wlp}. This then implies that
\begin{equation}\label{E:Delta'}
\liminf_{N\to\infty} \E_{\underline{n}\in[N]^s}\,
\Re \Big(\int \Delta_{\underline{n}}\tilde{f}_m\cdot \chi_{\underline{n}}\, d\mu\Big)>0,
\end{equation}
where  $\chi_{\underline{n}}$, $\underline{n}\in\N^s$,  are  appropriate unit modulus eigenfunctions of the system $(X,\mu,T)$.
In Steps~2 and 3, using suitable manipulations of \eqref{E:Delta'}, the good  equidistribution properties of  the sequences $a_1,\ldots, a_\ell$, and the induction hypothesis,  we deduce  that  the eigenfunctions $\chi_{\underline{n}}$ satisfy some algebraic relations, which allow us to conclude that they are products of sequences in $s-1$ variables.\footnote{We crucially use in this step that the sequences $a_1,\ldots, a_\ell$ are good for equidistribution;  if $\ell\geq 3$ and we only knew that they were good for irrational equidistribution (for example if they were polynomial sequences), and our aim was to prove that the rational Kronecker factor is a characteristic factor, then our argument would have been much more complicated (as is the case in \cite{P19b}).} In view of this,  \eqref{E:Delta'} combined with Lemma~\ref{L:lower} enables us to show that
$\nnorm{\tilde{f}_m}_{s+1}>0$. Iterating this step $s+1$ times we deduce that $\int \tilde{f}_m\, d\mu>0$. Given this, it will be an easy matter in Step~4 to  contradict the assumptions of Proposition~\ref{P:10}.

Let us say a few  words about the aforementioned  ``suitable manipulations'' performed in Steps 2 and 3, since they  are  more complicated than those used  for the case $s=1$  in Section~\ref{S:FW}.
Suppose for simplicity that $\ell=m=s=2$;
hence  $\nnorm{\tilde{f}_2}_{4}>0$,
and our goal is to show that $\nnorm{\tilde{f}_2}_{3}>0$.
In this case, our assumption implies that
\begin{equation}\label{E:Delta''}
\liminf_{N\to\infty} \E_{\underline{n}\in[N]^2}\,
\Re \Big(\int \Delta_{\underline{n}}\tilde{f}_2\cdot \chi_{\underline{n}}\, d\mu\Big)>0,
\end{equation}
for some eigenfunctions $\chi_{\underline{n}}$ with modulus $1$. Using the Cauchy-Schwarz inequality twice (as in Lemma~\ref{L:GCS}) we deduce that 	(the symmetries of the ergodic seminorms are crucially used here)
$$
\liminf_{N\to\infty}\E_{\underline{n},\underline{n}'\in [N]^2}\,
\limsup_{k\to\infty}\E_{n\in[N_k]}\,  \Re \Big(\int \Delta_{\underline{n}-\underline{n}'}g_k\cdot
T^{a_1(n)}(\Delta_{\underline{n}-\underline{n}'}
\overline{f}_1) \cdot T^{a_2(n)}\chi_{\underline{n},\underline{n}'} \, d\mu\Big)>0,
$$
where for $\underline{n}=(n_1,n_2)\in [N]^2$ and  $\underline{n}'=(n_1',n_2')\in [N]^2$,  the eigenfunctions $\chi_{\underline{n},\underline{n}'}$ are given by
\begin{equation}\label{E:cnn}
\chi_{\underline{n},\underline{n}'}=\chi_{n_1,n_2}\cdot
\overline{\chi}_{n_1,n_2'}\cdot \overline{\chi}_{n_1',n_2}\cdot \chi_{n_1',n_2'}.
\end{equation}
Note that the last estimate  is  substantially simpler to analyze than the $\ell=2$ case of \eqref{E:desired}, since in the inside average over the variable $n$, the iterate $T^{a_2(n)}$ is applied to an eigenfunction of the system and not to an arbitrary function in $L^2(\mu)$.
Using Property $(P_1)$ of Proposition~\ref{P:10} (which is our induction hypothesis), we can assume that the function $\Delta_{\underline{n}-\underline{n}'}
\overline{f}_1$ is constant for all $\underline{n},\underline{n}'\in \N^2$.   Using the good equidistribution properties of the sequence $a_2$, we deduce that for $N\in \N$  there exist constants $\underline{n}_N'\in [N]^2$ and sets $\Lambda_N\subset [N]^2$ with $\liminf_{N\to\infty}|\Lambda_N|/N>0$, such that    $\chi_{\underline{n},\underline{n}_N'}$ is constant for $\underline{n}\in \Lambda_N$.
Using \eqref{E:cnn}, this gives that for those values of $\underline{n}\in [N]^2$ the  $2$-variable sequence $\chi_{n_1,n_2}$ is a product of two $1$-variable sequences. If we use this information and an appropriate variant of \eqref{E:Delta''}, where we average over $\Lambda_N$ instead of $[N]^2$ (by being more careful we can guarantee that this holds), we deduce by  Lemma~\ref{L:lower}
that  $\nnorm{\tilde{f}_2}_{3}>0$.

\subsection{Step 1} (Characteristic factors)
The first step is to show that if \eqref{E:desired} fails, then it also fails for some function $f_m$ of special form that is given by \eqref{E:wlp} below.
 \begin{proposition}\label{P:11}
	Let $a_1,\ldots, a_\ell\colon \N\to \Z$  be sequences, $(X,\mu,T)$ be a system, and $f_1,\ldots, f_\ell\in L^\infty(\mu)$ be such that
	\begin{equation}\label{E:aa11}
\limsup_{N\to\infty}\norm{ 	\E_{n\in [N]} \, T^{a_1(n)}f_1\cdots T^{a_\ell(n)}f_\ell}_{L^2(\mu)}>0.
	\end{equation}
	Let $m\in [\ell]$.  Then  there exist $N_k\to\infty$ and $g_k\in L^\infty(\mu)$, with  $\norm{g_k}_{L^\infty(\mu)}\leq 1$,  $k\in\N$, such that
	for
	\begin{equation}\label{E:wlp}
	\tilde{f}_m:=\lim_{k\to\infty} \E_{n\in [N_k]}
	 \, T^{-a_m(n)}g_k\cdot \prod_{j\in [\ell], j\neq m}T^{a_j(n)-a_m(n)}\overline{f}_j,
	\end{equation}
	where the limit is a weak limit (note that then $\tilde{f}_m\in L^\infty(\mu)$), we have
	\begin{equation}\label{E:aa13}
\limsup_{N\to\infty}\norm{ 	\E_{n\in[N]} \, T^{a_m(n)}\tilde{f}_m\cdot  \prod_{j\in [\ell], j\neq m}T^{a_j(n)}f_j}_{L^2(\mu)}>0.
	\end{equation}
	 Furthermore, if $\ell=m=1$, and $\int f_1\, d\mu=0$, then we can choose $\tilde{f}_1$ of the form  \eqref{E:wlp}  and such that  $\int \tilde{f}_1\, d\mu=0$.
\end{proposition}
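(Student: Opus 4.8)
\emph{Proof strategy.} The plan is to mimic the proof of Proposition~\ref{P:1}, the only genuinely new point being the bookkeeping required for the final assertion. We may assume $\norm{f_j}_{L^\infty(\mu)}\le 1$ for all $j\in[\ell]$. For the fixed functions $f_j$, $j\in[\ell]\setminus\{m\}$, let $\mathcal{C}$ denote the $L^2(\mu)$-closure of the linear span of all subsequential weak limits of sequences of the form
$$
\E_{n\in [N]}\, T^{-a_m(n)}g_N\cdot \prod_{j\in [\ell],\, j\ne m} T^{a_j(n)-a_m(n)}\overline{f}_j,\qquad \norm{g_N}_{L^\infty(\mu)}\le 1 .
$$
The heart of the matter is the claim that if $f_m$ is orthogonal to $\mathcal{C}$, then $\E_{n\in [N]}\prod_{j\in[\ell]} T^{a_j(n)}f_j\to 0$ in $L^2(\mu)$; granting this, the proposition follows by a standard projection-and-approximation argument.

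To prove the claim, suppose it fails, so there are $a>0$ and $N_k\to\infty$ with $\norm{g_k}_{L^2(\mu)}\ge a$, where $g_k:=\E_{n\in [N_k]}\prod_{j\in[\ell]} T^{a_j(n)}f_j$ (note $\norm{g_k}_{L^\infty(\mu)}\le 1$). Expanding $\norm{g_k}_{L^2(\mu)}^2=\E_{n\in [N_k]}\int \overline{g}_k\cdot \prod_{j\in[\ell]} T^{a_j(n)}f_j\, d\mu\ge a^2$, composing with $T^{-a_m(n)}$ under the integral so that every factor other than the one involving $g_k$ is applied to $f_m$ or to the $f_j$ with $j\ne m$, and then taking complex conjugates of the resulting real quantity, we obtain
$$
\E_{n\in [N_k]}\int \overline{f}_m\cdot\Big( T^{-a_m(n)}g_k\cdot \prod_{j\ne m} T^{a_j(n)-a_m(n)}\overline{f}_j\Big)\, d\mu\ \ge\ a^2 .
$$
Passing to a further subsequence along which the inner sequences converge weakly to some $h\in\mathcal{C}$, we get $\int \overline{f}_m\cdot h\, d\mu\ge a^2>0$, i.e.\ $\langle h,f_m\rangle\ne 0$, contradicting $f_m\perp\mathcal{C}$. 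This proves the claim.

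Now assume \eqref{E:aa11}. Since $f_m-\E(f_m|\mathcal{C})$ is orthogonal to $\mathcal{C}$, the claim (applied to this bounded function in place of $f_m$, rescaling if necessary) gives that $\E_{n\in [N]}\, T^{a_m(n)}\big(f_m-\E(f_m|\mathcal{C})\big)\cdot\prod_{j\ne m}T^{a_j(n)}f_j\to 0$ in $L^2(\mu)$, so \eqref{E:aa11} persists with $f_m$ replaced by $\E(f_m|\mathcal{C})$. The operators $F\mapsto \E_{n\in [N]}\,T^{a_m(n)}F\cdot\prod_{j\ne m}T^{a_j(n)}f_j$ have $L^2(\mu)$-operator norm at most $1$, uniformly in $N$; hence, approximating $\E(f_m|\mathcal{C})$ in $L^2(\mu)$ by a finite linear combination $\sum_i c_i h_i$ of weak limits of the above form and using that the $\limsup$ over $N$ of a finite sum of norms is at most the sum of the $\limsup$s, we find a single such weak limit $h_i$ for which the corresponding average does not tend to $0$ in $L^2(\mu)$. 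Setting $\tilde f_m:=h_i$ (after relabelling its defining data as $N_k$, $g_k$) we get a function of the form \eqref{E:wlp} satisfying \eqref{E:aa13}, with $\norm{\tilde f_m}_{L^\infty(\mu)}\le1$ because the closed unit ball of $L^\infty(\mu)$ is weakly closed in $L^2(\mu)$.

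Finally, suppose $\ell=m=1$ and $\int f_1\, d\mu=0$. Choosing $g_N\equiv 1$ shows $1\in\mathcal{C}$, so $\int \E(f_1|\mathcal{C})\, d\mu=\langle \E(f_1|\mathcal{C}),1\rangle=\langle f_1,1\rangle=0$, and thus $\E(f_1|\mathcal{C})$ lies in the closed subspace $\mathcal{C}_0:=\mathcal{C}\cap\{1\}^{\perp}$. If $h=\lim_k \E_{n\in [N_k]}\,T^{-a_1(n)}g_k\in\mathcal{C}$ is a weak limit with $\norm{g_k}_{L^\infty(\mu)}\le 1$, then $h-\big(\int h\, d\mu\big)\,1$ is the weak limit of $\E_{n\in [N_k]}\,T^{-a_1(n)}\big(g_k-(\int g_k\, d\mu)\,1\big)$, whose defining functions have zero integral and sup-norm at most $2$; hence $\mathcal{C}_0$ is the $L^2(\mu)$-closure of the span of weak limits arising from sup-norm-bounded functions of mean zero. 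Running the approximation step of the previous paragraph inside $\mathcal{C}_0$ therefore produces $\tilde f_1$ of the form \eqref{E:wlp} whose defining $g_k$ have mean zero (after rescaling them to sup-norm at most $1$), so $\int \tilde f_1\, d\mu=\lim_k \int g_k\, d\mu=0$, as desired. The steps I expect to require the most care are the composition-and-conjugation manipulation in the proof of the claim, which must land the integrand in exactly the form defining $\mathcal{C}$, and — for the last assertion — tracking the mean-zero property through the orthogonal projection onto $\mathcal{C}$.
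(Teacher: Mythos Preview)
For the main statement your argument reproduces the proof of Proposition~\ref{P:1} with the obvious changes, which is exactly what the paper does (it says the first part ``can be proved by adjusting the proof of Proposition~\ref{P:1} in a straightforward way'' and omits the details). One remark: you call $f_m-\E(f_m|\mathcal C)$ a ``bounded function'', but $\E(\cdot|\mathcal C)$ here is the $L^2$-orthogonal projection onto a closed linear subspace, not a conditional expectation, so there is no a priori reason it preserves $L^\infty$. This is, however, the same liberty the paper itself takes in Proposition~\ref{P:1}, and it is easily repaired by avoiding the projection entirely and arguing directly, as the paper in fact does below for $\ell=m=1$.

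For the $\ell=m=1$ addendum your route differs from the paper's. The paper bypasses $\mathcal C$ completely: with $g_k:=\E_{n\in[N_k]}\,T^{a_1(n)}f_1$ (which inherits $\int g_k\,d\mu=0$ from $\int f_1\,d\mu=0$) it takes $\tilde f_1$ to be the weak limit of $\E_{n\in[N_k]}\,T^{-a_1(n)}\overline g_k$, so $\int\tilde f_1\,d\mu=0$ is immediate; then $\int f_1\cdot\tilde f_1\,d\mu>0$ forces $\tilde f_1\ne 0$, and expanding $\norm{\tilde f_1}_{L^2(\mu)}^2$ via the defining weak limit together with Cauchy--Schwarz yields $\limsup_k\norm{\E_{n\in[N_k]}\,T^{a_1(n)}\tilde f_1}_{L^2(\mu)}>0$. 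Your approach instead observes $1\in\mathcal C$, deduces $\E(f_1|\mathcal C)\in\mathcal C_0:=\mathcal C\cap\{1\}^\perp$, identifies $\mathcal C_0$ with the closed span of weak limits built from mean-zero $g_k$'s, and runs the approximation inside $\mathcal C_0$. Both arguments are correct; the paper's direct construction is shorter and, as a bonus, shows how to sidestep the projection step altogether --- which incidentally furnishes a clean fix for the boundedness issue noted above.
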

 \begin{proof}
 	The first part of the statement can be proved
 	by adjusting the proof of  Proposition~\ref{P:1} in a straightforward way, so we omit its proof.
 	
 	We give a  proof of  the second part related to the case $\ell=1$ since it requires some minor adjustments.  We can assume that $\norm{f_1}_{L^\infty(\mu)}\leq 1$.  Suppose that for some sequence $N_k\to\infty$ we have
 	$$
 	\lim_{k\to\infty}\norm{	\E_{n\in [N_k]} \, T^{a_1(n)}f_1}^2_{L^2(\mu)}>0.
 	$$
 	After expanding the square and composing with $T^{-a_1(n)}$ we get
 	\begin{equation}\label{E:f1fk}
 	\lim_{k\to\infty} \int f_1\cdot \E_{n\in [N_k]}\,  T^{-a_1(n)}\overline{g}_k\, d\mu>0
 	\end{equation}
 	where
 	\begin{equation}\label{E:fk1}
 	g_k:=	\E_{n\in [N_k]} \, T^{a_1(n)}f_1, \quad k\in\N.
 	\end{equation}
 	After passing to a subsequence  we can assume that  the averages $\E_{n\in [N_k]}\, T^{-a_1(n)} \overline{g}_k $ converge weakly in $L^2(\mu)$  as  $k\to\infty$ and we let
 $$
 	\tilde{f}_1:=\lim_{k\to\infty}\E_{n\in [N_k]}\, T^{-a_1(n)} \overline{g}_k.
 	$$
Note that if $\int f_1\, d\mu=0$, then \eqref{E:fk1} gives that $\int g_k\, d\mu=0$ for every $k\in \N$
 and as a consequence
 $\int \tilde{f}_1\, d\mu=0$.
 	Moreover, using \eqref{E:f1fk} we conclude that $\int f_1\cdot \tilde{f}_1\, d\mu>0$, and the Cauchy-Schwarz inequality gives
 	$$
 	0<\int |\tilde{f}_1|^2\, d\mu =\lim_{k\to\infty}\E_{n\in [N_k]}\int \tilde{f}_1 \cdot T^{-a_1(n)} g_k\, d\mu=\lim_{k\to\infty}\int  \E_{n\in [N_k]}\, T^{a_1(n)} \tilde{f}_1 \cdot g_k\, d\mu.
 	$$
 	Using the Cauchy-Schwarz inequality again, we deduce that
 	$$
 	\limsup_{k\to\infty}\norm{\E_{n\in [N_k]}\,  T^{a_1(n)} \tilde{f}_1 }_{L^2(\mu)}>0
 	$$
 	as required.
 \end{proof}

\subsection{Step 2} (Seminorms of averages of functions)
Our next goal is to use Lemma~\ref{L:GCS} in order to show  that if the seminorm  of an average of functions is positive, then some related positiveness property holds for the individual functions
(the $s=1$ case was proved in Proposition~\ref{P:3}).


\begin{proposition}\label{P:3'}
	Let $(X,\mu,T)$ be an ergodic system, $f_{n,k}\in L^\infty(\mu)$, $k,n\in \N$, be  bounded by $1$, and $f\in L^\infty(\mu)$ be defined by
	$$
	f:=\, \lim_{k\to\infty}\E_{n\in [N_k]}\, f_{n,k},
	$$
	for some $N_k\to\infty$,
	where the average is assumed to converge weakly.
If	$\nnorm{f}_{s+2}>0$ for some $s\in \Z_+$, 	then there exist
	$a>0$, a subset $\Lambda$ of $\N^s$ with positive lower density, and
	 $\chi_{\underline{n}}\in \mathcal{E}(T)$, $\underline{n}\in \Lambda$, such that
		\begin{equation}\label{E:deltan}
  \Re \Big( \int \Delta_{\underline{n}}f \cdot \chi_{\underline{n}}\, d\mu\Big)>a,
 \quad  \underline{n} \in \Lambda,
	\end{equation}
	and 	
\begin{equation}\label{E:deltan'}
	\liminf_{N\to\infty}\E_{\underline{n},\underline{n}'\in [N]^s}\,
	   \limsup_{k\to\infty}\E_{n\in[N_k]}\,  \Re \Big(\int \Delta_{\underline{n}-\underline{n}'}f_{n,k} \cdot\chi_{\underline{n},\underline{n}'}\cdot  {\bf 1}_{\Lambda'}(\underline{n},\underline{n}') \, d\mu\Big)>0,
	\end{equation}
	where
$$
\chi_{\underline{n},\underline{n}'}:=	T^{-|\underline{n}|}\big(\prod_{\epsilon\in \{0,1\}^s} \mathcal{C}^{|\epsilon|} \chi_{\underline{n}^\epsilon}\big), \quad \underline{n},\underline{n}'\in \N^s,
	$$
	and (recall that $n^\epsilon$ is defined in Section~\ref{SS:GHK})
	$$
	\Lambda':=\{(\underline{n},\underline{n}')\in \N^{2s}\colon \underline{n}^\epsilon \in \Lambda \text{ for all } \epsilon\in \{0,1\}^s\}.
		$$
	\end{proposition}
\begin{remarks}
$\bullet$	The key point is that from a lower bound for the expression \eqref{E:deltan}   that involves the terms
	$\Delta_{\underline{n}}f$, we can infer a lower bound for the expression \eqref{E:deltan'} that involves the easier to handle terms $\Delta_{\underline{n}}f_{n,k}$.
	
	$\bullet$ For $s=0$ the conclusion is that there exists $\chi\in \mathcal{E}(T)$ such that $\Re \big(\int f\cdot \chi\, d\mu\big)>0$.

$\bullet$
	We plan to apply this proposition in the next step for the  function $\tilde{f}_m$ given by \eqref{E:wlp} in place of $f$.
	\end{remarks}
\begin{proof}
	By Proposition~\ref{P:2'} we have that there exist $a>0$, a subset $\Lambda$ of $\N^s$ with positive lower density, and  $\chi_{\underline{n}}\in \mathcal{E}(T)$, $\underline{n}\in \N^s$, such that
	$$
 \Re \Big( \int \Delta_{\underline{n}}f \cdot \chi_{\underline{n}}\, d\mu\Big)>a,
 \quad
  \underline{n} \in \Lambda.
$$
Hence,
$$
\liminf_{N\to\infty} \E_{\underline{n}\in  [N]^s}\,  \Re \Big( \int \Delta_{\underline{n}}f \cdot \chi_{\underline{n}}\cdot  {\bf 1}_\Lambda(\underline{n})\, d\mu\Big)>0.
$$
	Since $\Delta_{\underline{n}}f=\prod_{\epsilon\in \{0,1\}^s}\mathcal{C}^{|\epsilon|}T^{\epsilon\cdot \underline{n}}f$, $\underline{n}\in \N^s$,  and
	 $f=\lim_{k\to\infty}\E_{n\in [N_k]}\, f_{n,k}$ (the limit is a weak limit), we deduce that
	$$
\liminf_{N\to\infty} \lim_{k\to\infty}\E_{n\in [N_k]}\, \Re \Big(
\E_{\underline{n}\in  [N]^s}\,  \int \prod_{\epsilon\in \{0,1\}^s\setminus \{\underline{1}\}}\mathcal{C}^{|\epsilon|}T^{\epsilon\cdot \underline{n}}f \cdot T^{n_1+\cdots+n_s}f_{n,k}\cdot \chi_{\underline{n}}\cdot  {\bf 1}_\Lambda(\underline{n})\, d\mu\Big)>0.
	$$
For fixed $k,n\in \N$ we apply  Lemma~\ref{L:GCS} for $f_{\underline{1}}:=f_{n,k}$,   $f_\epsilon:=\mathcal{C}^{|\epsilon|}f $ for $\epsilon\in \{0,1\}^s\setminus \underline{1}$, and
$g_{\underline{n}}:=\chi_{\underline{n}}\cdot {\bf 1}_\Lambda(\underline{n})$, $\underline{n}\in \N^s$, and  deduce that
	$$
	\liminf_{N\to\infty} \limsup_{k\to\infty}\E_{n\in [N_k]}\,
\E_{\underline{n},\underline{n}'\in  [N]^s}\,  \int\Delta_{\underline{n}-\underline{n}'}f_{n,k}\cdot T^{-|\underline{n}|}\Big(\prod_{\epsilon\in \{0,1\}^s}\big(\mathcal{C}^{|\epsilon|}\chi_{\underline{n}^\epsilon}\cdot  {\bf 1}_\Lambda(\underline{n}^\epsilon)\big)\Big)\, d\mu>0.
$$	
Since the limsup of a sum is at most the sum  of the limsups,  the second asserted estimate follows immediately from this one.
This completes the proof.
\end{proof}


\subsection{Step 3} (Going from $\nnorm{\cdot}_{s+2}$ to  $\nnorm{\cdot}_{1}$)
Our next goal is to use Proposition~\ref{P:3'} and the fact that the sequences $a_1,\ldots, a_\ell$ are good for equidistribution  for the system $(X,\mu,T)$   in order to prove the following result:
 \begin{proposition}\label{P:4'}
	Let $(X,\mu,T)$ be an ergodic system and
	 $a_1,\ldots, a_\ell\colon \N\to \Z$ be good for equidistribution  for the system $(X,\mu,T)$. Suppose that property  $(P_{m-1})$ of  Proposition~\ref{P:10} holds for some $m\in [\ell]$ and let  $\tilde{f}_m$ be as in \eqref{E:wlp}, where all related functions are bounded by $1$ and  $f_{m+1},\ldots, f_\ell\in \mathcal{E}(T)$.
	Suppose that $\nnorm{\tilde{f}_m}_{s+2}>0$ for some $s\in \Z_+$. Then $\int \tilde{f}_m\, d\mu\neq 0$.
\end{proposition}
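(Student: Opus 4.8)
The plan is to prove the proposition by induction on $s$. The base case $s=0$ asserts that $\nnorm{\tilde{f}_m}_2>0$ implies $\int\tilde{f}_m\,d\mu\neq 0$, and the inductive step is the \emph{descent} implication $\nnorm{\tilde{f}_m}_{s+2}>0\implies\nnorm{\tilde{f}_m}_{s+1}>0$; since $\tilde{f}_m$ is fixed, after one descent step the inductive hypothesis applied with $s-1$ gives $\int\tilde{f}_m\,d\mu\neq 0$. Write $f_{n,k}:=T^{-a_m(n)}g_k\cdot\prod_{j\neq m}T^{a_j(n)-a_m(n)}\overline{f}_j$, so that by \eqref{E:wlp}, $\tilde{f}_m=\lim_{k\to\infty}\E_{n\in[N_k]}f_{n,k}$ weakly with $\norm{f_{n,k}}_{L^\infty(\mu)}\le1$. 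I will freely use that $\Delta_{\underline{h}}$ is multiplicative, commutes with every $T^c$, and sends a unit modulus eigenfunction — in particular each $\overline{f}_j$ with $j>m$, since $f_j\in\mathcal{E}(T)$ — to a unit modulus constant; and that in an ergodic system $|\int\chi\,d\mu|\in\{0,1\}$ for $\chi\in\mathcal{E}(T)$, equalling $1$ exactly when $\chi$ is constant.

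For the base case I would invoke Proposition~\ref{P:1'} to get $\chi\in\mathcal{E}(T)$ with $\Re\int\tilde{f}_m\cdot\chi\,d\mu>0$. Expanding $\tilde{f}_m$ and composing with $T^{a_m(n)}$ inside the integral,
\[
\int\tilde{f}_m\cdot\chi\,d\mu=\lim_{k\to\infty}\int g_k\cdot\Big(\E_{n\in[N_k]}\,T^{a_1(n)}\overline{f}_1\cdot\ldots\cdot T^{a_{m-1}(n)}\overline{f}_{m-1}\cdot T^{a_m(n)}\chi\cdot T^{a_{m+1}(n)}\overline{f}_{m+1}\cdot\ldots\cdot T^{a_\ell(n)}\overline{f}_\ell\Big)\,d\mu .
\]
Since $\chi,\overline{f}_{m+1},\ldots,\overline{f}_\ell\in\mathcal{E}(T)$, property $(P_{m-1})$ forces the inner average to converge in $L^2(\mu)$ to the constant $C:=\int\chi\,d\mu\cdot\prod_{j\neq m}\int\overline{f}_j\,d\mu$; as $\norm{g_k}_{L^\infty(\mu)}\le1$ this gives $|\int\tilde{f}_m\cdot\chi\,d\mu|\le|C|\le|\int\chi\,d\mu|$, hence $\int\chi\,d\mu\neq0$, hence $\chi$ is constant, hence $\int\tilde{f}_m\,d\mu\neq0$.

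For the descent step ($s\ge1$) I would apply Proposition~\ref{P:3'} to $f=\tilde{f}_m$, obtaining $a>0$, a set $\Lambda\subset\N^s$ of positive lower density, and $\chi_{\underline{n}}\in\mathcal{E}(T)$ with $\Re\int\Delta_{\underline{n}}\tilde{f}_m\cdot\chi_{\underline{n}}\,d\mu>a$ on $\Lambda$, together with the accompanying lower bound on the double average of $\limsup_k\E_{n\in[N_k]}\Re\int\Delta_{\underline{n}-\underline{n}'}f_{n,k}\cdot\chi_{\underline{n},\underline{n}'}\cdot\mathbf{1}_{\Lambda'}(\underline{n},\underline{n}')\,d\mu$. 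Substituting the form of $f_{n,k}$ into the latter, the $j>m$ factors collapse to a unit modulus constant, $\Delta_{\underline{n}-\underline{n}'}g_k$ does not depend on $n$, and $T^{a_m(n)}\chi_{\underline{n},\underline{n}'}=e(a_m(n)\beta_{\underline{n},\underline{n}'})\chi_{\underline{n},\underline{n}'}$; pulling $\Delta_{\underline{n}-\underline{n}'}g_k$ out and applying $(P_{m-1})$ to the remaining average over $n$ (whose $m$-th through $\ell$-th entries are now eigenfunctions $\chi_{\underline{n},\underline{n}'},1,\ldots,1$), together with $\norm{g_k}_{L^\infty(\mu)}\le1$, shows $\limsup_k|\E_{n\in[N_k]}\int\Delta_{\underline{n}-\underline{n}'}f_{n,k}\cdot\chi_{\underline{n},\underline{n}'}\,d\mu|\le|\int\chi_{\underline{n},\underline{n}'}\,d\mu|$. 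Combined with the Proposition~\ref{P:3'} lower bound and the dichotomy for $|\int\chi_{\underline{n},\underline{n}'}\,d\mu|$, a positive proportion of pairs $(\underline{n},\underline{n}')$ with $\underline{n}^\epsilon\in\Lambda$ for all $\epsilon$ have $\chi_{\underline{n},\underline{n}'}$ constant, i.e.\ $\sum_{\epsilon\in\{0,1\}^s}(-1)^{|\epsilon|}\alpha_{\underline{n}^\epsilon}\equiv0\pmod1$, where $T\chi_{\underline{n}}=e(\alpha_{\underline{n}})\chi_{\underline{n}}$.

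To finish the descent, average over $\underline{n}'$ and pigeonhole: for each $N$ there are $\underline{n}'_N\in[N]^s$ and $\Lambda_N\subset\Lambda\cap[N]^s$ with $\liminf_N|\Lambda_N|/N^s>0$ such that on $\Lambda_N$, $\alpha_{\underline{n}}\equiv-\sum_{\epsilon\neq\underline{0}}(-1)^{|\epsilon|}\alpha_{\underline{n}^\epsilon}\pmod1$; grouping the summands by the least index $i$ with $\epsilon_i=1$ rewrites this as $\alpha_{\underline{n}}\equiv\sum_{i=1}^s\gamma_{i,N}(\underline{n}\setminus n_i)\pmod1$, with $\gamma_{i,N}$ depending only on the $s-1$ coordinates other than $n_i$ (extend arbitrarily to all of $[N]^{s-1}$). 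Feeding this into $\Re\int\Delta_{\underline{n}}\tilde{f}_m\cdot\chi_{\underline{n}}\,d\mu>a$ on $\Lambda_N$ — compose with $T^{n_0}$, average over $n_0\in[N]$, use $T^{n_0}\chi_{\underline{n}}=e(n_0\alpha_{\underline{n}})\chi_{\underline{n}}$ and Cauchy–Schwarz — gives $\norm{\E_{n_0\in[N]}e\big(n_0\sum_i\gamma_{i,N}(\underline{n}\setminus n_i)\big)T^{n_0}(\Delta_{\underline{n}}\tilde{f}_m)}_{L^2(\mu)}>a$ for $\underline{n}\in\Lambda_N$; the left side being nonnegative and $\Lambda_N$ of positive density, we may average over all $\underline{n}\in[N]^s$. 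Applying Lemma~\ref{L:VDC} to the average over $n_0$ — keeping $n_0$ as a genuine variable, so the van der Corput weight is the indicator $\mathbf{1}_{[N]}(n_0+m_0)$ — then composing with $T^{-n_0}$ and pigeonholing in $n_0$, one reaches
\[
\liminf_{N\to\infty}\Re\Big(\E_{\underline{m}\in[N]^{s+1}}\,\mathbf{1}_{[N]}(n_{0,N}+m_0)\prod_{i=1}^s e\big(m_0\,\gamma_{i,N}(\underline{m}\setminus m_i)\big)\int\Delta_{\underline{m}}\tilde{f}_m\,d\mu\Big)>0 ,
\]
with $\underline{m}=(m_0,m_1,\ldots,m_s)$. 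The weight is $\prod_{j=0}^s b_{j,N}(\underline{m})$ with each $b_{j,N}$ bounded by $1$ and independent of $m_j$ (absorb $\mathbf{1}_{[N]}(n_{0,N}+m_0)$ and the $i=1$ phase into $b_{1,N}$, the $i$-th phase into $b_{i,N}$ for $i\ge2$, $b_{0,N}\equiv1$), so if $\nnorm{\tilde{f}_m}_{s+1}=0$ then Lemma~\ref{L:lower} (with $s$ replaced by $s+1$) forces the displayed average to $0$ — a contradiction; thus $\nnorm{\tilde{f}_m}_{s+1}>0$ and the induction closes. The hardest part is this last stretch: turning the eigenfunction estimate into a correlation of $\tilde{f}_m$ against a genuinely \emph{product-type} weight to which Lemma~\ref{L:lower} applies. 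This hinges on extracting from $\sum_\epsilon(-1)^{|\epsilon|}\alpha_{\underline{n}^\epsilon}\equiv0$ a splitting of $\alpha_{\underline{n}}$ into functions of $s-1$ variables, on tracking exactly which auxiliary variable each factor of the final weight avoids, and on the bookkeeping needed to discard the restriction to $\Lambda_N$ (via nonnegativity of the squared norms and the extension of the $\gamma_{i,N}$). The equidistribution hypothesis enters only through $(P_{m-1})$, which for $m=1$ is precisely that hypothesis.
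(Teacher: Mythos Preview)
Your argument is correct and follows the same architecture as the paper's proof: invoke Proposition~\ref{P:3'}, show that on a positive-density set the eigenvalue $\alpha_{\underline{n}}$ splits as a sum of functions of $s-1$ of the coordinates, and then feed this product structure into Lemma~\ref{L:lower} (after the compose--average--Cauchy--Schwarz--van der Corput manoeuvre) to drop the seminorm index by one.

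There is one genuine streamlining relative to the paper, in the step where you conclude that $\chi_{\underline{n},\underline{n}'}$ must be constant on many pairs. The paper first uses Cauchy--Schwarz to pass to the $L^2$-norm estimate \eqref{E:problemforpolies}, then invokes $(P_{m-1})$ to replace the $j<m$ functions by their integrals, and finally appeals \emph{separately} to the ``good for equidistribution'' hypothesis on the resulting pure exponential sum \eqref{E:nn''} to force $\beta_{\underline{n},\underline{n}'_N}\equiv 0$. You instead apply $(P_{m-1})$ in one stroke to the full $\ell$-term average (legitimate, since positions $m,\ldots,\ell$ are occupied by $\chi_{\underline{n},\underline{n}'}$ and constants in $\mathcal{E}(T)$), read off that the limit is the product of integrals, and bound its modulus by $|\int \chi_{\underline{n},\underline{n}'}\,d\mu|\in\{0,1\}$. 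This is cleaner: the equidistribution hypothesis enters only once, through $(P_{m-1})$, rather than being re-invoked on the exponential sum. Your separate treatment of the base case $s=0$ via Proposition~\ref{P:1'} is likewise a tidy shortcut; the paper instead runs the descent uniformly down to $\nnorm{\cdot}_1$. Both routes reach the same endpoint, and your final stretch (extending the $\gamma_{i,N}$, using nonnegativity of the squared norms to drop the restriction to $\Lambda_N$, and checking that each factor $b_{j,N}$ of the weight omits the variable $m_j$) matches the paper's Lemma~\ref{L:lower} application essentially verbatim.
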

\begin{proof}
We can assume that the functions $f_1,\ldots, f_\ell$ are bounded by $1$. We will  show that   if   $\nnorm{\tilde{f}_m}_{s+2}>0$  for some $s\in \Z_+$, then    $\nnorm{\tilde{f}_m}_{s+1}>0$. Applying this successively $s+1$ times, we deduce that 
$\nnorm{\tilde{f}_m}_{1}>0$, or equivalently, that $\int \tilde{f}_m\, d\mu\neq 0$.

Since $\nnorm{\tilde{f}_m}_{s+2}>0$, we can use Proposition~\ref{P:3'} for $\tilde{f}_m$ in place of $f$ and
$$
f_{n,k}:=T^{-a_m(n)}g_k\cdot \prod_{j\in [\ell], j\neq m}T^{a_j(n)-a_m(n)}\overline{f}_j,\quad  k,n\in\N.
$$
 We deduce that there exist
      $a>0$, a  subset $\Lambda$ of $\N^s$ with positive lower density,
	 $\chi_{\underline{n}}\in \mathcal{E}(T)$,  $\underline{n}\in \N^s$, such that
\begin{equation}\label{E:U2first'}
\Re \Big( \int \Delta_{\underline{n}}\tilde{f}_m \cdot \chi_{\underline{n}}\, d\mu\Big)>a, \quad    \underline{n}\in \Lambda,
	\end{equation}
 and 	
\begin{multline}\label{E:U2second'}
\liminf_{N\to\infty}\E_{\underline{n},\underline{n}'\in [N]^s}\,  \limsup_{k\to\infty}
\\
  \E_{n\in[N_k]}\,  \Re \Big(\int T^{-a_m(n)}(\Delta_{\underline{n}-\underline{n}'}g_k) \cdot
\prod_{j\in [\ell], j\neq l}T^{a_j(n)-a_m(n)}(\Delta_{\underline{n}-\underline{n}'}\overline{f}_j)\cdot  \chi_{\underline{n},\underline{n}'}  \cdot  {\bf 1}_{\Lambda'}(\underline{n},\underline{n}')\, d\mu\Big)>0,
\end{multline}
where
$$
\chi_{\underline{n},\underline{n}'}:=	T^{-|\underline{n}|}\big(\prod_{\epsilon\in \{0,1\}^s} \mathcal{C}^{|\epsilon|} \chi_{\underline{n}^\epsilon}\big), \quad \underline{n},\underline{n}'\in \N^d,
	$$
	and
	$$
	\Lambda':=\{(\underline{n},\underline{n}')\in \N^{2d}\colon \underline{n}^\epsilon \in \Lambda \text{ for all } \epsilon\in \{0,1\}^s\}.
		$$
Our goal is to use \eqref{E:U2second'}  in order to show that the eigenfunctions
$\chi_{\underline{n}}$ in \eqref{E:U2first'} satisfy certain algebraic relations that enable us to
deduce, using Lemma~\ref{L:lower}, that $\nnorm{\tilde{f}_m}_{s+1}>0$.

We thus start by analyzing \eqref{E:U2second'}. After composing with $T^{a_m(n)}$
we get
\begin{multline*}
\liminf_{N\to\infty}\E_{\underline{n},\underline{n}'\in [N]^s}\,  \limsup_{k\to\infty}\\
  \E_{n\in[N_k]}\, 
 \Re \Big(\int \Delta_{\underline{n}-\underline{n}'}g_k \cdot
\prod_{j\in [\ell], j\neq m}T^{a_j(n)}(\Delta_{\underline{n}-\underline{n}'}\overline{f}_j)\cdot T^{a_m(n)} \chi_{\underline{n},\underline{n}'} \cdot  {\bf 1}_{\Lambda'}(\underline{n},\underline{n}') \, d\mu\Big)>0.
\end{multline*}
We let
 $$
g_{j, \underline{n},\underline{n}'}:=\Delta_{\underline{n}-\underline{n}'}\overline{f}_j, \quad  j\in [\ell], \,  j\neq m,\,  \underline{n}, \underline{n}'\in \N^s,
$$
and
$$
g_{l, \underline{n},\underline{n}'}:=\chi_{\underline{n},\underline{n}'}, \quad \underline{n}, \underline{n}'\in \N^s .
$$
Using the Cauchy-Schwarz inequality we deduce that
 $$
\liminf_{N\to\infty}\E_{\underline{n},\underline{n}'\in [N]^s}\,
 {\bf 1}_{\Lambda'}(\underline{n},\underline{n}') \, \limsup_{k\to\infty}
 \norm{\E_{n\in[N_k]}
\prod_{j=1}^\ell T^{a_j(n)}g_{j, \underline{n},\underline{n}'}}_{L^2(\mu)}>0.
$$
Since
$$
 {\bf 1}_{\Lambda'}(\underline{n},\underline{n}')=\prod_{\epsilon\in \{0,1\}^s}
 {\bf 1}_{\Lambda}(\underline{n}^\epsilon)\leq  {\bf 1}_{\Lambda}(\underline{n})
$$
  and the set $\Lambda$ has positive lower density, we get
\begin{equation}\label{E:problemforpolies}
\liminf_{N\to\infty} \E_{\underline{n}'  \in  [N]^s}\E_{\underline{n}\in   \Lambda\cap [N]^s}\,  \limsup_{k\to\infty}
 \norm{\E_{n\in[N_k]}
\prod_{j=1}^\ell T^{a_j(n)}g_{j, \underline{n},\underline{n}'}}_{L^2(\mu)}>0.
\end{equation}

Note that since $f_j\in \mathcal{E}(T)$ for $j=m+1,\ldots, \ell$, we have
$g_{j, \underline{n},\underline{n}'}\in \mathcal{E}(T)$ for $j=m+1,\ldots, \ell$, $\underline{n},\underline{n}'\in\N^s$. Moreover,
since $\chi_{\underline{n}}\in \mathcal{E}(T)$ for all $\underline{n}\in \N^s$ we get that $g_{m, \underline{n},\underline{n}'}\in \mathcal{E}(T)$ for all $\underline{n},\underline{n}'\in \N^s$.
Since by assumption property $(P_{m-1})$ of  Proposition~\ref{P:10} holds, we get that the previous expression remains unchanged if  for $j=1,\ldots, m-1$ we  replace the  functions
 $g_{j, \underline{n},\underline{n}'}$, $\underline{n},\underline{n}'\in\N^s$,  by constants (namely, their integrals).\footnote{When we deal with polynomial sequences $a_1,\ldots, a_\ell$,  we cannot replace
 	 the functions $g_{j, \underline{n},\underline{n}'}$  by constants but by their projection to the rational Kronecker factor, and this leads to serious complications. We bypass them in the proof of  Theorem~\ref{T:CharacteristicFactors} by appealing to Theorem~\ref{T:HostKra} below, which allows us to reduce matters to the case where the rational Kronecker factor is trivial.}

Next, note  that
\begin{equation}\label{E:eigen}
T^{a_m(n)} \chi_{\underline{n}}=e(a_m(n)\alpha_{\underline{n}})\, \chi_{\underline{n}}, \quad n\in \N,\, \underline{n}\in\N^s,
\end{equation}
for some $\alpha_{\underline{n}}\in \spec(T)$, $\underline{n}\in \N^s$.
Hence,
 we have that
$$
T^{a_m(n)}g_{m,\underline{n},\underline{n}'}=e(a_m(n)\beta_{\underline{n},\underline{n}'})
g_{m,\underline{n},\underline{n}'},
$$
where
$$
 \beta_{\underline{n},\underline{n}'}:=\sum_{\epsilon\in \{0,1\}^s} (-1)^{|\epsilon|}
 \alpha_{\underline{n}^\epsilon}.
 $$
(For instance, if $s=2$,  we have $ \beta_{\underline{n},\underline{n}'}= \beta_{n_1,n_2,n_1',n_2'}:=\alpha_{n_1,n_2}-\alpha_{n_1',n_2}-\alpha_{n_1,n_2'}+\alpha_{n_1',n_2'}$.)

For $j=m+1,\ldots, \ell$, $\underline{n}, \underline{n}'\in \N^s$, the eigenvalues of the eigenfunctions   $g_{j,\underline{n},\underline{n}'}$  can be expressed in the form  $e(\alpha_{j,\underline{n},\underline{n}'})$, where $\alpha_{j,\underline{n},\underline{n}'}\in \spec(T)$. Then  (recall the for $j<m$ the functions $g_{j,\underline{n},\underline{n}'}$  are assumed to be constant)
\begin{equation}\label{E:nn''}
\liminf_{N\to\infty}\E_{\underline{n}'  \in  [N]^s}\E_{\underline{n}\in   \Lambda\cap [N]^s}\, \limsup_{k\to\infty}
| \E_{n\in[N_k]}\,  e(a_m(n)\beta_{\underline{n},\underline{n}'}+\sum_{j=m+1}^\ell a_j(n)\alpha_{j,\underline{n}, \underline{n}'})|>0.
\end{equation}

From \eqref{E:nn''} we deduce that there exist 
integers $\underline{n}'_{N}\in [N]^s$,  and subsets $\Lambda_{N}$  of $\Lambda\cap [N]^s$  with
\begin{equation}\label{E:posden}
\liminf_{N\to\infty} \frac{|\Lambda_{N}|}{N^s}>0, \quad    N\in \N,
\end{equation}
and such that
$$
\limsup_{k\to\infty}  |\E_{n\in[N_k]}\,  e(a_m(n)\beta_{\underline{n},\underline{n}'_{N}}+\sum_{j=m+1}^\ell a_j(n)\alpha_{j,\underline{n}, \underline{n}'_N})| >0 , \quad  \underline{n}\in \Lambda_{N}.
$$
Since the sequences $a_1,\ldots, a_\ell$ are good for equidistribution for $(X,\mu,T)$ and
$\{\beta_{\underline{n},\underline{n}'_{N}}\}$, $\alpha_{j,\underline{n}, \underline{n}'_N}\in \spec(T) $,  $j=m+1,\ldots, \ell$,  $\underline{n}\in \Lambda_{N}, N\in\N$,
we deduce that
$$
 \beta_{\underline{n},\underline{n}_{N}'}=0 \pmod{1}, \quad  \underline{n}\in \Lambda_{N}, N\in\N.
 $$
 Then if $\underline{n}_N^\epsilon:=(n_1^{\epsilon_1}, \ldots, n_s^{\epsilon_s})$, where $n_j^0:=n_j$ and $n_j^1:=n_{j,N}'$ for $j=1,\ldots, s$, we get
   \begin{equation}\label{E:lessvar}
   \alpha_{\underline{n}}=
   -\sum_{\epsilon\in \{0,1\}^s\setminus \underline{0}}(-1)^{|\epsilon|}
   \alpha_{\underline{n}_N^\epsilon} \pmod{1}, \quad    \underline{n}\in \Lambda_{N},\,  N\in \N.
   \end{equation}
(For $s=2$ we get  $\alpha_{n_1,n_2}=\alpha_{n_{1,N}',n_2}+\alpha_{n_1,n_{2,N}'}-\alpha_{n_{1,N}',n_{2,N}'} \pmod{1}$.)
The important point is that we expressed  $\alpha_{\underline{n}}$ as a sum of sequences that depend on $s-1$ variables, chosen from the variables $n_1,\ldots, n_s$.



 Since  \eqref{E:U2first'} holds for all    $\underline{n}\in \Lambda_{N}\subset \Lambda$, $N\in \N$,   we deduce that 
$$
\liminf_{N\to\infty}	\E_{\underline{n}\in \Lambda_N}\,  \Re \Big( \int \Delta_{\underline{n}}\tilde{f}_m \cdot \chi_{\underline{n}}\, d\mu\Big)>0.
$$
Finally, we are going to combine this with \eqref{E:lessvar} in order to deduce that $\nnorm{\tilde{f}_m}_{s+1}>0$.
After
composing with the transformation $T^{n_s'}$, averaging over $n_s'\in [N]$,  and using that
$$
T^{n_s'} \chi_{\underline{n}}=e(n_s'\alpha_{\underline{n}})\cdot \chi_{\underline{n}},\quad  \underline{n}\in \N^s,\,  n_s'\in \N,
$$
 and then using the  Cauchy-Schwarz inequality  and \eqref{E:posden},
we deduce that
$$
\liminf_{N\to\infty}	\E_{\underline{n}\in  [N]^s}\,  \norm{ \E_{n_s'\in [N]} \,
T^{n_s'}(\Delta_{\underline{n}}\tilde{f}_m) \cdot e(n_s' \alpha_{\underline{n}})}_{L^2(\mu)}>0.
$$
  Using Lemma~\ref{L:VDC} for the average over  $n_s'$ and composing with $T^{-n_s'}$ on the integrals that arise,  we deduce that
$$
\liminf_{N\to\infty}  \Re \Big(	\E_{(\underline{n},n_{s+1},n_s')\in [N]^{s+2}}\, c_N(\underline{n}, n_{s+1},n_s')
    \int
\Delta_{(\underline{n},n_{s+1})}\tilde{f}_m \, d\mu\Big)>0,
$$
where
\begin{equation}\label{E:cNu}
 c_N(\underline{n}, n_{s+1},n_s') := {\bf 1}_{[N]}(n_s'+n_{s+1})\cdot e(n_{s+1} \alpha_{\underline{n}}), \quad \underline{n}\in [N]^s,\, n_{s+1}, n_s'\in [N],\,   N\in \N.
\end{equation}
Hence, for some $n_{s,N}'\in [N]$, $N\in \N$, we have
$$
\liminf_{N\to\infty}  \Re \Big(	\E_{(\underline{n},n_{s+1})\in [N]^{s+2}}\, c_N(\underline{n}, n_{s+1},n_{s,N}')
  \int
\Delta_{(\underline{n},n_{s+1})}\tilde{f}_m \, d\mu\Big)>0
$$
and  using the Cauchy-Schwarz inequality we deduce that
$$
\liminf_{N\to\infty}  \norm{	\E_{(\underline{n},n_{s+1})\in [N]^{s+1}}\, c_N(\underline{n},n_{s+1},n_{s,N}')   \cdot
\Delta_{(\underline{n},n_{s+1})}\tilde{f}_m }_{L^2(\mu)}>0.
$$

Using \eqref{E:lessvar} and \eqref{E:cNu} we get that  for  all   $N\in \N$ and  $\underline{n}\in \Lambda_{N}$ we have
$$
 c_N(\underline{n}, n_{s+1}) 
  =\prod_{j=1}^s b_{j,N}(\underline{n}, n_{s+1}), \qquad
   \underline{n}\in \Lambda_{N},\, n_{s+1}\in [N],\,  N\in \N,
$$
where   $b_{j,N}$ are bounded by $1$ and do not depend on the variable $n_j$, for $j=1,\ldots, s$, $N\in \N$. Since also  ${\bf 1}_{\Lambda_{N}}(\underline{n})$ does not depend on the variable $n_{s+1}$, we deduce
from Lemma~\ref{L:lower}   that
$$
\nnorm{\tilde{f}_m}_{s+1}>0.
$$
This completes the proof of Proposition~\ref{P:4'}.
\end{proof}

\subsection{Step 4}\label{SS:4.6} (Proof of Proposition~\ref{P:10})
Let the ergodic system $(X,\mu,T)$, the positive integer $\ell$,  and the sequences $a_1,\ldots, a_\ell$ be fixed.

    We first consider the case where $\ell=1$.  Note  that property $(P_0)$ of Proposition~\ref{P:10} is an immediate consequence of our assumption that the sequence $a_1$ is good for equidistribution for $(X,\mu,T_1)$.  It remains to show that for $\ell=1$  property $(P_0)$ implies property $(P_1)$.  We argue by contradiction.
  	We assume that property $(P_0)$ holds and there exists  $f_1\in L^\infty(\mu)$ such that  $ \int f_1\, d\mu=0$ but
$$
\limsup_{N\to\infty} \norm{\E_{n\in[N]}\, T_1^{a_1(n)}f_1}_{L^2(\mu)}>0.
$$
 Using  Proposition~\ref{P:11}  we get that there exists $\tilde{f}_1\in L^\infty(\mu)$  with $\int \tilde{f}_1\, d\mu=0$,
    and a sequence $N_k\to\infty$, such that
$$
\limsup_{k\to\infty}\norm{ \E_{n\in[N_k]}\, T_1^{a_1(n)}\tilde{f}_1}_{L^2(\mu)}>0.
$$
Since the sequence $a_1$ is good for seminorm estimates  for $(X,\mu, T_1)$, we deduce that $\nnorm{\tilde{f}_1}_s>0$ for some $s\geq 2$.  Since the assumptions of    Proposition~\ref{P:4'} are satisfied, we deduce  that
$\int \tilde{f}_1\, d\mu\neq 0$, which is a contradiction.

Suppose now that $\ell\geq 2$. We will prove that   property $(P_m)$ of  Proposition~\ref{P:10}   holds by induction on $m\in\{0,\ldots, \ell\}$.  We first consider the case   $m=0$. In this case our assumption is that
$Tf_j=e(t_j)f_j$ for some $t_j\in \spec(T)$, $j=1,\ldots, \ell$. Note first that if $t_j=0$ for $j=1,\ldots,\ell$,  then by ergodicity we have that $Tf_j=\int f_j \, d\mu$, $j=1,\ldots, \ell$, in which case \eqref{E:desired} is obvious.  Hence, in order to show that \eqref{E:desired} holds, it suffices to show that
$$
\lim_{N\to\infty}\E_{n\in [N]}\, e(a_1(n)t_1+\cdots + a_\ell(n)t_\ell) =0
$$
for $t_1,\ldots, t_\ell\in \spec(T)$, not all of them zero. This  holds since the sequences $a_1,\ldots, a_\ell$ are assumed to be good for equidistribution for the system $(X,\mu,T)$.

For $m\in [\ell]$ we assume that property $(P_{m-1})$ of  Proposition~\ref{P:10} holds and we are going to show that property $(P_m)$ holds.
Note  that in order to prove that  \eqref{E:desired} holds  it suffices to assume that at least one of  the functions $f_j$, for $j\in [\ell]\setminus\{m\}$,  has zero integral. Indeed, if we write
$f_j=\tilde{f}_j+\int f_j\, d\mu$ where $\tilde{f}_j=f_j-\int f_j\, d\mu$, 
$j\in [\ell]\setminus\{m\}$,  and expand the product on the average into a sum of $2^{\ell-1}$
terms,  we get a sum of $2^{\ell-1}-1$ averages with functions that have the required property, plus the term $\prod_{j\in [\ell]\setminus\{m\}}\int f_j\, d\mu \cdot \E_{n\in[N]}T^{a_m(n)}f_m$, which by the $\ell=1$ case we know that it converges in $L^2(\mu)$ to $\prod_{j\in [\ell]}\int f_j\, d\mu$.

So under this additional assumption, our goal is to show that
\begin{equation}\label{E:desired'}
\lim_{N\to\infty}\E_{n\in [N]}\, T^{a_1(n)}f_1 \cdot\ldots \cdot T^{a_\ell(n)}f_\ell=0
\end{equation}
where convergence takes place in $L^2(\mu)$. Arguing by contradiction, suppose that
$$
\E_{n\in [N]}\,  T^{a_1(n)}f_1 \cdot\ldots \cdot T^{a_\ell(n)}f_\ell \not\to 0
$$
in $L^2(\mu)$ as $N\to\infty$.
 Using  Propositions~\ref{P:11} we get that the same thing holds with  the function $\tilde{f}_m$,
  defined by \eqref{E:wlp}, in place of $f_m$. Recall that
 \begin{equation}\label{E:wlp'}
\tilde{f}_m=\lim_{k\to\infty} \E_{n\in [N_k]}
 \, T^{-a_m(n)}g_k\cdot \prod_{j\in [\ell], j\neq m}T^{a_j(n)-a_m(n)}\overline{f}_j,
 \end{equation}
for some $N_k\to\infty$ and   some $g_k\in L^\infty(\mu)$, $k\in \N$, where all functions are bounded by $1$,  and the limit is a weak limit.
 Since $f_{m+1},\ldots, f_\ell\in \mathcal{E}(T)$ and  the sequences $a_1,\ldots, a_\ell$ are good for seminorm estimates for the system $(X,\mu,T)$,
 we deduce that  $\nnorm{\tilde{f}_m}_{s}>0$, for some $s\in \N$.

  Since the assumptions of   Proposition~\ref{P:4'} are satisfied, we deduce that $\int \tilde{f}_m\, d\mu\neq 0$.
 Using \eqref{E:wlp'}, we deduce that
$$
\lim_{k\to\infty} \E_{n\in [N_k]} \, \int    T^{-a_m(n)}g_k\cdot \prod_{j\in [\ell], j\neq m}T^{a_j(n)-a_m(n)}\overline{f}_j\, d\mu\neq 0.
$$
Composing with $T^{a_m(n)}$  and using the Cauchy-Schwarz inequality  we get that
$$
\limsup_{k\to\infty}\norm{\E_{n\in [N_k]} \, \prod_{j\in [\ell], j\neq m}T^{a_j(n)}f_j}_{L^2(\mu)}>0.
$$
Since at least one of the  functions $f_j$, for $j\in [\ell]\setminus\{m\}$,  has zero integral, and $ f_{m+1},\ldots, f_\ell\in \mathcal{E}(T)$, using  property $(P_{m-1})$ of  Proposition~\ref{P:10} (with $f_m:=1\in \mathcal{E}(T)$) we get that the last limit is zero, a contradiction. We conclude that  property $(P_m)$ of  Proposition~\ref{P:10} holds. This completes the induction and the proof of  Proposition~\ref{P:10}.

\subsection{Proof of Corollary~\ref{C:MultipleRecurrence}}
Let $\mathcal{I}(T)$ be the $L^2(\mu)$-closed subspace of $T$-invariant functions. For $f\in L^2(\mu)$, we denote by  $\E(f|\mathcal{I}(T))$ the orthogonal projection, in $L^2(\mu)$, of $f$ onto $\mathcal{I}(T)$.
Using a standard argument we deduce from Theorem~\ref{T:JointErgodicity} that
the limit
$$
\lim_{N\to\infty}\E_{n\in [N]}\, \mu(A\cap T^{-a_1(n)}A\cap \cdots \cap T^{-a_\ell(n)}A)
$$
is equal to
$$
\int (\E({\bf 1}_A|\mathcal{I}(T)))^{\ell+1}\, d\mu\geq \Big(\int \E({\bf 1}_A|\mathcal{I}(T))\, d\mu\Big)^{\ell+1}=(\mu(A))^{\ell+1}.
$$



\subsection{Proof of Corollary~\ref{C:Nilsystems}}
It is clear that  $(i)$ implies $(ii)$.

We show that $(ii)$ implies $(i)$.
Let $(X,\mu,T)$ be an ergodic $k$-step nilsystem for some $k\in \N$.
By  \cite[Chapter~12, Theorem~17]{HK18} we have that if $f\in L^\infty(\mu)$ is such that $\nnorm{f}_{k+1}=0$,
then $f=0$. Hence, any collection of sequences is good for seminorm estimates for $(X,\mu,T)$. Since, by assumption the
sequences $a_1,\ldots, a_\ell$ are good for equidistribution for $(X,\mu,T)$, it follows from Theorem~\ref{T:JointErgodicity} that they are jointly ergodic for $(X,\mu,T)$.


\section{Characteristic factors of general  sequences}\label{S:Corollaries}
In this section we prove Theorem~\ref{T:CharacteristicFactors} and use it to deduce  Corollary~\ref{C:MultipleRecurrence'}.

\subsection{Proof of Theorem~\ref{T:CharacteristicFactors}} \label{SS:CF}
In this subsection we show how we can deduce  Theorem~\ref{T:CharacteristicFactors} from
Theorem~\ref{T:JointErgodicity}.
For this, we will need some deeper tools from ergodic theory
than those need in the proof of Theorem~\ref{T:JointErgodicity}.   In \cite{HK05a} it is shown that for every ergodic system $(X,\mathcal{X},\mu,T)$ and  $s\in\N$ there exists
a $T$-invariant sub-$\sigma$-algebra $\mathcal{Z}_s$ of $\mathcal{X}$ with the property that  for $f\in L^\infty(\mu)$ we have
$\E(f|\mathcal{Z}_s)=0$ if and only if $\nnorm{f}_{s+1}=0$.
We need some structural information about the
system $(X,\mathcal{Z}_s,\mu,T)$ that is a corollary of the main result in \cite{HK05a}. We
remark that this is  the only part of the article that we
make use of this structural theory.

 \begin{definition}
 	We say that a system $(X,\mu,T)$ has  {\em finite rational spectrum} if $\spec(T)\cap \mathbb{Q}$ is finite, or, equivalently, if  there exists $k\in \N$ such that the ergodic components of the system $(X,\mu,T^k)$ have trivial rational spectrum.
 \end{definition}
 \begin{theorem}[Host-Kra~\cite{HK05a}]\label{T:HostKra}
 	Let $(X,\mu,T)$ be an ergodic system such that $\mathcal{X}=\mathcal{Z}_s$ for some $s\in \N$. Then  $(X,\mu,T)$ is an inverse limit of systems with finite rational spectrum. 	
 \end{theorem}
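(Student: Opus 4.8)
The plan is to deduce Theorem~\ref{T:HostKra} directly from the Host--Kra structure theorem in \cite{HK05a}, which asserts that for an ergodic system the factor $(X,\mathcal{Z}_s,\mu,T)$ is an inverse limit of $s$-step nilsystems. So the whole content of the statement is that a $k$-step nilsystem, and hence an inverse limit of such, has finite rational spectrum in the sense defined above. First I would recall that if $(X,\mu,T)$ is an ergodic $k$-step nilsystem $G/\Gamma$ with $T=T_a$, then its maximal factor with purely discrete (Kronecker) spectrum is an ergodic rotation on the compact abelian group $G/([G,G]\Gamma)$; ergodicity of $T_a$ forces this group to be a \emph{connected} compact abelian Lie group (i.e.\ a finite-dimensional torus) times a finite cyclic group generated by the image of $a$. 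Consequently $\spec(T)\cap\mathbb{Q}$ is exactly the group of characters of that finite cyclic part, which is finite. This is the crux of the matter; the rest is a limiting argument.

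Concretely, the key steps in order: (1) reduce to the case $\mathcal{X}=\mathcal{Z}_s$ and invoke \cite{HK05a} to write $(X,\mu,T)=\varprojlim (X_i,\mu_i,T_i)$ with each $(X_i,\mu_i,T_i)$ an ergodic $s$-step nilsystem. (2) For a single ergodic nilsystem $(G/\Gamma, m, T_a)$, identify the rational eigenvalues: an eigenfunction descends to the maximal equicontinuous (Kronecker) factor, which is the rotation by $\bar a$ on the compact abelian group $A:=G/(\overline{[G,G]}\Gamma)$; by the structure theory of compact abelian groups, $A\cong \mathbb{T}^d\times F$ with $F$ finite, the rotation is by an element whose torus-component generates a dense subgroup of $\mathbb{T}^d$, and the rational eigenvalues correspond precisely to characters trivial on the torus factor, i.e.\ to $\widehat F$, which is finite. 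Hence $\spec(T_a)\cap\mathbb{Q}$ is finite. (3) Equivalently, choosing $k=|F|$, the system $(G/\Gamma, m, T_a^k)$ has ergodic components on each of which the Kronecker factor has no nontrivial rational eigenvalue, giving the ``trivial rational spectrum'' reformulation in the definition. (4) Pass to the inverse limit: a rational eigenfunction of $(X,\mu,T)$ is a limit of functions measurable with respect to some $X_i$, and a standard approximation shows it must actually be measurable with respect to some $X_i$; hence $\spec(T)\cap\mathbb{Q}=\bigcup_i \spec(T_i)\cap\mathbb{Q}$. Since the factor maps are surjective, these finite sets are nested and their union is a subgroup of $\mathbb{Q}/\mathbb{Z}$; the only way this can fail to be finite is if the orders are unbounded, but I would rule this out by noting that the rational Kronecker factor is itself a factor of the inverse limit and an ergodic rotation on a compact abelian group, which (being an inverse limit of finite groups with surjective bonding maps that is also monothetic via the image of $a$) is either finite or infinite; an infinite such group would force $\mathcal{Z}_s$ to contain a non-nilsystem Kronecker piece, contradicting that each $X_i$ is a genuine $s$-step nilsystem with finite rational spectrum of uniformly... — more cleanly: the rational Kronecker factor of $X$ equals the inverse limit of the rational Kronecker factors of the $X_i$, each a finite cyclic group, and I claim the bonding maps are eventually isomorphisms because $\spec(T)\cap\mathbb Q$ injects into $\spec(T_i)\cap\mathbb Q$ for large $i$ by the approximation argument, so it is finite.

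The main obstacle I anticipate is step (4), the inverse-limit bookkeeping: one must be careful that ``inverse limit'' here is in the category of measure-preserving systems, so that the increasing $\sigma$-algebras $\mathcal{X}_i$ generate $\mathcal{X}$, and then argue that any fixed $L^2$ eigenfunction with rational eigenvalue is, up to null sets, $\mathcal{X}_i$-measurable for some $i$ (this uses that the eigenspace for a fixed rational eigenvalue is finite-dimensional in the ergodic case, so a martingale/density argument localizes it). Once that is in hand, $\spec(T)\cap\mathbb{Q}$ is realized inside some single nilsystem factor and finiteness is immediate from step (2). I would present step (2) as the conceptual heart and treat the nilpotent-Lie-group facts (connectedness of the torus part of the Kronecker factor of an ergodic nilsystem) as standard, citing \cite{HK18}; everything else is soft ergodic theory. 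Note that, as the authors remark, this is the one place deep input from \cite{HK05a} is genuinely used, so there is no hope of avoiding the structure theorem here.
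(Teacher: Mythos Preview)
Your steps (1)--(3) are exactly right and match what the paper does (or rather, what the paper's remark after the statement indicates): the theorem is simply the Host--Kra structure theorem together with the observation that an ergodic nilsystem has finite rational spectrum, because its Kronecker factor is a compact abelian Lie group, hence of the form $\T^d\times F$ with $F$ finite. The paper gives no further proof; it just cites \cite{HK05a} and remarks that the full nilsystem conclusion is more than is needed.

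However, you have misread the statement. The conclusion is that $(X,\mu,T)$ \emph{is an inverse limit of} systems with finite rational spectrum, not that $(X,\mu,T)$ \emph{itself} has finite rational spectrum. So once you have written $(X,\mu,T)=\varprojlim (X_i,\mu_i,T_i)$ with each $X_i$ an ergodic nilsystem and checked (your step~(2)) that each $X_i$ has finite rational spectrum, you are done. Your step~(4) is unnecessary.

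More importantly, step~(4) is attempting to prove something false. An inverse limit of nilsystems need not have finite rational spectrum: the $p$-adic odometer $(\Z_p,+1)$ is ergodic, satisfies $\mathcal{X}=\mathcal{Z}_1$, is the inverse limit of the finite rotations on $\Z/p^n\Z$ (each a $0$-step nilsystem with rational spectrum of size $p^n$), and has $\spec(T)\cap\Q=\{k/p^n: n\ge 0,\ 0\le k<p^n\}$, which is infinite. Your argument that ``$\spec(T)\cap\Q$ is realized inside some single nilsystem factor'' breaks precisely here: each individual rational eigenfunction does live in some $X_i$, but there is no single $i$ that captures all of them, and the union over $i$ can be infinite. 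Delete step~(4) and the proposal is correct and essentially the paper's approach.
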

\begin{remark}
	The main result in \cite{HK05a} states much more, namely,    $(X,\mu,T)$ is an inverse limit of nilsystems, but we will not need this.
	\end{remark}
 We are also going to use the following  fact:
\begin{lemma}\label{L:irrequi}
	Suppose that the sequences $a_1,\ldots, a_\ell\colon \N\to \Z$ are good for irrational equidistribution and seminorm estimates. For $k\in \N$ and  $i=1,\ldots, \ell$ let
	$$
	b_i(n):=\sum_{r=0}^{k-1}  {\bf 1}_{k\Z+r}(a_i(n)) \, \frac{a_i(n)-r}{k}, \quad n\in\N.
	$$
	Then	the sequences   $b_1,\ldots, b_\ell$ are   good for irrational equidistribution and seminorm estimates.
	\end{lemma}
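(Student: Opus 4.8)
The plan is to verify the two required properties of $b_1,\dots,b_\ell$ separately. Write $r_i(n):=a_i(n)\bmod k\in\{0,\dots,k-1\}$, so that $k\,b_i(n)=a_i(n)-r_i(n)$ and $b_i(n)=\lfloor a_i(n)/k\rfloor$. For irrational equidistribution, fix $t_1,\dots,t_\ell\in[0,1)$ with some $t_{i_0}$ irrational. Splitting $[N]$ according to the residue vector $(r_1(n),\dots,r_\ell(n))$ and expanding each indicator via $\mathbf{1}_{k\Z+r}(m)=\frac1k\sum_{j=0}^{k-1}e(j(m-r)/k)$, one obtains an identity of the shape
\[
\E_{n\in[N]}\,e\Big(\sum_{i=1}^\ell b_i(n)t_i\Big)=\frac{1}{k^{2\ell}}\sum_{\underline r,\underline j\in\{0,\dots,k-1\}^\ell}c_{\underline r,\underline j}\;\E_{n\in[N]}\,e\Big(\sum_{i=1}^\ell a_i(n)\tfrac{t_i+j_i}{k}\Big),
\]
with unimodular constants $c_{\underline r,\underline j}$. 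For every $\underline j$ the number $(t_{i_0}+j_{i_0})/k$ is irrational, so each inner average tends to $0$ because $a_1,\dots,a_\ell$ are good for irrational equidistribution; hence the left-hand side tends to $0$, and $b_1,\dots,b_\ell$ are good for irrational equidistribution.

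For the seminorm estimates, fix an ergodic system $(X,\mu,T)$ and pass to the $k$-point suspension $(Y,\nu,S)$, where $Y=X\times\Z_k$, $\nu$ is $\mu$ times the normalized counting measure, $S(x,j)=(x,j+1)$ for $j<k-1$ and $S(x,k-1)=(Tx,0)$. One checks directly that $S$ is ergodic, that $S^k=T\times\mathrm{id}$ (so the ergodic components of $(Y,\nu,S^k)$ are the $k$ copies of $(X,\mu,T)$ carried by the fibres $X\times\{j\}$), and that $S^{a_i(n)}(x,0)=\big(T^{b_i(n)}x,\,r_i(n)\big)$. Since $a_1,\dots,a_\ell$ are good for seminorm estimates for the ergodic system $(Y,\nu,S)$, some $s\in\N$ witnesses this; I claim the same $s$ works for $b_1,\dots,b_\ell$ and $(X,\mu,T)$.

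To see this, take $f_1,\dots,f_\ell\in L^\infty(\mu)$ bounded by $1$ with $\nnorm{f_m}_{s,T}=0$ and $f_{m+1},\dots,f_\ell\in\mathcal{E}(T)$, say $Tf_i=e(\alpha_i)f_i$ for $i>m$, and lift $f_i$ to $Y$ by $H_i:=f_i\otimes 1$ for $i\le m$ and, for $i>m$ and $p_i\in\{0,\dots,k-1\}$, by $H_i^{(p_i)}(x,j):=e\big(j(\alpha_i+p_i)/k\big)\,f_i(x)$, which one checks lies in $\mathcal{E}(S)$. For the distinguished index, $\nnorm{f_m\otimes 1}_{s,S^k}=\nnorm{f_m}_{s,T}=0$ by the ergodic decomposition of $(Y,\nu,S^k)$ together with the behaviour of the Gowers--Host--Kra seminorms under ergodic decomposition, whence $\nnorm{f_m\otimes 1}_{s,S}\le\nnorm{f_m\otimes 1}_{s,S^k}=0$ by \eqref{E:TS} applied to the commuting pair $S,S^k$ (note $\CI(S)\subseteq\CI(S^k)$). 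Thus the seminorm estimate for $a_1,\dots,a_\ell$ on $(Y,\nu,S)$ applies and gives, for every $\underline p=(p_{m+1},\dots,p_\ell)$,
\[
\lim_{N\to\infty}\Big\|\E_{n\in[N]}\,S^{a_1(n)}H_1\cdots S^{a_m(n)}H_m\cdot S^{a_{m+1}(n)}H_{m+1}^{(p_{m+1})}\cdots S^{a_\ell(n)}H_\ell^{(p_\ell)}\Big\|_{L^2(\nu)}=0.
\]
Because $\|H\|_{L^2(\nu)}^2=\frac1k\sum_{j}\|H(\cdot,j)\|_{L^2(\mu)}^2$, the restriction of this average to the fibre $\{j=0\}$ also tends to $0$ in $L^2(\mu)$, and by $S^{a_i(n)}(x,0)=(T^{b_i(n)}x,r_i(n))$ that restriction equals $\E_{n\in[N]}\,e\big(\sum_{i>m}(\alpha_i+p_i)r_i(n)/k\big)\prod_{i=1}^\ell T^{b_i(n)}f_i$. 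Averaging these $k^{\ell-m}$ identities against the characters of $\Z_k^{\ell-m}$ isolates, for each residue vector $(\rho_{m+1},\dots,\rho_\ell)$ and up to a unimodular constant, the term $\E_{n\in[N]}\mathbf{1}[(r_{m+1}(n),\dots,r_\ell(n))=(\rho_{m+1},\dots,\rho_\ell)]\prod_i T^{b_i(n)}f_i$, and summing over all residue vectors yields $\lim_{N\to\infty}\E_{n\in[N]}\prod_i T^{b_i(n)}f_i=0$ in $L^2(\mu)$. Since $(X,\mu,T)$ was arbitrary ergodic, $b_1,\dots,b_\ell$ are good for seminorm estimates.

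The routine ingredients are the two Fourier inversions on $\Z_k^\ell$, the verification that $S$ is ergodic, and the identification of the ergodic decomposition of the suspension. The one point that needs care — and the main obstacle — is controlling the seminorm of the lifted function $f_m\otimes 1$ on $(Y,\nu,S)$: rather than computing $\nnorm{f_m\otimes 1}_{s,S}$ directly (which is awkward because of the carries $\lfloor(n+j)/k\rfloor$), I route it through $S^k=T\times\mathrm{id}$, where the seminorm is transparent by ergodic decomposition, and then pass back down to $S$ using \eqref{E:TS}.
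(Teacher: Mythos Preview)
Your proof is correct and shares the main architecture with the paper's argument: the equidistribution part is handled identically via Fourier expansion on $\Z_k^\ell$, and for the seminorm part both you and the paper pass to the $k$-point suspension $(Y,\nu,S)$ and use the key observation that $\nnorm{f_m\otimes 1}_{s,S}\le\nnorm{f_m\otimes 1}_{s,S^k}=\nnorm{f_m}_{s,T}=0$ via \eqref{E:TS}.

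The tactical difference lies in how the eigenfunctions $f_{m+1},\dots,f_\ell$ are treated. The paper first invokes the product-system remark after the definition of ``good for seminorm estimates'' (applied to $(X_k\times X_k,\mu_k\times\mu_k,T_k\times T_k)$) to absorb these eigenfunctions into an arbitrary bounded weight $(c_n)$, and then splits over the residues $r_1(n),\dots,r_m(n)$ to reduce to $k^m$ weighted averages with shifted functions $T_k^{-r_i}g_i$. You instead lift each eigenfunction $f_i$ to a genuine eigenfunction $H_i^{(p_i)}$ of $S$ (a nice observation), apply the seminorm hypothesis directly on the ergodic system $(Y,\nu,S)$ without passing to a product, restrict to the fibre $\{j=0\}$, and then Fourier-invert in $\underline p$ to isolate the residue classes of $r_{m+1}(n),\dots,r_\ell(n)$. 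Your route avoids the product-system detour and the splitting over the first $m$ residues, at the cost of constructing the eigenfunction lifts and the extra inversion step; the paper's route is more uniform in the indices but leans on the auxiliary remark. Both are clean implementations of the same idea.
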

\begin{proof}
We first establish the equidistribution statement.
Let $t_1,\ldots, t_\ell\in [0,1)$, not all of them rational.
We want to show that
$$
\lim_{N\to\infty}\E_{n\in [N]}\, e\big(\sum_{i=1}^\ell b_i(n)t_i\big)=0.
$$
By direct computation we see that it suffices to show that
$$
\lim_{N\to\infty}\E_{n\in [N]}\,  F(a_1(n),\ldots, a_\ell(n))\, e\big(\sum_{i=1}^\ell a_i(n)\frac{t_i}{k}\big)=0,
$$
where $F\colon \Z_k^\ell\to \C$ is defined by
$$
F(t_1,\ldots, t_\ell):=e\big(-\sum_{i=1}^\ell\sum_{r=1}^k {\bf 1}_{k\Z+r}(x_i)\frac{r}{k}t_i\big), \quad t_1,\ldots, t_\ell\in \Z_k.
$$
Using the Fourier expansion of $F$ on $\Z_k^\ell$ we see that it suffices to show that
$$
\lim_{N\to\infty}\E_{n\in [N]}\, e\big(\sum_{i=1}^\ell a_i(n)\frac{t_i+r_i}{k}\big)=0
$$
for all $r_1,\ldots, r_\ell\in \{0,\ldots, k-1\}$. Since, by assumption, the sequences $a_1,\ldots, a_\ell$ are good for irrational equidistribution, and $\frac{t_i+r_i}{k}$ is irrational for some $i\in \{1,\ldots, \ell\}$, the needed identity follows.

Next, we establish the statement about seminorm estimates.
	Let $(X,\mu,T)$ be a system and  $k\in \N$. Our plan is to show that
	the sequences $b_1,\ldots, b_\ell$ are good for seminorm estimates for the system $(X,\mu,T)$  by
	using that (by our assumption)  the sequences $a_1,\ldots, a_\ell$ are good for seminorm estimates for the ``$k$-th root'' of the system $(X,\mu,T)$, which is defined as follows:
	 	We consider measure preserving the system $(X_k,\mu_k,T_k)$ where
	$$
	X_k:=X\times \{0,\ldots, k-1\}, \quad \mu_k=\mu\times \nu_k, \quad \nu_k:=\frac{\delta_0+\cdots+\delta_{k-1}}{k},
	$$
	and for $x\in X$ we let
	$$
	T_k(x,i)=(x,i+1), \, i=0,\ldots, k-2, \quad T_k(x,k-1)=(Tx,0).
	$$
 	The key property is that
	$T_k^k(x,i)=(Tx,i), x\in X, i\in \{0,\ldots, k-1\}$, hence for $f\in L^\infty(\mu)$ we have
	\begin{equation}\label{E:Tk}
	(T_k^k(f\otimes 1))(x,i)=(Tf)(x), \quad x\in X, \, i\in  \{0,\ldots, k-1\}.
	\end{equation}

  Applying our good seminorm assumption  for the product system $(X_k\times X_k,\mu_k\times\mu_k,T_k\times T_k)$,
and taking into account the remarks following the definition in Section~\ref{SS:JE} for the good seminorm property,
we get that
there exists an   $s\in \N$ (we can assume that $s\geq 2$)
such that if  $g_1, \ldots, g_\ell\in L^\infty(\mu_k)$ and
 $\nnorm{g_m}_{s,T_k}=0$ for some $m\in [\ell]$, then for every bounded sequence $(c_n)$ we have
 \begin{equation}\label{E:assumption}
 \lim_{N\to\infty} \E_{n\in [N]}\, c_n\,  T_k^{a_1(n)}g_1\cdot\ldots \cdot  T_k^{a_m(n)}g_m= 0
 \end{equation}
  in $L^2(\mu_k)$.

 We claim that this $s\in \N$   produces  good seminorm estimates for the sequences $b_1,\ldots, b_\ell$ for the system $(X,\mu,T)$. To see this let $f_1,\ldots, f_\ell\in L^\infty(\mu)$ be such that  $\nnorm{f_m}_{s,T}=0$ for some $m\in [\ell]$ and $f_{m+1},\ldots, f_\ell\in \mathcal{E}(T)$. It suffices to show that   for every bounded sequence $(d_n)$ we have
 $$
 \lim_{N\to\infty} \E_{n\in [N]}\, d_n\,  T^{b_1(n)}f_1\cdot\ldots \cdot  T^{b_m(n)}f_m= 0
 $$
 in $L^2(\mu)$.
 Let $g_i\in L^\infty(\mu_k)$ be defined by
 $g_i:=f_i\otimes 1$, $i=1,\ldots, \ell$, and recall that $\mu_k=\mu\times \nu_k$. By \eqref{E:Tk}  it suffices to show that
 $$
 \lim_{N\to\infty} \E_{n\in [N]}\,  d_n\, T_k^{kb_1(n)}g_1\cdot\ldots \cdot  T_k^{ka_m(n)}g_m= 0
 $$
 in $L^2(\mu_k)$, or equivalently, that
 $$
 \lim_{N\to\infty} \E_{n\in [N]}\,  d_n\, T_k^{a_1(n)}(T_k^{-r_1(n)}g_1)\cdot\ldots \cdot  T_k^{a_m(n)}(T_k^{-r_m(n)}g_m)= 0
 $$
 in $L^2(\mu_k)$, 	for some  sequences $r_1,\ldots, r_m\colon \N\to\{0,\ldots, k-1\}$. Note that
  the last average can be written as a sum of $k^m$ weighted averages of the form
  \begin{equation}\label{E:assumption'}
  \E_{n\in [N]}\, d'_n\,  T_k^{a_1(n)}(T_k^{-r_1}g_1)\cdot\ldots \cdot  T_k^{a_m(n)}(T_k^{-r_m}g_m)
  \end{equation}
 for some $r_1,\ldots, r_m\in \{0,\ldots, k-1\}$ and $d'_n\in \{0,d_n\}$, $n\in\N$. Hence, it suffices to show that averages of the form \eqref{E:assumption'}
 converge to $0$ in $L^2(\mu_k)$ and $N\to\infty$.
	
	 Since $g_m=f_m\otimes 1$ and $\mu_k=\mu\times \nu_k$, our assumption $\nnorm{f_m}_{s,T}=0$ and  \eqref{E:Tk} give that
	 $\nnorm{g_m}_{s,T_k^k}=\nnorm{f_m}_{s,T}=0$.

 Since $\CI(T_k)\subset \CI(T_k^k)$  we deduce from \eqref{E:TS} the implication
  $$
	\nnorm{g_m}_{s,T_k^k}=0 \implies \nnorm{g_m}_{s,T_k}=0,
	$$ hence we also have  $\nnorm{T_k^{-r_m}g_m}_{s,T_k}=\nnorm{g_m}_{s,T_k}=0$. It then follows from \eqref{E:assumption} that the averages  \eqref{E:assumption'} converge to $0$ in $L^2(\mu_k)$ as $N\to\infty$. This completes the proof.
\end{proof}

\begin{proof}[Proof of Theorem~\ref{T:CharacteristicFactors}]
		It is straightforward to verify that Property~$(i)$ implies Property~$(ii)$ (the seminorm property holds with $s=2$ and one can use appropriate rotations on $\T^\ell$ to verify the  equidistribution property).
	So we only prove that Property~$(ii)$ implies Property~$(i)$.

	Let $\mu=\int \mu_x\, d\mu$ be the ergodic decomposition of the measure $\mu$. Since
	 $\E(f|\mathcal{K}_{rat}(\mu))=0$ implies that $\E(f|\mathcal{K}_{rat}(\mu_x))=0$ for $\mu_x$ almost every $x\in X$ (see for example \cite[Theorem~3.2]{FrK06}), we can assume that the system $(X,\mu,T)$ is ergodic.

We claim that it suffices to show that  the Kronecker factor $\mathcal{K}(T)$ is characteristic for mean convergence of the averages
	\begin{equation}\label{E:ain}
	\E_{n\in [N]}\, T^{a_1(n)}f_1 \cdot\ldots \cdot  T^{a_\ell(n)}f_\ell,
	\end{equation}
	meaning that the previous averages converge to $0$ in $L^2(\mu)$ as $N\to\infty$ if at least one of the functions is orthogonal to $\mathcal{K}(T)$ (recall that $\mathcal{K}(T)$ is $L^2(\mu)$-closure of the linear span of   the eigenfunctions of the system).
Indeed, if this is the case, then by approximation, we can assume that all functions are eigenfunctions; hence,  for $i=1,\ldots, \ell$ we have
$$
T^{a_i(n)}f_i=e(a_i(n)t_i)\, f_i, \quad n\in \N,
$$
for some $t_i\in [0,1)$, not all of them rational (since at least one of the functions is orthogonal to $\mathcal{K}_{rat}(T)$). In this case, the needed convergence to zero follows from
our assumption that the sequences $a_1,\ldots, a_\ell$ are good for irrational equidistribution.

So it remains to show  that if $\E(f_i|\mathcal{K}(T))=0$,  for some $i\in \{1,\ldots, \ell\}$, then the  averages \eqref{E:ain} converge to $0$ in $L^2(\mu)$. Without loss of generality we can assume that $i=1$, hence  $\E(f_1|\mathcal{K}(T))=0$.
	Since the sequences $a_1,\ldots, a_\ell$ are very good for seminorm estimates and $\E(f|\mathcal{Z}_s(T))=0$ implies $\nnorm{f}_{s+1,T}=0$, we can assume that all functions $f_1,\ldots, f_\ell$ are $\mathcal{Z}_s(T)$-measurable for some $s\in \N$. In this case we can assume that  $\mathcal{X}=\mathcal{Z}_s(T)$,\footnote{The assumption that the sequences $a_1,\ldots, a_\ell$ are very good for seminorm estimates (versus simply  ``good'') is  needed in order to get this reduction.} hence  using Theorem~\ref{T:HostKra}  and an approximation argument we can assume that the system $(X,\mu,T)$  has finite rational spectrum, in which case  there exists $k\in \N$ such  the system $(X,\mu,T^k)$ has trivial rational spectrum.

	For $i=1,\ldots, \ell$ we let
	$$
	b_i(n):=\sum_{r=0}^{k-1}  {\bf 1}_{k\Z+r}(a_i(n)) \, \frac{a_i(n)-r}{k}, \quad n\in\Z.
	$$
	By Lemma~\ref{L:irrequi}, the sequences $b_1,\ldots, b_\ell$ are good for irrational equidistribution and seminorm estimates.
	Then for $i=1,\ldots, \ell$ and  $S:=T^k$ we have that 	
	$$
	T^{a_i(n)}f_i=\sum_{r=0}^{k-1} {\bf 1}_{k\Z+r}(a_i(n))\, (T^k)^{\frac{a_i(n)-r}{k}}(T^rf_i)=
	\sum_{r=0}^{k-1} {\bf 1}_{k\Z+r}(a_i(n))\, S^{b_i(n)}(T^rf_i), \quad n\in\N.
	$$
	We insert this identity in \eqref{E:ain} and expand the product.
	We deduce
	that it suffices to show the following: If the (not necessarily ergodic) system $(X,\mu,S)$ has trivial rational spectrum, the sequences $b_1,\ldots, b_\ell\colon \N\to \Z$ are good for irrational equidistribution and seminorm estimates,
	and $\E(f_1|\mathcal{K}(T))=0$, then for every bounded sequence $(c_n)$ we have
	$$
\lim_{N\to\infty}\E_{n\in [N]}\, c_n\cdot S^{b_1(n)}f_1 \cdot\ldots \cdot  S^{b_\ell(n)}f_\ell=0
	$$
	in $L^2(\mu)$.
	
 To prove this, we first remark that  since the system $(X,\mu,S)$ has trivial  rational spectrum, the same holds for the system
  	$(X\times X,\mu\times\mu,S\times S)$. Furthermore,  it is known that
	$\E(f_1|\mathcal{K}(S))=0$ implies that
	 $\E(f_1\otimes \overline{f}_1|\mathcal{I}(S\times S))=0$. Combining these facts (whose proof follows for example from \cite[Lemma~4.18]{Fu81a}) we deduce that  if $$
	 \mu\times \mu=\int (\mu\times\mu)_{(x,y)}\, d(\mu\times\mu)
	 $$ is the ergodic decomposition of the measure $\mu\times \mu$ with respect to  the transformation $S\times S$, then
	  for  $(\mu\times\mu)$-almost every $(x,y)\in X\times X$ the system 	$(X\times X,(\mu\times\mu)_{(x,y)},S\times S)$ is ergodic, has trivial rational spectrum, and  $\int f_1\otimes \overline{f}_1\, d(\mu\times\mu)_{(x,y)}=0$. By
	  Theorem~\ref{T:JointErgodicity}  
	  we deduce that  for  $(\mu\times\mu)$-almost every $(x,y)\in X\times X$ we have
	 $$
	\E_{n\in [N]}\, (S\times S)^{b_1(n)}(f_1\otimes\overline{f}_1) \cdot\ldots \cdot  (S\times S)^{b_\ell(n)}(f_\ell\otimes \overline{f}_\ell)\to^{L^2((\mu\times\mu)_{(x,y)})}0
	$$
	as $N\to \infty$.
	 This implies that
    $$
		\E_{n\in [N]}\, (S\times S)^{b_1(n)}(f_1\otimes\overline{f}_1) \cdot\ldots \cdot  (S\times S)^{b_\ell(n)}(f_\ell\otimes \overline{f}_\ell)\to^{L^2(\mu\times\mu)}0
	$$
	as $N\to \infty$,
	and using the Cauchy-Schwarz inequality we deduce that
	$$
	\lim_{N\to\infty}\E_{n\in [N]}\, \int (f_{0,N}\otimes\overline{f}_{0,N})\cdot (S\times S)^{b_1(n)}(f_1\otimes\overline{f}_1) \cdot\ldots \cdot  (S\times S)^{b_\ell(n)}(f_\ell\otimes \overline{f}_\ell)\, d(\mu\times\mu)=0
	$$
	for all functions $f_{0,N}\in L^\infty(\mu)$, $N\in \N$, that are uniformly bounded.
	Hence,
		$$
\lim_{N\to\infty}	\E_{n\in [N]}\, \Big|\int f_{0,N} \cdot  S^{b_1(n)}f_1 \cdot\ldots \cdot  S^{b_\ell(n)}f_\ell\, d\mu\Big|^2= 0,
	$$
	which implies using the Cauchy-Schwarz inequality  that
		$$
\lim_{N\to\infty}	\E_{n\in [N]}\,c_n\,  \int f_{0,N} \cdot  S^{b_1(n)}f_1 \cdot\ldots \cdot  S^{b_\ell(n)}f_\ell\, d\mu= 0
	$$
	for every bounded sequence $(c_n)$.
	If we let
	$$
	f_{0,N}:=	\overline{\E_{n\in [N]}\, c_n \cdot  S^{b_1(n)}f_1 \cdot\ldots \cdot  S^{b_\ell(n)}f_\ell}, \quad N\in \N,
	$$
	we deduce that
		$$
	\lim_{N\to\infty}\E_{n\in [N]}\, c_n \cdot   S^{b_1(n)}f_1 \cdot\ldots \cdot  S^{b_\ell(n)}f_\ell = 0
	$$
	in $L^2(\mu)$. This completes the proof.
\end{proof}

\subsection{Proof of Corollary~\ref{C:MultipleRecurrence'}}
Let $A\in \mathcal{X}$ and $\varepsilon>0$. Let $\mathcal{K}_r$ denote the closed subspace of $L^2(\mu)$ consisting of  all $T^r$-invariant functions. Then there exists $r\in \N$ such that
\begin{equation}\label{E:Krat}
\norm{\E({\bf 1}_A|\mathcal{K}_{rat})-\E({\bf 1}_A|\mathcal{K}_{r})}_{L^2(\mu)}\leq \frac{\varepsilon}{\ell}.
\end{equation}

Let
$$
S_r=
\{n\in \N\colon a_1(n)\equiv 0, \ldots, a_\ell(n)\equiv 0 \!  \!  \! \pmod{r}\}.
$$
By assumption, we have  $\bar{d}(S_r)>0$, hence there exist $N_k\to \infty$ such that
\begin{equation}\label{E:dSr}
\lim_{k\to\infty}\frac{|S_r\cap [N_k]|}{N_k}>0.
\end{equation}

First, we claim that $\mathcal{K}_{rat}(T)$ is a characteristic factor for the averages
\begin{equation}\label{E:SrNk}
\E_{n\in S_r\cap [N_k]}\,  T^{a_1(n)}f_1\cdot \ldots \cdot T^{a_\ell(n)}f_\ell,
\end{equation}
meaning, if $\E(f_{j_0}|\mathcal{K}_{rat}(T))=0$ for some $j_0\in \{1,\ldots, \ell\}$, then the averages converge to zero in $L^2(\mu)$ as $k\to \infty$.
To see this, note that for every $k\in \N$ the averages \eqref{E:SrNk} are equal to
\begin{equation}\label{E:SrNk'}
\frac{N_k}{|S_r\cap [N_k]|}\cdot
\E_{n\in [N_k]}\,  (T\times R)^{a_1(n)}(f_1\otimes g)\cdot \ldots \cdot T^{a_\ell(n)}(f_\ell\otimes g),
\end{equation}
where $R$ is the shift transformation on a cyclic group of order $r$ and $g$ is the indicator function of the identity element in this cyclic group.
Then  $\E(f_{j_0}\otimes g|\mathcal{K}_{rat}(T\times R))=0$, and since by assumption the sequences $a_1,\ldots, a_\ell$ are very good for seminorm estimates and
good for irrational equidistribution, we get by Theorem~\ref{T:CharacteristicFactors} that the  averages in \eqref{E:SrNk} converge to $0$ in $L^2(\mu)$ as $k\to \infty$.
We deduce from this and \eqref{E:dSr} that the averages \eqref{E:SrNk'} converge to $0$ in $L^2(\mu)$ as $k\to \infty$, completing the proof of our claim.

It follows from what we just proved that the limit
$$
\liminf_{k\to \infty}\E_{n\in S_r\cap [N_k]}\, \mu(A\cap T^{-a_1(n)}A\cap \cdots \cap T^{-a_\ell(n)}A)
$$
is equal to the limit
$$
\liminf_{k\to \infty}\E_{n\in S_r\cap [N_k]}\int \E({\bf 1}_A|\mathcal{K}_{rat}(T))\cdot T^{a_1(n)}\E({\bf 1}_A|\mathcal{K}_{rat}(T))
\cdot \ldots \cdot T^{a_\ell(n)}\E({\bf 1}_A|\mathcal{K}_{rat}(T))\, d\mu.
$$
Using \eqref{E:Krat}  and telescoping, we get that the last limit  is greater or equal than
$$
\liminf_{k\to \infty}\E_{n\in S_r\cap [N_k]}\int \E({\bf 1}_A|\mathcal{K}_{r}(T))\cdot T^{a_1(n)}\E({\bf 1}_A|\mathcal{K}_{r}(T))
\cdot \ldots \cdot T^{a_\ell(n)}\E({\bf 1}_A|\mathcal{K}_{r}(T))\, d\mu -\varepsilon.
$$

Note that for $n\in S_r$ we have $T^{a_j(n)}\E({\bf 1}_A|\mathcal{K}_{r}(T))=\E({\bf 1}_A|\mathcal{K}_{r}(T))$, $j=1,\ldots, \ell$, hence the last limit is equal to
$$
\int (\E({\bf 1}_A|\mathcal{K}_{r}(T)))^{\ell+1}\, d\mu\geq \Big(\int \E({\bf 1}_A|\mathcal{K}_{r}(T))\, d\mu\Big)^{\ell+1}=(\mu(A))^{\ell+1}.
$$
Combining the above estimates we get that
$$
\liminf_{k\to \infty}\E_{n\in S_r\cap [N_k]}\, \mu(A\cap T^{-a_1(n)}A\cap \cdots \cap T^{-a_\ell(n)}A)\geq (\mu(A))^{\ell+1}-\varepsilon,
$$
completing the proof of Corollary~\ref{C:MultipleRecurrence'}.



\section{Joint ergodicity   of special sequences}\label{S:Hardy}
In this section we prove the results of Sections~\ref{SS:16} and  \ref{SS:17}.

 \subsection{Definition of Hardy fields}\label{SS:Hardy}
 Let $B$ be the collection of equivalence classes of real valued
 functions  defined on some half line $[c,+\infty)$, where we
 identify two functions if they agree eventually.\footnote{The
 	equivalence classes just defined are often called ``germs of
 		functions''. We choose to use the word function when we refer to
  	elements of $B$ instead, with the understanding that all the
 	operations defined and statements made for elements of $B$ are
 	considered only for sufficiently large values of $t\in \R$.}
 A
 \emph{Hardy field} $\mathcal{H}$ is a subfield of the ring $(B,+,\cdot)$ that is
 closed under differentiation.
 For the purposes of this section we assume that  all Hardy
 	fields $\mathcal{H}$ considered are contained in some other Hardy field $\mathcal{H}'$ that  satisfies property~\eqref{E:Hardy}.
 A particular example of such a Hardy field $\mathcal{H}$ is the collection  of
 \emph{logarithmic-exponential functions}, meaning all functions
 defined on some half line $[c,+\infty)$ by a finite combination of
 the symbols $+,-,\times, :, \log, \exp$, operating on the real
 variable $t$ and on real constants; linear combinations of functions of the form  $t^a(\log{t})^b c^t$, $a,b\in \R$, $c>0$, are examples of such functions.
 The reader can find more information about Hardy fields in \cite{Bos94} and the references therein.

 \subsection{Good equidistribution properties for Hardy field sequences}
 We will use  the following equidistribution result:
 \begin{theorem}[Boshernitzan~\cite{Bos94}]\label{T:Boshernitzan}
 	Let $a\colon [c,+\infty)\to \R$ be a Hardy field function with at most polynomial growth. Then the sequence
 	$(a(n))$ is equidistributed on $\T$  	if and only if it stays logarithmically away from rational polynomials (see definition in Section~\ref{SS:16}).
 \end{theorem}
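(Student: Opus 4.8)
By Weyl's equidistribution criterion it suffices to prove that $\E_{n\in[N]}\,e(k\,a(n))\to 0$ for every nonzero $k\in\Z$; since $ka$ is again a Hardy field function of at most polynomial growth and $|ka(t)-p(t)|=k\,|a(t)-p(t)/k|$ with $p/k\in\Q[t]$, the hypothesis of staying logarithmically away from rational polynomials transfers to $ka$, so one may take $k=1$. \emph{Necessity} I would prove by contraposition. If $a$ fails to stay logarithmically away from some $p\in\Q[t]$ then, $a-p$ lying in a Hardy field, $|a(t)-p(t)|/\log t$ has a finite limit, i.e.\ $a=p+g$ with $g$ in a Hardy field and $g(t)=O(\log t)$; choosing $k$ to be a common denominator of the coefficients of $p$ gives $e(k\,a(n))=e(k\,g(n))$, and $\E_{n\in[N]}\,e(k\,g(n))$ does not tend to $0$: if $g$ has a finite limit $L$ the averages tend to $e(kL)\neq0$; if $g(t)\sim c\log t$ with $c\neq0$, comparing the sum with $\int_1^N e(k\,g(t))\,dt$ (error $O(\int_1^N|g'|)=O(\log N)$) and integrating by parts shows $|\E_{n\in[N]}\,e(k\,g(n))|$ is eventually bounded below by a positive constant (roughly $|1+2\pi i kc|^{-1}$); and if $g\to\pm\infty$ with $g\prec\log t$, then $g$ changes by $o(1)$ over each dyadic block $[N,2N]$, so the sequence is eventually nearly constant on blocks of geometrically growing length and its Cesàro averages oscillate.

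For \emph{sufficiency} I would run van der Corput's method, organised as a reduction of the growth order of $a$ (note that the hypothesis applied to constant polynomials forces $\log t\prec a(t)$, so $a\to\pm\infty$). \textbf{(1) Top term a genuine integer power:} if $a(t)/t^{j}\to c\neq0$ for some integer $j\ge1$ and $c\in\Q$, partition $[N]$ into residue classes modulo the denominator of $c$, on each of which $c\,n^{j}$ is constant modulo $1$; this reduces matters to Weyl sums along arithmetic progressions of the strictly-lower-order function $a-c\,t^{j}$ (still in the Hardy field, still logarithmically away from rational polynomials), and one recurses. If instead $c\notin\Q$, difference $j$ times with integer shifts $h_1,\dots,h_j\le H$: the term $c\,t^{j}$ contributes $c\cdot j!\cdot h_1\cdots h_j$ modulo $\Z$, whose averages over the $h_i$ vanish as $H\to\infty$ by equidistribution of $\{\alpha\,h_1\cdots h_j\}$ for irrational $\alpha$, while the iterated difference of the remainder $a-c\,t^{j}$ tends to $0$ and is negligible; a van der Corput inequality then finishes. \textbf{(2) Base cases} — $a$ of order strictly between two consecutive integer powers, or linear-plus-sublinear — are treated directly: (i) if $a$ is sublinear, compare $\sum_{n\le N}e(a(n))$ with $\int_1^N e(a(t))\,dt$ (error $O(|a(N)|)=o(N)$) and integrate by parts, using that $\log t\prec a(t)$ forces $t|a'(t)|\to\infty$, so the boundary and secondary terms are $o(N)$; (ii) if $a(t)\sim\alpha t$ with $\alpha$ irrational, write $a=\alpha t+a_1$ with $a_1$ sublinear and sum by parts against $\sum_{n\le M}e(\alpha n)=O_{\alpha}(1)$, the error being controlled by the total variation $\int_1^N|a_1'|=o(N)$; (iii) if $t\prec a(t)\prec t^{2}$, then $a''(t)\to0$ while $a''$ cannot decay faster than $t^{-2}$, so van der Corput's estimate $|\sum_{n=M}^{2M}e(a(n))|\ll M|a''(M)|^{1/2}+|a''(M)|^{-1/2}=o(M)$ applies and is summed dyadically; and functions of order strictly between $t^{d}$ and $t^{d+1}$ reduce to case (iii) after $d-1$ differencings.

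I expect the principal obstacle to be the bookkeeping of orders of growth in the Hardy field: one must check that every branch of the reduction lands in a case already treated, that the hypothesis (or its quantitative weakening, with the correct dependence on the shift parameters) survives the differencings and the passage to residue classes, and that the van der Corput exponential-sum estimates genuinely apply with the decay rates that $a$ and its derivatives produce. In particular the slowly varying regimes such as $a(t)=t\log t$ or $t\log\log t$ must be handled by the second-derivative estimate (iii) rather than by differencing, precisely because a naive differencing of such a function would produce something no longer logarithmically away from the rational polynomials. This verification, which exhibits ``staying logarithmically away from rational polynomials'' as exactly the dividing line between equidistributed and non-equidistributed Hardy field sequences of polynomial growth, is the technical heart of the result; for the complete argument I refer to Boshernitzan~\cite{Bos94}.
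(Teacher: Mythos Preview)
The paper does not prove this theorem at all: it is stated as a black-box input, attributed to Boshernitzan~\cite{Bos94}, and used without argument. So there is no ``paper's proof'' to compare your proposal against; you have gone beyond what the paper does by sketching an argument.

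That said, your outline is broadly in the spirit of Boshernitzan's original approach---van der Corput differencing to lower the growth order, with the Hardy field structure controlling the derivatives that appear---and the necessity direction is essentially correct. A couple of points in the sufficiency sketch would need tightening if you were to write this out in full. In case~(1) with $c\notin\Q$, after $j$ applications of van der Corput you do not literally obtain the constant $c\,j!\,h_1\cdots h_j$; rather you obtain a function of $n$ whose $n$-dependence comes from the $j$-fold difference of $a(t)-ct^j$, and you must argue (using Hardy field asymptotics for derivatives) that this residual tends to~$0$ uniformly for $n\in[N]$ and bounded shifts. You flag this at the end, but the student reading your sketch might think the van der Corput step is purely algebraic. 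Similarly, the recursion in the $c\in\Q$ case requires checking that ``logarithmically away from rational polynomials'' is preserved under subtracting $ct^j$ and restricting to arithmetic progressions---true, but worth stating. These are exactly the bookkeeping issues you identify in your final paragraph, and they are indeed the substance of~\cite{Bos94}; your instinct to defer to that reference for the complete argument is the right one, and is precisely what the present paper does.
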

We will also use the following reduction, variants of which have been frequently used in the literature. We give its proof for completeness.
\begin{lemma}\label{L:Equi}
	Let $a_1,\ldots, a_\ell\colon [c,+\infty)\to \R$ be such that for every
	  $t_1,\ldots, t_\ell\in \R$, not all of them zero, we have
	\begin{equation}\label{E:wanteda}
	\lim_{N\to\infty} \E_{n\in[N]}\,  e(a_1(n)t_1+\cdots+ a_\ell(n)t_\ell) =0.
	\end{equation}
	Then the sequences $[a_1(n)],\ldots, [a_\ell(n)]$ are good for equidistribution.
\end{lemma}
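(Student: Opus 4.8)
The plan is to reduce the statement about the integer-part sequences $[a_i(n)]$ to the hypothesis \eqref{E:wanteda} about the real sequences $a_i(n)$ by approximating the indicator-type function $e([a_i(n)]t_i)$ in terms of $e(a_i(n)s)$ for various real $s$. Recall that $[a_i(n)] = a_i(n) - \{a_i(n)\}$, so $e([a_i(n)]t_i) = e(a_i(n)t_i)\cdot e(-\{a_i(n)\}t_i)$, and the factor $e(-\{a_i(n)\}t_i)$ depends on $a_i(n)$ only through its fractional part. Thus, for each $i$, the quantity $e([a_i(n)]t_i)$ equals $g_{t_i}(a_i(n))$ for the $1$-periodic function $g_{t_i}(x) := e(xt_i)\cdot e(-\{x\}t_i) = e(-\{x\}t_i + xt_i)$, which one checks is indeed periodic in $x$ with period $1$ (replacing $x$ by $x+1$ multiplies by $e(-t_i)\cdot e(t_i) = 1$). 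Hence $g_{t_i}$ admits a Fourier expansion $g_{t_i}(x) = \sum_{k\in\Z} c_k(t_i)\, e(kx)$ with rapidly (at least absolutely) summable coefficients, since $g_{t_i}$ is piecewise smooth with a single jump discontinuity; in fact one can compute $c_k(t_i)$ explicitly, but all we need is that $\sum_{k}|c_k(t_i)| < \infty$ for each fixed $t_i$.

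Next I would multiply out: for $t_1,\dots,t_\ell\in[0,1)$ not all zero,
\[
e\Big(\sum_{i=1}^\ell [a_i(n)]t_i\Big) = \prod_{i=1}^\ell g_{t_i}(a_i(n)) = \sum_{k_1,\dots,k_\ell\in\Z} \Big(\prod_{i=1}^\ell c_{k_i}(t_i)\Big)\, e\Big(\sum_{i=1}^\ell (t_i+k_i)\, a_i(n)\Big),
\]
where the multi-index series converges absolutely and uniformly in $n$. Averaging over $n\in[N]$ and interchanging the (absolutely convergent) sum with the finite average, then letting $N\to\infty$, each inner average $\E_{n\in[N]}\, e(\sum_i (t_i+k_i)a_i(n))$ tends to $0$ by hypothesis \eqref{E:wanteda} \emph{provided} the real numbers $t_1+k_1,\dots,t_\ell+k_\ell$ are not all zero. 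Since the $t_i$ lie in $[0,1)$ and are not all zero, say $t_{i_0}\neq 0$; then $t_{i_0}+k_{i_0}\neq 0$ for every $k_{i_0}\in\Z$, so the tuple $(t_i+k_i)_i$ is never the zero tuple and every term vanishes in the limit. A dominated-convergence argument for series (the tail is controlled uniformly in $N$ by $\sum_{\max|k_i|>M}\prod_i|c_{k_i}(t_i)|$, which is small) then gives $\lim_{N\to\infty}\E_{n\in[N]}\, e(\sum_i [a_i(n)]t_i) = 0$, which is \eqref{E:equidistribution}, i.e. the sequences $[a_1(n)],\dots,[a_\ell(n)]$ are good for equidistribution.

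The only genuinely delicate point is the interchange of the infinite Fourier sum with the limit $N\to\infty$: one must justify that the error from truncating the multi-index series at $\max_i|k_i|\le M$ is $o(1)$ uniformly in $N$, which follows from $\sum_{k_i}|c_{k_i}(t_i)|<\infty$ for each $i$ together with the trivial bound $|\E_{n\in[N]}(\cdot)|\le 1$ on each exponential average. Everything else is routine: the periodicity check for $g_{t_i}$, the absolute convergence of its Fourier series, and the observation that $t_{i_0}+k_{i_0}$ can never vanish. I expect no serious obstacle here, since this is a standard ``integer-part removal'' argument; the main care is simply bookkeeping the uniformity in the truncation estimate so that the passage to the limit is legitimate.
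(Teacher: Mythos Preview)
Your overall strategy—factor $e([a_i(n)]t_i)=e(a_i(n)t_i)\,e(-\{a_i(n)\}t_i)$ and Fourier-expand the fractional-part factor—is the right idea and is essentially what the paper does. But the execution has a genuine gap.

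The function $g_{t_i}(x)=e([x]t_i)$ is \emph{not} $1$-periodic: since $\{x+1\}=\{x\}$, one gets $g_{t_i}(x+1)=e(t_i)g_{t_i}(x)$, not $g_{t_i}(x)$. What \emph{is} periodic is $h_{t_i}(x):=e(-\{x\}t_i)$, and your product formula with frequencies $t_i+k_i$ is in fact the expansion $g_{t_i}(x)=e(xt_i)\sum_k c_k(t_i)e(kx)$ coming from the Fourier series of $h_{t_i}$, not of $g_{t_i}$. This is a fixable confusion.

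The real problem is the claim that $\sum_k|c_k(t_i)|<\infty$. For $t_i\in(0,1)$ one computes
\[
c_k(t_i)=\int_0^1 e(-(t_i+k)x)\,dx=\frac{1-e(-t_i)}{2\pi i(t_i+k)},
\]
so $|c_k(t_i)|\asymp 1/|k|$. A function with a jump discontinuity always has Fourier coefficients of exact order $1/|k|$; they are \emph{never} absolutely summable. Consequently your interchange of the multi-index Fourier sum with $\lim_{N\to\infty}$, and even the claim that the product series converges absolutely uniformly in $n$, are unjustified. The ``dominated-convergence for series'' step cannot be carried out as written.

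The paper avoids this by not using the Fourier series of the discontinuous function at all. Instead it proves the slightly stronger statement that for Riemann-integrable $G\colon\T^\ell\to\C$ (here $G(x_1,\dots,x_\ell)=e(-\sum_i\{x_i\}t_i)$) one has
\[
\lim_{N\to\infty}\E_{n\in[N]}\,e\big(\textstyle\sum_i a_i(n)t_i\big)\,G(a_1(n),\dots,a_\ell(n))=0,
\]
by first sandwiching $G$ between continuous functions and then approximating those uniformly by trigonometric polynomials. This replaces the infinite Fourier sum by a \emph{finite} one, so no summability issue arises, and the resulting exponential sums $\E_n e\big(\sum_i(k_i+t_i)a_i(n)\big)$ vanish by the hypothesis exactly as you argued (your observation that $t_{i_0}+k_{i_0}\neq 0$ for all $k_{i_0}\in\Z$ when $t_{i_0}\in(0,1)$ is correct and is the key point). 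The passage from continuous approximants back to $G$ uses that the hypothesis already gives equidistribution of $(a_1(n),\dots,a_\ell(n))$ on $\T^\ell$, which handles Riemann-integrable test functions.
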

\begin{remark}
	If we assume that \eqref{E:wanteda} holds for all  $t_1,\ldots, t_\ell\in \R$, not all of them
	rational, then a similar argument gives that the sequences $[a_1(n)],\ldots, [a_\ell(n)]$ are good for irrational equidistribution.
	\end{remark}
\begin{proof}
	We first remark that it suffices to show the following: If $F\colon \T\to \C$ and $G\colon \T^\ell\to \R$ are Riemann-integrable, then
	for all $t_1,\ldots, t_\ell\in [0,1)$, not all of them $0$,  we have
	\begin{equation}\label{E:wantedb}
	\lim_{N\to\infty} \E_{n\in[N]}\, F(a_1(n)t_1+\cdots+ a_\ell(n)t_\ell) \, G(a_1(n),\ldots, a_\ell(n))=\int F\, dm_\T\cdot \int G \, dm_{\T^\ell}.
	\end{equation}
	Indeed, if this is the case, then using \eqref{E:wantedb}
	for the continuous function $F(x):=e(x)$, $x\in \T$, and  the Riemann integrable function $G(x_1,\ldots, x_\ell):=e(-\{x_1\}t_1-\cdots- \{x_\ell\}t_\ell)$, $x_1,\ldots, x_\ell\in \T$,
	we get that
	$$
	\lim_{N\to\infty} \E_{n\in[N]}\, e([a_1(n)]t_1+\cdots+ [a_\ell(n)]t_\ell)=0
	$$
	for all $t_1,\ldots, t_\ell\in [0,1)$, not all of them $0$.
	
	We move now to the proof of \eqref{E:wantedb}. After approximating from above and below by continuous functions, we can assume  that $F\in C(\T)$ and $G\in C(\T^\ell)$. After a further approximation by trigonometric polynomials we can assume that $F(x)=e(kx)$, $x\in \T$, and  $G(x_1,\ldots, x_\ell)=e(k_1x_1+\cdots + k_\ell x_\ell)$, $x_1,\ldots, x_\ell\in \T$, for some $k,k_1,\ldots, k_\ell\in \Z$.
	If $k=k_1=\cdots=k_\ell=0$, then the identity is obvious. Hence,  it suffices to show that for all $k,k_1,\ldots, k_\ell\in \Z$, not all of them zero, and $t_1,\ldots, t_\ell\in [0,1)$, not all of them zero, we have
	$$
	\lim_{N\to\infty} \E_{n\in[N]}\,e((k_1+kt_1)a_1(n)+\cdots + (k_\ell+kt_\ell) a_\ell(n))=0.
	$$
	Since $k_1+kt_1,\ldots, k_\ell+kt_\ell$ are not all of them zero, this follows from our assumptions, completing the proof.
\end{proof}

Combining the previous two results we get the following:
 	\begin{proposition}\label{L:WeylHardy}
 		Let $a_1,\ldots, a_\ell\colon [c,+\infty)\to \R$ be functions of at most polynomial growth from a Hardy field  such that every non-trivial linear combination 
 		of these functions stays logarithmically away from rational polynomials.
Then
  the sequences $[a_1(n)],\ldots, [a_\ell(n)]$ are good for equidistribution.
 	\end{proposition}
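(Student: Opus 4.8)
The plan is to deduce Proposition~\ref{L:WeylHardy} by combining Theorem~\ref{T:Boshernitzan} with Lemma~\ref{L:Equi}. By Lemma~\ref{L:Equi}, it suffices to show that for all real numbers $t_1,\ldots,t_\ell$, not all of them zero, we have
$$
\lim_{N\to\infty}\E_{n\in[N]}\,e(t_1a_1(n)+\cdots+t_\ell a_\ell(n))=0.
$$
Fix such $t_1,\ldots,t_\ell$ and set $a:=t_1a_1+\cdots+t_\ell a_\ell$. Since each $a_i$ lies in a common Hardy field $\mathcal H$ and Hardy fields are vector spaces over $\R$, the function $a$ also belongs to $\mathcal H$; moreover, being a linear combination of functions of at most polynomial growth, $a$ itself has at most polynomial growth. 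By hypothesis $a$ is a non-trivial linear combination of $a_1,\ldots,a_\ell$, hence by assumption it stays logarithmically away from rational polynomials. Theorem~\ref{T:Boshernitzan} then tells us that the sequence $(a(n))$ is equidistributed on $\T$, which is precisely the statement that $\lim_{N\to\infty}\E_{n\in[N]}\,e(a(n))=0$ — here using Weyl's criterion in the easy direction, namely that equidistribution of $(a(n))$ implies $\E_{n\in[N]}\,e(ka(n))\to0$ for every non-zero integer $k$, and applying it with $k=1$. This gives exactly the required limit, so Lemma~\ref{L:Equi} yields that $[a_1(n)],\ldots,[a_\ell(n)]$ are good for equidistribution.

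The only mildly delicate point is to make sure the hypotheses of Theorem~\ref{T:Boshernitzan} are genuinely satisfied by $a=\sum t_i a_i$; this is immediate since ``non-trivial linear combination'' in the statement of Proposition~\ref{L:WeylHardy} is meant with real coefficients, which is exactly the class of $a$'s that arises, and ``at most polynomial growth'' is preserved under finite linear combinations. One should also note that Theorem~\ref{T:Boshernitzan} is an ``if and only if'' statement, but we only need the ``if'' direction. I do not expect any serious obstacle here: the proposition is a direct packaging of Boshernitzan's equidistribution criterion through the reduction lemma. The remark following Lemma~\ref{L:Equi} handles, by the identical argument applied to linear combinations with at least one irrational coefficient, the variant needed for irrational equidistribution; this will be the relevant input for the applications to Theorem~\ref{T:HardyPolyChar} where the full equidistribution property fails.
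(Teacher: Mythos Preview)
Your proof is correct and follows exactly the paper's approach: the paper simply states that the proposition is obtained by ``combining the previous two results,'' namely Theorem~\ref{T:Boshernitzan} and Lemma~\ref{L:Equi}, and you have spelled out precisely this combination. The additional remarks you make about closure of Hardy fields under real linear combinations and the irrational-equidistribution variant are accurate and match the paper's surrounding discussion.
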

In a similar fashion, using the variant recorded on the remark following Lemma~\ref{L:Equi},
we get the following:
	\begin{proposition}\label{L:HardyPoly}
Let  $a_1,\ldots, a_\ell\colon [c,+\infty)\to \R$ be functions of at most polynomial growth from a Hardy field  such that every non-trivial linear combination   of these functions, with at least one irrational coefficient,
stays logarithmically away from rational polynomials.
Then
	the sequences $[a_1(n)],\ldots, [a_\ell(n)]$ are good for irrational equidistribution.
\end{proposition}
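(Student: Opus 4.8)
The plan is to imitate the proof of Proposition~\ref{L:WeylHardy} almost verbatim, replacing ``equidistribution'' by ``irrational equidistribution'' throughout. By definition, the sequences $[a_1(n)],\ldots,[a_\ell(n)]$ are good for irrational equidistribution precisely when
$$
\lim_{N\to\infty}\E_{n\in[N]}\,e\big([a_1(n)]t_1+\cdots+[a_\ell(n)]t_\ell\big)=0
$$
for all $t_1,\ldots,t_\ell\in[0,1)$ that are not all rational. Invoking the variant of Lemma~\ref{L:Equi} recorded in the remark following it (whose proof is identical to that of Lemma~\ref{L:Equi}), it suffices to prove the cleaner statement that
$$
\lim_{N\to\infty}\E_{n\in[N]}\,e\big(a_1(n)t_1+\cdots+a_\ell(n)t_\ell\big)=0
$$
for every $t_1,\ldots,t_\ell\in\R$ not all rational.

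So I would fix such a tuple, say with $t_{j_0}\notin\Q$, and set $b:=t_1a_1+\cdots+t_\ell a_\ell$. Since each $a_j$ lies in a Hardy field satisfying \eqref{E:Hardy} (which contains the real constants) and has at most polynomial growth, the same holds for $b$; thus $b$ is a Hardy field function of at most polynomial growth. Moreover $b$ is a non-trivial linear combination of $a_1,\ldots,a_\ell$ with at least one irrational coefficient, so by hypothesis $b$ stays logarithmically away from rational polynomials. Boshernitzan's theorem (Theorem~\ref{T:Boshernitzan}) then shows that $(b(n))_{n\in\N}$ is equidistributed on $\T$, and in particular $\E_{n\in[N]}\,e(b(n))\to0$ as $N\to\infty$, which is exactly the displayed identity. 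This finishes the argument.

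The only place where a little care is needed is the reduction step, namely checking that the proof of Lemma~\ref{L:Equi} still works when its hypothesis is assumed only for tuples that are not all rational. Tracing that proof: after writing $\prod_j e([a_j(n)]t_j)=e\big(\sum_j a_j(n)t_j\big)\cdot e\big(-\sum_j\{a_j(n)\}t_j\big)$ and approximating the Riemann-integrable factor $\bx\mapsto e\big(-\sum_j\{x_j\}t_j\big)$ on $\T^\ell$ by trigonometric polynomials, one is reduced to showing $\E_{n\in[N]}\,e\big(\sum_j (t_j+k_j)a_j(n)\big)\to 0$ for every integer vector $(k_1,\ldots,k_\ell)$; and if $t_{j_0}\notin\Q$ then $t_{j_0}+k_{j_0}\notin\Q$, so the tuple $(t_j+k_j)_j$ is again not all rational and the reduced claim is of exactly the form just established. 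Hence no genuine obstacle arises: the entire proof is a two-line combination of the stated hypothesis with Boshernitzan's equidistribution criterion.
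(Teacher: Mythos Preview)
Your proof is correct and follows precisely the route the paper indicates: combine Boshernitzan's equidistribution criterion (Theorem~\ref{T:Boshernitzan}) with the variant of Lemma~\ref{L:Equi} recorded in the remark following it. Your explicit verification that the reduction in Lemma~\ref{L:Equi} survives under the weaker ``not all rational'' hypothesis---namely, that $t_{j_0}\notin\Q$ forces $t_{j_0}+k_{j_0}\notin\Q$ for every integer $k_{j_0}$---is exactly the point one needs to check, and the paper itself leaves this implicit.
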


 \subsection{Good seminorm estimates  for Hardy field sequences}
We will use the following known result.\footnote{As far as we know this result is proved in detail only under the assumption \eqref{E:Hardy}, which is the reason why we impose this assumption on all Hardy fields considered in this section.}
 \begin{proposition}\label{P:SeminormHardy}
 Let $a_1,\ldots, a_\ell\colon [c,+\infty)\to \R$ be functions from a Hardy field such that
 the functions and their pairwise differences are non-constant functions in $\mathcal{T}+\mathcal{P}$. Then the  sequences
 $[a_1(n)],\ldots, [a_\ell(n)]$ are very  good for seminorm estimates.
 \end{proposition}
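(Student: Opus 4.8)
The plan is to reduce Proposition~\ref{P:SeminormHardy} to known results in the literature concerning Gowers--Host--Kra seminorm control for Hardy field iterates, so the argument is primarily a matter of verifying that the hypotheses of those results are met and that the conclusions are phrased in the ``very good for seminorm estimates'' language of Section~\ref{SS:JE}. First I would recall that for a collection $a_1,\ldots,a_\ell$ of Hardy field functions with the stated growth/non-constancy conditions, one has a bound of the form
$$
\limsup_{N\to\infty}\norm{\E_{n\in[N]}\, T^{[a_1(n)]}f_1\cdot\ldots\cdot T^{[a_\ell(n)]}f_\ell}_{L^2(\mu)}\leq C\,\nnorm{f_m}_s
$$
for some $s=s(a_1,\ldots,a_\ell)\in\N$, whenever one of the functions, say $f_m$, is singled out; such PET-induction / van der Corput style estimates for Hardy sequences appear in work of Bergelson--Moragues, Koutsogiannis, and Tsinas, and are precisely the inputs invoked in \cite{BMR21,Fr15,Fr10} (the requirement that the functions and their pairwise differences lie in $\mathcal{T}+\mathcal{P}$ and are non-constant is exactly what makes the PET induction terminate). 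Given such an estimate for every $m\in[\ell]$, the definition of ``very good for seminorm estimates'' is satisfied directly: if $\nnorm{f_m}_s=0$ for some $m$, the right-hand side vanishes, hence the averages tend to $0$ in $L^2(\mu)$.

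The key steps, in order, would be: (i) by the ergodic decomposition and the remark in Section~\ref{SS:JE} (citing \cite[Chapter~8, Proposition~18]{HK18}), reduce to the case of an ergodic system, so that the seminorms $\nnorm{\cdot}_s$ behave well; (ii) set up the van der Corput / PET induction on the ``complexity'' of the tuple $([a_1(n)],\ldots,[a_\ell(n)])$, using that $[a_j(n)]=a_j(n)+O(1)$ and absorbing the bounded error into weights $c_n$ (here one uses a standard fact, as in the first remark of Section~\ref{SS:JE}, that bounded weights do not affect seminorm control after passing to one higher seminorm), thereby replacing the integer parts by the smooth functions $a_j$ themselves; (iii) run the PET induction: each van der Corput step differentiates the tuple, replacing $a_j(n)$ by difference functions $a_j(n+h)-a_j(n)$ which again lie in $\mathcal{T}+\mathcal{P}$ (this is where the closure hypothesis \eqref{E:Hardy} on the Hardy field is used, to guarantee $a(\cdot+h)\in\mathcal H$), strictly decreasing a suitable complexity measure since the functions and their differences are non-constant and have controlled growth; (iv) at the base of the induction one is left with an average whose single iterate controls things by a fixed seminorm $\nnorm{f_m}_s$, giving the desired bound. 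Finally I would note the uniformity of $s$ over all ergodic systems, which is what the definition of ``very good for seminorm estimates'' (the unqualified version) requires.

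The main obstacle is bookkeeping rather than a genuine mathematical difficulty: one must be careful that the PET induction, when carried out with the auxiliary weights $c_n$ coming from the integer-part corrections and from the repeated van der Corput applications, still terminates and still yields a bound by a \emph{single} $\nnorm{f_m}_s$ rather than by a product of seminorms of several functions --- and that $s$ does not depend on the system. This is precisely the content that is established (for the relevant classes of Hardy sequences) in \cite{BMR21} and, in the full generality needed here, in the work of Tsinas~\cite{Ts21} mentioned in Section~\ref{SS:16}; accordingly, the cleanest route is to quote those results. For this reason I expect the proof in the paper to be short: a reduction to the ergodic case, a sentence identifying the growth hypotheses with those of the cited seminorm-control theorems, and an invocation of the first remark following the definition in Section~\ref{SS:JE} to pass from ``good'' to ``very good'' if needed. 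I would therefore present the argument as: \emph{this follows from the seminorm estimates established in} \cite{BMR21} (and in greater generality in \cite{Ts21}), \emph{combined with the standard reduction to ergodic systems}, rather than reproving the PET induction in full.
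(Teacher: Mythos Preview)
Your proposal is correct and matches the paper's approach: the paper does not give a self-contained proof but simply states that the argument of \cite[Theorem~2.9]{Fr10} works verbatim for the slightly restricted class $t^{k+\varepsilon}\prec a(t)\prec t^{k+1}$, and that for the full class $\mathcal{T}+\mathcal{P}$ one uses the argument of \cite[Theorem~4.2]{BMR21} ``without essential changes,'' then omits the details. Your sketch of the underlying PET induction (van der Corput differencing, absorbing integer-part errors into bounded weights, using the closure property \eqref{E:Hardy}) accurately describes what those cited arguments do; note only that the paper does not invoke \cite{Ts21} here, since that work postdates the first version of the paper, and the reduction to ergodic systems and the ``good $\Rightarrow$ very good'' step are not needed because the cited results already give the bound for arbitrary bounded $f_j$.
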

If $\mathcal{T}$ is replaced with the class of functions $a\colon \R_+\to \R$  that satisfy the slightly more restrictive growth condition  $t^{k+\varepsilon}\prec a(t)\prec t^{k+1}$ for some $k\in \Z_+$, then the argument used to prove \cite[Theorem~2.9]{Fr10} can be applied without any change to prove Proposition~\ref{P:SeminormHardy}. For the more extended class of functions used above one can employ the argument used to prove  \cite[Theorem~4.2]{BMR21} without essential changes. We omit the details.

\subsection{Proof of Theorem~\ref{T:Hardy}}
	By Proposition~\ref{P:SeminormHardy} the sequences $[a_1(n)],\ldots, [a_\ell(n)]$ are good for seminorm estimates, and by Proposition~\ref{L:WeylHardy} they are also good for equidistribution.  Hence by Theorem~\ref{T:JointErgodicity} they are jointly ergodic.


\subsection{Proof of Theorem~\ref{T:HardyPolyChar}}
By Proposition~\ref{P:SeminormHardy} the sequences $[a_1(n)],\ldots, [a_\ell(n)]$ are very good for seminorm estimates 	and by Proposition~\ref{L:HardyPoly} they are good for irrational equidistribution.  Hence, by Theorem~\ref{T:JointErgodicity}
(or  Corollary~\ref{C:JointTotalErgodicity})
they are jointly ergodic for totally ergodic systems and by  Theorem~\ref{T:CharacteristicFactors} the rational Kronecker factor  is characteristic for these sequences.

\subsection{Proof of Theorem~\ref{T:NilsystemSpecial}}
By Corollary~\ref{C:Nilsystems} it suffices to show that these collections of sequences are good for equidistribution. By Lemma~\ref{L:Equi} it suffices to show that, after removing the integer parts, every non-trivial linear combination of the given collections of sequences is equidistributed on the circle.

For the sequences in Part~$(i)$ this follows by combining \cite[Lemma~9]{BP98} (which is the main result in \cite{Kar71}) with \cite[Lemma~16]{BP98}.

 For the sequences in Parts~$(ii)$ and $(iii)$ this follows from
 \cite[Theorem~2.2]{BBK02} and \cite[Theorem~4.1]{BK90}.


\section{Joint ergodicity for flows}\label{S:Flows}
In this section we  prove Theorem~\ref{T:flows}, which we repeat for convenience.
 \begin{theorem}
	Let $a_1,\ldots, a_\ell\colon [c,+\infty)\to \R_+$ be functions from a Hardy field. Suppose that there exists  
	 $\delta>0$ such that  $t^\delta\prec a_1(t)$ and
	$(a_{j+1}(t))^\delta\prec a_j(t)\prec (a_{j+1}(t))^{1-\delta}$ for $j=1,\ldots, \ell-1$.
	Then for all measure preserving actions  $T_1^t,\ldots, T_\ell^t$, $t\in\R$,  on a probability space $(X,\mathcal{X},\mu)$ and $f_1,\ldots, f_\ell\in L^\infty(\mu)$, we have
	$$
	\lim_{y\to+\infty} \frac{1}{y} \int_0^y f_1(T_1^{a_1(t)}x)\cdot\ldots\cdot f_\ell(T_\ell^{a_{\ell}(t)}x)\, dt=\tilde{f_1}\cdots \tilde{f_\ell}
	$$
	pointwise for $\mu$-almost every $x\in X$, where for $j=1,\ldots, \ell$ we denote by  $\tilde{f_j}$  the orthogonal projection of $f_j$ on the space of functions that are $T_j^t$-invariant for every $t\in \R$.
\end{theorem}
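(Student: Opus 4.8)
The plan is to exploit the idea flagged in the introduction: since the $a_j$ take values in $\R_+$ and grow at least polynomially, we can invert the dominant function and change variables so that, after the substitution, all but one of the iterates become sublinear. First I would observe that it suffices to prove the result with $\tilde{f_j}$ replaced by $0$ whenever one of the functions has zero projection onto the invariant factor of the corresponding $T_j^t$; the general case follows by expanding each $f_j$ as $\tilde{f_j}+(f_j-\tilde{f_j})$ and using multilinearity, since the term with all the $\tilde{f_j}$'s contributes exactly $\tilde{f_1}\cdots\tilde{f_\ell}$ (each $\tilde{f_j}$ is $T_j^t$-invariant so may be pulled out of the average) and every other term has at least one factor whose projection vanishes. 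So assume, say, $\tilde f_\ell = 0$ (the roles of the indices are not symmetric because of the growth hypotheses, so I would treat the general ``one of them is zero'' case; but the growth assumptions are designed so that the dominant function $a_\ell$ controls everything).

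The key step is the change of variables. Using that $a_\ell$ is eventually a strictly monotone Hardy field function tending to $+\infty$, write $b := a_\ell^{-1}$, which is again a Hardy field function with at most polynomial growth (indeed $t^\delta \prec a_\ell(t)$ forces $b(s) \prec s^{1/\delta}$). Substituting $t = b(s)$ in $\frac{1}{y}\int_0^y \prod_j f_j(T_j^{a_j(t)}x)\,dt$ turns the average into a weighted integral $\frac{1}{a_\ell(y)}\int_0^{a_\ell(y)} \prod_{j<\ell} f_j\big(T_j^{a_j(b(s))}x\big)\cdot f_\ell(T_\ell^{s}x)\cdot b'(s)\,ds$; this is Lemma~\ref{L:CV}, referred to in the introduction. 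The point is that now the innermost iterate is just $T_\ell^s$ run along the real line, while the composed functions $a_j\circ b$ for $j<\ell$ inherit, from the hypotheses $(a_{j+1})^\delta \prec a_j \prec (a_{j+1})^{1-\delta}$, the property that $a_j(b(s)) \prec s^{1-\delta'}$ for some $\delta' > 0$ and that consecutive ones still have polynomially separated growth. After absorbing $b'(s)$ (a monotone, slowly varying weight) by partial summation / summation-by-parts over dyadic blocks, I would reduce to showing that $\frac{1}{Y}\int_0^Y \prod_{j<\ell} g_j(S_j^{c_j(s)}x)\cdot f_\ell(T_\ell^s x)\,ds \to 0$ pointwise a.e., where the $c_j$ are sublinear Hardy field functions with separated growth and $f_\ell$ has zero projection onto the $T_\ell$-invariant factor.

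To finish I would run a van der Corput / Hilbert-space argument on this reduced average. Because all the $c_j(s)$ are sublinear, on each window of length $H$ with $H$ large the quantities $c_j(s+h)-c_j(s)$ for $|h|\le H$ are small relative to their overall variation, so a Taylor expansion shows $T_j^{c_j(s+h)}g_j \approx T_j^{c_j(s)}(T_j^{\text{small}}g_j)$; making this quantitative and combining it with an $L^2$ van der Corput inequality in the $s$-variable reduces matters, after Cauchy--Schwarz, to an average of correlations $\int f_\ell\cdot T_\ell^{s}(\cdots)\,d\mu$-type expressions in which the $T_\ell$-shift survives and all other shifts have been differenced away; the mean ergodic theorem for $T_\ell^t$ together with $\tilde f_\ell = 0$ then gives the decay. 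For the \emph{pointwise} (rather than mean) statement I would additionally invoke the Birkhoff pointwise ergodic theorem for each one-parameter flow $T_j^t$ — valid along the full real line — to control the relevant maximal functions, and a standard argument passing from convergence of averages over a lacunary sequence $Y_k$ to convergence of the full average, using that the weight $b'$ and the functions are bounded. The main obstacle I expect is the bookkeeping in the van der Corput step: keeping the growth-rate separation hypotheses in play after the change of variables, and ensuring the error terms from the Taylor expansions of the $c_j$ are summable uniformly in $x$ so that the argument yields genuine pointwise (not just mean) convergence; this is precisely where the strict inequalities $(a_{j+1})^\delta \prec a_j \prec (a_{j+1})^{1-\delta}$ — not just $a_j \prec a_{j+1}$ — are used, and why functions of ``not substantially different growth'' like $t, t\log t$ are excluded.
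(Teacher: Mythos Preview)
Your change-of-variables idea---inverting $a_\ell$ so that the $\ell$-th iterate becomes linear in $s$ and the remaining iterates $b_j:=a_j\circ a_\ell^{-1}$ become sublinear---is exactly the key step in the paper (Lemma~\ref{L:CV}), and your identification of the growth hypotheses as the source of the sublinearity $b_j(s)\prec s^{1-\delta'}$ is correct. From that point on, however, your route diverges from the paper's and carries a genuine gap for \emph{pointwise} convergence.

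The paper does not use van der Corput at all. It argues by induction on $\ell$ (not by a full multilinear expansion---your expansion would force you to handle the case $\tilde f_j=0$ for \emph{every} $j$, which you do not address), and in the inductive step decomposes only $f_\ell$ via the von Neumann splitting
\[
f_\ell=\tilde f_\ell+f_{\ell,2}+f_{\ell,3},
\]
with $\norm{f_{\ell,2}}_{L^2(\mu)}\le\varepsilon$ and $f_{\ell,3}$ a finite sum of coboundaries $T_\ell^ch-h$, $h\in L^\infty(\mu)$. The invariant piece pulls out and induction applies; the small piece is handled by Cauchy--Schwarz, Lemma~\ref{L:CV}, and Birkhoff. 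The substance is in the coboundary piece: after the change of variables the $T_\ell$-iterate is $T_\ell^s$, so $T_\ell^ch-h$ produces a \emph{telescoping} difference, and the entire task reduces to showing that for each sublinear $b_j$ one has
\[
\lim_{y\to+\infty}\frac{1}{y}\int_0^y\big|f_j(T_j^{b_j(s-c)}x)-f_j(T_j^{b_j(s)}x)\big|^2\,ds=0
\]
for $\mu$-a.e.\ $x$. This is Lemma~\ref{L:dense}, and its proof is where the pointwise statement is actually earned: one approximates $f_j$ in $L^2$ by a function whose spectral measure has compact support (Lemma~\ref{L:compact}); for such functions the spectral identity together with the mean value theorem gives a quantitative rate $\norm{A_y}_{L^1(\mu)}\le C y^{-\delta}$, and Borel--Cantelli upgrades this to a.e.\ convergence along a lacunary sequence, which is then filled in by boundedness.

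Your van der Corput plus Taylor-expansion step, by contrast, naturally produces an $L^2(\mu)$ bound on the averages, not an a.e.\ bound: the Cauchy--Schwarz in the differencing step integrates in $\mu$, and you are then left needing a maximal inequality for a genuinely multilinear average, which is not supplied by Birkhoff for the individual flows. Your remark that this is ``the main obstacle'' is accurate---it is a real gap, not just bookkeeping. The paper's coboundary-plus-spectral route sidesteps it entirely: no van der Corput, no multilinear maximal inequality, and the only pointwise input beyond Birkhoff for a single flow is the explicit polynomial rate that feeds Borel--Cantelli.
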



We start with the following crucial change of variables property (a variant of this property also appears in \cite{Au11c}):
\begin{lemma}\label{L:CV}
	Let $a\colon \R_+\to \R_+$ be a  function from a Hardy field such that   $t^\delta\prec a(t)$ for some $\delta>0$.
Let
 $f\in L^\infty(m_\R)$ and	suppose that the following limit exists
	$$
\lim_{y\to+\infty}	\frac{1}{y}\int_0^y f(t)\, dt.
	$$
 	Then also  the following  limit exists
		$$
	\lim_{y\to+\infty}	\frac{1}{y}\int_0^{y} f(a(t)) \,  dt
	$$
	and the two limits are equal.
	\end{lemma}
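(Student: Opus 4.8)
The plan is to reduce to a one‑variable change of variables $u=a(t)$ and then apply an Abel summation argument, the only genuinely non‑routine ingredient being a growth estimate for $a'/a$ that comes from the Hardy field hypothesis. First I would record some reductions. Since $a$ lies in a Hardy field and $t^\delta\prec a(t)$, the germ $a$ is eventually $C^2$, strictly increasing, and tends to $+\infty$ (Hardy field functions are eventually monotone, and $a',a''$ are eventually of constant sign); let $b:=a^{-1}$ be its compositional inverse on the corresponding half‑line, so that by the inverse function theorem $b$ is eventually $C^2$ with $b'=1/(a'\circ b)>0$, and since $a'$ is eventually monotone and $b$ is increasing, $b'$ is eventually monotone as well. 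Fix $t_1$ past which all of this holds. Because $f$ is bounded, replacing the lower limit $0$ by any fixed $t_0\ge t_1$ alters $\frac1y\int_0^y f(a(t))\,dt$ by $O(1/y)$, so it suffices to show $\lim_{y\to+\infty}\frac1y\int_{t_0}^y f(a(t))\,dt=L$, where $L:=\lim_{y\to+\infty}\frac1y\int_0^y f(t)\,dt$. The substitution $u=a(t)$, $dt=b'(u)\,du$, gives $\int_{t_0}^y f(a(t))\,dt=\int_{a(t_0)}^{a(y)}f(u)\,b'(u)\,du$, and since $\int_{a(t_0)}^{a(y)}b'(u)\,du=y-t_0\sim y$, the weight $b'$ has the ``correct'' total mass; this is the heuristic reason the two averages share a limit.

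Next I would isolate the growth estimate: $\frac{a(t)}{a'(t)}\le\frac{2t}{\delta}$ for all large $t$. Indeed, $g(t):=\frac{t\,a'(t)}{a(t)}$ is a function from a Hardy field, so $\ell:=\lim_{t\to+\infty}g(t)$ exists in $[-\infty,+\infty]$; the value $\ell<0$ is excluded because $a$ is eventually increasing, and if $0\le\ell<\delta$ then choosing $\delta'\in(\ell,\delta)$ we get $\frac{a'(t)}{a(t)}<\frac{\delta'}{t}$ eventually, hence integrating from some $T_0$ yields $a(t)\le C\,t^{\delta'}$, contradicting $t^\delta\prec a(t)$. Therefore $\ell\ge\delta$, so $g(t)\ge\delta/2$ eventually, which is the claim. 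This is the only place where Hardy field structure is used in an essential way, and I expect it to be the conceptual crux of the argument.

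Now the main estimate, via Abel summation. Put $F(x):=\int_0^x f(u)\,du$ and $H(x):=F(x)-Lx$, so that $H(x)=o(x)$. Given $\varepsilon>0$, pick $X_0$ with $|H(x)|\le\varepsilon x$ for $x\ge X_0$ and then $t_0\ge t_1$ with $a(t_0)\ge X_0$; write $A:=a(t_0)$ and $B:=a(y)$. Splitting $f=(f-L)+L$, the constant part contributes $L\int_A^B b'(u)\,du=L(y-t_0)$. For the remainder, since $H'=f-L$ a.e. and $b'$ is $C^1$, integration by parts gives
\[
\int_A^B(f(u)-L)\,b'(u)\,du=\bigl[H(u)\,b'(u)\bigr]_A^B-\int_A^B H(u)\,b''(u)\,du .
\]
Using $|H(u)|\le\varepsilon u\le\varepsilon B$ on $[A,B]$ and $B\,b'(B)=\frac{a(y)}{a'(y)}\le\frac{2y}{\delta}$, the boundary term is at most $\varepsilon\frac{2y}{\delta}+O_\varepsilon(1)$. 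For the bulk term, since $b'$ has constant sign and is monotone on $[A,B]$, integrating $\int u\,b''$ by parts in each monotonicity case gives $\int_A^B u\,|b''(u)|\,du\le B\,b'(B)+A\,b'(A)+(y-t_0)\le\frac{2y}{\delta}+y+O_\varepsilon(1)$, so $\bigl|\int_A^B H\,b''\bigr|\le\varepsilon\bigl(\frac{2y}{\delta}+y\bigr)+O_\varepsilon(1)$. Altogether $\int_{t_0}^y f(a(t))\,dt=L(y-t_0)+R$ with $|R|\le C_\delta\,\varepsilon\,y+O_\varepsilon(1)$ for a constant $C_\delta$ depending only on $\delta$.

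Finally, dividing by $y$ and letting $y\to+\infty$ yields $\limsup_{y\to+\infty}\bigl|\frac1y\int_{t_0}^y f(a(t))\,dt-L\bigr|\le C_\delta\,\varepsilon$; since $\varepsilon>0$ is arbitrary, the limit equals $L$, and restoring the negligible contribution on $[0,t_0]$ completes the proof. The main obstacle is not conceptual once the growth estimate is in hand: it is the bookkeeping in the Abel summation, namely verifying that the weight $b'$ contributes mass exactly $\sim y$ while every error term stays $O(\varepsilon y)$, which is precisely what $a(t)/a'(t)=O(t)$ secures.
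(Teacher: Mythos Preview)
Your proof is correct and follows essentially the same route as the paper's: change variables $u=a(t)$, integrate by parts against the primitive of $f$, and control the boundary and bulk terms via the growth bound $a(t)/(t\,a'(t))=O(1)$ together with the eventual constant sign of $(a^{-1})''$. The only cosmetic differences are that the paper first normalizes to $L=0$ and writes the primitive as $tF(t)$ with $F(t)=\frac{1}{t}\int_0^t f$, and that it derives the growth bound via the Hardy field limit $\lim\frac{a(y)}{y\,a'(y)}=\lim\frac{\log y}{\log a(y)}<\frac{1}{\delta}$ rather than by your direct argument on $g(t)=t\,a'(t)/a(t)$.
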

\begin{remark}
	More generally, our argument works if there exist $\delta, M>0$ such that the functions  $a,a^{-1}$ are three times differentiable, their derivatives are non-zero  on $[M,+\infty)$,   and also $a(t)\geq t^\delta$ and  $\frac{a(t)}{ta'(t)}$ is bounded for $t\geq M$.
	\end{remark}
\begin{proof}
	We can assume that the first limit is $0$.
	Let
		$$
	F(t)=\frac{1}{t}\int_0^t f(s)\, ds, \quad t>0.
	$$
	Let $\varepsilon>0$. Using our assumption we have that there exists $M>0$ such that $|F(t)|\leq \varepsilon$ for $t>M$ and also $a,a^{-1}\in C^3([M,+\infty))$.
	Since $f$ is bounded, 
	it suffices to show that $\lim_{y\to +\infty}\frac{1}{y}\int_{a^{-1}(M)} ^y f(a(t))\, dt=0$.	
	We assume that $y$ is large enough so that $a(y)\geq M$. Using the change of variables $s=a(t)$ we get
		$$
		\frac{1}{y}\int_{a^{-1}(M)} ^{y} f(a(t)) \,  dt=	\frac{1}{y}\int_M^{a(y)} f(t) \cdot (a^{-1})'(t)\, dt.
	$$
	
Since
$
f(t)=(t F(t))'
$
for Lebesgue almost every $t\in  \R$,
we have
$$
	\frac{1}{y}\int_M^{a(y)} f(t) \cdot (a^{-1})'(t)\, dt
	=	\frac{1}{y}\int_M^{a(y)} (t F(t))' \cdot (a^{-1})'(t)\, dt.
$$
Integration by parts  ($tF(t)$ and $(a^{-1})'(t)$ are absolutely continuous on $[M,a(y)]$) gives that the last integral is equal to
\begin{equation}\label{E:1y}
\frac{a(y) F(a(y)) (a^{-1})'(a(y))}{y}- \frac{C}{y}- \frac{1}{y}\, \int_M^{a(y)} t F(t) \cdot (a^{-1})''(t)\, dt
\end{equation}
for some $C\in \R_+$.

The first term in \eqref{E:1y} is equal to
$$
\frac{a(y) F(a(y)) (a^{-1})'(a(y))}{y}=\frac{\int_0^{a(y)}f(t)\, dt}{ya'(y)} =\frac{\int_0^{a(y)}f(t)\, dt}{a(y)}\cdot \frac{a(y)}{ya'(y)}.
$$
 Notice that since $a(t)$ is a Hardy field function with $a(t)\succ t^\delta$, we have that
   $$
 \lim_{y\to+\infty} \frac{a(y)}{ya'(y)}=\lim_{y\to+\infty}\frac{\log{y}}{\log{a(y)}}<\frac{1}{\delta}.
 $$
 Moreover, by assumption we have
 $\lim_{y\to+\infty}	\frac{1}{y}\int_0^y f(t)\, dt=0$  and    $\lim_{y\to+\infty}a(y)= +\infty$. We deduce  that the first term  in \eqref{E:1y} converges to $0$ as $y\to+\infty$.

It remains to show that the limsup  as $y\to+\infty$ of last term in \eqref{E:1y} is bounded by a constant multiple of $\varepsilon$.  Since $|F(t)|\leq \varepsilon$ for $t>M$, it suffices to show that the limsup  as $y\to+\infty$ of the expression
$$
\frac{1}{y}\int_M^{a(y)} t  \cdot |(a^{-1})''(t)|\, dt
$$
is bounded by a quantity that is independent of $\varepsilon$ . Since $(a^{-1})'' $ has eventually constant sign, say in $[M_1,+\infty)$,  we can replace $M$ with $M_1$ and remove the absolute value.
After doing so, integration by parts leads to the expression
$$
\frac{1}{y}\Big( a(y) \cdot (a^{-1})'(a(y))-C_1 -\int_{M_1}^{a(y)}  (a^{-1})'(t)\, dt\Big)
$$
for some $C_1\in \R_+$.
The last expression  is equal to
$$
\frac{a(y)}{ya'(y)} - \frac{C_1}{y}  -\frac{y-a^{-1}(M_1)}{y}.
$$
As we showed before, the limit of the first term  is bounded and the limit of the other two terms as $y\to+\infty$ is $1$. This completes the proof.
\end{proof}

We review some basic facts from the spectral theory of unitary $\R$-actions that we will use. Proofs of the stated facts can be found for example in \cite{EW17}
(we use Theorem~9.58 and the variant of Theorems~9.17 that applies to flows).  If $(X,\mu,T^t)$, $t\in \R_+$, is a  measure preserving flow, then   for every $f\in L^2(\mu)$ there exists a positive and bounded measure $\sigma_f$ on $\R$, that is called the {\em spectral measure of $f$}, such that
\begin{equation}\label{E:spectral}
\int T^tf\cdot \overline{f}\, d\mu=\int e(ts)\, d\sigma_f(s), \quad t\in \R.
\end{equation}
 More generally,  for every $f,g\in L^2(\mu)$ there exists a complex measure $\sigma_{f,g}$,  with bounded variation, such that
$$
\int T^tf\cdot \overline{g}\, d\mu=\int e(ts)\, d\sigma_{f,g}(s), \quad t\in \R.
$$
Furthermore,  if $h\in L^\infty(\R)$, then there exists a bounded operator $h(T)\colon L^2(\mu)\to L^2(\mu)$ that commutes with $T^t$, $t\in \R$,  and satisfies
$$
\int h(T)f \cdot \overline{g}\, d\mu=\int h\, d\sigma_{f,g}
$$
for all $f,g\in L^2(\mu)$. We then have
\begin{equation}\label{E:p1}
(h_1+h_2)(T)=h_1(T)+h_2(T), \quad h_1,h_2\in L^\infty(\mu),
\end{equation}
and
\begin{equation}\label{E:p2}
d\sigma_{h(T)f}=h\, d\sigma_f, \quad \norm{h(T)f}_{L^2(\mu)}=\norm{h}_{L^2(\sigma_f)}.
\end{equation}
We will use the following fact:
\begin{lemma}\label{L:compact}
	Let $(X,\mu,T^t)$, $t\in \R_+$, be a measure preserving flow. Then the set
	$$
	\mathcal{G}:=\{f\in L^2(\mu)\colon \sigma_f \text{ has compact support}\}
	$$
	is dense in $L^2(\mu)$.
	\end{lemma}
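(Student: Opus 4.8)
The plan is to exhibit, for each $f\in L^2(\mu)$ and each $\ve>0$, an element $g\in\CG$ with $\norm{f-g}_{L^2(\mu)}<\ve$. The natural candidate is $g:=\one_{[-R,R]}(T)f$ for a suitable large $R$, where $\one_{[-R,R]}$ denotes the indicator function of the interval $[-R,R]\subset\R$, viewed as an element of $L^\infty(\R)$; this is legitimate since the functional calculus $h\mapsto h(T)$ is defined for bounded Borel functions $h$ on $\R$, as recalled just before the lemma.

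First I would check that $g\in\CG$, i.e.\ that $\sigma_g$ has compact support. By the first identity in \eqref{E:p2} we have $d\sigma_g=\one_{[-R,R]}\,d\sigma_f$, so $\sigma_g$ is the restriction of $\sigma_f$ to $[-R,R]$; in particular $\sigma_g$ is supported on the compact set $[-R,R]$, so $g\in\CG$. (Here I am using that $\sigma_f$ is a finite positive measure on $\R$, which is part of the setup in \eqref{E:spectral}.)

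Next I would estimate $\norm{f-g}_{L^2(\mu)}$. Writing $f-g=\one_{\R\setminus[-R,R]}(T)f$ (using the additivity \eqref{E:p1} applied to $\one_{[-R,R]}+\one_{\R\setminus[-R,R]}=\one_\R$, together with $\one_\R(T)=\id$, which holds because $\sigma_{\id f,g}=\sigma_{f,g}$), the second identity in \eqref{E:p2} gives
$$
\norm{f-g}_{L^2(\mu)}^2=\norm{\one_{\R\setminus[-R,R]}}_{L^2(\sigma_f)}^2=\sigma_f(\R\setminus[-R,R]).
$$
Since $\sigma_f$ is a finite measure on $\R$, its mass on the complements of growing intervals tends to $0$: $\lim_{R\to\infty}\sigma_f(\R\setminus[-R,R])=0$. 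Hence for $R$ large enough the right-hand side is less than $\ve^2$, giving $\norm{f-g}_{L^2(\mu)}<\ve$. This shows $\CG$ is dense in $L^2(\mu)$.

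I do not expect any genuine obstacle here; the only points requiring a little care are the justification that $\one_\R(T)=\id$ (equivalently that the spectral measure of $f$ relative to $f$ in \eqref{E:spectral} is a probability-like finite measure capturing all of $\norm{f}_{L^2(\mu)}^2$, so that $\norm{\one_\R}_{L^2(\sigma_f)}=\norm{f}_{L^2(\mu)}$), and the finiteness of $\sigma_f$ so that the tail bound applies — both of which are contained in the cited facts from \cite{EW17}. The argument is essentially a one-line consequence of the functional calculus once these are in place.
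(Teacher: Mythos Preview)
Your proof is correct and essentially identical to the paper's own argument: the paper also takes $f_n:={\bf 1}_{[-n,n]}(T)f$, uses \eqref{E:p2} to see that $\sigma_{f_n}$ is supported on $[-n,n]$, and then uses \eqref{E:p1}, \eqref{E:p2}, and the finiteness of $\sigma_f$ to get $\norm{f-f_n}_{L^2(\mu)}=\norm{{\bf 1}_{\R\setminus[-n,n]}}_{L^2(\sigma_f)}\to 0$. The only difference is cosmetic (integer $n$ versus real $R$).
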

\begin{proof}
Let $f\in L^2(\mu)$. 	Using the previous notation we have by \eqref{E:p2}
that for $n\in \N$
the spectral measure of the function $f_n={\bf 1 }_{[-n,n]}(T)f$ is supported on the interval $[-n,n]$, hence   $f_n\in \mathcal{G}$. Moreover, since ${\bf 1}_\R(T)f=f$, using \eqref{E:p1} and \eqref{E:p2} we get
$$
\norm{f-f_n}_{L^2(\mu)}=\norm{{\bf 1}_{\R\setminus [-n,n]}(T)f}_{L^2(\mu)}=
\norm{{{\bf 1}_{\R\setminus [-n,n]}}}_{L^2(\sigma_f)}\to 0
$$	
as $n\to\infty$,  since $\sigma_f$ is a bounded measure.
\end{proof}

\begin{lemma}\label{L:dense}
	Let $b\colon \R_+\to \R_+$ be a function from a Hardy field that satisfies $t^\delta\prec b(t)\prec t^{1-\delta}$ for some $\delta>0$, and $(X,\mu,T^t)$, $t\in \R_+$, be a measure preserving flow. Then  for every $f\in L^\infty(\mu)$   and $c\in \R$ we have
	\begin{equation}\label{E:tozero}
	\lim_{y\to+\infty}	\frac{1}{y}\int_0^y|f(T^{b(t+c)}x)-f(T^{b(t)}x)|^2\, dt=0
	\end{equation}
	for $\mu$-almost every $x\in X$.
\end{lemma}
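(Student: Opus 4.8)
The plan is to split $f$ into a part that varies slowly (Lipschitzly) along the flow and a small $L^2$-remainder, treat the first part by a deterministic estimate and the second by the pointwise ergodic theorem transported through the change of variables of Lemma~\ref{L:CV}. Throughout we may assume $\|f\|_{L^\infty(\mu)}\le 1$ and, after a harmless reduction, $c\ge 0$. The first thing to record is that $h(t):=b(t+c)-b(t)\to 0$ as $t\to+\infty$. This is where the growth hypothesis $b(t)\prec t^{1-\delta}$ enters: writing $b(t)=t^{1-\delta}g(t)$ with $g\in\mathcal H$, $g(t)\to 0$, the sign-definiteness of $g'$ together with the convergence of $g$ forces $g'(t)\prec t^{\delta-1}$ (otherwise $\int g'$ would diverge), whence $b'(t)\to 0$ and then $|h(t)|\le |c|\sup_{[t-|c|,\,t+|c|]}|b'|\to 0$ by the mean value theorem.

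Next, for $\varepsilon>0$ set $f_\varepsilon:=\frac1\varepsilon\int_0^\varepsilon T^sf\,ds\in L^\infty(\mu)$ and $r_\varepsilon:=f-f_\varepsilon$. Two elementary facts are needed: $\|r_\varepsilon\|_{L^2(\mu)}\to 0$ as $\varepsilon\to 0$, by strong continuity of the flow on $L^2(\mu)$; and the Lipschitz bound $\|T^uf_\varepsilon-f_\varepsilon\|_{L^\infty(\mu)}\le 2|u|/\varepsilon$ for all $u\in\R$, from a direct computation with the defining integral. Since $T^{b(t+c)}x=T^{h(t)}\!\big(T^{b(t)}x\big)$, the Lipschitz bound gives $|f_\varepsilon(T^{b(t+c)}x)-f_\varepsilon(T^{b(t)}x)|\le 2|h(t)|/\varepsilon\to 0$ \emph{uniformly} in $x$, so $\frac1y\int_0^y|f_\varepsilon(T^{b(t+c)}x)-f_\varepsilon(T^{b(t)}x)|^2\,dt\to 0$ as $y\to+\infty$, uniformly in $x$. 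For the remainder I would invoke the pointwise (Wiener) ergodic theorem for flows to get $\frac1y\int_0^y|r_\varepsilon(T^sx)|^2\,ds\to \E(|r_\varepsilon|^2\mid\CI)(x)$ a.e., where $\CI$ is the $\sigma$-algebra of sets invariant under all $T^s$; then — this is the place Lemma~\ref{L:CV} is used — apply that lemma pointwise in $x$ to the bounded function $s\mapsto|r_\varepsilon(T^sx)|^2$ with $a(t)=b(t)$ and with $a(t)=b(t+c)$ (both are Hardy field functions with $t^\delta\prec a(t)$, the latter because $b(t+c)/b(t)\to 1$), to conclude that
$$
\frac1y\int_0^y|r_\varepsilon(T^{b(t)}x)|^2\,dt\to \E(|r_\varepsilon|^2\mid\CI)(x),\qquad \frac1y\int_0^y|r_\varepsilon(T^{b(t+c)}x)|^2\,dt\to \E(|r_\varepsilon|^2\mid\CI)(x)
$$
for a.e.\ $x$.

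Combining these via $|u+v|^2\le 2|u|^2+2|v|^2$ applied twice, for each fixed $\varepsilon$ one gets $\limsup_{y\to+\infty}\frac1y\int_0^y|f(T^{b(t+c)}x)-f(T^{b(t)}x)|^2\,dt\le 8\,\E(|r_\varepsilon|^2\mid\CI)(x)$ for a.e.\ $x$. To finish, choose $\varepsilon_n\to 0$ with $\sum_n\|r_{\varepsilon_n}\|_{L^2(\mu)}^2<\infty$; since $\int\E(|r_{\varepsilon_n}|^2\mid\CI)\,d\mu=\|r_{\varepsilon_n}\|_{L^2(\mu)}^2$, the monotone convergence theorem gives $\sum_n\E(|r_{\varepsilon_n}|^2\mid\CI)(x)<\infty$, hence $\E(|r_{\varepsilon_n}|^2\mid\CI)(x)\to 0$, for a.e.\ $x$; applying the $\limsup$ bound with $\varepsilon=\varepsilon_n$ and letting $n\to\infty$ yields the claim.

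The step I expect to be the real obstacle is the passage from mean to almost-everywhere convergence: one cannot simply Cesàro-average the relation $\|T^{h(t)}f-f\|_{L^2(\mu)}\to 0$, since the pointwise Cesàro averages of a nonnegative integrand need not converge just because their integrals tend to $0$. The decomposition $f=f_\varepsilon+r_\varepsilon$ into a uniformly-controlled piece and an ergodic-averageable piece, together with the Borel--Cantelli-type choice of $\varepsilon_n$, is precisely what circumvents this. The only other substantive ingredient, the pointwise change of variables, is already available as Lemma~\ref{L:CV}; the remaining technical points (the $c<0$ case, measurability of $s\mapsto r_\varepsilon(T^sx)$ for a.e.\ $x$, and that $b(\cdot+c)$ is admissible for Lemma~\ref{L:CV}) are routine.
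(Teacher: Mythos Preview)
Your proof is correct and takes a genuinely different route from the paper's.

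The paper decomposes $f=f_1+f_2$ with $f_1$ having \emph{compactly supported spectral measure} (Lemma~\ref{L:compact}) and $\|f_2\|_{L^2(\mu)}\le\varepsilon$. The small piece $f_2$ is handled exactly as you handle $r_\varepsilon$, via Lemma~\ref{L:CV} and the pointwise ergodic theorem. For the main piece $f_1$, the paper uses the spectral representation \eqref{E:spectral} together with the quantitative bound $\int_{t_0}^y (b'(t))^2\,dt\le b(y)\le y^{1-\delta}$ to obtain a rate $\|A_y(f_1)\|_{L^1(\mu)}\le C/y^\delta$, and then upgrades this $L^1$ decay to almost-everywhere convergence by Borel--Cantelli along the lacunary sequence $N^a$ with $a>1/\delta$. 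Your decomposition $f=f_\varepsilon+r_\varepsilon$ replaces the spectral truncation by time-averaging: the key gain is that the smoothed piece $f_\varepsilon$ satisfies a \emph{pointwise} Lipschitz bound along the orbit, so its contribution vanishes deterministically (uniformly in $x$) once you know $b'(t)\to 0$. This sidesteps the spectral theory for flows entirely and needs only the qualitative fact $b'(t)\to 0$, not the monotonicity or the integral bound on $(b')^2$ that the paper uses; in that sense your argument is both more elementary and slightly more robust. The trade-off is that the paper's route gives an explicit rate for the regular part, which could be useful if one wanted quantitative control, whereas your argument is purely soft.
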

\begin{remark}
More generally, our argument works if the function $b$ satisfies the properties mentioned in the remark following Lemma~\ref{L:CV} and also $b(t)\prec t^{1-\delta}$ and  $b'$ monotonically decreases to $0$ as $t\to+\infty$.
\end{remark}
\begin{proof}
	We can assume that $c\geq 0$.	
	Our assumptions imply that   there exists $t_0>0$ such that for $t\geq t_0$
	we have  $0\leq b'(t)\leq 1$,  $b'(t)$ is  decreasing, and $b(t)\leq t^{1-\delta}$. Since $f\in L^\infty(\mu)$ 
	 it suffices to show that
	\begin{equation}\label{E:tozero'}
	\lim_{y\to+\infty}	\frac{1}{y}\int_{t_0}^{y}|f(T^{b(t+c)}x)-f(T^{b(t)}x)|^2\, dt=0
	\end{equation}
		for $\mu$-almost every $x\in X$.
	
		Let $\varepsilon>0$. If $\mathcal{G}$ is the dense subset of $L^2(\mu)$ given by  Lemma~\ref{L:compact},
	we have a decomposition
	$$
	f=f_1+f_2
	$$
	where $f_1\in \mathcal{G}$ and $\norm{f_2}_{L^2(\mu)}\leq \varepsilon$.
	
	For $g\in L^2(\mu)$ let
	$$
A_y(g):=\frac{1}{y}\int_{t_0}^y|g(T^{b(t+c)}x)-g(T^{b(t)}x)|^2\, dt, \quad y\geq t_0.
	$$
	
	We first deal with the contribution of $f_2$.
We	clearly  have
	$$
 \limsup_{y\to+\infty}  A_y(f_2)	 \leq 4  \lim_{y\to+\infty}  \frac{1}{y}\int_{t_0}^y|f_2(T^{b(t)}x)|^2\, dt.
$$
	Hence, using Lemma~\ref{L:CV} and the pointwise ergodic theorem for flows we deduce that
		\begin{equation}\label{E:f2}
\int \limsup_{y\to+\infty}  A_y(f_2) \, d\mu\leq  4 \int \Big( \lim_{y\to+\infty}  \frac{1}{y}\int_{t_0}^y|f_2(T^tx)|^2\, dt\Big) d\mu = 4\int |f_2|^2\, d\mu\leq 4\varepsilon^2.
 \end{equation}
	
	Next, we deal with the contribution of $f_1$.
	Since $f_1\in \mathcal{G}$, there exists $M>0$ such that the spectral measure $\sigma_{f_1}$ of $f_1$ is supported on the set $[-M,M]$.
	Using the Fubini-Tonelli theorem	we get
	$$
	\norm{A_y(f_1)}_{L^1(\mu)}=
	\frac{1}{y}\int_{t_0}^y\int_X|f_1(T^{b(t+c)}x)-f_1(T^{b(t)}x)|^2\,d\mu \, dt.
	$$
	Using \eqref{E:spectral}    and the  Fubini-Tonelli theorem again, we get that  the last expression is equal to
	$$
	\int_{-M}^M\frac{1}{y}\int_{t_0}^y|e(sb(t+c))-e(sb(t))|^2\, dt\, d\sigma_{f_1}(s).
	$$
	After bounding the integrant pointwise,  using the mean value theorem,  the fact that $b'$  is non-negative, and decreasing for $t\geq t_0$, and that $c\geq 0$, we get  that the last expression is bounded by a constant multiple of
	$$
	\int_{-M}^M\frac{1}{y}\int_{t_0}^y \big (scb'(t))^2\, dt\, d\sigma_{f_1}(s)\leq M^2c^2\norm{f_1}^2_{L^2(\mu)}\frac{1}{y}\int_{t_0}^y (b'(t))^2\, dt.
	$$
Since $0\leq b'(t)\leq 1$ for $t\geq t_0$ we have for $y\geq t_0$ that
	$$	
	\int_{t_0}^y \big(b'(t))^2dt\leq \int_{t_0}^y b'(t)\, dt =b(y)-b(t_0).
	$$
	Since  $b(y)\leq  y^{1-\delta}$ for $y>t_0$, combining the above we get for $C:=Mc^2\nnorm{f_2}^2_{L^2(\mu)}$ that
	$$
	\norm{A_y(f_1)(x)}_{L^1(\mu)}\leq \frac{C}{y^\delta}
	$$
	for all $y>t_0$.  Using the Borel-Cantelli lemma, we get that  for $a>1/\delta$ we have  $\lim_{N\to\infty} A_{N^a}(f_1)(x)= 0$  for $\mu$-almost every $x\in X$.
	From this we deduce  that
	\begin{equation}\label{E:f1}
	\lim_{y\to+\infty} A_y(f_1)(x)=0
	\end{equation}
	for $\mu$-almost every $x\in X$. Indeed, if $N_y\in \Z_+$ is such that $N_y^a\leq y<(N_y+1)^a$, then
	$\norm{A_y(f_1)-A_{N_y^a}(f_1)}_{L^\infty(\mu)}\leq 2\, \norm{f_1}_{L^\infty(\mu)} |1-N_y^a/y|\to 0$ as $y\to+\infty$.
	
Combining \eqref{E:f2} and \eqref{E:f1} we get that
$$
\int \limsup_{y\to+\infty}  A_y(f) \, d\mu\leq 8\varepsilon^2.
$$
Since $\varepsilon$ is arbitrary, we get that \eqref{E:tozero'} holds for $\mu$-almost every $x\in X$, completing the proof.
\end{proof}

\begin{proof}[Proof of Theorem~\ref{T:flows}]

	We prove  the statement by induction on $\ell$. For $\ell=1$ the result follows from Lemma~\ref{L:CV} and the pointwise ergodic theorem for flows. Suppose that the statement holds for $\ell-1$, we shall show that it 	 holds for $\ell$.
		

		Without loss of generality we can assume that
	$\norm{f_j}_{L^\infty(\mu)}\leq 1$ for $j=1,\ldots, \ell$.
		We let
		$$
		A_y(f_\ell)(x):=	\frac{1}{y} \int_0^y f_1(T_1^{a_1(t)}x)\cdot\ldots\cdot f_\ell(T_\ell^{a_{\ell}(t)}x)\, dt, \quad y\in \R_+.
		$$

	Let $\varepsilon>0$.	
 Using a standard  Hilbert space argument we get a decomposition
	$$
	f_\ell=f_{\ell,1}+f_{\ell,2}+f_{\ell,3},
	$$
	where $f_{\ell, 1}=\tilde{f_\ell}$, $\norm{f_{\ell,2}}_{L^2(\mu)}\leq \varepsilon$, and
	$f_{\ell,3}$ belongs to the linear subspace spanned by the functions $T_\ell^ch-h$, $c\in \R$, $h\in L^2(\mu)$. After approximating $h$ in $L^2(\mu)$ by functions in  $L^\infty(\mu)$ and incorporating the error in $f_{\ell,2}$, we can assume that $h\in L^\infty(\mu)$. Furthermore, we have
$\norm{f_{\ell, 1}}_{L^\infty(\mu)}\leq \norm{f_{\ell}}_{L^\infty(\mu)}\leq 1$ and
	  $\norm{f_{\ell,3}}_{L^2(\mu)}\leq 2+\varepsilon$. 	
		An application of the Cauchy-Schwarz inequality and the
 Fubini-Tonelli theorem shows that  for $\mu$-almost every $x\in X$ the
	quantities 	$A_y(f_{\ell,j})$, $j=1,2,3$, are well defined finite numbers.

Our goal is to show \eqref{E:yfl} below. 	We first deal with the contribution of the term $f_{\ell,1}$. Since $T_\ell^tf_{\ell, 1}=f_{\ell,1}=\tilde{f_\ell}$ for every $t\in \R_+$, the induction hypothesis gives
	\begin{equation}\label{E:flows1}
	\lim_{y\to+\infty}	A_y(f_{\ell,1})=\tilde{f_1}\cdots \tilde{f_\ell}.
	\end{equation}
		 for $\mu$-almost every $x\in X$.

	Next we deal with the contribution of the term $f_{\ell,2}$.
For every $y>0$, using the Cauchy-Schwarz inequality   we have
		$$
		|A_y(f_{\ell,2})(x)|^2\leq   \frac{1}{y}\int_0^y |f_{\ell,2}(T_\ell^{a_\ell(t)}x)|^2\, dt.
		$$
 	Using Lemma~\ref{L:CV},
		 the pointwise ergodic theorem for flows, and the Cauchy-Schwarz inequality,  we get (note that in the ergodic case there is no need for the integral on the left hand side)
	\begin{equation}\label{E:flows2}
	\int 	\limsup_{y\to+\infty}|A_y(f_{\ell,2})|\, d\mu \leq
	\norm{f_{\ell,2}}_{L^2(\mu)}  \leq \varepsilon.
	\end{equation}
		
		It remains to deal with  the contribution of the  term $f_{\ell,3}$. We claim that
			\begin{equation}\label{E:flows3}
		\lim_{y\to+\infty}|A_y(f_{\ell,3})(x)|=0
		\end{equation}
		 for $\mu$-almost every $x\in X$.
		 By Lemma~\ref{L:CV},\footnote{We remark that since not all Hardy fields are closed under composition and compositional inversion, we cannot assume that the functions $b_j$ belong to some Hardy field. Nevertheless, one can easily verify that these functions  satisfy the necessary assumptions mentioned on the remark following Lemma~\ref{L:CV}, so we are entitled to apply this lemma. } it suffices to show that
 for $\mu$-almost every $x\in X$ we have
\begin{equation}\label{E:yzero}
\lim_{y\to+\infty}\frac{1}{y}\int_0^{y} \prod_{j=1}^{\ell-1}f_j(T_j^{b_j(t)}x)\cdot f_{\ell,3}(T_\ell^{t}x)\, dt=0
\end{equation}
where $b_j:=a_j\circ a^{-1}_\ell$ for $j=1,\ldots, \ell-1$.

 In order to establish \eqref{E:yzero} it suffices to verify  that if
   $$
 f_{\ell,3}:=T_\ell^ch- h,
 $$
 for some $c\in \R$ and $h\in L^\infty(\mu)$, then \eqref{E:yzero}  holds for $\mu$-almost every $x\in X$.
 So let  $c>0$ and $h\in L^\infty(\mu)$.
After inserting $ f_{\ell,3}=T_\ell^ch- h$  in \eqref{E:yzero}  and using the change of variables $t\mapsto t-c$ in the first of the two integrals, we get that it suffices to show that
 \begin{equation}\label{E:diff}
 \lim_{y\to+\infty}\frac{1}{y}\int_0^{y} \Big(\prod_{j=1}^{\ell-1}f_j(T_j^{b_j(t-c)}x) - \prod_{j=1}^{\ell-1}f_j(T_j^{b_j(t)}x)\Big) \cdot h(T_\ell^{t}x)\, dt=0
 \end{equation}
for $\mu$-almost every $x\in X$. It suffices to show that for
$d=1,\ldots, \ell-1$ we have
\begin{multline}\label{E:diffd}
\lim_{y\to+\infty}\frac{1}{y}\int_0^{y} \Big(\prod_{j=1}^{d}f_j(T_j^{b_j(t-c)}x)\prod_{j=d+1}^{\ell-1}f_j(T_j^{b_j(t)}x) -\\ \prod_{j=1}^{d-1}f_j(T_j^{b_j(t-c)}x)\prod_{j=d}^{\ell-1}f_j(T_j^{b_j(t)}x)\Big) \cdot h(T_\ell^{t}x)\, dt=0
\end{multline}
for $\mu$-almost every $x\in X$.
Finally, notice that our growth assumptions give that $t^{\delta'}\prec b_d(t)\prec t^{1-\delta'}$  for some $\delta'>0$.
Hence,  by Lemma~\ref{L:dense},\footnote{Again here, in order to avoid the assumption that the function $b_d$ belongs to some Hardy field,  we can verify that the assumptions mentioned on the remark following Lemma~\ref{L:dense}
are satisfied.} for $d=1,\ldots, \ell-1$, we have
 \begin{equation}\label{E:tozerod}
 \lim_{y\to+\infty}	\frac{1}{y}\int_0^y|f_d(T_d^{b_d(t-c)}x)-f_d(T_d^{b_d(t)}x)|^2\, dt=0
 \end{equation}
 for  $\mu$-almost every $x\in X$.
 Hence,  using the Cauchy-Schwarz inequality, equation \eqref{E:tozerod}, and the fact that all the functions $f_j$ and $h$ are bounded, we get that   \eqref{E:diffd} holds for $\mu$-almost every $x\in X$. Combning the above we deduce that \eqref{E:flows3} holds.

 From \eqref{E:flows1}, \eqref{E:flows2},  \eqref{E:flows3}, we deduce that
 \begin{equation}\label{E:yfl}
 \int 	\limsup_{y\to+\infty}|A_y(f_{\ell})-\tilde{f_1}\cdots \tilde{f_\ell}|\, d\mu \leq \varepsilon.
 \end{equation}
 Since $\varepsilon$ was arbitrary, we deduce that
 $$
\lim_{y\to+\infty} A_y(f_{\ell})(x)=\tilde{f_1}\cdots \tilde{f_\ell}
 $$
 for $\mu$-almost every $x\in X$, completing the proof.
 \end{proof}

\end{document}